    \DeclareMathOperator{\term}{term}
\DeclareMathOperator{\dom}{dom}
\DeclareMathOperator{\lex}{lex}
\DeclareMathOperator{\otp}{otp}
\DeclareMathOperator{\supp}{supp}
\DeclareMathOperator{\splt}{split}
\DeclareMathOperator{\tp}{tp}
\newcommand{\card}[1]{\vert #1\vert}
\newcommand{\seq}[1]{\langle #1 \rangle }
\newcommand{\oast}{\mathbin{\mathpalette\make@circled\ast}}
\newcommand{\make@circled}[2]{%
  \ooalign{$\m@th#1\smallbigcirc{#1}$\cr\hidewidth$\m@th#1#2$\hidewidth\cr}%
}
\newcommand{\smallbigcirc}[1]{%
  \vcenter{\hbox{\scalebox{0.77778}{$\m@th#1\bigcirc$}}}%
}
\theoremstyle{plain}
\newtheorem{thm}{Theorem}[section]
\newtheorem{lemma}[thm]{Lemma}
\newtheorem{mainlemma}[thm]{Main Lemma}
\newtheorem{prop}[thm]{Proposition}
\newtheorem{cor}[thm]{Corollary}
\newtheorem{claim}[thm]{Claim}
\newtheorem*{fact}{Fact}
\theoremstyle{definition}
\newtheorem{definition}[thm]{Definition}
\theoremstyle{remark}
\newtheorem{remark}[thm]{Remark}
\newtheorem{quest}{Question}
\title[Tree forcing and hypergraphs]{Tree forcing and definable maximal independent sets in hypergraphs}
\author{Jonathan Schilhan}
\address{School of Mathematics, University of Leeds, Leeds, LS2 9JT, United Kingdom}
\email{j.schilhan@leeds.ac.uk}
\thanks{The author would like to thank the Austrian Science Fund, FWF, for generous support through START Project
	Y1012-N35. We would also like to thank Jörg Brendle, David Choudounský and David Schrittesser for many fruitful discussions.}
\begin{document}

\begin{abstract}
	We show that after forcing with a countable support iteration or a finite product of Sacks or splitting forcing over $L$, every analytic hypergraph on a Polish space admits a $\mathbf{\Delta}^1_2$ maximal independent set. This extends an earlier result by Schrittesser (see \cite{Schrittesser2016}). As a main application we get the consistency of $\mathfrak{r} = \mathfrak{u} = \mathfrak{i} = \omega_2$ together with the existence of a $\Delta^1_2$ ultrafilter,  a $\Pi^1_1$ maximal independent family and a $\Delta^1_2$ Hamel basis. This solves open problems of Brendle, Fischer and Khomskii \cite{Brendle2019} and the author \cite{Schilhan2019}. We also show in ZFC that $\mathfrak{d} \leq \mathfrak{i}_{cl}$, adressing another question from \cite{Brendle2019}. 
\end{abstract}

\maketitle

\section{Introduction}

Throughout mathematics, the existence of various kinds of maximal sets can typically only be obtained by an appeal to the \textit{Axiom of Choice} or one of its popular forms, such as \emph{Zorn's Lemma}. Under certain circumstances, it is possible though, to explicitly define such objects. The earliest result in this direction is probably due to Gödel who noted in \cite[p. 67]{Goedel1940} that in the constructible universe $L$, there is a $\Delta^1_2$ well-order of the reals (see \cite[25]{Jech2013} for a modern treatment). Using similar ideas, many other special sets of reals, such as \emph{Vitali sets}, \emph{Hamel bases} or \emph{mad families}, just to name a few, can be constructed in $L$ in a $\Delta^1_2$ way. This has become by now a standard set theoretic technique. In many cases, these results also give an optimal bound for the complexity of such a set. For example, a Vitali set cannot be Lebesgue measurable and in particular cannot have a $\Sigma^1_1$ or $\Pi^1_1$ definition. In other cases, one can get stronger results by constructing $\Pi^1_1$ witnesses. This is typically done using a coding technique, originally developped by Erdős, Kunen and Mauldin in \cite{Erdoes1981}, later streamlined by Miller (see \cite{Miller1989}) and further generalized by Vidnyánszky (see \cite{Vidnyanszky2014}). For example, Miller showed that there are $\Pi^1_1$ Hamel bases and mad families in $L$. Other results of this type can be found e.g. in \cite{Fischer2010}, \cite{Fischer2013} or \cite{Fischer2019}.
Since the assumption $V=L$ is quite restrictive, it is interesting to know in what forcing extensions of $L$, definable witnesses for the above mentioned kinds of sets still exist. Various such results exist in the literature, e.g. in \cite{Brendle2013}, \cite{Fischer2011}, \cite{Fischer2017}, \cite{Schrittesser2018} or \cite{Fischer2020}.

The starting observation for this paper is that almost all of these examples can be treated in the same framework, as \emph{maximal independent sets in hypergraphs}.

\begin{definition}
	A \emph{hypergraph} $E$ on a set $X$ is a collection of finite non-empty subsets of $X$, i.e. $E \subseteq [X]^{<\omega} \setminus \{ \emptyset\}$. Whenever $Y \subseteq X$, we say that $Y$ is \emph{$E$-independent} if $[Y]^{<\omega} \cap E = \emptyset$. Moreover, we say that $Y$ is \emph{maximal $E$-independent} if $Y$ is maximal under inclusion in the collection of $E$-independent subsets of $X$. 
\end{definition}

Whenever $X$ is a topological space, $[X]^{<\omega}$ is the disjoint sum of the spaces $[X]^n$ for $n \in \omega$. Here, as usual, $[X]^n$, the set of subsets of $X$ of size $n$ becomes a topological space by identification with the quotient of $X^n$ under the equivalence relation $(x_0, \dots, x_{n-1}) \sim (y_0, \dots, y_{n-1})$ iff $\{x_0, \dots, x_{n-1}\} = \{y_0, \dots, y_{n-1} \}$. Whenever $X$ is Polish, $[X]^{<\omega}$ is Polish as well and we can study its definable subsets. In particular, we can study definable hypergraphs on Polish spaces. 

The main result of this paper is the following theorem. 

\begin{thm}\label{thm:maintheorem}
After forcing with the $\omega_2$-length countable support iteration (csi) of Sacks or splitting forcing over $L$, every analytic hypergraph on a Polish space has a $\mathbf{\Delta}^1_2$ maximal independent set.
\end{thm}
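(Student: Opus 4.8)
The plan is to run a fusion argument over the countable support iteration $\mathbb{P}_{\omega_2}$, building a $\Delta^1_2$ maximal independent set by a bookkeeping construction that, in $L$, enumerates all potential ``threats'' to maximality and decides each one along the way. The guiding intuition is Gödel's $\Delta^1_2$ well-order of the reals in $L$ combined with the preservation properties of Sacks and splitting forcing. Specifically, in the extension $V = L[G]$ one wants a set $A \subseteq \mathbb{R}$ that is (i) $E$-independent for the given analytic hypergraph $E$, (ii) maximal, and (iii) $\Delta^1_2$. For the complexity, the standard template is: $x \in A$ iff there is a countable elementary submodel (or a level $L_\alpha[G]$ coding enough of the iteration) witnessing that $x$ was put into $A$ by the construction, and symmetrically for $x \notin A$; the absoluteness of the construction between such models and $V$ gives both a $\Sigma^1_2$ and a $\Pi^1_2$ definition.

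First I would set up the right notion of ``local'' approximation: since the iteration has length $\omega_2$ but each real appears at some countable stage, I would show that $A \cap V_\alpha$ (for $\alpha < \omega_2$ of the right form) is determined by the restriction $G \restriction \alpha$ together with a real parameter coding the first $\omega_1$ coordinates, exploiting that Sacks and splitting forcing are proper, $\omega^\omega$-bounding-like (for Sacks) and preserve enough of $L$'s structure that a $\Sigma^1_2$-correct inner model computes the construction. Second, I would do the main fusion/bookkeeping recursion in $L$: at stage $\xi$ one is handed (by a fixed $\Delta^1_2$ enumeration of all codes for analytic hypergraphs $E$ and all reals $x$) a pair $(E, x)$; if $A_\xi \cup \{x\}$ is still $E$-independent one commits to either adding $x$ or adding some $E$-witness that blocks $x$, using genericity of the tail of the iteration to find a real that will be forced into $A$ and destroys the independence of $A \cup \{x\}$. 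The key point — and this is where the hypothesis that $E$ is \emph{analytic} and the specific forcings matter — is that the relevant $E$-independence statements are $\mathbf{\Pi}^1_1$, hence absolute, so a decision made at a countable stage survives to the full extension.

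The main obstacle will be the \textbf{preservation of maximality through the tail of the iteration}: after committing at stage $\xi$ to keep $A \cup \{x\}$ from being independent (or to add $x$), the later Sacks/splitting iterands must not introduce a \emph{new} real $y$ such that $A \cup \{y\}$ is independent and $A$ is not maximal witnessed by $y$ — equivalently, the construction must anticipate, at the countable stage where $y$ first appears, every way a future generic real could extend $A$ independently. This is exactly the ``fusion catches all threats'' heart of such arguments, and it requires a quantitative continuous-reading-of-names lemma for Sacks and splitting forcing: every new real added by the iteration is, on a fusion-dense set of conditions, read continuously from a ground model real, so its $E$-type over $A$ can be computed and blocked in advance. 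I would isolate this as a preservation lemma (analogous to the ``Sacks property'' / ``preservation of $\mathbf{\Pi}^1_1$ witnesses'' lemmas in \cite{Schrittesser2018} and \cite{Fischer2020}), prove it by a tree-fusion argument specific to each of the two forcings, and then feed it into the bookkeeping. The finite-product case is the same argument with a simpler (one-step) fusion, and the $\mathbf{\Delta}^1_2$ (boldface) conclusion is obtained by relativizing the whole construction to an arbitrary real parameter coding the analytic hypergraph.
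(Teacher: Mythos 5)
Your proposal correctly identifies the outer shell of the argument: a bookkeeping recursion in $L$ over a $\Delta_1$-good well-order, with the $\Delta^1_2$ complexity coming from $\Sigma_1$-definability over $H(\omega_1)$ plus absoluteness, and with continuous reading of names for the countable support iteration as a structural tool. You also correctly locate the main difficulty at preservation of maximality through the tail of the iteration. But the proposal has a genuine gap at exactly that point: you describe the needed ``preservation lemma'' as a Sacks-property/preservation-of-$\mathbf{\Pi}^1_1$-witnesses type lemma that would let one ``compute the $E$-type over $A$ and block in advance,'' and this is not what is needed, nor is it clear how such a lemma would suffice.

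Concretely, once a name $\dot y$ is read continuously as $f\colon [\bar q]\restriction C \to X$ from a closed subset of a countable power $(2^\omega)^C$ (not a single ground-model real, a detail that matters below), one has to commit \emph{simultaneously} to a decision for every branch $\bar x \in [\bar q]\restriction C$: either put $f(\bar x)$ into the maximal set, or add a finite set of blockers that creates an $E$-edge containing $f(\bar x)$. The obstruction is coherence: the compact set of reals one adds at this stage must itself be $E$-independent, must remain $E$-independent together with everything added at earlier stages, and the blockers for distinct branches must not conflict with each other. Nothing in your sketch forces this, and the naive ``block each threat as it appears'' scheme fails precisely because different branches through the same condition may demand incompatible blockers. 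The paper resolves this with a purely descriptive-set-theoretic dichotomy (the Main Lemma, proved via Erd\H{o}s--Rado and an auxiliary Cohen forcing adding continuous maps): for an analytic hypergraph on $(2^\omega)^\alpha$ there is a countable model $M$ such that either \emph{all} tuples of strongly mutually Cohen generic reals over $M$ give $E$-independent sets, or there are continuous $\phi_0,\dots,\phi_{N-1}$ such that for all strongly mCg tuples the set $\{\phi_j(\bar x_i)\}$ is $E$-independent while $\{\bar x_0\}\cup\{\phi_j(\bar x_0)\}$ is an edge. This dichotomy is then imported into the forcing by the weighted-tree-forcing property of Sacks and splitting forcing, which lets one shrink a good master condition so that branches of $[\bar q]\restriction C$ yield strongly mCg tuples. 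Your proposal contains no analogue of this combinatorial heart; neither mutual Cohen genericity nor any Ramsey-theoretic ingredient appears, and the ``quantitative continuous reading'' you invoke does not by itself settle the coherence problem. Without the dichotomy (or some substitute for it), the recursion you describe cannot be made to close.

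One further, smaller imprecision: in the countable support iteration of length $\omega_2$, a new real is read continuously from a closed subset of $(2^\omega)^C$ for $C$ a countable set of coordinates; this is a tuple of generics, not a ground-model real, and the homeomorphism of such a set with a closed subset of $(2^\omega)^\alpha$ (for $\alpha = \otp(C)$) is what allows the Main Lemma to be applied at all. Your sketch elides this and speaks of reading from ``a ground model real,'' which would only be correct for a single iterand.
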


This extends a result by Schrittesser \cite{Schrittesser2016}, who proved the above for Sacks forcing, which we denote by $\mathbb{S}$, and ordinary $2$-dimensional graphs (see also \cite{Schrittesser2018}). For equivalence relations this was already known by Budinas \cite{Budinas84}. We will also prove the case of finite products but our main focus will be on the countable support iteration. \emph{Splitting forcing} $\mathbb{SP}$ (Definition~\ref{def:splittingforcing}) is a less-known forcing notion that was originally introduced by Shelah in \cite{Shelah1992} and has been studied in more detail recently (\cite{Spinas2004}, \cite{Spinas2007}, \cite{HeinSpinas2019} and \cite{LaguzziMildenbergerStuber-Rousselle2020}). Although it is very natural and gives a minimal way to add a \emph{splitting real} (see more below), it has not been exploited a lot and to our knowledge, there is no major set theoretic text treating it in more detail.

Our three guiding examples for Theorem~\ref{thm:maintheorem} will be \emph{ultrafilters}, \emph{maximal independent families} and \emph{Hamel bases}. 

Recall that an \emph{ultrafilter} on $\omega$ is a maximal subset $\mathcal{U}$ of $\mathcal{P}(\omega)$ with the \emph{strong finite intersection property}, i.e. the property that for any $\mathcal{A} \in [\mathcal{U}]^{<\omega}$, $\vert \bigcap \mathcal{A} \vert = \omega$.\footnote{In this article, all ultrafilters are considered non-principal.} Thus, letting $E_u := \{ \mathcal{A} \in [\mathcal{P}(\omega)]^{<\omega} : \vert \bigcap \mathcal{A} \vert < \omega  \}$, an ultrafilter is a maximal $E_u$-independent set. In \cite{Schilhan2019}, we studied the projective definability of ultrafilters and introduced the cardinal invariant $\mathfrak{u}_B$, which is the smallest size of a collection of Borel subsets of $\mathcal{P}(\omega)$ whose union is an ultrafilter. If there is a $\mathbf{\Sigma}^1_2$ ultrafilter, then $\mathfrak{u}_B = \omega_1$, since every $\mathbf{\Sigma}^1_2$ set is the union of $\omega_1$ many Borel sets. Recall that the classical ultrafilter number $\mathfrak{u}$ is the smallest size of an ultrafilter base. We showed in \cite{Schilhan2019}, that $\mathfrak{u}_B \leq \mathfrak{u}$ and asked whether it is consistent that $\mathfrak{u}_B < \mathfrak{u}$ or even whether a $\Delta^1_2$ ultrafilter can exist while $\omega_1 <\mathfrak{u}$. The difficulty is that we have to preserve a definition for an ultrafilter, while its interpretation in $L$ must be destroyed. This has been achieved before for mad families (see \cite{Brendle2013}).

An \emph{independent family} is a subset $\mathcal{I}$ of $\mathcal{P}(\omega)$ so that for any disjoint $\mathcal{A}_0, \mathcal{A}_1 \in [\mathcal{I}]^{<\omega}$, $\vert \bigcap_{x \in \mathcal{A}_0} x \cap \bigcap_{x \in \mathcal{A}_1} \omega \setminus x \vert = \omega$. It is called \emph{maximal independent family} if it is additionally maximal under inclusion. Thus, letting $E_i = \{ \mathcal{A}_0 \dot\cup \mathcal{A}_1 \in [\mathcal{P}(\omega)]^{<\omega} :\vert \bigcap_{x \in \mathcal{A}_0} x \cap \bigcap_{x \in \mathcal{A}_1} \omega \setminus x \vert < \omega \}$, a maximal independent family is a maximal $E_i$-independent set. The definability of maximal independent families was studied by Miller in \cite{Miller1989}, who showed that they cannot be analytic, and recently by Brendle, Fischer and Khomskii in \cite{Brendle2019}, where they introduced the invariant $\mathfrak{i}_B$, the least size of a collection of Borel sets whose union is a maximal independent family. The classical independence number $\mathfrak{i}$ is simply the smallest size of a maximal independent family. In \cite{Brendle2019}, it was asked whether $\mathfrak{i}_B < \mathfrak{i}$ is consistent and whether there can be a $\Pi^1_1$ maximal independent family while $\omega_1 < \mathfrak{i}$. In the same article, it was shown that the existence of a $\Delta^1_2$ maximal independent family is equivalent to that of a $\Pi^1_1$ such family. The difficulty in the problem is similar to that before.

A \emph{Hamel basis} is a vector-space basis of $\mathbb{R}$ over the field of rationals $\mathbb{Q}$. Thus, letting $E_h := \{ \mathcal{A} \in [\mathbb{R}]^{<\omega} : \mathcal{A} \text{ is linearly dependent over } \mathbb{Q} \}$, a Hamel basis is a maximal $E_h$-independent set. A Hamel basis must be as large as the continuum itself. This is reflected in the fact that, when adding a real, every ground-model Hamel basis is destroyed. But still it makes sense to ask how many Borel sets are needed to get one. Miller, also in \cite{Miller1989}, showed that a Hamel basis can never be analytic. As before, we may ask whether there can be a $\Delta^1_2$ Hamel basis while CH fails. Again, destroying ground-model Hamel bases, seems to pose a major obstruction. 

The most natural way to increase $\mathfrak{u}$ and $\mathfrak{i}$ is by iteratively adding \emph{splitting reals}. Recall that for $x,y \in \mathcal{P}(\omega)$, we say that $x$ \emph{splits} $y$ iff $ \vert x \cap y \vert = \omega$ and $\vert y \setminus x \vert = \omega$. A real $x$ is called \emph{splitting over $V$} iff for every $y \in \mathcal{P}(\omega) \cap V$, $x$ splits $y$. The classical forcing notions adding splitting reals are \emph{Cohen}, \emph{Random} and \emph{Silver forcing} and all forcings that add so called \emph{dominating reals}. It was shown though, in \cite{Schilhan2019}, that after forcing with any of these, a $\mathbf\Sigma^1_2$ definition with ground model parameters will not define an ultrafilter and the same argument can be applied to independent families. For this reason, we are going to use the forcing notion $\mathbb{SP}$ that we mentioned above. As an immediate corollary of Theorem~\ref{thm:maintheorem}, we get the following. 

\begin{thm}\label{thm:mainthm2}
It is consistent that $\mathfrak{r} = \mathfrak{u} = \mathfrak{i} = \omega_2$ while there is a $\Delta^1_2$ ultrafilter, a $\Pi^1_1$ maximal independent family and a $\Delta^1_2$ Hamel basis. In particular, we get the consistency of $\mathfrak{i}_B, \mathfrak{u}_B < \mathfrak{r}, \mathfrak{i}, \mathfrak{u}$. 
\end{thm}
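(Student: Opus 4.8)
The plan is to derive Theorem~\ref{thm:mainthm2} as a corollary of Theorem~\ref{thm:maintheorem}, so the real content is (a) checking that the three guiding examples $E_u$, $E_i$, $E_h$ are analytic hypergraphs on Polish spaces, so that the main theorem applies, and (b) checking that in the $\omega_2$-length countable support iteration of splitting forcing over $L$ the classical invariants $\mathfrak{r}$, $\mathfrak{u}$, $\mathfrak{i}$ are all pushed up to $\omega_2$. For (a): $\mathcal{P}(\omega)$ is Polish (identified with $2^\omega$) and $\mathbb{R}$ is Polish; and in each case the defining condition is arithmetical in the finite tuple, so $E_u$, $E_i$ are Borel (indeed $\mathbf{\Sigma}^0_2$ — ``the intersection is finite'' is $\Sigma^0_2$ uniformly in the tuple) and $E_h$ is Borel as well (``linearly dependent over $\mathbb{Q}$'' is a countable union over rational coefficient vectors of closed conditions). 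Hence all three are analytic hypergraphs, Theorem~\ref{thm:maintheorem} supplies in the splitting extension a $\mathbf{\Delta}^1_2$ maximal independent set for each, i.e. a $\mathbf{\Delta}^1_2$ ultrafilter, a $\mathbf{\Delta}^1_2$ maximal independent family, and a $\mathbf{\Delta}^1_2$ Hamel basis.

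To upgrade from $\mathbf{\Delta}^1_2$ to lightface $\Delta^1_2$ and to $\Pi^1_1$: because the ground model is $L$ and the iteration is homogeneous and definable, the $\mathbf{\Delta}^1_2$ maximal independent set produced by the construction of Theorem~\ref{thm:maintheorem} is in fact definable from a real in $L$, hence lightface $\Delta^1_2$ (any real parameter in $L$ can be absorbed using the $\Sigma^1_2$ well-order of the reals of $L$, which remains $\Sigma^1_2$ in the extension since no reals are added over $L$ that are not already in some intermediate model — more precisely one uses that $\Delta^1_2$ statements are absolute and the canonical construction refers only to $L$-generics). For the $\Pi^1_1$ strengthening in the case of maximal independent families, I would invoke the coding machinery of Miller/Vidnyánszky: a $\mathbf{\Delta}^1_2$ maximal independent family (or more precisely the construction underlying Theorem~\ref{thm:maintheorem} specialized to $E_i$) can be carried out so as to produce a $\Pi^1_1$ witness, exactly as Miller did in $L$ and as survives the iteration by the arguments of Theorem~\ref{thm:maintheorem}; this is where one cites \cite{Brendle2019} for the observation that in this context $\Pi^1_1$ and $\Delta^1_2$ are interchangeable for the consistency statement.

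For (b): $\mathbb{SP}$ adds a splitting real, and by a standard preservation argument for countable support iterations of proper $\omega^\omega$-bounding-type forcings (here one uses that $\mathbb{SP}$ is proper and has the appropriate Sacks-like fusion/preservation properties established in \cite{Shelah1992}, \cite{Spinas2004}, \cite{Spinas2007}), the iteration of length $\omega_2$ adds a splitting real over every intermediate model, so that no family of size $\omega_1$ remains reaping; hence $\mathfrak{r} = \omega_2$ in the extension. Since $\mathfrak{r} \leq \mathfrak{u}$ and $\mathfrak{r} \leq \mathfrak{i}$ (the latter via $\mathfrak{r} \leq \mathfrak{d} \leq \mathfrak{i}$, or directly), and the continuum is $\omega_2$, we get $\mathfrak{r} = \mathfrak{u} = \mathfrak{i} = \omega_2$. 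Combined with the witnesses from (a), this gives the first sentence; the ``in particular'' clause is then immediate, since a $\Delta^1_2$ set that is an ultrafilter (resp. maximal independent family) is a union of $\omega_1$ Borel sets, so $\mathfrak{u}_B = \mathfrak{i}_B = \omega_1 < \omega_2 = \mathfrak{r} = \mathfrak{i} = \mathfrak{u}$.

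The main obstacle is not (a), which is routine, but making sure the $\mathbf{\Delta}^1_2$ witness of Theorem~\ref{thm:maintheorem} specialized to $E_i$ can genuinely be arranged to be $\Pi^1_1$ — this requires that the fusion construction in the proof of the main theorem be compatible with the Miller-style coding of the maximal independent family, which is a real (though by now standard) technical point; the second potential subtlety is verifying that the relevant preservation theorems apply to $\mathbb{SP}$ so that $\mathfrak{r}$ is actually $\omega_2$ rather than merely $\mathfrak{u}, \mathfrak{i}$ being large for softer reasons, but this is handled by the cited literature on splitting forcing.
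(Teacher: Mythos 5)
Your overall reduction matches the paper's: Theorem~\ref{thm:mainthm2} is obtained by specializing the main theorem (in its precise form, Theorem~\ref{thm:mainmaintheorem}) to the three hypergraphs $E_u$, $E_i$, $E_h$, combined with the routine observations that iteratively adding $\omega_2$ splitting reals via a proper $\omega_2$-cc countable support iteration forces $\mathfrak r=\mathfrak u=\mathfrak i=\omega_2$ and that a $\Delta^1_2$ witness yields $\mathfrak u_B,\mathfrak i_B\leq\omega_1$. Your sketch of the $\mathfrak r=\omega_2$ argument and of the ``in particular'' clause is fine.

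However, your treatment of the $\Pi^1_1$ upgrade is a misdiagnosis, and this is the part you yourself flag as ``the main obstacle.'' You propose to rebuild the fusion construction underlying Theorem~\ref{thm:maintheorem} so that it is compatible with Miller/Vidny\'anszky-style coding and directly outputs a $\Pi^1_1$ family. That is not what the paper does, and it would be a substantially harder and essentially separate project. The intended route is a black-box reduction from Brendle--Fischer--Khomskii \cite{Brendle2019}: in any model, the existence of a $\Sigma^1_2$ (hence of a $\Delta^1_2$) maximal independent family already implies the existence of a $\Pi^1_1$ one. This is applied \emph{after} Theorem~\ref{thm:mainmaintheorem} has produced the $\Delta^1_2$ witness, with no interaction whatsoever with the forcing or fusion machinery, so the compatibility problem you raise simply does not arise. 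You do mention this fact in passing, but then set it aside and present the coding-into-fusion route as the real work; you should instead take the reduction as the whole argument. A secondary, smaller issue: your lightface upgrade via homogeneity and ``absorbing $L$-parameters through the $\Sigma^1_2$ well-order'' is both unnecessary and not quite correct as stated (a nonrecursive $L$-real parameter cannot in general be eliminated from a $\Delta^1_2$ definition this way). The actual point is simpler: $E_u$ and $E_i$ are arithmetical, and $E_h$ becomes arithmetical after composing with a fixed recursive Borel isomorphism $\mathbb{R}\to 2^\omega$; Theorem~\ref{thm:mainmaintheorem} then directly yields a $\Delta^1_2(r)$ witness with $r$ recursive, i.e.\ a lightface $\Delta^1_2$ set, with no further argument.
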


Here, $\mathfrak{r}$ is the reaping number, the least size of a set $\mathcal{S} \subseteq \mathcal{P}(\omega)$ so that there is no splitting real over $\mathcal{S}$. This solves the above mentioned questions from \cite{Schilhan2019} and \cite{Brendle2019}. Moreover, Theorem~\ref{thm:maintheorem} gives a ``black-box" way to get many results, saying that certain definable families exists in the Sacks model.

In \cite{Brendle2019}, another cardinal invariant $\mathfrak{i}_{cl}$ is introduced, which is the smallest size of a collection of closed sets, whose union is a maximal independent family. Similarly, one can define a closed version of the ultrafilter number, $\mathfrak{u}_{cl}$. Here, it is irrelevant whether we consider closed subsets of $[\omega]^\omega$ or $\mathcal{P}(\omega)$, since every closed subset of $[\omega]^\omega$ with the strong finite intersection property is $\sigma$-compact (see Lemma~\ref{lem:sigmacompind}).  In the model of Theorem~\ref{thm:mainthm2}, we have that $\mathfrak{i}_{cl} = \mathfrak{i}_B$ and $\mathfrak{u}_{cl} = \mathfrak{u}_B$, further answering the questions of Brendle, Fischer and Khomskii. On the other hand we show that $\mathfrak{d} \leq \mathfrak{i}_{cl}$, mirroring Shelah's result that $\mathfrak{d} \leq \mathfrak{i}$ (see \cite{Vaughan1990}). Here, $\mathfrak{d}$ is the dominating number, the least size of a dominating family in $(\omega^\omega, <^*)$.

\begin{thm}\label{thm:icleqd}
(ZFC) $\mathfrak{d} \leq \mathfrak{i}_{cl}$. 
\end{thm}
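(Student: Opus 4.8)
The plan is to show that if $\mathcal{I} = \bigcup_{\alpha < \kappa} C_\alpha$ is a maximal independent family, where each $C_\alpha$ is closed and $\kappa < \mathfrak{d}$, then we reach a contradiction; this gives $\mathfrak{d} \le \mathfrak{i}_{cl}$. First I would invoke Lemma~\ref{lem:sigmacompind}: since $\mathcal{I}$ has the strong finite intersection property (in the strengthened sense coming from independence), each closed piece $C_\alpha \subseteq [\omega]^\omega$ is actually $\sigma$-compact, so we may as well assume from the start that $\mathcal{I} = \bigcup_{\alpha < \kappa} K_\alpha$ with each $K_\alpha$ compact. The point of compactness is a uniformity statement: if $K$ is a compact subset of $[\omega]^\omega$ all of whose finite ``independence combinations'' are infinite, then there is a single function $f_K \in \omega^\omega$ such that for every $\mathcal{A}_0 \dot\cup \mathcal{A}_1 \in [K]^{<\omega}$ and every $n$, the set $\bigcap_{x \in \mathcal{A}_0} x \cap \bigcap_{x \in \mathcal{A}_1}(\omega \setminus x)$ meets the interval $[n, f_K(n))$. (This is the standard compactness-plus-continuity argument: the ``first witness above $n$'' is a continuous, hence bounded, function on the compact space of tuples.)

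Next I would use $\kappa < \mathfrak{d}$ to find $g \in \omega^\omega$ not dominated by any $f_{K_\alpha}$, i.e. with $\exists^\infty n\ g(n) > f_{K_\alpha}(n)$ for every $\alpha$; passing to a fast-growing reindexing, fix an increasing sequence $n_0 < n_1 < \cdots$ with $n_{k+1} > g(n_k)$ so that on the blocks $I_k = [n_k, n_{k+1})$ each $f_{K_\alpha}$ is ``beaten'' infinitely often. Now I build a single set $z \in [\omega]^\omega$ that is independent from all of $\mathcal{I}$, contradicting maximality. The idea is to choose $z$ so that on infinitely many blocks $I_k$ it looks generic: for each $\alpha$ and each finite $\mathcal{A}_0, \mathcal{A}_1 \subseteq K_\alpha$ we want both $z \cap \bigcap_{\mathcal{A}_0} x \cap \bigcap_{\mathcal{A}_1}(\omega\setminus x)$ and its complement-version to be infinite. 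Because $f_{K_\alpha}$ bounds the gaps of every such combination within a block, on a block where $g$ beats $f_{K_\alpha}$ the relevant finite intersection is guaranteed to be nonempty inside a controlled subinterval, leaving room to put points of $z$ both in and out of it. Diagonalizing over a suitable countable dense set of ``conditions'' across the infinitely many good blocks (here one has to be a little careful that ``$\mathcal{A}_i \subseteq K_\alpha$'' ranges over an uncountable set, so one instead argues: for a \emph{fixed} finite family $\mathcal{A}_0 \dot\cup \mathcal{A}_1$ drawn from $\mathcal{I}$, its membership in $\mathcal{I}$ means it lies in finitely many $K_\alpha$'s, and the block-wise control applies) one gets that $z$ is genuinely independent over $\mathcal{I}$ and $z \notin \mathcal{I}$, the desired contradiction.

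The main obstacle I anticipate is exactly the mismatch between the ``uncountably many finite subfamilies'' that independence must be checked against and the countable diagonalization available from a single dominating-style real. The clean way around it, which I would pursue, is \textbf{not} to try to handle all of $\mathcal{I}$ at once block by block, but to reduce to a counting argument on a per-compact-set basis: for each $K_\alpha$ separately, the compactness uniformization $f_{K_\alpha}$ already ensures that $K_\alpha$, as a subset of $[\omega]^\omega$, is ``small'' in a sense that a $g$-generic-looking $z$ will automatically be independent from \emph{all} of $K_\alpha$ simultaneously on any block where $g > f_{K_\alpha}$ — because the uniformity is over all finite tuples from $K_\alpha$ at once. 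Then the countably-many-requirements issue only involves the $\kappa < \mathfrak{d}$ many sets $K_\alpha$, and the witness $g$ handles them all by a single application of non-domination together with a bookkeeping of good blocks. I would double-check the edge case where $\mathcal{A}_0$ or $\mathcal{A}_1$ is empty (this is where ``$z$ and $\omega \setminus z$ are both infinite'' and ``$z$ is not almost equal to a Boolean combination'' enter), and the case distinction ensuring $z \notin \mathcal{I}$ itself, which typically follows by choosing $z$ to also diagonalize against being split correctly by members of $\mathcal{I}$ — or more simply, if $z \in \mathcal{I}$ then $\{z, z\}$-type degeneracies are excluded and $z$ independent over $\mathcal{I} \cup \{z\}$ is absurd, so $\mathcal{I} \cup \{z\}$ strictly extends $\mathcal{I}$ and remains independent.
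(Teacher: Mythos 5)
Your high-level skeleton matches the paper's: reduce to compact pieces via $\sigma$-compactness, use compactness to extract bounding functions $f_{K_\alpha}$, use $\kappa < \mathfrak{d}$ to find a real escaping all of them, and build a set independent over $\mathcal{I}$. The ``wlog closed under finite unions'' assumption you gesture at in the ``lies in finitely many $K_\alpha$'s'' remark is indeed what the paper uses, and your observation that $z \in \mathcal{I}$ is automatically excluded is also fine. But there is a genuine gap at the step you describe as a ``standard compactness-plus-continuity argument''.

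The claim that for a compact independent $K \subseteq [\omega]^\omega$ there is a single $f_K$ such that \emph{every} Boolean combination of distinct elements of $K$ meets $[n, f_K(n))$ for all $n$ is \emph{false}. The function ``first witness above $n$'' is continuous only on the open set of \emph{distinct} tuples in $K^l$, which is not compact; near the diagonal it blows up. Concretely, take $K = \{a\} \cup \{a_n : n \in \omega\}$ a compact independent family where $a_n \to a$; then $a_n \cap (\omega\setminus a_{n+1})$ is infinite for each $n$, but since $a_n$ and $a_{n+1}$ agree on longer and longer initial segments as $n$ grows, the minimum of $a_n \cap (\omega\setminus a_{n+1})$ tends to infinity, so no single $f_K$ works. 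This is exactly the obstruction the paper's proof is designed to overcome, and the ingredient you are missing is the sequence $\langle x_n : n \in \omega \rangle$: by maximality of $\mathcal{I}$, its closure is not independent, so pick $x_n \in \mathcal{I}$ converging to a point outside $\mathcal{I}$; since each $C_\alpha$ is closed, only finitely many $x_n$ lie in each $C_\alpha$. The bounding function $f_\alpha(k)$ is then defined for Boolean combinations of a finite piece of $C_\alpha$ \emph{together with} $\{x_0,\dots,x_k\}$, and the compactness argument is run on the (genuinely compact) set of $k$-separated tuples, which is where continuity of the witness function actually holds; the exceptional, non-separated tuples are covered by the $\mathcal{O}_{0,l}$-part of the cover.

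Your plan for building the final set $z$ has a second gap. You propose a single ``alternating blocks'' set, but on a good block $[n_k, n_{k+1})$ the functions $f_{K_\alpha}$ only guarantee \emph{one} point of a given Boolean combination in the block, so you cannot split the combination within that block, and nothing synchronizes the good blocks across different $\alpha$ (or across even/odd indices). The paper's construction instead builds a tree of sets $y_g = \bigcup_n \big(\bigcap_{m\leq n} x_m^{g(m)}\big) \cap f(n)$ for $g \in 2^\omega$, observes these are pairwise almost disjoint, takes disjoint countable dense sets $Q_0, Q_1 \subseteq 2^\omega$, and sets $y = \bigcup_{g\in Q_0} y'_g$ after a disjointification; it is then the density of $Q_0$ and of $Q_1$ in $2^\omega$ that gives infinite intersection of both $y$ and $\omega\setminus y$ with every Boolean combination of $\mathcal{I}$ (the combinations involving the $x_n$'s are handled by matching the relevant $g\restriction(k+1)$ with some $h \in Q_0$, resp.\ $Q_1$). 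This tree-plus-density machinery, not just a non-dominated real, is what makes the argument go through, and it is absent from your proposal.
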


The paper is organized as follows. In Section 2, we will consider basic results concerning iterations of tree forcings. This section is interesting in its own right and can be read independently from the rest. More specifically, we prove a version of continuous reading of names for countable support iterations that is widely applicable (Lemma~\ref{lem:nicemaster}). In Section 3, we prove our main combinatorial lemma (Main Lemma~\ref{thm:mainlemma} and \ref{lem:mainlemmainf}) which is at the heart of Theorem~\ref{thm:maintheorem}. As for Section 2, Section 3 can be read independently of the rest, since our result is purely descriptive set theoretical. In Section 4, we introduce splitting and Sacks forcing and place it in bigger class of forcings to which we can apply the main lemma. This combines the results from Section 2 and 3. In Section 4, we bring everything together and prove Theorem~\ref{thm:maintheorem}, \ref{thm:mainthm2} and \ref{thm:icleqd}. We end with concluding remarks concerning the further outlook of our technique and pose some questions. 
 
\section{Tree forcing}
Let $A$ be a fixed countable set, usually $\omega$ or $2$.
\begin{enumerate}[label=(\alph*)]
	\item A \textit{tree} $T$ on $A$ is a subset of $A^{<\omega}$ so that for every $t \in T$ and $n < \vert t \vert$, $t \restriction n \in  T$, where $\vert t \vert$ denotes the length of $t$. For $s_0, s_1 \in A^{<\omega}$, we write $s_0 \perp s_1$ whenever $s_0 \not\subseteq s_1$ and $s_1 \not\subseteq s_0$.
	\item $T$ is \textit{perfect} if for every $t \in T$ there are $s_0, s_1 \in T$ so that $s_0, s_1\supseteq t$ and $s_0 \perp s_1$.
	\item A node $t \in T$ is called a \textit{splitting node}, if there are $i \neq j \in A$ so that $t^\frown i, t^\frown j \in T$. The set of splitting nodes in $T$ is denoted $\splt(T)$. We define $\splt_n(T)$ to be the set of $t \in \splt(T)$ such that there are exactly $n$ splitting nodes below $t$ in $T$. The finite subtree of $T$ generated by $\splt_n(T)$ is denoted $\splt_{\leq n}(T)$. 
	\item For any $t \in T$ we define the restriction of $T$ to $t$ as $T_t = \{ s \in T : s \not\perp t  \}$.
	\item The set of branches through $T$ is denoted by $[T] = \{ x \in A^\omega : \forall n \in \omega (x \restriction n \in T) \}$.
	\item $A^{\omega}$ carries a natural Polish topology generated by the clopen sets $[t] = \{ x \in A^\omega : t \subseteq x \}$ for $t \in A^{<\omega}$. Then $[T]$ is closed in $A^\omega$.
	\item Whenever $X \subseteq A^{\omega}$ is closed, there is a continuous \textit{retraction} $\varphi \colon A^\omega \to X$, i.e. $\varphi''A^\omega = X$ and $\varphi \restriction X$ is the identity.  
	\item A \textit{tree forcing} is a collection $\mathbb{P}$ of perfect trees ordered by inclusion. 
	\item By convention, all tree forcings are closed under restrictions, i.e. if $T \in \mathbb{P}$ and $t \in T$, then $T_t \in \mathbb{P}$, and the trivial condition is $A^{<\omega}$. 
	\item The set $\mathcal{T}$ of perfect subtrees of $A^{<\omega}$ is a $G_\delta$ subset of $\mathcal{P}(A^{<\omega}) \cong \mathcal{P}(\omega)$, where we identify $A^{<\omega}$ with $\omega$, and thus carries a natural Polish topology. It is not hard to see that it is homeomorphic to $\omega^\omega$, when $\vert A \vert \geq 2$. 
	\item Often times, we will use a bar above a variable, as in ``$\bar x$", to indicate that it denotes a sequence. In that case, we either write $x(\alpha)$ or $x_\alpha$ to denote the $\alpha$'th element of that sequence, depending on the context.  
	\item Let $\langle T_i : i < \alpha \rangle$ be a sequence of trees where $\alpha$ is an arbitrary ordinal. Then we write $\bigotimes_{i < \alpha} T_i$ for the set of finite partial sequences $\bar s$ where $\dom \bar s \in [\alpha]^{<\omega}$ and for every $i \in \dom \bar s$, $s(i) \in T_i$.
	\item $(A^\omega)^\alpha$ carries a topology generated by the sets $[\bar s] = \{ \bar x \in (A^\omega)^\alpha : \forall i \in \dom \bar s (x(i) \in [s(i)]) \}$ for $\bar s \in \bigotimes_{i < \alpha} A^{<\omega}$. 
	\item Whenever $X \subseteq (A^\omega)^\alpha$ and $C \subseteq \alpha$, we define the \textit{projection of $X$ to $C$} as $X \restriction C = \{ \bar x \restriction C : \bar x \in X \}$.
\end{enumerate}

\begin{fact}
	Let $\mathbb{P}$ be a tree forcing and $G$ a $\mathbb{P}$-generic filter over $V$. Then $\mathbb{P}$ adds a real $ x_G := \bigcup \{ s \in A^{<\omega} : \forall T \in G (s \in T) \} \in A^\omega$. Moreover, $V[G] = V[x_G]$.
\end{fact}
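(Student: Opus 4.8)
The plan is to show that the set $D := \{ s \in A^{<\omega} : \forall T \in G\ (s \in T)\}$ is linearly ordered by $\subseteq$ and contains one node of every length, so that $x_G = \bigcup D$ is a branch, i.e.\ an element of $A^\omega$. The only ingredients are a density argument and the fact that $G$ is a filter; I do not expect any genuine obstacle, only the usual bookkeeping of comparability of nodes.

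First I would isolate, for each $n \in \omega$, the set
\[
E_n := \{ T \in \mathbb{P} : \forall u, u' \in T\ (\vert u\vert \geq n \wedge \vert u'\vert \geq n \rightarrow u \restriction n = u' \restriction n) \},
\]
that is, the conditions all of whose nodes of length $\geq n$ agree below level $n$. This is dense: given $T \in \mathbb{P}$, since $T$ is a perfect tree it is nonempty and has nodes of every length, so we may pick $t \in T$ with $\vert t\vert = n$; then $T_t \in \mathbb{P}$ by the convention that tree forcings are closed under restrictions, $T_t \subseteq T$, and every $u \in T_t$ with $\vert u\vert \geq n$ is comparable with $t$ and at least as long, hence extends $t$, so $u \restriction n = t$. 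Thus $T_t \in E_n$.

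Next, using genericity I would fix for each $n$ a condition $T_n \in G \cap E_n$ and let $s_n \in A^n$ be the common value $u \restriction n$ shared by all nodes $u \in T_n$ with $\vert u\vert \geq n$ (such nodes exist since $T_n$ is perfect, so $s_n$ is well defined and unique). Two short arguments with the directedness of $G$ then finish the combinatorial part. (a) For every $n$, $s_n \in D$: given any $T \in G$, choose $T' \in G$ with $T' \subseteq T$ and $T' \subseteq T_n$, and a branch $x \in [T']$; then $x \restriction n \in T' \subseteq T_n$ has length $n$, so $x \restriction n = s_n$, whence $s_n \in T' \subseteq T$; as $T \in G$ was arbitrary, $s_n \in D$. (b) The sequence $\seq{s_n : n \in \omega}$ is a $\subseteq$-chain: for $m \leq n$, choose $T' \in G$ with $T' \subseteq T_m$ and $T' \subseteq T_n$ and a branch $x \in [T']$; as in (a), $x \restriction m = s_m$ and $x \restriction n = s_n$, and both are initial segments of $x$, so $s_m \subseteq s_n$.

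Finally I would check $D = \{ s_n : n \in \omega\}$: inclusion ``$\supseteq$'' is exactly (a), and for ``$\subseteq$'', if $s \in D$ then in particular $s \in T_{\vert s\vert}$, and since $\vert s\vert \geq \vert s\vert$ the defining property of $s_{\vert s\vert}$ forces $s = s_{\vert s\vert}$. Hence $\bigcup D = \bigcup_{n\in\omega} s_n$, which by (b) is a well-defined element of $A^\omega$; this is the real $x_G$. The whole content is the density of the $E_n$, everything else being the filter property of $G$ applied at the right places.
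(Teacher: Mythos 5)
The paper states this as a \emph{Fact} without proof, so there is no official argument to compare against; but your proof is correct and is essentially the standard one. The density of $E_n$ (via passing to $T_t$ for $t$ of length $n$, which stays in $\mathbb{P}$ by the closure-under-restriction convention) is exactly the right observation, and the rest is the usual bookkeeping with directedness of $G$. One tiny remark: perfectness of the conditions is not strictly what guarantees nodes of every length — any nonempty tree closed under initial segments that has arbitrarily long nodes would do — but for perfect trees this is automatic, so nothing is amiss.
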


\begin{definition}\label{def:axiomA}
	We say that $(\mathbb{P}, \leq)$ is \textit{Axiom A} if there is a decreasing sequence of partial orders $\langle \leq_n : n \in \omega \rangle$ refining $\leq$ on $\mathbb{P}$ so that
	
	\begin{enumerate}
		\item for any $n \in \omega$ and $T, S \in \mathbb{P}$, if $S \leq_n T$, then $S \cap A^{<n} = T \cap A^{<n}$,
		\item for any fusion sequence, i.e. a sequence $\langle p_n : n \in \omega \rangle$ where $p_{n+1} \leq_n p_n$ for every $n$, $p = \bigcap_{n \in \omega} p_n \in \mathbb{P}$ and $p \leq_n p_n$ for every $n$, 
		\item and for any maximal antichain $D \subseteq \mathbb{P}$, $p \in \mathbb{P}$, $n \in \omega$, there is $q \leq_n p$ so that $\{ r \in D : r \not\perp q \}$ is countable. 
	\end{enumerate}
	Moreover we say that $(\mathbb{P}, \leq)$ is \textit{Axiom A with continuous reading of names} (\textit{crn}) if there is such a sequence of partial orders so that additionally,
	\begin{enumerate}
		\setcounter{enumi}{3}
		\item for every $p \in \mathbb{P}$, $n \in \omega$ and $\dot y$ a $\mathbb{P}$-name for an element of a Polish space\footnote{In the generic extension $V[G]$ we reinterpret $X$ as the completion of $(X)^V$. Similarly, we reinterpret spaces $(A^\omega)^\alpha$, continuous functions, open and closed sets on these spaces. This should be standard.} $X$, there is $q \leq_n p$ and a continuous function $f \colon [q] \to X$ so that $$ q \Vdash \dot y [G] = f(x_G).$$
	\end{enumerate} 
\end{definition}

Although (1) is typically not part of the definition of Axiom A, we include it for technical reasons. The only classical example that we are aware of, in which it is not clear whether (1)-(4) can be realized simultaneously, is Mathias forcing. 

Let $\langle \mathbb{P}_\beta, \dot{\mathbb{Q}}_\beta : \beta < \alpha \rangle$ be a countable support iteration of tree forcings that are Axiom A with crn, where for each $\beta < \alpha$, $$\Vdash_{\mathbb{P}_\beta} \text{``}\langle \dot{\leq}_{\beta,n} : n \in \omega \rangle \text{ witnesses that } \dot{\mathbb{Q}}_{\beta} \text{ is Axiom A with crn''}.$$

\begin{enumerate}[label=(\alph*)]
	\setcounter{enumi}{13}
	\item For each $n \in \omega, a \subseteq \alpha$, we define $\leq_{n,a}$ on $\mathbb{P}_\alpha$, where $$\bar q \leq_{n,a} \bar p \leftrightarrow \left(\bar q \leq \bar p \wedge \forall \beta \in a ( \bar q \restriction \beta \Vdash_{\mathbb{P}_\beta} \dot q(\beta) \dot{\leq}_{\beta,n} \dot p(\beta))\right).$$
	\item  The support of $\bar p \in \mathbb{P}_\alpha$ is the set $\supp (\bar p) = \{ \beta < \alpha : \bar p \Vdash \dot p(\beta) \neq \mathbbm{1} \}$.
\end{enumerate}

Recall that a condition $q$ is called a master condition over a model $M$ if for any maximal antichain $D \in M$, $\{ p \in D : q \not\perp p \} \subseteq M$. Equivalently, it means that for every generic filter $G$ over $V$ containing $q$, $G$ is generic over $M$ as well. Throughout this paper, when we say that \emph{$M$ is elementary}, we mean that it is elementary in a large enough model of the form $H(\theta)$. Sometimes, we will say that \emph{$M$ is a model of set theory} or just that \emph{$M$ is a model}. In most generality, this just mean that $(M,\in)$ satisfies a strong enough fragment of ZFC. But this is a way too general notion for our purposes. For instance, such $M$ may not even be correct about what $\omega$ is. Thus, let us clarify that in all our instances this will mean, that $M$ is either elementary or an extension of an elementary model by a countable (in $M$) forcing. In particular, some basic absoluteness (e.g. for $\Sigma^1_1$ or $\Pi^1_1$ formulas) holds true between $M$ and $V$, $M$ is transitive below $\omega_1$ and $\omega_1$ is computed correctly.

\begin{fact}[{Fusion Lemma, see e.g. \cite[Lemma 1.2, 2.3]{Baumgartner1979}}]
If $\langle a_n : n \in \omega \rangle$ is $\subseteq$-increasing, $\langle \bar p_n : n \in \omega \rangle$ is such that $\forall n \in \omega (\bar p_{n+1} \leq_{n,a_n} \bar p_n)$ and $\bigcup_{n \in \omega} \supp(\bar p_n) \subseteq \bigcup_{n \in \omega} a_n \subseteq \alpha$, then there is a condition $\bar p \in \mathbb{P}_\alpha$ so that for every $n \in \omega$, $\bar p \leq_{n,a_n} \bar p_n$; in fact, for every $\beta < \alpha$, $\bar p \restriction \beta \Vdash \dot p(\beta) = \bigcap_{n \in \omega} \dot p_n(\beta)$.

Moreover, let $M$ be a countable elementary model, $\bar p \in M \cap \mathbb{P}_\alpha$, $n \in \omega$, $a \subseteq M\cap\alpha$ finite and $\langle \alpha_i : i \in \omega \rangle$ a cofinal increasing sequence in $M\cap \alpha$. Then there is $\bar q \leq_{n,a} \bar p$ a master condition over $M$ so that for every name $\dot y \in M$ for an element of $\omega^\omega$ and $j \in \omega$, there is $i \in \omega$ so that below $\bar q$, the value of $\dot y \restriction j$ only depends on the $\mathbb{P}_{\alpha_i}$-generic.  
\end{fact}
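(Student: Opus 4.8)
Throughout, write $\dot\leq_{\beta,\cdot}$ for the Axiom A orders on $\dot{\mathbb Q}_\beta$.

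\textbf{The first assertion} is the classical Fusion Lemma, so I would only recall the construction. Define $\bar p$ by recursion on $\beta<\alpha$: given $\bar p\restriction\beta$ with $\bar p\restriction\beta\leq_{m,a_m\cap\beta}\bar p_m\restriction\beta$ for all $m$, let $n_\beta$ be least with $\beta\in a_{n_\beta}$. Below $\bar p\restriction\beta$ the sequence $\langle\dot p_m(\beta):m\geq n_\beta\rangle$ is $\subseteq$-decreasing and satisfies $\dot p_{m+1}(\beta)\dot\leq_{\beta,m}\dot p_m(\beta)$ (this uses $\beta\in a_m$ for $m\geq n_\beta$), hence is a fusion sequence in $\dot{\mathbb Q}_\beta$; by clause~(2) of Axiom~A the name $\dot p(\beta)$ for $\bigcap_m\dot p_m(\beta)$ is forced to be a condition with $\dot p(\beta)\dot\leq_{\beta,m}\dot p_m(\beta)$ for every $m$. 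The hypothesis $\bigcup_m\supp(\bar p_m)\subseteq\bigcup_m a_m$ gives $\supp(\bar p)\subseteq\bigcup_m a_m$, a countable set, so $\bar p\in\mathbb P_\alpha$; by construction $\bar p\leq_{m,a_m}\bar p_m$ for all $m$, and $\bar p\restriction\beta\Vdash\dot p(\beta)=\bigcap_m\dot p_m(\beta)$.

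\textbf{The ``moreover''.} Work inside $M$. Choose a $\subseteq$-increasing sequence $\langle a_k:k\in\omega\rangle$ of finite subsets of $M\cap\alpha$ with $a\subseteq a_0$ and $\bigcup_k a_k=M\cap\alpha$, and, using that $\langle\alpha_i\rangle$ is cofinal in $M\cap\alpha$, a strictly increasing $\langle i_k:k\in\omega\rangle$ with $a_k\subseteq\alpha_{i_k}$. Since $M$ is countable, fix in $V$ an enumeration of all maximal antichains of $\mathbb P_\alpha$ that lie in $M$, and an enumeration of all pairs $(\dot y,j)$ with $\dot y\in M$ a name for an element of $\omega^\omega$ and $j\in\omega$. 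I build a sequence $\langle\bar q_k:k\in\omega\rangle$ in $M\cap\mathbb P_\alpha$ with $\bar q_0\leq_{n,a}\bar p$ and $\bar q_{k+1}\leq_{n+k+1,a_k}\bar q_k$, addressing at step $k$ the $k$-th antichain $D$ and the $k$-th pair $(\dot y,j)$ in two sub-steps. The fusion of $\langle\bar q_k\rangle$ furnished by the first assertion — call it $\bar q$ — then satisfies $\bar q\leq_{n,a}\bar p$, and since every maximal antichain $D\in M$ is handled at some step $k_D$ with $\{r\in D:r\not\perp\bar q_{k_D+1}\}\subseteq M$, we get $\{r\in D:r\not\perp\bar q\}\subseteq M$, so $\bar q$ is a master condition over $M$.

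\textbf{The two sub-steps at stage $k$.} Handling $D$: this is exactly Baumgartner's construction of a generic condition for a countable support iteration, applying clause~(3) of Axiom~A coordinatewise together with the first assertion; it produces $\bar q_k'\leq_{n+k+1,a_k}\bar q_k$ in $M$ with $\{r\in D:r\not\perp\bar q_k'\}\subseteq M$. Handling $(\dot y,j)$: here one exploits that $\leq_{n+k+1,a_k}$ constrains only coordinates in $a_k\subseteq\alpha_{i_k}$, so the tail $[\alpha_{i_k},\alpha)$ may be shrunk freely. Write $\bar q_k'=(\bar q_k'\restriction\alpha_{i_k})^\frown\dot r$ with $\dot r$ a $\mathbb P_{\alpha_{i_k}}$-name for a condition in $\dot{\mathbb P}_{[\alpha_{i_k},\alpha)}$. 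Since $\dot y\restriction j$ is (forced to be) an element of $\omega^j$, in $V^{\mathbb P_{\alpha_{i_k}}}$ there is below $\dot r$ a tail condition $\dot r'$ deciding $\dot y\restriction j$, i.e. $\dot r'\Vdash\dot y\restriction j=\check w$ for some $w\in\omega^j$ (computed in $V^{\mathbb P_{\alpha_{i_k}}}$); by elementarity such $\dot r'$ can be chosen in $M$. Put $\bar q_{k+1}:=(\bar q_k'\restriction\alpha_{i_k})^\frown\dot r'$. Then $\bar q_{k+1}\in M$, $\bar q_{k+1}\leq_{n+k+1,a_k}\bar q_k'$ (equality on $a_k$), $\bar q_{k+1}$ still meets $D$ inside $M$, and $\bar q_{k+1}$ — hence $\bar q$ — forces $\dot y\restriction j$ to equal the $\mathbb P_{\alpha_{i_k}}$-name $\check w$; that is, $\dot y\restriction j$ depends only on $G\restriction\alpha_{i_k}$. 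Taking $i:=i_k$ for the stage that handled $(\dot y,j)$ gives the displayed property.

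\textbf{Caveat and main obstacle.} The one point that needs checking is that the ``free'' tail-shrinks at the reading sub-steps are not $\dot\leq_{\cdot,m}$-refinements, so one must verify $\langle\bar q_k\rangle$ is still fusible. This is fine: at each coordinate $\gamma$, once $\gamma\in a_k$ — which holds from some $k_\gamma$ on, as $\bigcup_k a_k=M\cap\alpha$ — the coordinates $\langle\dot q_k(\gamma):k\geq k_\gamma\rangle$ do form a $\dot\leq_{\gamma,\cdot}$-fusion sequence, while earlier entries are merely $\subseteq$-decreasing, so $\bigcap_k\dot q_k(\gamma)=\bigcap_{k\geq k_\gamma}\dot q_k(\gamma)$ is a condition by clause~(2), and the first assertion applies verbatim. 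I expect the real work to lie in setting up the bookkeeping so that Baumgartner's genericity sub-step (which must be carried out exactly as in the standard proof) and the reading sub-step cohere within a single fusion sequence in $M$; no individual step is hard.
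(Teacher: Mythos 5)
The paper states this Fact without proof, citing Baumgartner--Laver, so there is no internal argument to compare against; I will assess your reconstruction on its own terms.

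Your proof of the first assertion is correct and standard: re-indexing past $n_\beta$ (least with $\beta\in a_{n_\beta}$) gives a genuine fusion sequence at coordinate $\beta$, for $m<n_\beta$ one only needs $\leq$, and the support hypothesis keeps the union countable. The name-reading sub-step in the ``moreover'' is also sound and is the genuinely new observation you need: since $a_k\subseteq\alpha_{i_k}$, replacing the tail $\dot r$ by a decided $\dot r'\in M$ does not disturb $\leq_{n+k+1,a_k}$, decides $\dot y\restriction j$ as a $\mathbb P_{\alpha_{i_k}}$-name $\dot w$, and this property persists under further strengthening; your ``caveat'' paragraph correctly explains why the tail-shrinks do not break fusibility.

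The gap is in the antichain sub-step. You assert that in one fusion stage one can pass from $\bar q_k$ to some $\bar q_k'\leq_{n+k+1,a_k}\bar q_k$ \emph{in $M$} with $\{r\in D:r\not\perp\bar q_k'\}\subseteq M$, invoking ``Baumgartner's construction of a generic condition.'' But that construction \emph{is} the master-condition part of the very Fact you are proving; obtaining $\{r\in D:r\not\perp\bar q_k'\}\subseteq M$ for a single $D$ while preserving $\leq_{n+k+1,a_k}$ already requires a whole auxiliary fusion covering a cofinal set of coordinates, and if you appeal to elementarity to get a countable elementary $N\in M$ and a ``master condition over $N$'' inside $M$, you are applying the statement you set out to prove. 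The non-circular Baumgartner--Laver argument does not handle one antichain per step; it makes \emph{finite} progress on finitely many antichains at each stage (at the finitely many addresses determined by $a_k$ and the current fusion level) and only in the limit becomes a master condition. A clean way to repair your argument is either (a) take the master-condition existence $\bar q_0'\leq_{n,a}\bar p$ over $M$ with $\supp(\bar q_0')\subseteq M$ as the cited black box and then run \emph{only} your tail-shrinking fusion on top of $\bar q_0'$ (downward closure of master-condition-ness then carries it through), or (b) interleave the Baumgartner--Laver finite-progress steps with your tail-shrinking steps in a single fusion, spelling out the per-address, per-coordinate use of clause~(3). Either route works; the version you wrote is circular as stated.

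A small bookkeeping remark: with $\bar q_{k+1}\leq_{n+k+1,a_k}\bar q_k$ the Fusion Lemma of the first part yields $\bar q\leq_{k,a_k}\bar q_k$, which gives $\bar q\leq_{0,a_0}\bar q_0\leq_{n,a}\bar p$ but not directly $\bar q\leq_{n,a}\bar p$. Pad the sequence (e.g.\ set $\bar q_m=\bar p$ and $a_m=a$ for $m\leq n$, then begin the real work) so that the conclusion of the Fusion Lemma reads $\bar q\leq_{n,a}\bar p$ literally.
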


\begin{enumerate}[label=(\alph*)]
	\setcounter{enumi}{15}
	\item For $G$ a $\mathbb{P}_\alpha$-generic, we write $\bar x_G$ for the generic element of $\prod_{\beta <\alpha}A^\omega$ added by $\mathbb{P}_\alpha$.
\end{enumerate}

Let us from now on assume that for each $\beta < \alpha$ and $n \in \omega$, $\mathbb{Q}_\beta$ and $\leq_{\beta,n}$ are fixed analytic subsets subsets of $\mathcal{T}$ and $\mathcal{T}^2$ respectively, coded in $V$. Although the theory that we develop below can be extended to a large extent to non-definable iterands, we will only focus on this case, since we need stronger results later on. 

\begin{lemma}
		\label{lem:nicemaster}
		For any $\bar p \in \mathbb{P}_\alpha$, $M$ a countable elementary model so that ${\mathbb{P}_\alpha, \bar p \in M}$ and $n \in \omega, a \subseteq M\cap \alpha$ finite, there is $\bar q \leq_{n,a} \bar p$ a master condition over $M$ and a closed set $[\bar q] \subseteq (A^\omega)^\alpha$ so that
		\begin{enumerate}
			\item $\bar q \Vdash \bar x_G \in [\bar q]$,
			
		\end{enumerate}
		
		for every $\beta < \alpha$,

		\begin{enumerate}
			\setcounter{enumi}{1}
			
			\item $\bar q \Vdash \dot q(\beta) = \{s \in A^{<\omega} : \exists \bar z \in [\bar q] (\bar z \restriction \beta = \bar x_{G} \restriction \beta \wedge s \subseteq z(\beta))  \}  $,

			\item the map sending $\bar x \in [\bar q] \restriction \beta$ to $\{s \in A^{<\omega} : \exists \bar z \in [\bar q] (\bar z \restriction \beta = \bar x \wedge s \subseteq z(\beta))  \}$ is continuous and maps to $\mathbb{Q}_\beta$,
			
			\item $[\bar q] \restriction \beta \subseteq (A^\omega)^\beta$ is closed,
		\end{enumerate}
		
		and for every name $\dot y \in M$ for an element of a Polish space $X$,
		
		\begin{enumerate}
			\setcounter{enumi}{4}
			\item there is a continuous function $f \colon [\bar q] \to X$ so that $\bar q \Vdash \dot y = f(\bar x_G).$
		\end{enumerate}		
	\end{lemma}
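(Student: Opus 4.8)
The plan is to refine the fusion construction behind the Fusion Lemma so that it additionally achieves continuous reading of names, and then to extract the closed set $[\bar q]$ by a transfinite recursion on $\beta \le \alpha$. After coding (passing to neighbourhood codes) it suffices to treat clause (4) for names $\dot y \in M$ of elements of $\omega^\omega$. I would fix a cofinal increasing sequence $\langle \alpha_i : i \in \omega\rangle$ in $M \cap \alpha$, enumerate in type $\omega$ all maximal antichains of $\mathbb{P}_\alpha$ lying in $M$ together with all pairs $(\dot y, j)$ with $\dot y \in M$ a name for an element of $\omega^\omega$ and $j \in \omega$, and build a sequence $\langle \bar p_k : k \in \omega\rangle$ with $\bar p_0 = \bar p$, $\bar p_{k+1} \leq_{n_k, a_k} \bar p_k$, where $n_0 = n$, $n_k \nearrow \infty$, $a_0 = a$, $\langle a_k\rangle$ is $\subseteq$-increasing with $a_k \subseteq M \cap \alpha$, $\alpha_i \in a_k$ for $i \le k$, and (by bookkeeping) $\bigcup_k a_k \supseteq \bigcup_k \supp(\bar p_k)$, the operations below keeping all supports inside $M \cap \alpha$. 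At step $k$ I handle one task: if it is an antichain $D$, I use the master-condition step of the Fusion Lemma to get $\bar p_{k+1} \leq_{n_k, a_k} \bar p_k$ with $\{ r \in D : r \not\perp \bar p_{k+1}\}$ countable, hence (by elementarity of $M$) a subset of $M$; if it is a pair $(\dot y, j)$, I strengthen to $\bar p_{k+1} \leq_{n_k, a_k} \bar p_k$ so that $\dot y \restriction j$ is decided by a clopen condition on $\bar x_G$. The second kind of step is where the continuous reading of the iterands is used: below $\bar p_k$ the value $\dot y \restriction j$ depends only on $\bar x_G \restriction \alpha_i$ for some $i \le k$, and I process the finitely many coordinates of $\supp(\bar p_k) \cap \alpha_i \subseteq a_k$ from the top down, at each coordinate $\beta$ invoking that $\Vdash_{\mathbb{P}_\beta}$ ``$\dot{\mathbb{Q}}_\beta$ is Axiom A with crn'' to replace $p_k(\beta)$ by a $\dot\leq_{\beta,n_k}$-stronger tree on which the remaining part of $\dot y \restriction j$ is read continuously from the $\dot{\mathbb{Q}}_\beta$-generic; Axiom A clause (1) freezes the first $n_k$ levels, so these partial readings are compatible with everything done later. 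Taking $\bar q := \bigcap_k \bar p_k$ by the Fusion Lemma gives $\bar q \leq_{n,a} \bar p$, a master condition over $M$, with $\bar q \restriction \beta \Vdash \dot q(\beta) = \bigcap_k \dot p_k(\beta)$, and clause (4) for every real name in $M$, hence for every Polish-space name in $M$.

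Next I would produce $[\bar q]$ by recursion on $\beta \le \alpha$, proving simultaneously that there is a closed $C_\beta \subseteq (A^\omega)^\beta$ and a continuous $G_\beta \colon C_\beta \to \mathbb{Q}_\beta$ with $\bar q \restriction \beta \Vdash \bar x_G \restriction \beta \in C_\beta$ and $\bar q \restriction \beta \Vdash \dot q(\beta) = G_\beta(\bar x_G \restriction \beta)$, where $C_{\beta+1} = \{ \bar z{}^\frown\langle x\rangle : \bar z \in C_\beta,\ x \in [G_\beta(\bar z)]\}$ and $C_\beta$ is the inverse limit of $\langle C_{\beta'} : \beta' < \beta\rangle$ at limits. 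At a successor $\beta+1$ the crucial point is that, from the generic below $\beta$, the tree $\dot q(\beta) = \bigcap_k \dot p_k(\beta)$ is computed continuously: each $\dot p_{k+1}(\beta)$ arose from $\dot p_k(\beta)$ either unchanged or by one $\dot\leq_{\beta,\cdot}$-operation performed continuously in $\bar x_G \restriction \beta$ (this is what the crn steps of the first paragraph deliver once the top-down processing is unwound), and the intersection stabilizes level by level, so $G_\beta$ exists; at limits $C_\beta$ is closed and the forcing statement follows from the stages below $\beta$. I set $[\bar q] := C_\alpha$. Since every $G_{\beta'}$ outputs a perfect (hence pruned) tree, the bonding map $C_{\beta'+1} \to C_{\beta'}$ is onto, so $C_\alpha \restriction \beta = C_\beta$ for all $\beta \le \alpha$.

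Clause (1) is then the recursion's statement $\bar q \Vdash \bar x_G \in C_\alpha$. For (2) and (3) I fix $\beta < \alpha$ and $\bar x \in C_\beta = [\bar q] \restriction \beta$. A point $\bar z \in [\bar q]$ with $\bar z \restriction \beta = \bar x$ is precisely one with $z(\beta) \in [G_\beta(\bar x)]$ and $z(\beta') \in [G_{\beta'}(\bar z \restriction \beta')]$ for $\beta < \beta' < \alpha$; as $G_\beta(\bar x)$ is perfect, every $s \in G_\beta(\bar x)$ lies on some branch $z(\beta) \in [G_\beta(\bar x)]$, and the coordinates above $\beta$ can be filled in by a further transfinite recursion (each $G_{\beta'}$ returning a nonempty tree, limits causing no problem by closedness). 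Hence $\{ s \in A^{<\omega} : \exists \bar z \in [\bar q]\,(\bar z \restriction \beta = \bar x \wedge s \subseteq z(\beta)) \} = G_\beta(\bar x)$; for $\bar x = \bar x_G \restriction \beta$ this equals $\dot q(\beta)$, which is (2), and as a function of $\bar x$ it is $G_\beta$, continuous with values in $\mathbb{Q}_\beta$, which is (3).

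The hard part will be the crn step of the fusion and, correspondingly, the existence of $G_\beta$: one must argue that a name can be read continuously from the full generic sequence $\bar x_G$ by composing the continuous readings supplied by the individual iterands (processed top-down over the finitely many active coordinates), and that this survives passage to the fusion limit $\bigcap_k \dot p_k(\beta)$. Combining Axiom A clause (1), the top-down bookkeeping, and the continuous reading already achieved below $\beta$ is the technical heart of the argument; it runs parallel to the proof of the Fusion Lemma but is strictly stronger.
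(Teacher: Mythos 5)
Your overall shape—build $\bar q$ by fusion achieving continuous reading, then construct $[\bar q]$ by a coordinate-by-coordinate recursion—is a genuinely different decomposition from the paper's, which runs an induction on $\alpha$ and only inside the successor step does a single-coordinate fusion. The difference is not cosmetic, and your route hits a gap precisely at the point you flag as ``the technical heart.''

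The problem is with the top-down processing of coordinates. Fix a name $\dot y\restriction j$ and work inside $\supp(\bar p_k)\cap\alpha_i$ with top coordinate $\beta_m$. Invoking crn for $\dot{\mathbb{Q}}_{\beta_m}$ in $V^{\mathbb{P}_{\beta_m}}$ yields a $\mathbb{P}_{\beta_m}$-name $\dot f_m$ for a continuous function with $\dot y\restriction j=\dot f_m(x_G(\beta_m))$. But $\dot f_m$ is not a name for a finite object; it is a name for an element of $C(A^\omega,\omega^j)$ (after composing with a retract so the domain is all of $A^\omega$). To continue the top-down pass at $\beta_{m-1}$ you must read $\dot f_m$ continuously from the $\dot{\mathbb{Q}}_{\beta_{m-1}}$-generic—i.e.\ you must apply crn to a name whose target space is $C(A^\omega,\omega^j)$. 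When $A$ is infinite this space is only coanalytic, not Polish, so clause (4) of Axiom A with crn simply does not apply to it; the premise of your step is unavailable. Your sentence ``these partial readings are compatible with everything done later'' and the later ``each $\dot p_{k+1}(\beta)$ arose \dots\ by one $\dot\leq_{\beta,\cdot}$-operation performed continuously in $\bar x_G\restriction\beta$'' assume exactly the conclusion that has to be proved; nothing in the one-shot fusion guarantees that the names $\dot p_{k+1}(\beta)$ or the intermediate reading functions are themselves read continuously from $\bar x_G\restriction\beta$, because the tool you would need for that (continuous reading of function-valued names below $\beta$) is an instance of the lemma itself.

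This is why the paper's proof is an induction on $\alpha$: at a successor $\alpha=\beta+1$ it uses the inductive hypothesis for $\mathbb{P}_\beta$, applied to a \emph{larger} countable model $M^{++}$, to get a good master condition $\bar q_1$ over $M^{++}$, and then—crucially—it applies Lemma~\ref{lem:intermediategoodanalytic}(iii) together with the fact that $C(A^\omega,X)=\bigcup_{\xi<\omega_1}Y_\xi$ for an increasing sequence of analytic $Y_\xi$ to pin the function-valued name $\dot g\circ\dot\varphi$ inside a single analytic $Y_\xi$ (with $\xi=M^+\cap\omega_1$) and then read it continuously. A flat fusion over a single model $M$ has no way to invoke the already-established statement for smaller $\alpha$ mid-construction, and no mechanism replacing the $Y_\xi$ trick. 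You would need either to restrict to compact $A^\omega$ (where $C(A^\omega,X)$ is Polish) or to build the whole model tower and coanalytic argument into your fusion step, at which point you have essentially reconstructed the paper's induction. As written the proposal also quietly assumes that the inverse limit $C_\beta$ at limit $\beta$ is nonempty and projects onto each $C_{\beta'}$; this is true but requires an argument (the paper gets it from the density of generics in $[\bar q]$, Lemma~\ref{lem:goodmaster2}(i)), since inverse limits of nonempty closed subsets of $(A^\omega)^{\beta'}$ with surjective bonding maps need not be nonempty when $A^\omega$ is not compact.
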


	\begin{enumerate}[label=(\alph*)]
		\setcounter{enumi}{16}
		\item We call such $\bar q$ as in Lemma~\ref{lem:nicemaster} a \textit{good master condition over $M$}.
	\end{enumerate}
Before we prove Lemma~\ref{lem:nicemaster}, let us draw some consequences from the definition of a good master condition. 

\begin{lemma}\label{lem:intermediategoodanalytic}\label{lem:goodmaster2}
Let $\bar q \in \mathbb{P}_\alpha$ be a good master condition over a model $M$ and $\dot y \in M$ a name for an element of a Polish space $X$. 
\begin{enumerate}[label=(\roman*)]
\item Then $[\bar q]$ is unique, in fact it is the closure of $\{ \bar x_G : G \ni \bar q \text{ is generic over } V\}$ in any forcing extension $W$ of $V$ where $\left(\beth_{\omega}(\vert \mathbb{P}_\alpha \vert)\right)^V$ is countable.
\item The continuous map $f \colon [\bar q] \to X$ given by (5) is unique and
\item whenever $Y \in M$ is an analytic subset of $X$ and $\bar q \Vdash \dot y \in Y$, then $f''[\bar q] \subseteq Y$. 
\end{enumerate}
Moreover, there is a countable set $C \subseteq \alpha$, not depending on $\dot y$, so that
\begin{enumerate}[label=(\roman*)]
\setcounter{enumi}{3}
\item $[\bar q] \restriction C$ is a closed subset of the Polish space $(A^{\omega})^C$ and $[\bar q] = ([\bar q] \restriction C) \times (A^\omega)^{\alpha \setminus C}$,
\item for every $\beta \in C$, there is a continuous function $g \colon [\bar q] \restriction (C \cap \beta)\to \mathbb{Q}_\beta$, so that for every $\bar x \in [\bar q]$, $$g(\bar x \restriction (C \cap \beta)) = \{s \in A^{<\omega} : \exists \bar z \in [\bar q] (\bar z \restriction \beta = \bar x \restriction \beta \wedge s \subseteq z(\beta))  \} ,$$
\item there is a continuous function $f \colon [\bar q] \restriction C \to X$, so that $$\bar q \Vdash \dot y = f(\bar x_G \restriction C).$$  
\end{enumerate}\end{lemma}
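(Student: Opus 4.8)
The plan is to extract everything from the defining property of a good master condition $\bar q$, using the Fusion Lemma and absoluteness between $M$ and $V$. For part (i), note that by clause (1) in the definition, $\bar q \Vdash \bar x_G \in [\bar q]$, so the closure $Z$ of $\{\bar x_G : G \ni \bar q \text{ generic over } V\}$ is contained in $[\bar q]$. For the reverse inclusion, I would argue that any $\bar z \in [\bar q]$ is approximated by generic reals: given a basic open neighbourhood $[\bar s]$ of $\bar z$ with $\dom\bar s$ finite, I build by induction along $\dom\bar s$ a condition $\bar q' \leq \bar q$ forcing $\bar x_G \restriction \dom\bar s \in [\bar s]$, using clause (2) to see that at each coordinate $\beta \in \dom\bar s$ the value $\dot q(\beta)$ is the tree of nodes compatible with some $\bar z' \in [\bar q]$ extending the part chosen so far, hence $s(\beta)$ lies in that tree for a suitable choice; then any $G \ni \bar q'$ witnesses genericity near $\bar z$. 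Thus $[\bar q] \subseteq Z$, and uniqueness follows since $Z$ is defined without reference to $[\bar q]$.

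For part (ii), suppose $f_0, f_1 \colon [\bar q] \to X$ are both continuous with $\bar q \Vdash \dot y = f_j(\bar x_G)$. On any generic point $\bar x_G$ they agree, and generic points are dense in $[\bar q]$ by (i), so by continuity $f_0 = f_1$. For part (iii), let $Y \in M$ be analytic with $\bar q \Vdash \dot y \in Y$. Pick any $\bar z \in [\bar q]$; by (i) and continuity of $f$ it suffices to handle $\bar z = \bar x_G$ a generic point, and then $f(\bar x_G) = \dot y[G] \in Y^{V[G]}$. Since $Y$ is analytic and coded in $M$, and $\Sigma^1_1$ statements are upward absolute from $V[G]$ to itself and the code lies in $M \subseteq V$, we get $f(\bar x_G) \in Y$ in $V$; continuity then pushes this to all of $[\bar q]$, giving $f''[\bar q] \subseteq Y$ (using that $Y$, being analytic, need not be closed, so one must phrase this via the code: $f(\bar x) \in Y$ holds on the dense set of generic points, and being in an analytic set is $\Sigma^1_1$ in the point, which is preserved under limits only if... — here I would instead note directly that $f''[\bar q]$, being a continuous image of a closed subset of Polish space, is analytic, and it is contained in the closure of $Y$; to get it inside $Y$ itself one argues: the set $\{\bar x \in [\bar q] : f(\bar x) \in Y\}$ is a $\Sigma^1_1$ subset of $[\bar q]$ containing all generic points, and one shows it is in fact all of $[\bar q]$ by a genericity/forcing argument rather than topologically).

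For the "moreover" clauses, the set $C$ is obtained from the Fusion Lemma: in the construction of $\bar q$ one fixes a cofinal increasing sequence $\langle \alpha_i : i \in \omega\rangle$ in $M \cap \alpha$ and arranges that each relevant name depends only on initial segments of the generic; I would take $C = \bigcup_i \supp(\bar q) \cup \{\alpha_i : i \in \omega\}$ intersected appropriately, a countable subset of $\alpha$ determined by $\bar q$ and $M$ alone, not by $\dot y$. Clause (iv) then says $[\bar q]$ is a cylinder over its projection to $C$: outside $C$, $\bar q$ imposes the trivial condition $A^{<\omega}$ at every coordinate (by choice of $C \supseteq \supp(\bar q)$ and the product structure forced by clause (2)), so $[\bar q] = ([\bar q]\restriction C) \times (A^\omega)^{\alpha\setminus C}$, and $[\bar q]\restriction C$ is closed because it is the continuous image — indeed the projection — of the closed set $[\bar q]$ and the omitted coordinates are free. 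Clause (v) is just clause (3) of the definition read through this cylinder structure: the map there factors through the projection to $C \cap \beta$ since $\dot q(\beta)$ depends only on $\bar x_G \restriction \beta$ and, by the fusion construction, only on finitely many (hence $C$-many) coordinates below $\beta$. Clause (vi) is the analogous factorization of clause (4): the continuous $f$ from (4), composed with the fact (from the Fusion Lemma's "value of $\dot y \restriction j$ depends only on the $\mathbb{P}_{\alpha_i}$-generic") that $\dot y$ depends only on $\bar x_G \restriction C$, yields a continuous $f \colon [\bar q]\restriction C \to X$ with $\bar q \Vdash \dot y = f(\bar x_G \restriction C)$.

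The main obstacle I expect is part (iii) — getting $f''[\bar q] \subseteq Y$ for $Y$ merely \emph{analytic}, not closed. Continuity gives $f''[\bar q]$ inside the closure of the set of values on generic points, but an analytic set is not closed, so one cannot conclude directly. The right approach is the forcing-theoretic one: show $\bar q$ forces $f(\bar x_G) \in Y$ not just for the "real" generic but uniformly, i.e. that $[\bar q]$ equals the closure of generic points \emph{and} that membership in the analytic set $Y$ is decided the same way everywhere, which is where one genuinely uses that $\bar q$ is a \emph{good} master condition (clauses (2)–(4) together) rather than an arbitrary master condition. Making this precise — ruling out a non-generic $\bar z \in [\bar q]$ with $f(\bar z) \notin Y$ — is the heart of the argument and will use that the tree $[\bar q]$ is "saturated" by generics in the strong sense established in part (i).
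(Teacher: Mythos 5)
Your outlines of (i) and (ii) match the paper's proof: for (i) you build, by induction on $\max(\dom\bar s)$, a generic extending any nonempty $[\bar s]\cap[\bar q]$, using clauses (2) and (3) of the definition of a good master condition; for (ii) you use density of generic points plus continuity. These are fine.

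Part (iii) is where you have a genuine gap, and you have correctly sensed it. You note that continuity alone only gives $f''[\bar q]\subseteq\overline{Y}$, which is not enough since $Y$ is merely analytic, and you suggest "a genericity/forcing argument" using that $\bar q$ is a \emph{good} master condition — but you do not supply that argument, and the observation that $\{\bar x\in[\bar q]:f(\bar x)\in Y\}$ is a $\Sigma^1_1$ set containing all generic points does not, by itself, yield that it is all of $[\bar q]$. The actual argument in the paper is a disjoint-union trick: form the analytic space $Z=\{0\}\times X\cup\{1\}\times Y$, fix a continuous surjection $F\colon\omega^\omega\to Z$ in $M$, and find in $M$ a name $\dot z$ for an element of $\omega^\omega$ with the property that $F(\dot z[G])=(1,\dot y[G])$ when $\dot y[G]\in Y$ and $=(0,\dot y[G])$ otherwise. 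Clause (4) gives a continuous $g\colon[\bar q]\to\omega^\omega$ with $\bar q\Vdash\dot z=g(\bar x_G)$. Since $\bar q\Vdash\dot y\in Y$, on all generic points $F(g(\bar x_G))=(1,f(\bar x_G))$, and because $\{1\}\times Y$ is \emph{clopen} in $Z$ and generic points are dense (part (i)), the continuous map $F\circ g$ sends all of $[\bar q]$ into $\{1\}\times Y$. This is the step that converts "analytic $Y$" into something to which a density-plus-continuity argument applies; you would need this idea (or an equivalent) to close the gap.

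For the "moreover" clauses your route is also different from the paper's, and less robust. You reach for the Fusion Lemma construction of $\bar q$, but the lemma is stated for an arbitrary good master condition, i.e. for anything satisfying (1)–(4) of the definition, not specifically one obtained from the fusion procedure; your description of $C$ ("$\bigcup_i\supp(\bar q)\cup\{\alpha_i:i\in\omega\}$ intersected appropriately") is not actually well-defined. The paper instead invokes Bockstein's theorem: every continuous function on a product of second-countable spaces depends on only countably many coordinates. Applied to the (countably many) continuous maps furnished by (3) and by (4) for each name $\dot y\in M$, this yields a countable set $C$ of coordinates (together with $\supp(\bar q)$) that works uniformly; (iv)–(vi) then follow since sections of closed sets in products are closed and the relevant maps factor through the projection to $C$. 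I'd suggest using Bockstein directly rather than appealing to the construction.
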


\begin{proof}
Let us write, for every $\beta < \alpha$ and $\bar x \in [\bar q] \restriction \beta$, $$T_{\bar x} := \{s \in A^{<\omega} : \exists \bar z \in [\bar q] (\bar z \restriction \beta = \bar x \wedge s \subseteq z(\beta))  \}.$$

For (i), let $W$ be an extension in which $\left(\beth_{\omega}(\vert \mathbb{P}_\alpha \vert)\right)^V$ is countable and let $\bar s \in \bigotimes_{i < \alpha} A^{<\omega}$ be arbitrary so that $[\bar s] \cap [\bar q]$ is non-empty. We claim that there is a generic $G$ over $V$ containing $\bar q$ so that $\bar x_G \in [\bar s]$.  This is shown by induction on $\max(\dom (\bar s))$. For $\bar s = \emptyset$ the claim is obvious. Now assume $\max(\dom (\bar s)) = \beta$, for $\beta < \alpha$. Then, by (3), $O := \{ \bar x \in [\bar q] : s(\beta) \in T_{\bar x \restriction \beta}\}$ is open and it is non-empty since $[\bar s] \cap [\bar q] \neq \emptyset$. Applying the inductive hypothesis, there is a generic $G \ni \bar q$ so that $\bar x_G \in O$. In $V[G \restriction \beta]$ we have, by (2), that $T_{\bar x_G \restriction \beta} = \dot q(\beta)[G]$.  Moreover, since $\bar x_G \in O$, we have that $s(\beta) \in \dot q(\beta)[G]$. Then it is easy to force over $V[G \restriction \beta]$, to get a full $\mathbb{P}_\alpha$ generic $H \supseteq G \restriction \beta$ containing $\bar q$ so that $\bar x_{H} \restriction \beta = \bar x_G \restriction \beta $ and $s(\beta) \subseteq \bar x_H(\beta)$. By (1), for every generic $G$ over $V$ containing $\bar q$, $\bar x_G \in [\bar q]$. Thus we have shown that the set of such $\bar x_G$ is dense in $[\bar q]$. Uniqueness follows from $[\bar q]$ being closed and the fact that if two closed sets coded in $V$ agree in $W$, then they agree in $V$. This follows easily from $\mathbf\Pi^1_1$ absoluteness.

Now (ii) follows easily since any two continuous functions given by (5) have to agree on a dense set in an extension $W$ and thus they agree in $V$. Again this is an easy consequence of $\mathbf\Pi^1_1$ absoluteness.

For (iii), let us consider the analytic space $Z = \{0\} \times X \cup \{1\} \times Y$, which is the disjoint union of the spaces $X$ and $Y$. Then there is a continuous surjection $F \colon \omega^\omega \to Z$ and by elementarity we can assume it is in $M$. Let us find in $M$ a name $\dot z$ for an element of $\omega^\omega$ so that in $V[G]$, if $\dot y[G] \in Y$, then $F(\dot z[G]) = (1,\dot y[G])$, and if $\dot y[G] \notin Y$, then $F(\dot z[G]) = (0,\dot y[G])$. By (5), there is a continuous function $g \colon [\bar q] \to \omega^\omega$ so that $\bar q \Vdash \dot z = g(\bar x_G)$. Since $\bar q \Vdash \dot y \in Y$, we have that for any generic $G$ containing $\bar q$, $F(g(\bar x_G)) = (1,f(\bar x_G))$. By density, for every $\bar x \in [\bar q]$, $F(g(\bar x)) = (1,f(\bar x))$ and in particular $f(\bar x) \in Y$. 

Now let us say that the support of a function $g \colon [\bar q] \to X$ is the smallest set $C_g \subseteq \alpha$ so that the value of $g(\bar x)$ only depends on $\bar x \restriction C_g$. The results of \cite{Bockstein1948} imply that if $g$ is continuous, then $g$ has countable support. 
Note that for all $\beta \notin \supp(\bar q)$, the map in (3) is constant on the set of generics and by continuity it is constant everywhere. Thus it has empty support. Let $C$ be the union of $\supp(\bar q)$ with all the countable supports given by instances of (3) and (5). Then $C$ is a countable set. For (iv), (v) and (vi), note that $[\bar q] \restriction C = \{ \bar y \in (A^\omega)^C : \bar y^\frown (\bar x \restriction \alpha \setminus C) \in [\bar q] \}$ for $\bar x \in [\bar q]$ arbitrary, and recall that in a product, sections of closed sets are closed and continuous functions are coordinate-wise continuous.
\end{proof}

\begin{proof}[Proof of Lemma~\ref{lem:nicemaster}]
Let us fix for each $\beta < \alpha$ a continuous surjection $F_\beta \colon \omega^\omega \to \mathbb{Q}_\beta$. 
The proof is by induction on $\alpha$. If $\alpha = \beta +1$, then $\mathbb{P}_\alpha = \mathbb{P}_\beta * \dot{\mathbb{Q}}_{\beta}$. Let $\bar q_0 \leq_{n,a} \bar p \restriction \beta$ be a master condition over $M$ and $H \ni \bar q_0$ a $\mathbb{P}_\beta$ generic over $V$. Then, applying a standard fusion argument using Axiom A with continuous reading of names in $V[H]$ to $\mathbb{Q}_\beta$, we find $q(\beta) \leq_{\beta,n} p(\beta)$ a master condition over $M[H]$ (note that $H$ is also $M$ generic since $\bar q_0$ is a master condition over $M$) so that for each name $\dot y \in M[H]$ for an element of a Polish space $X$ there is a continuous function $f \colon [q(\beta)] \to X$ so that $q(\beta) \Vdash \dot y = f(\dot{x}_G)$. Thus we find in $V$, a $\mathbb{P}_\beta$-name $\dot q(\beta)$ so that $\bar q_0$ forces that it is such a condition. Let $M^+ \ni M$ be a countable elementary model containing $\dot q(\beta)$ and $\bar q_0$, and let $\bar q_{1/2} \leq_{n,a} \bar q_{0}$ be a master condition over $M^+$. Again let $M^{++}\ni M^+$  be a countable elementary model containing $\bar q_{1/2}$. By the induction hypothesis we find $\bar q_1 \leq_{n,a} \bar q_{1/2}$ a good master condition over $M^{++}$. Finally, let $\bar q = \bar q_1 ^\frown \dot{q}(\beta)$. Then $\bar q \leq_{n,a} \bar p$ and $\bar q$ is a master condition over $M$. Since $\dot q(\beta) \in M^+\subseteq M^{++}$, there is a continuous function $f \colon [\bar q_1] \to \omega^\omega$, so that $\bar q_1 \Vdash_\beta F_\beta(f(\bar x_{H})) = \dot q(\beta)$. Here note that $F_\beta$ is in $M$ by elementarity and we indeed find a name $\dot z$ in $M^+$ so that $\bar q_0 \Vdash F_\beta(\dot z) = \dot q(\beta)$. Let $[ \bar q] = \{ \bar x \in (A^{\omega})^\alpha : \bar x \restriction \beta \in [\bar q_1] \wedge x(\beta) \in [F_\beta(f(\bar x \restriction \beta))]  \}$. Then $[\bar q]$ is closed and (1), (2), (3), (4) hold true. To see that $[\bar q]$ is closed, note that the graph of a continuous function is always closed, when the codomain is a Hausdorff space. For (5), let $\dot y \in M$ be a $\mathbb{P}_\alpha$-name for an element of a Polish space $X$. If $H \ni \bar q_1$ is $V$-generic, then there is a continuous function $g \colon [q(\beta)] \to X$ in $V[H]$ so that $V[H] \models q(\beta) \Vdash g(\dot x_G) = \dot y$, where we view $\dot y$ as a $\mathbb{Q}_\beta$-name in $M[H]$. Moreover there is a continuous retraction $\varphi \colon A^\omega \to [q(\beta)]$ in $V[H]$. Since $M^+$ was chosen elementary enough, we find names $\dot g$ and $\dot \varphi$ for $g$ and $\varphi$ in $M^+$. The function $g \circ \varphi$ is an element of the space\footnote{The topology is such that for any continuous $h$ mapping to $C(A^\omega, X)$, $(x,y) \mapsto h(x)(y)$ is continuous.} $C(A^\omega, X)$, but this is not a Polish space when $A$ is infinite, i.e. when $A^\omega$ is not compact. It is though, always a coanalytic space (consult e.g. \cite[12, 2.6]{Kechris1995} to see how $C(A^\omega, X)$ is a coanalytic subspace of a suitable Polish space). Thus there is an increasing sequence $\langle Y_\xi : \xi < \omega_1 \rangle$ of analytic subspaces such that $\bigcup_{\xi < \omega_1} Y_\xi = C(A^\omega, X)$ and the same equality holds in any $\omega_1$-preserving extension. Since $\bar q_{1/2}$ is a master condition over $M^+$, we have that $\bar q_{1/2} \Vdash \dot g \circ \dot \varphi \in Y_\xi$, where $\xi = M^+ \cap \omega_1$. Since $\bar q_1$ is a good master condition over $M^{++}$ and $Y_\xi \in M^{++}$, by Lemma~\ref{lem:intermediategoodanalytic}, there is a continuous function $g' \in V$, $g' \colon [\bar q_1] \to Y_\xi$, so that $\bar q_1 \Vdash g'(\bar x_H) = \dot g \circ \dot \varphi$. Altogether we have that $\bar q \Vdash \dot y = g'(\bar x_G \restriction \beta)(x_G(\beta))$.

For $\alpha$ limit, let $\langle \alpha_i : i \in \omega \rangle$ be a strictly increasing sequence cofinal in $M \cap \alpha$ and let $\bar q_0 \leq_{n,a} \bar p$ be a master condition over $M$ so that for every name $\dot y \in M$ for an element of $\omega^\omega$, $j \in \omega$, the value of $\dot y \restriction j$ only depends on the generic restricted to $\mathbb{P}_{\alpha_i}$ for some $i \in \omega$. Let us fix a ``big" countable elementary model $N$, with $\bar q_0, M \in N$. Let $\langle a_i : i \in \omega\rangle$ be an increasing sequence of finite subsets of $N \cap \alpha$ so that $a_0 = a$ and $\bigcup_{i \in \omega} a_i = N \cap \alpha$. Now inductively define sequences $\langle M_i : i \in \omega\rangle$, $\langle \bar r_i : i \in \omega \rangle$, initial segments lying in $N$, so that for every $i \in \omega$,
		
		\begin{enumerate}[label=-]
			\item $M_0 = M$, $\bar r_0 = \bar q_0 \restriction \alpha_0$,
			\item $M_{i+1} \ni \bar q_0$ is a countable model, 
			\item $M_{i}, \bar r_i, a_i \in M_{i+1}$
			\item $\bar r_i$ is a good $\mathbb{P}_{\alpha_i}$ master condition over $M_i$, 
			\item $r_{i+1} \leq_{n + i,a_i\cap \alpha_i} r_i ^\frown \bar q_0 \restriction [\alpha_i, \alpha_{i+1})$.
		\end{enumerate}
		
		Define for each $i \in \omega$, $\bar q_i = \bar r_i^\frown \bar q_0 \restriction [\alpha_i, \alpha) $. Then $\langle \bar q_i : i \in \omega \rangle$ is a fusion sequence in $\mathbb{P}_\alpha$ and we can find a condition $\bar q \leq_{n,a} \bar q_0 \leq_{n,a} \bar p$, where for each $\beta < \alpha$, $\bar q \restriction \beta \Vdash \dot q(\beta) = \bigcap_{i \in \omega} \dot q_i(\beta)$. Finally let $[\bar q] := \bigcap_{i \in \omega} ([\bar r_i] \times (A^\omega)^{[\alpha_i, \alpha)})$. Then (1) is easy to check. For (5), we can assume without loss of generality that $\dot y$ is a name for an element of $\omega^\omega$ since for any Polish space $X$, there is a continuous surjection from $\omega^\omega$ to $X$. Now let $(i_j)_{j \in \omega}$ be increasing so that $\dot y \restriction j$ is determined on $\mathbb{P}_{\alpha_{i_j}}$ for every $j \in \omega$. Since $\bar r_{i_j}$ is a good master condition over $M$, there is a continuous function $f_j \colon [\bar r_{i_j}] \to \omega^j$ so that $\bar r_{i_j} \Vdash \dot y \restriction j = f_j(\bar x_{G_{\alpha_{i_j}}})$ for every $j \in \omega$. It is easy to put these functions together to a continuous function $f \colon [\bar q] \to 2^\omega$, so that $f(\bar x) \restriction j = f_j(\bar x \restriction \alpha_{i_j})$. Then we obviously have that $\bar q \Vdash \dot y = f(\bar x_G)$. 
		
		Now let us fix for each $i \in \omega$, $C_i \subseteq \alpha_i$ a countable set as given by Lemma~\ref{lem:intermediategoodanalytic} applied to $\bar r_i$, $M_i$, which by elementarity exists in $N$. Let $C = \bigcup_{i \in \omega} C_i$. Then $[\bar q] = [\bar q] \restriction C \times (A^\omega)^{\alpha \setminus C}$ and $[\bar q] \restriction C$ is closed. For every $\beta \in \alpha \setminus C$, the map given in (3) is constant and maps to $\mathbb{Q}_\beta$, as $A^{<\omega}$ is the trivial condition. Thus we may restrict our attention to $\beta \in C$. Let us write $X_i = ([\bar r_i] \times (A^\omega)^{[\alpha_i, \alpha)}) \restriction C$ for every $i \in \omega$ and note that $\bigcap_{i \in \omega} X_i = [\bar q] \restriction C$. For every $\beta \in C$, $\bar x \in [\bar q] \restriction (C \cap \beta)$ and $i \in \omega$, we write $$T_{\bar x} := \{s \in A^{<\omega} : \exists \bar z \in [\bar q] \restriction C (\bar z \restriction \beta = \bar x \wedge s \subseteq z(\beta))  \}$$ and $$T_{\bar x}^i = \{s \in A^{<\omega} : \exists \bar z \in X_i (\bar z \restriction \beta = \bar x \wedge s \subseteq z(\beta))\}.$$ 
		
		\begin{claim}
			For every $i \in \omega$, where $\beta \in a_i$, $T^{i+1}_{\bar x} \leq_{\beta, i} T^i_{\bar x}$. In particular, $\bigcap_{i \in \omega} T^i_{\bar x} \in \mathbb{Q}_\beta$.
		\end{claim}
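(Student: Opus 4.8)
Here is how I would prove the Claim.

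\medskip

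\noindent\emph{(Proof plan.)} The plan is to identify each $T^i_{\bar x}$ with a forcing condition read off the good master condition $\bar r_i$, to obtain $T^{i+1}_{\bar x}\le_{\beta,i}T^i_{\bar x}$ for generic $\bar x$ straight from the construction of $\bar r_{i+1}$, and then to propagate it to all $\bar x\in[\bar q]\restriction(C\cap\beta)$ via Lemma~\ref{lem:intermediategoodanalytic}(iii). Throughout I would assume the $C_i$ are $\subseteq$-increasing: replacing $C_i$ by $\bigcup_{j\le i}C_j\subseteq\alpha_i$ changes neither $C$ nor the validity of Lemma~\ref{lem:intermediategoodanalytic} (any countable superset of a witnessing set still witnesses it). The degenerate cases are immediate: if $\beta\ge\alpha_i$, or $\beta<\alpha_i$ with $\beta\notin C_i$, then the $\beta$-th coordinate is free in $X_i$ — it lies in the factor $(A^\omega)^{[\alpha_i,\alpha)}$, resp.\ $(A^\omega)^{\alpha_i\setminus C_i}$ of the decomposition of $[\bar r_i]$ from Lemma~\ref{lem:intermediategoodanalytic}(iv) — so $T^i_{\bar x}=A^{<\omega}$; and since $X_{i+1}\subseteq X_i$ (because $[\bar r_{i+1}]\subseteq[\bar r_i]\times(A^\omega)^{[\alpha_i,\alpha_{i+1})}$) also $T^{i+1}_{\bar x}=A^{<\omega}$, and the refinement is trivial. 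So I may assume $\beta\in a_i\cap C_i$, hence $\beta<\alpha_i$ and, by monotonicity, $\beta\in C_{i+1}\cap C_{i+2}$.

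First I would show that for every $\bar x\in[\bar q]\restriction(C\cap\beta)$ one has $T^i_{\bar x}=g_i(\bar x\restriction(C_i\cap\beta))$, where $g_i\colon[\bar r_i]\restriction(C_i\cap\beta)\to\mathbb Q_\beta$ is the continuous map given by Lemma~\ref{lem:intermediategoodanalytic}(v) for $\bar r_i,M_i$; this is a direct computation using $[\bar r_i]=([\bar r_i]\restriction C_i)\times(A^\omega)^{\alpha_i\setminus C_i}$, $C_i\subseteq\alpha_i$, $[\bar q]\restriction C\subseteq X_i$ and $X_i\restriction C_i=[\bar r_i]\restriction C_i$. Plugging the generic $\bar x_G\in[\bar r_i]$ in and combining (v) with Lemma~\ref{lem:nicemaster}(1),(2) for $\bar r_i$ gives $\bar r_i\Vdash\dot r_i(\beta)=g_i(\bar x_G\restriction(C_i\cap\beta))$, and likewise for $i+1$. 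On the other hand, unpacking $\bar r_{i+1}\le_{n+i,a_i\cap\alpha_i}\bar r_i{}^\frown\bar q_0\restriction[\alpha_i,\alpha_{i+1})$ at $\beta\in a_i\cap\alpha_i$ yields at once $\bar r_{i+1}\restriction\beta\Vdash\dot r_{i+1}(\beta)\,\dot{\leq}_{\beta,n+i}\,\dot r_i(\beta)$; since $\bar q\le\bar q_{i+1}$, so $\bar q\restriction\beta\le\bar r_{i+1}\restriction\beta$, this gives $T^{i+1}_{\bar x}\le_{\beta,n+i}T^i_{\bar x}$ in $V[G]$ whenever $\bar x=\bar x_G\restriction(C\cap\beta)$ for $G\ni\bar q$ generic.

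The hard part is that this only delivers the refinement for $\bar x$ in the dense set $\{\bar x_G\restriction(C\cap\beta):G\ni\bar q\}$, and since $\leq_{\beta,n+i}$ is merely analytic, density alone does not force it everywhere on $[\bar q]\restriction(C\cap\beta)$. Here the staggering of the construction is what I would exploit: both $\bar r_i$ and $\bar r_{i+1}$ lie in $M_{i+2}$, over which $\bar r_{i+2}$ is a good master condition. I would apply Lemma~\ref{lem:intermediategoodanalytic}(iii) to $\bar r_{i+2}$, $M_{i+2}$, the name $\dot y:=(\dot r_{i+1}(\beta),\dot r_i(\beta))$ for an element of the Polish space $\mathcal T^2$ — which lies in $M_{i+2}$ since $\bar r_{i+1}\in M_{i+2}$ and $\bar r_i\in M_{i+1}\subseteq M_{i+2}$ — and the analytic set $Y:=\,\leq_{\beta,n+i}\,\in M_{i+2}$; here $\bar r_{i+2}\Vdash\dot y\in Y$ follows from $\bar r_{i+2}\restriction\beta\le\bar r_{i+1}\restriction\beta$ together with $\Sigma^1_1$-upwards absoluteness. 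Then (vi) and (iii) hand us a continuous $f\colon[\bar r_{i+2}]\restriction C_{i+2}\to\mathcal T^2$ with $\bar r_{i+2}\Vdash\dot y=f(\bar x_G\restriction C_{i+2})$ and $f''([\bar r_{i+2}]\restriction C_{i+2})\subseteq\,\leq_{\beta,n+i}$. Since the continuous map $\bar x\mapsto(g_{i+1}(\bar x\restriction(C_{i+1}\cap\beta)),g_i(\bar x\restriction(C_i\cap\beta)))$ on $[\bar r_{i+2}]\restriction C_{i+2}$ (well-defined because $C_i,C_{i+1}\subseteq C_{i+2}$ and $[\bar r_{i+2}]\restriction\beta\subseteq[\bar r_{i+1}]\restriction\beta\subseteq[\bar r_i]\restriction\beta$) agrees with $f$ on the generics, hence everywhere, and $[\bar q]\restriction C_{i+2}\subseteq X_{i+2}\restriction C_{i+2}=[\bar r_{i+2}]\restriction C_{i+2}$, I would conclude that $(T^{i+1}_{\bar x},T^i_{\bar x})\in\,\leq_{\beta,n+i}\,\subseteq\,\leq_{\beta,i}$ for every $\bar x\in[\bar q]\restriction(C\cap\beta)$, which is the Claim.

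Finally, for the ``in particular'': $\langle T^i_{\bar x}\rangle_i$ is $\subseteq$-decreasing (again from $X_{i+1}\subseteq X_i$). Picking $i_1$ with $\beta\in a_{i_1}\cap C_{i_1}$, the Claim gives $T^{i_1+k+1}_{\bar x}\le_{\beta,i_1+k}T^{i_1+k}_{\bar x}$, hence $T^{i_1+k+1}_{\bar x}\le_{\beta,k}T^{i_1+k}_{\bar x}$ since the $\leq_{\beta,m}$ decrease in $m$; so $\langle T^{i_1+k}_{\bar x}\rangle_k$ is a fusion sequence for $\mathbb Q_\beta$, and by the fusion property (condition (2) of Definition~\ref{def:axiomA}) for $\mathbb Q_\beta$, $\bigcap_i T^i_{\bar x}=\bigcap_k T^{i_1+k}_{\bar x}\in\mathbb Q_\beta$.
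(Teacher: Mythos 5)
Your proposal follows the paper's own argument quite closely: you package $(\dot r_{i+1}(\beta),\dot r_i(\beta))$ as a $\mathbb{P}_{\alpha_{i+2}}$-name $\dot y$ in $M_{i+2}$, check from the fusion requirement $\bar r_{i+1}\leq_{n+i,\,a_i\cap\alpha_i}\bar r_i{}^\frown\bar q_0\restriction[\alpha_i,\alpha_{i+1})$ that $\bar r_{i+2}$ forces $\dot y$ into the analytic set $\leq_{\beta,n+i}$, and then apply Lemma~\ref{lem:intermediategoodanalytic}(iii) for the good master condition $\bar r_{i+2}$ over $M_{i+2}$ to extend this from the dense set of generics to all of $[\bar q]\restriction(C\cap\beta)$. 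This is exactly the paper's argument. The extra bookkeeping you spell out --- identifying $T^i_{\bar x}$ with $g_i(\bar x\restriction(C_i\cap\beta))$ from Lemma~\ref{lem:intermediategoodanalytic}(v), and arguing that the continuous function produced by (iii) must coincide with $\bar x\mapsto(T^{i+1}_{\bar x},T^i_{\bar x})$ because the two agree on the dense set of generics --- is correct and is genuinely left implicit in the paper's compressed proof.

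Two issues. First, for the ``in particular'' you invoke ``the fusion property (condition~(2) of Definition~\ref{def:axiomA}) for $\mathbb{Q}_\beta$'' directly, but the standing hypothesis only asserts Axiom~A for $\dot{\mathbb{Q}}_\beta$ \emph{in $V^{\mathbb{P}_\beta}$} --- it is stated in the form $\Vdash_{\mathbb{P}_\beta}\dots$ --- whereas your trees $T^i_{\bar x}$ and their intersection live in $V$. The missing step, which the paper explicitly supplies, is that closure-under-fusion is a $\mathbf{\Pi}^1_2$ statement in the analytic, ground-model-coded parameters $\mathbb{Q}_\beta,\langle\leq_{\beta,n}\rangle$, hence by Shoenfield absoluteness it transfers from $V^{\mathbb{P}_\beta}$ down to $V$. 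Without this your last sentence does not follow from the stated hypotheses.

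Second, your reduction to $\beta\in a_i\cap C_i$ contains a non sequitur: from $X_{i+1}\subseteq X_i$ and $T^i_{\bar x}=A^{<\omega}$ you conclude $T^{i+1}_{\bar x}=A^{<\omega}$, but the inclusion only yields $T^{i+1}_{\bar x}\subseteq A^{<\omega}$, which is vacuous. If $\alpha_i\leq\beta<\alpha_{i+1}$, or if $\beta<\alpha_i$ with $\beta\in C_{i+1}\setminus C_i$, then $T^{i+1}_{\bar x}$ can be a proper tree, and $T^{i+1}_{\bar x}\leq_{\beta,i}A^{<\omega}$ would then be false in general, since by Definition~\ref{def:axiomA}(1) it forces $T^{i+1}_{\bar x}\supseteq A^{<i}$. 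The paper's own degenerate case is only $\alpha_{i+1}\leq\beta$, and its claim is admittedly a little loose at this same point. None of this touches the ``in particular'' --- only cofinally many $i$ matter there --- but as written your reduction is not justified.
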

	
	\begin{proof}
	If $\alpha_{i+1} \leq \beta$, then $T^{i+1}_{\bar x} = T^i_{\bar x} = A^{<\omega}$. Else consider a $\mathbb{P}_{\alpha_{i+2}}$-name for $(T^{i+1}_{\bar y}, T^i_{\bar y}) \in \mathcal{T}^2$, where $\bar y = \bar x_G \restriction (C\cap \beta)$. Such a name exists in $M_{i+2}$ and $\beta \in a_i \subseteq M_{i+2}$. Thus $\leq_{\beta, i}\in M_{i+2}$ and by Lemma~\ref{lem:intermediategoodanalytic}, we have that for every $\bar y \in [\bar r_{i+2}] \restriction (C\cap \beta)$, $(T^{i+1}_{\bar y}, T^i_{\bar y}) \in \leq_{\beta,i}$, thus also for $\bar y = \bar x$. The rest follows from the fact that the statement, that for any fusion sequence in $\mathbb{Q}_\beta$, its intersection is in $\mathbb{Q}_\beta$, is $\mathbf{\Pi}^1_2$ and thus absolute. 
	\end{proof}
	
\begin{claim}\label{claim:closedset}
For every $\gamma$, $\bigcap_{i \in \omega} (X_i \restriction \gamma) = (\bigcap_{i \in \omega} X_i) \restriction \gamma$.
\end{claim}	

\begin{proof}
That $\bigcap_{i \in \omega} (X_i \restriction \gamma) \supseteq (\bigcap_{i \in \omega} X_i) \restriction \gamma$ is obvious. Let us show by induction on $\delta \in C$, that for any $\delta' \in C \cap \delta$, $$\bigcap_{i \in \omega} (X_i \restriction \delta') \subseteq \left(\bigcap_{i \in \omega} (X_i \restriction \delta)\right) \restriction \delta'.$$ The base case $\delta = \min C$ is clear. For the limit case, let $\delta' \in C \cap \delta$ be given and let $(\delta_n)_{n \in  \omega} $ be increasing cofinal in $(C \cap \delta) \setminus \delta'$. Whenever $\bar y \in \bigcap_{i \in \omega} (X_i \restriction \delta')$, by the inductive hypothesis, there is $\bar y_0 \in \bigcap_{i \in \omega} (X_i \restriction \delta_0)$ extending $\bar y$. In particular, there is $\bar z_0 \in X_0 \restriction \delta$ extending $\bar y_0$. Next, there is $\bar y_1 \in \bigcap_{i \in \omega} (X_i \restriction \delta_1)$ extending $\bar y_0$ and $\bar z_1 \in X_1 \restriction \delta$ extending $\bar y_1$. Continuing like this, we find a sequence $\langle\bar z_n : n \in \omega \rangle$ that converges to $\bar z \in (A^\omega)^{C \cap \delta}$. Since $\langle\bar z_n : n \geq m \rangle$ is contained within the closed set $X_m \restriction \delta$ for each $m \in \omega$, $\bar z \in \bigcap_{i \in \omega} (X_i \restriction \delta)$. Since $\bar z \restriction \delta' = \bar y$, this proves the limit case. Now assume $\delta = \xi +1$. Let $\delta' < \delta$ be given and let $\bar y \in \bigcap_{i \in \omega} (X_i \restriction \delta')$. Then there is $\bar z \in \bigcap_{i \in \omega} (X_i \restriction \xi)$ extending $\bar y$ by the inductive hypothesis. Since $\bigcap_{i \in \omega} T^i_{\bar z} \in \mathbb{Q}_\delta$, there is $u \in [\bigcap_{i \in \omega}T^i_{\bar z}]$ and $\bar z {}^\frown u \in \bigcap_{i \in \omega} (X_i \restriction \delta)$. To finish the proof apply the induction step one more time to $\delta = \sup \{ \xi +1 : \xi \in C\}$ and $\delta' = \gamma$. 
\end{proof}

Claim~\ref{claim:closedset} shows that (4) holds as $[\bar q] \restriction \beta = (\bigcap_{i \in \omega} X_i) \restriction \beta \times (A^\omega)^{\beta \setminus C} = \bigcap_{i \in \omega} (X_i \restriction \beta) \times (A^\omega)^{\beta \setminus C}$ and $\bigcap_{i \in \omega} (X_i \restriction \beta)$ is closed, being an intersection of closed sets.

\begin{claim}
	$T_{\bar x} = \bigcap_{i \in \omega} T^i_{\bar x}$.
\end{claim}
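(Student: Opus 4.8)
The easy inclusion is $T_{\bar x}\subseteq\bigcap_{i}T^i_{\bar x}$: since $[\bar q]\restriction C=\bigcap_i X_i\subseteq X_i$, any $\bar z\in[\bar q]\restriction C$ witnessing $s\in T_{\bar x}$ lies in every $X_i$ and hence witnesses $s\in T^i_{\bar x}$. For the reverse inclusion the plan is to exploit the product structure of the $X_i$ coming from Lemma~\ref{lem:intermediategoodanalytic} applied to the $\bar r_i$. First I would replace $C_i$ by $\bigcup_{j\le i}C_j$, so that $C_0\subseteq C_1\subseteq\cdots$ with $\bigcup_i C_i=C$, and then, using Lemma~\ref{lem:intermediategoodanalytic}(iv) for $\bar r_i,M_i$, write $X_i=R_i\times(A^\omega)^{C\setminus C_i}$ where $R_i:=[\bar r_i]\restriction C_i$ is closed in $(A^\omega)^{C_i}$; one also has $R_{i+1}\restriction C_i\subseteq R_i$, because $\bar r_{i+1}\le\bar r_i {}^\frown\bar q_0\restriction[\alpha_i,\alpha_{i+1})$ gives $[\bar r_{i+1}]\restriction\alpha_i\subseteq[\bar r_i]$ while $C_i\subseteq\alpha_i$. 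Fixing $i^*$ with $\beta\in C_i$ for all $i\ge i^*$, the tree $T^i_{\bar x}$ is, for $i\ge i^*$, exactly the section of $R_i$ over $\bar x\restriction(C_i\cap\beta)$, whereas $T^i_{\bar x}=A^{<\omega}$ for $i<i^*$ (the coordinate $\beta$ is free in $X_i$), so $\bigcap_i T^i_{\bar x}=\bigcap_{i\ge i^*}T^i_{\bar x}$.

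Now fix $s\in\bigcap_i T^i_{\bar x}$. For $i\ge i^*$ I would set
\[
Y_i:=\bigl\{\bar z\in X_i : \bar z\restriction(C_i\cap\beta)=\bar x\restriction(C_i\cap\beta)\ \wedge\ s\subseteq z(\beta)\bigr\}.
\]
Each $Y_i$ is closed; it is non-empty because $s\in T^i_{\bar x}$ supplies $\bar w\in R_i$ with $\bar w\restriction(C_i\cap\beta)=\bar x\restriction(C_i\cap\beta)$ and $s\subseteq w(\beta)$, and extending $\bar w$ freely on the coordinates in $C\setminus C_i$ yields a point of $Y_i$; and $Y_{i+1}\subseteq Y_i$ since $X_{i+1}\subseteq X_i$ and $C_i\subseteq C_{i+1}$. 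Any $\bar z^*\in\bigcap_{i\ge i^*}Y_i$ then lies in $\bigcap_i X_i=[\bar q]\restriction C$, has $s\subseteq z^*(\beta)$, and agrees with $\bar x$ on all of $C\cap\beta$ (every $\gamma\in C\cap\beta$ lies in some $C_i$), hence witnesses $s\in T_{\bar x}$. So it remains only to see that $\bigcap_{i\ge i^*}Y_i\ne\emptyset$.

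The hard part is precisely this non-emptiness. When $A$ is finite — in particular for Sacks and splitting forcing, which are the cases relevant to this paper — $(A^\omega)^C$ is compact and the $Y_i$ form a decreasing chain of non-empty closed sets, so nothing further is needed. In general one cannot appeal to compactness and must instead build $\bar z^*$ directly by a fusion-style diagonalisation: at stage $k$ one commits a finite portion of the coordinates in $C$, keeping the commitment consistent with $Y_0,\dots,Y_k$. What makes this converge is that passing from $Y_i$ to $Y_{i+1}$ only tightens the constraint along the coordinates in $a_i\cap\alpha_i$, and there — by condition~(1) of Axiom~A (Definition~\ref{def:axiomA}), together with the preceding claim, which gives $T^{i+1}_{\bar x}\le_{\beta,i}T^i_{\bar x}$, and analogously at the other coordinates of $a_i$ via Lemma~\ref{lem:intermediategoodanalytic} applied to $\bar r_{i+1}$ over $M_{i+1}$ — the levels already decided before stage $i$ are never revised. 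I expect the bookkeeping of this diagonalisation to be the only genuinely delicate point of the claim; the reductions above are routine.
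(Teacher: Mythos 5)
Your reduction to the nested closed sets $Y_i$ is correct, and your observation that for $A$ finite (the case of $\mathbb{S}$ and $\mathbb{SP}$, where $A=2$) compactness of $(A^\omega)^C$ immediately yields $\bigcap_i Y_i\neq\emptyset$ is a valid shortcut that the paper does not use. But the claim, and Lemma~\ref{lem:nicemaster} more generally, is stated for arbitrary countable $A$ (the paper explicitly allows $A=\omega$), so this does not close the argument.

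For general $A$ there is a genuine gap: you correctly identify that a diagonalisation is needed and you name the right ingredients (Axiom~A condition~(1), the preceding claim that $T^{i+1}_{\bar x}\le_{\beta,i}T^i_{\bar x}$, and continuity from Lemma~\ref{lem:intermediategoodanalytic}), but you do not carry it out, and you characterise what remains as ``bookkeeping.'' It is not. The heart of the paper's proof is a separate auxiliary statement, proved by induction on $\max(\dom\bar s)$: whenever $\bar s\in\bigotimes_{i\in C}A^{<\omega}$ satisfies $\dom(\bar s)\subseteq a_j$, $\max_{i\in\dom(\bar s)}|s_i|\le j$ and $[\bar s]\cap X_j\neq\emptyset$, then $[\bar s]\cap X_i\neq\emptyset$ for \emph{every} $i$. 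The inductive step isolates the coordinate $\xi=\max(\dom\bar s)$, uses continuity of $\bar y\mapsto T^j_{\bar y}$ to produce an open set $O\subseteq X_j\restriction\xi$ to which the induction hypothesis is applied, and then uses $T^i_{\bar z_i}\le_{\xi,j}T^j_{\bar z_i}$ together with $|s(\xi)|\le j$ to propagate $s(\xi)$ into $T^i_{\bar z_i}$. Only once this is established can one build the shrinking sequence of boxes $\langle\bar s_i\rangle$ (with $\dom(\bar s_i)=a_i\cap C$, all coordinates of length $i$, $s_i(\beta)\supseteq s$, $\bar x\in[\bar s_i\restriction\beta]$, $[\bar s_i]\cap X_i\neq\emptyset$) whose intersection $\bigcap_i[\bar s_i]$ is the single witness $\bar z$. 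Supplying that auxiliary statement is precisely the step your sketch defers; without it the proof is incomplete for $A$ infinite.
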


\begin{proof}

That $T_{\bar x} \subseteq \bigcap_{i \in \omega} T^i_{\bar x}$ is clear from the definitions. Thus let $s \in \bigcap_{i \in \omega} T^i_{\bar x}$. As $\bigcap_{i \in \omega} T^i_{\bar x} \in \mathbb{Q}_\beta$, there is $y \in [\bigcap_{i \in \omega} T^i_{\bar x}]$ with $s \subseteq y$. In particular, $\bar x {}^\frown y \in \bigcap_{i \in \omega} (X_i \restriction (\beta +1)) = (\bigcap_{i \in \omega} X_i )\restriction (\beta +1)$. So there is $\bar z \in \bigcap_{i \in \omega} X_i$ with $\bar z \restriction \beta = \bar x$ and $z(\beta) = y \supseteq s$. Thus $s \in T_{\bar x}$.\end{proof}
Now (2) follows easily. For the continuity of $\bar x \mapsto T_{\bar x}$, let $t \in A^{<\omega}$ be arbitrary and $j$ large enough so that $\vert t \vert \leq j$ and $\beta \in a_j$. Then $\{ \bar x \in [\bar q] \restriction \beta : t \notin T_{\bar x} \} = \{ \bar x \in [\bar q] \restriction \beta : t \notin T^j_{\bar x} \}$ and $\{ \bar x \in [\bar q] \restriction \beta : t \in T_{\bar x} \} = \{ \bar x \in [\bar q] \restriction \beta : t \in T^j_{\bar x} \}$ which are both open.\footnote{Here we use clause (1) in the definition of Axiom A.} Thus we have shown (3).\end{proof}

\begin{lemma}
\label{lem:gettingacondition}
Let $C \subseteq \alpha$ be countable and $X \subseteq (A^{\omega})^C$ be a closed set so that for every $\beta \in C$ and $\bar x \in X \restriction \beta$, $$\{s \in A^{<\omega} : \exists \bar z \in X (\bar z \restriction \beta = \bar x \wedge s \subseteq z(\beta))  \} \in \mathbb{Q}_\beta.$$ Let $M \ni X$ be countable elementary. Then there is a good master condition $\bar r$ over $M$ so that $[\bar r] \restriction C \subseteq X$.
\end{lemma}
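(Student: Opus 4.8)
The goal is, roughly, a converse to Lemma~\ref{lem:nicemaster}/\ref{lem:goodmaster2}: given a closed set $X \subseteq (A^\omega)^C$ which ``looks like'' the projection of a good master condition — meaning all its fibers over proper initial segments are legitimate conditions in the respective iterands — we want to realize $X$ (or a closed subset of it) as $[\bar r]\restriction C$ for an actual good master condition $\bar r$ over $M$. So the plan is to \emph{extract} $\bar r$ from $X$ directly: $X$ already tells us what the coordinates of $\bar r$ should be.

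First I would define, for each $\beta \in C$, a $\mathbb{P}_\beta$-name $\dot r(\beta)$ by the obvious recipe suggested by clause (2) of Lemma~\ref{lem:nicemaster}: $\bar r\restriction\beta \Vdash \dot r(\beta) = \{ s \in A^{<\omega} : \exists \bar z \in X\ (\bar z\restriction(C\cap\beta) = \bar x_G\restriction(C\cap\beta) \wedge s\subseteq z(\beta))\}$, and for $\beta \in \alpha\setminus C$ set $\dot r(\beta) = A^{<\omega}$ (the trivial condition). By hypothesis the map $\bar x\mapsto \{s : \exists \bar z\in X(\dots)\}$ takes values in $\mathbb{Q}_\beta$; since this map is continuous on the closed set $X\restriction(C\cap\beta)$ (fibers of a closed set in a product are closed, and membership of a fixed $s$ is determined by a basic open condition), the recipe makes sense and $\bar r\restriction\beta \Vdash \dot r(\beta)\in\dot{\mathbb{Q}}_\beta$ follows by induction, using that $X$ being closed means the fiber $\{ \bar z \in X : \bar z\restriction(C\cap\beta) = \bar x\}$ is nonempty precisely when $\bar x$ is a branch of the projection. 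One should also check the support of $\bar r$ is contained in $C$ and that $\bar r \leq \mathbbm{1}$ — i.e., $\bar r \in \mathbb{P}_\alpha$ — which is immediate from the construction since outside $C$ the name is trivial, and $C$ is countable so the support condition for countable support iterations is satisfied.

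The main work is showing $\bar r$ is a \emph{good master condition over $M$}, and this I would do by an induction on $\alpha$ running parallel to the proof of Lemma~\ref{lem:nicemaster}, but \emph{interleaving} the fusion with the given data $X$. At successor stages $\alpha=\beta+1$: force with $\mathbb{P}_\beta$ below a good master condition over (an elementary extension of) $M$ obtained inductively for $X\restriction(C\cap\beta)$; in $V[H]$, $\dot r(\beta)[H]$ is the fiber of $X$ over $\bar x_H\restriction(C\cap\beta)$, a genuine perfect tree in $\mathbb{Q}_\beta$, and I claim it is already a master condition over $M[H]$ for the iterand $\dot{\mathbb{Q}}_\beta$ — this is where one needs the hypothesis that $X\in M$, so every maximal antichain of $\dot{\mathbb{Q}}_\beta$ in $M[H]$ is hit appropriately; if not literally a master condition, one shrinks $\dot r(\beta)$ by an Axiom-A-with-crn fusion inside $[\dot r(\beta)]$ to make it one and to get continuous reading, replacing $X$ by the correspondingly shrunk closed set, which still satisfies the fiber hypothesis. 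The continuous-reading clause (4) for names $\dot y\in M$ is handled exactly as in Lemma~\ref{lem:nicemaster}, via the coanalytic space $C(A^\omega,X)$, the increasing $\omega_1$-chain $\langle Y_\xi\rangle$, and clause (iii) of Lemma~\ref{lem:goodmaster2} applied at the lower stage. At limit $\alpha$, fix a cofinal increasing $\langle\alpha_i\rangle$ in $M\cap\alpha$, build a fusion sequence $\langle\bar r_i\rangle$ with $\bar r_i$ a good master condition over a tower $M_i$ of models, arranging $[\bar r_i]\restriction(C\cap\alpha_i)\subseteq X\restriction(C\cap\alpha_i)$ at each stage (applying the inductive hypothesis to the closed set $X\restriction(C\cap\alpha_i)$, which does satisfy the fiber hypothesis by the ``restriction'' stability built into clause (iv)), and let $\bar r$ be the fusion with $[\bar r]\restriction C = \bigcap_i([\bar r_i]\restriction(C\cap\alpha_i))\times(\dots) \subseteq X$; clauses (2),(3),(4) transfer to the fusion exactly as in the two Claims inside the proof of Lemma~\ref{lem:nicemaster}.

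The hard part, and the step I expect to be the real obstacle, is the bookkeeping at the successor step ensuring that \emph{shrinking} $\dot r(\beta)$ (to force it to be a master condition over $M[H]$ with continuous reading) can be done \emph{uniformly in $H$} so as to come from a single shrinking of the global closed set $X$ that still satisfies the fiber hypothesis at every $\beta' \in C$ — i.e., that the Axiom-A fusion inside the iterand, performed in $V[H]$, has a name living low enough and produces a new closed set whose fibers at \emph{all} coordinates (not just $\beta$) remain conditions. This is exactly the kind of ``reflect the fusion into a continuous function of the generic'' argument that Lemma~\ref{lem:goodmaster2}(iii) is designed to supply, so I expect it goes through, but it is the place where the interaction between the externally given $X$ and the internal fusion of the iterands is most delicate, and where one must be careful that ``good master condition'' is preserved rather than merely ``master condition''.
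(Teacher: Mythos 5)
Your plan and the paper's proof diverge sharply: you try to re-run the entire induction of Lemma~\ref{lem:nicemaster} with $X$ woven into the fusion, whereas the paper's proof is three lines and uses Lemma~\ref{lem:nicemaster} and Lemma~\ref{lem:goodmaster2} as black boxes. The paper first constructs \emph{in $M$} a condition $\bar q$ with $\bar q \Vdash \bar x_G\restriction C \in X$ by the obvious recursion (this is easy because the fiber hypothesis guarantees that the natural name $\dot q(\beta)$ for $\beta \in C$ is forced into $\mathbb{Q}_\beta$, and one never needs $\bar q$ to be a master condition); it then extends $\bar q$ to a good master condition $\bar r$ over $M$ by Lemma~\ref{lem:nicemaster}; finally, taking $\dot y \in M$ to be the name $\bar x_G\restriction C$ for an element of the Polish space $(A^\omega)^C$, the unique continuous $f\colon [\bar r]\to (A^\omega)^C$ with $\bar r \Vdash \dot y = f(\bar x_G)$ must satisfy $f(\bar x) = \bar x\restriction C$ for every $\bar x\in[\bar r]$ (it does so on the dense set of generics, hence everywhere), and since $X\in M$ is analytic (closed) and $\bar q \Vdash \dot y \in X$, Lemma~\ref{lem:goodmaster2}(iii) gives $f''[\bar r]\subseteq X$, i.e.\ $[\bar r]\restriction C \subseteq X$. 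The crucial observation you missed is that one only needs \emph{containment}, never equality, so there is no requirement to extract $\bar r$ coordinate-wise from $X$ or to preserve the fiber structure of $X$ under fusion: the analytic-image clause of Lemma~\ref{lem:goodmaster2} already forces the branches of any good master condition below $\bar q$ into $X$.

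The step you yourself flag as the obstacle — shrinking $\dot r(\beta)$ ``uniformly in $H$'' so that the shrunk closed set still has all its fibers in the iterands — is exactly the trap the paper's proof is designed to avoid, and I do not think you should trust it to ``go through'' without a substantially more careful argument. When you fuse $\dot r(\beta)$ inside $V[H]$ to become a master condition with continuous reading, the resulting shrinking depends on the generic $H$ in a way that need not assemble into a single closed subset of $X$ with the required fiber property at coordinates $\beta' \neq \beta$: a priori you only control the $\beta$-th fiber after the shrink, and the fibers at later coordinates are determined by $X$, which you are simultaneously trying to replace. You attempt to delegate this to ``reflect the fusion into a continuous function'' via Lemma~\ref{lem:goodmaster2}(iii), but that clause is stated for a \emph{given} good master condition, which is precisely what you are trying to construct — so the argument as sketched is circular at the successor step. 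The paper's route dissolves the problem: build $\bar q$ without worrying about master-condition-ness, then fuse once and for all via Lemma~\ref{lem:nicemaster}, and recover $[\bar r]\restriction C \subseteq X$ afterward.
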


\begin{proof}
	It is easy to construct $\bar q \in M$ recursively so that $\bar q \Vdash \bar x_G \restriction C \in X$. By Lemma~\ref{lem:nicemaster}, we can extend $\bar q$ to a good master condition $\bar r$ over $M$. The unique continuous function $f \colon [\bar r] \to (A^{\omega})^C$ so that for generic $G$, $f(\bar x_G) = \bar x_G \restriction C$, is so that $f(\bar x) = \bar x \restriction C$ for every $\bar x \in [\bar r]$. Since $f$ maps to $X$, $[\bar r] \restriction C \subseteq X$.  
\end{proof}

\section{The Main Lemma}

\subsection{Mutual Cohen Genericity}

Let $X$ be a Polish space and $M$ a model of set theory with $X \in M$. Recall that $x \in X$ is \emph{Cohen generic in} $X$ over $M$ if for any open dense $O \subseteq X$, such that $O \in M$, $x \in O$. 

Let $x_0, \dots, x_{n-1} \in X$. Then we say that $x_0, \dots x_{n-1}$ are ($X$-)\emph{mutually Cohen generic (mCg) over $M$} if $(y_0, \dots, y_{K-1})$ is a Cohen generic real over $M$ in the Polish space $X^K$, where $\langle y_i : i < K \rangle$ is some, equivalently any, enumeration of $\{ x_0, \dots, x_{n-1} \}$. In particular, we allow for repetition in the definition of mutual genericity. 

\begin{definition}\label{def:mcgfinite}
Let $\langle X_l : l < k \rangle \in M$ be Polish spaces. Then we say that $\bar x_0, \dots, \bar x_{n-1} \in \prod_{l<k} X_l$ are $\langle X_l : l < k \rangle$-\emph{mutually Cohen generic (mCg) over $M$}, if their components are mutually added Cohen generics, i.e.  $$(y_0^0, \dots, y_0^{K_0}, \dots,y_{k-1}^0, \dots, y_{k-1}^{K_{k-1}}) \text{ is Cohen generic in } \prod_{l<k }X_l^{K_l} \text{ over } M,$$ where $\langle y_l^{i} : i < K_l \rangle$ is some, equivalently any, enumeration of $\{ x_i(l) : i<n  \}$ for each $l< k$. 
\end{definition}


\begin{definition}
	Let $X$ be a Polish space with a fixed countable basis $\mathcal{B}$. Then we define the forcing poset $\mathbb{C}(2^\omega, X)$ consisting of functions $h \colon 2^{\leq n} \to \mathcal{B} \setminus \{\emptyset\}$ for some $n \in \omega$ such that $\forall \sigma \subseteq \tau \in 2^{\leq n} (h(\sigma) \supseteq h(\tau))$. The poset is ordered by function extension. 
\end{definition}

The poset $\mathbb{C}(2^\omega, X)$ adds generically a continuous function $\chi \colon 2^\omega \to X$, given by $\chi(x) = y$ where $\bigcap_{n \in \omega} h(x \restriction n ) = \{ y\}$ and $h = \bigcup G$ for $G$ the generic filter. This forcing will be used in this section several times to obtain ZFC results. Note for instance that if $G$ is generic over $M$, then for any $x \in 2^\omega$, $\chi(x)$ is Cohen generic in $X$ over $M$, and moreover, for any $x_0, \dots, x_{n-1} \in 2^\omega$, $\chi(x_0), \dots, \chi(x_{n-1})$ are $X$-mutually Cohen generic over $M$. Sometimes we will use $\mathbb{C}(2^\omega, X)$ to force over a countable model a continuous function from a space homeomorphic to $2^\omega$, such as $(2^\omega)^\alpha$ for $\alpha < \omega_1$.

\begin{lemma}
	\label{lem:forcingcont}
Let $M$ be a model of set theory, $K, n \in \omega$, $X_j \in M$ a Polish space for every $j < n$ and $G$ a $\prod_{j < n} \mathbb{C}(2^\omega, X_j)$-generic over $M$ yielding $\chi_j \colon 2^\omega \to X_j$ for every $j <n$. Then, whenever $\bar x$ is Cohen generic in $(2^\omega)^K$ over $M[G]$ and $u_0, \dots, u_{n-1} \in 2^\omega \cap M[\bar x]$ are pairwise distinct, $$\bar x^\frown \langle \chi_j(u_i) : i < n, j < n \rangle $$ is Cohen generic in $$(2^\omega)^K \times \prod_{i < n} X_i $$ over $M$. 
\end{lemma}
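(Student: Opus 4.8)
The plan is to break the statement into three routine ingredients: commuting the two generics $G$ and $\bar x$, recognising $\prod_{j<n}\mathbb{C}(2^\omega,X_j)$ as a single poset of the form $\mathbb{C}(2^\omega,\cdot)$, and the two-step iteration theorem for Cohen forcing. Since $M$ and all of its set-forcing extensions satisfy enough of $\mathrm{ZFC}$, the product forcing theorem is available throughout; note also that the displayed tuple $\langle\chi_j(u_i):i<n,\ j<n\rangle$ naturally lives in $\prod_{i<n}\prod_{j<n}X_j$, the codomain appearing in the statement.

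\emph{Commuting the generics.} As $\bar x$ is Cohen generic in $(2^\omega)^K$ over $M[G]\supseteq M$, it is in particular Cohen generic in $(2^\omega)^K$ over $M$. Applying the product forcing theorem to $\prod_{j<n}\mathbb{C}(2^\omega,X_j)$ (witnessed by $G$, generic over $M$) and to the Cohen forcing whose generic is a point of $(2^\omega)^K$ (witnessed by $\bar x$, generic over $M[G]$), the pair $(G,\bar x)$ is generic over $M$ for the product of these two posets; equivalently, $G$ is $\prod_{j<n}\mathbb{C}(2^\omega,X_j)$-generic over $M':=M[\bar x]$, still inducing the same continuous functions $\chi_j\colon 2^\omega\to X_j$. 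We have thus reduced to the following situation: $G$ is $\prod_{j<n}\mathbb{C}(2^\omega,X_j)$-generic over $M'$, and $u_0,\dots,u_{n-1}$ are pairwise distinct reals of $M'$ --- this is exactly what the hypothesis $u_i\in M[\bar x]$ buys us, namely that the $u_i$ do not depend on $G$.

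\emph{The heart of the matter.} I claim $\langle\chi_j(u_i):i<n,\ j<n\rangle$ is Cohen generic in $\prod_{i<n}\prod_{j<n}X_j$ over $M'$. Equip $\prod_{j<n}X_j$ with the product basis $\{\prod_{j<n}B_j:B_j\in\mathcal{B}(X_j)\}$ (replacing $\mathbb{C}(2^\omega,\prod_{j<n}X_j)$ by a forcing-equivalent copy if a different basis was fixed). Then a condition of $\mathbb{C}(2^\omega,\prod_{j<n}X_j)$ is precisely an $n$-tuple of conditions of the $\mathbb{C}(2^\omega,X_j)$ sharing a common domain, and such tuples are dense in $\prod_{j<n}\mathbb{C}(2^\omega,X_j)$; hence $\prod_{j<n}\mathbb{C}(2^\omega,X_j)$ is forcing equivalent to $\mathbb{C}(2^\omega,\prod_{j<n}X_j)$, whose associated generic continuous function $\chi\colon 2^\omega\to\prod_{j<n}X_j$ is $\chi(v)=\langle\chi_j(v):j<n\rangle$. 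Now apply the basic property of $\mathbb{C}(2^\omega,Y)$ recorded after its definition, with $Y:=\prod_{j<n}X_j$ and ground model $M'$: for pairwise distinct reals $u_0,\dots,u_{n-1}$ of $M'$, the values $\chi(u_0),\dots,\chi(u_{n-1})$ are mutually Cohen generic in $Y$ over $M'$. (For completeness, here is the projection underlying this when the $u_i$ are ground-model reals: let $\ell$ be least with $u_0\restriction\ell,\dots,u_{n-1}\restriction\ell$ pairwise distinct; on the dense set of conditions whose domain reaches level $\ell$, the map $h\mapsto\langle h(u_i\restriction m):i<n\rangle$, where $2^{\leq m}$ is the domain of $h$, is a projection onto $\prod_{i<n}\mathcal{B}(Y)$, i.e.\ Cohen forcing for $Y^n$, whose induced filter has intersection $\langle\chi(u_i):i<n\rangle$.) Unwinding, $\langle\chi_j(u_i):i<n,\ j<n\rangle$ is Cohen generic in $Y^n=\prod_{i<n}\prod_{j<n}X_j$ over $M'=M[\bar x]$.

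\emph{Reassembly.} We now have that $\bar x$ is Cohen generic in $(2^\omega)^K$ over $M$ and that $\langle\chi_j(u_i):i<n,\ j<n\rangle$ is Cohen generic in $\prod_{i<n}\prod_{j<n}X_j$ over $M[\bar x]$. By the two-step iteration theorem for Cohen forcing --- equivalently, because Cohen forcing for a finite product of Polish spaces is the product of the Cohen forcings --- $\bar x^\frown\langle\chi_j(u_i):i<n,\ j<n\rangle$ is Cohen generic in $(2^\omega)^K\times\prod_{i<n}\prod_{j<n}X_j$ over $M$, which is the assertion. The only steps deserving real attention are the commutation of $G$ and $\bar x$ in the first ingredient and, should one want it fully spelled out, the projection argument in the second; everything else is bookkeeping about products of Cohen forcings.
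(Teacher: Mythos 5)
Your proof is correct and the key moves are sound. You and the paper both begin by reducing, via the product lemma, to showing that $\langle\chi_j(u_i):i<n,\ j<n\rangle$ is Cohen generic over $M[\bar x]$. Where you diverge is in how this genericity is established. The paper stays with $M$ as the ground model: it takes a $(2^{<\omega})^K$-name $\dot O\in M$ for the dense open set (and names $\dot u_i$ for the parameters) and directly verifies that a tailored set $D$ of conditions is dense in the double product $\prod_{i<n}\mathbb{C}(2^\omega,X_i)\times(2^{<\omega})^K$; meeting $D$ then hands you the desired containment. You instead push the commutation of the two generics all the way through: by the product lemma, $G$ is already $\prod_{j<n}\mathbb{C}(2^\omega,X_j)$-generic over $M':=M[\bar x]$, and there the $u_i$ are honest ground-model reals rather than names. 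After identifying the finite product with a single poset $\mathbb{C}(2^\omega,\prod_{j<n}X_j)$ (a routine dense-embedding), the claim follows from the mutual-genericity property of $\mathbb{C}(2^\omega,Y)$, for which you supply the projection argument. This is a genuinely cleaner modularization: it trades the paper's bespoke density computation for a quotable lemma about $\mathbb{C}(2^\omega,Y)$, at the cost of having to observe that the informal remark following the definition of $\mathbb{C}(2^\omega,X)$ --- ``for any $x_0,\dots,x_{n-1}\in 2^\omega$ the $\chi(x_i)$ are mCg'' --- really requires the $x_i$ to be ground-model reals (which is exactly what passing to $M'=M[\bar x]$ buys you, and which your projection paragraph makes precise). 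Both proofs rest on the same two ingredients (the product lemma and a density/projection argument); yours is arguably the more reusable organization, while the paper's is more self-contained.
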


\begin{proof}
	Since $\bar x$ is generic over $M$ it suffices to show that $\langle \chi_j(u_i) : i < n, j \rangle$ is generic over $M[\bar x]$. Let $\dot O \in M$ be a $(2^{<\omega})^K$-name for a dense open subset of $\prod_{j < n} (X_j)^n$ and $\dot u_i$ a $(2^{<\omega})^K$-name for $u_i$, $i < n$, such that the trivial condition forces that the $\dot u_i$ are pairwise distinct. Then consider the set \begin{multline*}
		D := \{ (\bar h,\bar s) \in \prod_{i<n} \mathbb{C}(2^\omega, X_i) \times (2^{<\omega})^K : \exists t_0, \dots, t_{n-1}\in 2^{<\omega} \\ ( \forall i < n(\bar s \Vdash t_i \subseteq \dot u_i) \wedge \bar s \Vdash \prod_{i,j<n} h_{j}(t_i) \subseteq \dot O) \}.
	\end{multline*}
	
	We claim that this set is dense in $\prod_{i<n} \mathbb{C}(2^\omega, X_i) \times (2^{<\omega})^K$ which finishes the proof. Namely let $(\bar h, \bar s)$ be arbitrary, wlog $\dom h_{j} = 2^{\leq n_0}$ for every $j < n$. Then we can extend $\bar s$ to $\bar s'$ so that there are incompatible $t_i$, with $\vert t_i \vert \geq n_0$, so that $\bar s' \Vdash t_i \subseteq \dot u_i$ and there are $U_{i,j} \subseteq h_j(t_i \restriction n_0)$ basic open subsets of $X_j$ in $M$ for every $i < n$ and $j<n$, so that $\bar s' \Vdash \prod_{i,j < n} U_{i,j} \subseteq \dot O$. Then we can extend $\bar h$ to $\bar h'$ so that $h'_j(t_i) = U_{i,j}$ for every $i,j < n$. We see that $(\bar h', \bar s') \in D$.
\end{proof}

\subsection{Finite products}

This subsection can be skipped entirely if one is only interested in the results for the countable support iteration. The lemma that we will prove below is relevant to finite products instead (see Theorem~\ref{thm:finiteprod}). It is very similar to Main Lemma~\ref{lem:mainlemmainf} in the next subsection, but the proofs are completely different. Main Lemma~\ref{thm:mainlemma} is based on a forcing-theoretic proof of the Halpern-Läuchli theorem that is commonly attributed to L. Harrington (see e.g. \cite[Lemma~4.2.4]{Golshani2015} as a reference). On the other hand, Main Lemma~\ref{lem:mainlemmainf} uses an inductive argument. 

\begin{mainlemma}
	\label{thm:mainlemma}
	Let $k \in \omega$ and $E \subseteq [(2^\omega)^k]^{<\omega}$ an analytic hypergraph on $(2^\omega)^k$. Then there is a countable model $M$ so that either 

	\begin{enumerate}
		\item for any $n \in \omega$ and $\bar x_0,\dots, \bar x_{n-1} \in (2^\omega)^k$ that are $\langle 2^\omega : l < k\rangle$-mCg over $M$, $$\{\bar x_0,\dots, \bar x_{n-1}\} \text{ is } E \text{-independent}$$
	\end{enumerate}
	
	or for some $N \in \omega$,
	\begin{enumerate}
		\setcounter{enumi}{1}
		\item there are $\phi_0, \dots, \phi_{N-1} \colon (2^\omega)^k \to (2^\omega)^k$ continuous, $\bar s \in \bigotimes_{l<k} 2^{<\omega}$ so that for any $n \in \omega$ and $\bar x_0,\dots, \bar x_{n-1} \in (2^\omega)^k \cap [\bar s]$, that are $\langle 2^\omega : l < k\rangle$-mCg over $M$,  $$\{\phi_j(\bar x_i) : j < N, i<n\} \text{ is } E \text{-independent but } \{ \bar x_0\} \cup \{ \phi_j(\bar x_0) : j < N \} \in E.$$
			\end{enumerate}
	\end{mainlemma}

\begin{remark}
	Note that $N = 0$ is possible in the second option.  For example whenever $[(2^\omega)^k]^1 \subseteq E$, then $\emptyset$ is the only $E$-independent set. In this case the last line simplifies to ``$\{ \bar x_0 \} \in E$".
\end{remark}

\begin{proof}
	Let $\kappa = \beth_{2k -1}(\aleph_0)^+$. Recall that by Erdős-Rado (see \cite[Thm 9.6]{Jech2013}), for any ${c \colon [\kappa]^{2k} \to H(\omega)}$, there is $B \in [\kappa]^{\aleph_1}$ which is monochromatic for $c$, i.e. $c \restriction [B]^{2k}$ is constant. Let $\mathbb{Q}$ be the forcing adding $\kappa$ many Cohen reals $$\langle z_{(l,\alpha)} : \alpha < \kappa\rangle \text{ in } 2^{\omega} \text{ for each } l<k$$ with finite conditions, i.e. $\mathbb{Q} = \prod_\kappa^{<\omega} (2^{<\omega})^k$. We will use the notational convention that elements of $[\kappa]^d$, for $d \in \omega$, are sequences $\bar \alpha = (\alpha_0, \dots, \alpha_{d-1})$ ordered increasingly. For any $\bar \alpha \in [\kappa]^k$ we define $\bar z_{\bar \alpha} := (z_{(0,\alpha_0)}, \dots, z_{(k-1,\alpha_{k-1})}) \in (2^\omega)^k$.

	Let $\dot{\mathcal{A}}$ be a $\mathbb{Q}$-name for a maximal $E$-independent subset of $\{ \bar z_{\bar \alpha} : \bar \alpha \in [\kappa]^k \}$, reinterpreting $E$ in the extension by $\mathbb{Q}$. For any $\bar \alpha \in [\kappa]^k$, we fix $p_{\bar\alpha} \in \mathbb{Q}$ so that either \begin{equation}
	p_{\bar \alpha} = \mathbbm{1} \wedge p_{\bar \alpha} \Vdash \bar z_{\bar \alpha} \in \dot{\mathcal{A}}                                                                                                                
	\tag{1}                                                                                       \end{equation}
	or 
	\begin{equation}
	p_{\bar\alpha} \Vdash \bar z_{\bar \alpha} \not\in \dot{\mathcal{A}}.\tag{2}
	\end{equation}
	In case (2) we additionally fix $N_{\bar\alpha} < \omega$ and $(\bar \beta^i)_{i < N_{\bar\alpha}} = (\bar \beta^i(\bar \alpha))_{i < N_{\bar\alpha}}$, and we assume that $$p_{\bar\alpha} \Vdash \{ \bar z_{\bar \beta^i} : i < N_{\bar\alpha} \} \subseteq \dot{\mathcal{A}} \wedge \{ \bar z_{\bar \alpha} \} \cup \{ \bar z_{\bar \beta^i} : i < N_{\bar\alpha} \} \in E .$$ We also define $H_l(\bar \alpha) = \{ \beta^i_l : i < N_{\bar\alpha} \} \cup \{ \alpha_l \} \in [\kappa]^{<\omega}$ for each $l < k$. 
	
	Now for $\bar \alpha \in [\kappa]^{2k}$ we collect the following information: 
	
	\begin{enumerate}[label=(\roman*)]
		\item whether $p_{\bar \alpha \restriction k } = p_{\alpha_0,\dots, \alpha_{k-1}}\Vdash \bar z_{\bar \alpha \restriction k} \in \dot{\mathcal{A}}$ or not,
		\item $\bar s = (p_{\bar \alpha \restriction k }(0,\alpha_0), \dots,  p_{\bar \alpha \restriction k }(k-1,\alpha_{k-1})) \in (2^{<\omega})^k$,
		\item the relative position of the $p_{\bar \gamma}$ for $\bar \gamma \in \Gamma := \prod_{l<k} \{\alpha_{2l}, \alpha_{2l+1} \}$ to each other. More precisely, consider $ \bigcup_{\bar \gamma \in\Gamma  } \dom p_{\bar \gamma} = \{0\} \times d_0 \cup \dots \cup \{k-1\} \times d_{k-1}$ where $d_0,\dots, d_{k-1} \subseteq \kappa$. Let $M_l = \vert d_l \vert$ for $l<k$ and for each $\bar j \in \prod_{l<k} \{2l, 2l+1 \}$, collect a function $r_{\bar j}$ with $\dom r_{\bar j} \subseteq \{0\} \times M_0 \cup \dots \cup \{k-1\} \times M_{k-1}$ that is a copy of $p_{\bar \gamma}$, where $\bar \gamma = (\alpha_{j_0}, \dots, \alpha_{j_{k-1}})$, $\bar j =  (j_0, \dots, j_{k-1})$. Namely, $r_{\bar j}(l,m) = p_{\bar \gamma}(l,\beta)$, whenever $\beta$ is the $m$'th element of $d_l$. 
	\end{enumerate}
	
	In case $p_{\bar \alpha \restriction k }\Vdash \bar z_{\bar \alpha \restriction k} \notin \dot{\mathcal{A}}$ we additionally remember  
	
	\begin{enumerate}[label=(\roman*)]
		\setcounter{enumi}{3}
		
		\item $N = N_{\bar \alpha \restriction k}$, 
		\item $N_l = \vert H_l(\bar \alpha \restriction k) \vert$, for each $l < k$, 
		\item $\bar b^i \in \prod_{l <k} N_l$ so that $\beta^i_l$ is the $b^i_l$'th element of $H_l(\bar \alpha \restriction k)$, for each $i<N$, 
		\item $\bar a \in \prod_{l <k} N_l$ so that $\alpha_l$ is the $a_l$'th member of $H_l(\bar \alpha \restriction k)$, 
		\item the partial function $r$ with domain a subset of $\bigcup_{l <k} \{l\} \times N_l$, so that $r(l,m)= t \in 2^{<\omega}$ iff $p_{\bar \alpha \restriction k}(l,\beta) = t$ where $\beta$ is the $m$'th element of $H_l(\bar \alpha \restriction k)$. 
	\end{enumerate}

	And finally we also remember 
	
	\begin{enumerate}[label=(\roman*)]
		\setcounter{enumi}{8}
		
		\item for each pair $\bar\gamma, \bar\delta \in \prod_{l<k} \{\alpha_{2l}, \alpha_{2l +1} \}$, where $\bar\gamma = (\alpha_{j_l})_{l < k}$ and $\bar\delta = (\alpha_{j'_l})_{l < k}$, finite partial injections $e_{l, \bar j, \bar j'} \colon N_l \to N_l$ so that $e_{l, \bar j, \bar j'}(m) = m'$ iff the $m$'th element of $H_l(\bar\gamma)$ equals the $m'$'th element of $H_l(\bar\delta)$. 
		\end{enumerate}
	
	This information is finite and defines a coloring $c \colon [\kappa]^{2k} \to H(\omega)$. Let $B \in [\kappa]^{\omega_1}$ be monochromatic for $c$. Let $M \preccurlyeq H(\theta)$ be countable for $\theta$ large enough so that $\kappa, c, B, \langle p_{\bar \alpha} : \bar \alpha \in [\kappa]^k \rangle, E, \dot{\mathcal{A}} \in M$.	
	
	\begin{claim}
		If for every $\bar \alpha \in [B]^k$, $p_{\bar{\alpha}} \Vdash \bar z_{\bar \alpha} \in \dot{\mathcal{A}}$, then (1) of the main lemma holds true.
	\end{claim}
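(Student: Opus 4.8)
The plan is to realise any mutually Cohen generic tuple over $M$ as a tuple of values taken among the $\kappa$ many Cohen reals added by $\mathbb{Q}$, indexed by elements of $B$ lying in $M$, and then to read off $E$-independence from the hypothesis together with the fact that $\mathbbm{1}$ forces $\dot{\mathcal{A}}$ to be $E$-independent.

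Concretely, fix $\bar x_0,\dots,\bar x_{n-1}\in (2^\omega)^k$ that are mCg wrt $\prod_{l<k}2^\omega$ over $M$; I want $\{\bar x_0,\dots,\bar x_{n-1}\}$ to be $E$-independent. For each $l<k$ let $\sim_l$ be the equivalence relation on $\{0,\dots,n-1\}$ given by $i\sim_l j\Leftrightarrow \bar x_i(l)=\bar x_j(l)$, let $K_l$ be its number of classes, and fix $c_l\colon\{0,\dots,n-1\}\to K_l$ selecting class indices, so that $y_l^{a}$ is well defined by $y_l^{c_l(i)}:=\bar x_i(l)$ and the $y_l^a$, $a<K_l$, are pairwise distinct. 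By the definition of mCg, $\bar y:=(y_l^a)_{l<k,\,a<K_l}$ is Cohen generic in $\prod_{l<k}(2^\omega)^{K_l}$ over $M$. Since $B\in M$ and $\card{B}=\omega_1$, the set $B\cap M$ is infinite, so I may pick $\delta_0^0<\dots<\delta_0^{K_0-1}<\delta_1^0<\dots<\delta_{k-1}^{K_{k-1}-1}$, all in $B\cap M$, grouped into consecutive blocks by $l$, and set $\bar\gamma^i:=(\delta_0^{c_0(i)},\dots,\delta_{k-1}^{c_{k-1}(i)})$, which is a genuine increasing $k$-tuple, hence $\bar\gamma^i\in[B]^k\cap M$. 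The subforcing $\mathbb{C}_0$ of $\mathbb{Q}$ supported on the atomic coordinates $\{(\delta_l^a,l):l<k,\,a<K_l\}$ is just finite Cohen forcing adding a generic element of $\prod_{l<k}(2^\omega)^{K_l}$; thus $\bar y$ yields a $\mathbb{C}_0$-generic filter $G_0$ over $M$ with $z_{(l,\delta_l^a)}[G_0]=y_l^a$. Factoring $\mathbb{Q}\cong\mathbb{C}_0\times\mathbb{C}_1$ and choosing $G_1$ to be $\mathbb{C}_1$-generic over the countable model $M[G_0]$ (possible since $\mathbb{C}_1$ is countable), I get $G:=G_0\times G_1$, which is $\mathbb{Q}$-generic over $M$ and satisfies $\bar z_{\bar\gamma^i}[G]=\bar x_i$ for all $i<n$.

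Next I invoke the hypothesis. All of $\langle p_{\bar\alpha}:\bar\alpha\in[\kappa]^k\rangle$, $\dot{\mathcal{A}}$, $B$, $\mathbb{Q}$, $E$ lie in $M\preccurlyeq H(\theta)$, so by elementarity $M$ believes that for every $\bar\alpha\in[B]^k$ we are in case (1), i.e.\ $p_{\bar\alpha}=\mathbbm{1}$ and $\mathbbm{1}\Vdash_{\mathbb{Q}}\bar z_{\bar\alpha}\in\dot{\mathcal{A}}$, and that $\mathbbm{1}\Vdash_{\mathbb{Q}}\dot{\mathcal{A}}$ is $E$-independent. Applying this inside $M$ to the $\bar\gamma^i\in[B]^k\cap M$ ($i<n$), and using that subsets of $E$-independent sets are $E$-independent, $M\models$ ``$\mathbbm{1}\Vdash_{\mathbb{Q}}\{\bar z_{\bar\gamma^i}:i<n\}$ is $E$-independent''. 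By the forcing theorem applied in $M$, $M[G]\models$ ``$\{\bar z_{\bar\gamma^i}[G]:i<n\}$ is $E$-independent'', that is, $M[G]\models$ ``$\{\bar x_0,\dots,\bar x_{n-1}\}$ is $E$-independent'' (with $E$ reinterpreted in $M[G]$). Since this set is finite, its $E$-independence is a finite conjunction of $\mathbf{\Pi}^1_1$ statements about the reals $\bar x_i$, and $M[G]$ is well founded with standard $\omega$; hence these statements are absolute between $M[G]$ and $V$ by Mostowski absoluteness, so $\{\bar x_0,\dots,\bar x_{n-1}\}$ is $E$-independent in $V$. As the $\bar x_i$ were arbitrary mCg reals over $M$, this establishes (1) of the main lemma.

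The one delicate point is the combinatorial bookkeeping in the second paragraph: one must match the equality pattern of the coordinates $\bar x_i(l)$ by the index pattern of the $\bar\gamma^i$ (which is what the relations $\sim_l$ achieve), ensure the $\bar\gamma^i$ are honest increasing elements of $[B]^k$ that moreover lie in $M$ (handled by choosing the $\delta_l^a$ in blocks inside the infinite set $B\cap M$), and extend the $\mathbb{C}_0$-generic given by the hypothesis to a full $\mathbb{Q}$-generic over $M$ (handled by countability of $M[G_0]$ and of $\mathbb{C}_1$). Once this identification is in place, the rest is a routine combination of elementarity, the forcing theorem, and $\mathbf{\Pi}^1_1$-absoluteness. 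Note that the monochromaticity of $B$ plays no role in this case; it is only needed for the other branch of the dichotomy.
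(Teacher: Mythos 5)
Your proof is correct and follows essentially the same route as the paper: realise the mCg tuple as values $\bar z_{\bar\beta_i}[G]$ for a suitable $\mathbb{Q}$-generic $G$ over $M$, with the indices $\bar\beta_i$ chosen from $B\cap M$ according to the coincidence pattern of the coordinates, then use that $p_{\bar\beta_i}=\mathbbm{1}$ together with the forcing theorem in $M$ and $\mathbf{\Pi}^1_1$-absoluteness. The factoring $\mathbb{Q}\cong\mathbb{C}_0\times\mathbb{C}_1$ and the explicit bookkeeping via $\sim_l$ are elaborations of steps the paper leaves implicit, but not a different argument.
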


	\begin{proof}
		Let $\bar x_0, \dots, \bar x_{n-1}$ be arbitrary mCg over $M$. Say $\{x_i(l) : i < n\}$ is enumerated by $\langle y^i_l : i < K_l \rangle$ for every $l<k$. Now find $$\alpha^0_0 < \dots < \alpha^{K_0 -1}_0 < \dots < \alpha^{0}_{k-1} < \dots < \alpha^{K_{k-1} -1}_{k-1}$$ in $M \cap B$. Then there is a $\mathbb{Q}$-generic $G$ over $M$ so that for any $\bar j \in \prod_{l < k} K_l$,  $$\bar z_{\bar \beta}[G] = (y^{j_0}_0, \dots, y^{j_{k-1}}_{k-1}),$$ where $\bar \beta = (\alpha^{j_0}_0, \dots, \alpha^{j_{k-1}}_{k-1})$. In particular, for each $i<n$, there is $\bar \beta_i \in [B \cap M]^k$ so that $\bar z_{\bar \beta_i}[G]= \bar x_{i}$. Since $p_{\bar \beta_i} =\mathbbm{1} \in G$ for every $\bar \beta_i$ we have that $$M[G] \models \bar x_i \in \dot{\mathcal{A}}[G]$$ for every $i< n$ and in particular $$M[G] \models \{ \bar x_i : i < n \} \text{ is } E \text{-independent}.$$ By absoluteness $\{ \bar x_i : i < n \}$ is indeed $E$-independent.
		\end{proof}

	 	Assume from now on that $p_{\bar \alpha}\Vdash \bar z_{\bar \alpha} \notin \dot{\mathcal{A}}$ for every $\bar \alpha \in [B]^k$. Then we may fix $\bar s$, $N$, $(N_l)_{l<k}$, $\bar b^i$ for $i<N$, $\bar a$, $r$ and $e_{l,\bar j, \bar j'}$ for all $l<k$ and $\bar j,\bar j' \in \prod_{l'<k} \{2l', 2l'+1 \}$ corresponding to the coloring on $[B]^{2k}$. 
	 
	 \begin{claim}
	 	 For any $\bar \alpha \in [B]^{2k}$ and $\bar\gamma, \bar\delta \in \prod_{l<k} \{\alpha_{2l}, \alpha_{2l +1} \}$, $$p_{\bar\gamma} \restriction (\dom p_{\bar\gamma} \cap \dom p_{\bar\delta}) = p_{\bar\delta} \restriction (\dom p_{\bar\gamma} \cap \dom p_{\bar\delta}).$$
	 \end{claim}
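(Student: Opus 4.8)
The plan is to read the claim off Erdős--Rado homogeneity as a $\Delta$-system statement: I would show that for all $\bar\gamma\ne\bar\delta$ in $\Gamma$ and all $(l^*,\xi)\in\dom p_{\bar\gamma}\cap\dom p_{\bar\delta}$ one has $p_{\bar\gamma}(l^*,\xi)=p_{\bar\delta}(l^*,\xi)$, which is exactly the displayed equality. So I would assume toward a contradiction that some $\bar\gamma\ne\bar\delta$ in $\Gamma$ and $(l^*,\xi)$ in their common domain satisfy $p_{\bar\gamma}(l^*,\xi)\ne p_{\bar\delta}(l^*,\xi)$. Writing $\bigcup_{\bar\epsilon\in\Gamma}\dom p_{\bar\epsilon}=\{0\}\times d_0\cup\dots\cup\{k-1\}\times d_{k-1}$ as in (iii), letting $m$ be the position of $\xi$ inside $d_{l^*}$, and letting $\bar j,\bar j'$ be the codes of $\bar\gamma,\bar\delta$, I read off $(l^*,m)\in\dom r_{\bar j}\cap\dom r_{\bar j'}$ and $r_{\bar j}(l^*,m)=p_{\bar\gamma}(l^*,\xi)\ne p_{\bar\delta}(l^*,\xi)=r_{\bar j'}(l^*,m)$. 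First I would reduce, by changing one interleaved block at a time, to the case where $\bar\gamma$ and $\bar\delta$ differ in exactly one block $l_0$, say $\gamma_{l_0}=\alpha_{2l_0}$ and $\delta_{l_0}=\alpha_{2l_0+1}$ and $\gamma_l=\delta_l$ for $l\ne l_0$.

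The engine of the argument is that for a fixed tuple $\bar\gamma$ the condition $p_{\bar\gamma}$ is one fixed finite object, together with monochromaticity of $B$. Choosing $\bar\alpha\in[B]^{2k}$ with wide gaps --- legitimate, since by monochromaticity a counterexample at one such tuple yields one at all of them --- I would pick fresh ordinals $\beta_0<\beta_1$ in $B$ inside the $l_0$-th gap, distinct from every $\alpha_i$ and from every ordinal occurring in any $\dom p_{\bar\epsilon}$ for $\bar\epsilon\in\Gamma$, and replace block $l_0$ of $\bar\alpha$ by $\{\beta_0,\beta_1\}$ to obtain $\bar\alpha^*\in[B]^{2k}$. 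Since $c(\bar\alpha^*)=c(\bar\alpha)$, the recorded data transfers verbatim: the $M_l$, the abstract patterns $r_{\bar j}$ from (iii), the dictionary of which abstract position is pinned to which $\alpha_i$ or is auxiliary, and the injections $e_{l,\bar j,\bar j'}$ from (ix). Tracking a coordinate shared between $p_{\bar\gamma}$ and $p_{\bar\delta}$ through a single such relocation should force the position $m$ of $\xi$ to be pinned only to coordinates $\alpha_i$ lying in blocks on which $\bar\gamma$ and $\bar\delta$ agree (or to be genuinely auxiliary): if $m$ were pinned to a coordinate that is not moved, then $\xi$ --- which lives unchanged in the fixed condition $p_{\bar\gamma}$ --- stays at position $m$; if it were pinned to a coordinate that is moved, that coordinate becomes auxiliary while $\xi$ still occupies position $m$ in the fixed $p_{\bar\gamma}$; either way the pinning-dictionary would change, contradicting $c(\bar\alpha^*)=c(\bar\alpha)$. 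Once $m$ is known to be pinned only to the part shared by $\bar\gamma$ and $\bar\delta$, applying the relocation to the block-$l_0$ coordinates of \emph{both} $\bar\gamma$ and $\bar\delta$ separates the two conditions without disturbing $\xi$'s position, so the recorded overlap data from (iii)+(ix), which must again agree with that of $\bar\alpha$, forces $r_{\bar j}(l^*,m)=r_{\bar j'}(l^*,m)$, i.e.\ $p_{\bar\gamma}(l^*,\xi)=p_{\bar\delta}(l^*,\xi)$ --- the desired contradiction.

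The hard part will be exactly this last bookkeeping. One has to verify that the colour $c$ built from (i)--(ix) records enough --- in particular that ``which of the finitely many ordinals underlying $\bigcup_{\bar\epsilon\in\Gamma}\dom p_{\bar\epsilon}$ coincides with which $\alpha_i$, and which are auxiliary'' is a monochromatic invariant --- so that moving an $\alpha_i$ genuinely alters the colour unless the position under consideration was independent of that coordinate; and one must keep every modified $2k$-tuple strictly increasing and inside $B$, which is why $\bar\alpha$ is taken with wide gaps and the $\beta_i$ are chosen fresh. Granting this, the $\Delta$-system-with-common-stem conclusion follows, and the claim is its restriction to a single pair $\bar\gamma,\bar\delta$.
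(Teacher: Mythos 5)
The approach you sketch is not the one the paper uses, and I don't think it closes the argument. The paper's proof is a ``three‑point'' pigeonhole: it inserts a \emph{third} tuple $\bar\rho\in[B]^k$ strictly between $\bar\gamma$ and $\bar\delta$ coordinatewise (and equal to them where they agree). Both pairs $(\bar\gamma,\bar\rho)$ and $(\bar\rho,\bar\delta)$ then realize the same abstract position-type that $(\bar\gamma,\bar\delta)$ does, so monochromaticity of $c$ via the data in (iii) forces $p_{\bar\rho}(l^*,\xi)=p_{\bar\delta}(l^*,\xi)=v$ (reading off from the first pair, since $p_{\bar\gamma}$ and hence the abstract position of $\xi$ inside $\dom p_{\bar\gamma}$ is unchanged) and also $p_{\bar\rho}(l^*,\xi)=p_{\bar\gamma}(l^*,\xi)=u$ (reading off from the second pair, symmetrically through $p_{\bar\delta}$), whence $u=v$.

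Your relocation argument does not produce this contradiction. If you move the $l_0$-block to fresh coordinates and obtain $\bar\alpha^*$ with $c(\bar\alpha^*)=c(\bar\alpha)$, then you get exactly the same $r_{\bar j},r_{\bar j'}$ as before --- and they still satisfy $r_{\bar j}(l^*,m)=u\ne v=r_{\bar j'}(l^*,m)$. Monochromaticity only says the abstract overlap data is identical across all tuples in $[B]^{2k}$; it does not, by itself, identify $r_{\bar j}(l^*,m)$ with $r_{\bar j'}(l^*,m)$, which is what you are trying to prove. The step ``the recorded overlap data\ldots forces $r_{\bar j}(l^*,m)=r_{\bar j'}(l^*,m)$'' is therefore a genuine gap: nothing in the relocation distinguishes the situation where the two values agree from the one where they do not.

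There is a secondary problem as well: the ``pinning dictionary'' (which abstract position in $d_l$ is occupied by which $\alpha_i$, and which are auxiliary) is \emph{not} part of the colour $c$. The data in (iii) is purely abstract --- cardinalities $M_l$ and the relabelled conditions $r_{\bar j}$ --- and never refers to the identity of the underlying ordinals. So ``the pinning-dictionary would change, contradicting $c(\bar\alpha^*)=c(\bar\alpha)$'' is not valid as stated; at best one could try to \emph{deduce} invariance of the pinning from homogeneity, but you cannot assume it as an input. The reduction to a single differing block is also unnecessary: the intermediate $\bar\rho$ handles all blocks simultaneously. I'd suggest looking for a configuration in which a single object is forced, by two different uses of homogeneity, to take two incompatible values --- that is the mechanism the paper's proof exploits.
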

 
	 \begin{proof}
	 	 Suppose not. By homogeneity we find a counterexample $\bar \alpha$, $\bar\gamma$, $\bar\delta$ where $B \cap (\alpha_{2l'},\alpha_{2l' +1})$ is non-empty for every $l' <k$. So let $(l,\beta) \in \dom p_{\bar\gamma} \cap \dom p_{\bar\delta}$ such that $p_{\bar\gamma}(l,\beta) = u \neq v = p_{\bar\delta}(l,\beta)$. Let $\bar \rho \in [B]^k$ be such that for every $l' < k$, $$\begin{cases}
	 	 \rho_{l'} \in (\gamma_{l'}, \delta_{l'} ) & \text{if } \gamma_{l'} < \delta_{l'} \\
	 	 \rho_{l'} \in (\delta_{l'}, \gamma_{l'} ) & \text{if }  \delta_{l'} < \gamma_{l'} \\
	 	 \rho_{l'} = \gamma_{l'} & \text{if } \gamma_{l'} = \delta_{l'}.
	 	 \end{cases}$$
	 	 Now note that $\bar\rho$'s relative position to $\bar\gamma$ is the same as that of $\bar\delta$ to $\bar\gamma$. More precisely, let $\bar j, \bar j' \in \prod_{l'<k} \{2l', 2l'+1 \}$ so that $\bar \gamma = (\alpha_{j_0}, \dots, \alpha_{j_{k-1}})$, $\bar \delta = (\alpha_{j'_0}, \dots, \alpha_{j'_{k-1}})$. Then there is $\bar \beta \in [B]^{2k}$ so that $\bar \gamma = (\beta_{j_0}, \dots, \beta_{j_{k-1}})$ and $\bar \rho = (\beta_{j'_0}, \dots, \beta_{j'_{k-1}})$. Thus by homogeneity of $[B]^{2k}$ via $c$, $p_{\bar\rho}(l,\beta) = v$. Similarly $\bar\delta$ is in the same position relative to $\bar\rho$ as to $\bar\gamma$. Thus also $p_{\bar\rho}(l,\beta) = u$ and we find that $v = u$ -- we get a contradiction.
	 \end{proof}

	\begin{claim}
		For any $l<k$ and $\bar j,\bar j' \in \prod_{l'<k} \{2l', 2l'+1 \}$, $e_{l,\bar j, \bar j'}(m) = m$ for every $m \in \dom e_{l,\bar j, \bar j'} $. 
	\end{claim}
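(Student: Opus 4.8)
The plan is to argue exactly as in the previous claim, by inserting a third tuple ``between'' $\bar\gamma$ and $\bar\delta$ and exploiting homogeneity of $c$ twice. First I would dispose of the trivial case $\bar j = \bar j'$: then the two chosen subtuples of $\bar\alpha$ coincide, so $H_l(\bar\gamma) = H_l(\bar\delta)$ and $e_{l,\bar j,\bar j'}$ is literally the identity on $N_l$. So assume $\bar j \neq \bar j'$ and suppose towards a contradiction that $e_{l,\bar j,\bar j'}(m) = m'$ for some $m \neq m'$. As in the previous claim, using that the value of $e_{l,\bar j,\bar j'}$ is part of the monochromatic colour, I would choose the ambient tuple $\bar\alpha \in [B]^{2k}$ ``spread out'', i.e. with $B \cap (\alpha_{2l'},\alpha_{2l'+1}) \neq \emptyset$ for every $l'<k$, and set $\bar\gamma = (\alpha_{j_0},\dots,\alpha_{j_{k-1}})$, $\bar\delta = (\alpha_{j'_0},\dots,\alpha_{j'_{k-1}})$. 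Write $\eta$ for the ordinal which is simultaneously the $m$'th element of $H_l(\bar\gamma)$ and the $m'$'th element of $H_l(\bar\delta)$; note that $H_l(\bar\gamma)$ and $H_l(\bar\delta)$ depend only on the respective single tuple and, by homogeneity, both have exactly $N_l$ elements.

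Next, exactly as in the previous claim, I would pick $\bar\rho \in [B]^k$ lying strictly between $\bar\gamma$ and $\bar\delta$ coordinatewise: $\rho_{l'}$ strictly between $\gamma_{l'}$ and $\delta_{l'}$ in those coordinates where they differ (this is where the ``spread out'' choice is used), and $\rho_{l'} = \gamma_{l'} = \delta_{l'}$ otherwise; one checks $\bar\rho$ is strictly increasing. Then the relative position of $\bar\rho$ to $\bar\gamma$ is the same as that of $\bar\delta$ to $\bar\gamma$, so there is $\bar\beta \in [B]^{2k}$ with $\bar\gamma = (\beta_{j_0},\dots,\beta_{j_{k-1}})$ and $\bar\rho = (\beta_{j'_0},\dots,\beta_{j'_{k-1}})$; since $c(\bar\beta) = c(\bar\alpha)$, the injection $e_{l,\bar j,\bar j'}$ read off from $\bar\beta$ agrees with the one read off from $\bar\alpha$, so the $m$'th element of $H_l(\bar\gamma)$ equals the $m'$'th element of $H_l(\bar\rho)$, i.e. the $m'$'th element of $H_l(\bar\rho)$ is $\eta$. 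Symmetrically, $\bar\delta$ is in the same position relative to $\bar\rho$ as relative to $\bar\gamma$, so there is $\bar\beta' \in [B]^{2k}$ with $\bar\rho = (\beta'_{j_0},\dots,\beta'_{j_{k-1}})$ and $\bar\delta = (\beta'_{j'_0},\dots,\beta'_{j'_{k-1}})$, and the same argument gives that the $m$'th element of $H_l(\bar\rho)$ equals the $m'$'th element of $H_l(\bar\delta)$, i.e. the $m$'th element of $H_l(\bar\rho)$ is $\eta$. Hence $\eta$ is at once the $m$'th and the $m'$'th element of $H_l(\bar\rho)$, and since the elements of $H_l(\bar\rho)$ are enumerated in strictly increasing order, $m = m'$ -- a contradiction.

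The only point needing care (and it is entirely routine, handled just as in the previous claim) is checking that the interleaving tuple $\bar\rho$ and the ambient tuples $\bar\beta,\bar\beta' \in [B]^{2k}$ with the prescribed relative positions actually exist: this follows from $B$ having order type $\omega_1$ together with the ``spread out'' choice of $\bar\alpha$, since in coordinates where $\bar\gamma$ and $\bar\delta$ agree the remaining slot of $\bar\beta$ (resp. $\bar\beta'$) may be filled by any sufficiently nearby element of $B$. I do not anticipate a genuine obstacle here: the content of the claim is simply that after homogenisation the sets $H_l(\bar\gamma)$ sitting over order-isomorphic tuples must line up position by position, and the double insertion forces the overlap map to be the identity.
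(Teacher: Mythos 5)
Your argument is correct and is essentially the same as the paper's: both insert an interleaving tuple $\bar\rho$ between $\bar\gamma$ and $\bar\delta$, use homogeneity twice to deduce that the distinguished ordinal is simultaneously the $m$'th and $m'$'th element of $H_l(\bar\rho)$, and conclude $m=m'$. You spell out the existence of the ambient tuples $\bar\beta, \bar\beta'$ more explicitly than the paper does, but the route is identical.
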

	
	\begin{proof}
		Let $\alpha_0 < \dots < \alpha_{2k} \in B$ so that $(\alpha_{2l'},\alpha_{2l'+1}) \cap B \neq \emptyset$ for every $l' <k$. Consider $\bar \gamma = (\alpha_{j_{l'}})_{l' < k}$, $\bar \delta = (\alpha_{j'_{l'}})_{l' < k}$ and again we find $\bar \rho \in [B]^k$ so that $\rho_{l'}$ is between (possibly equal to) $\alpha_{j_{l'}}$ and $\alpha_{j'_{l'}}$. If $e_{l,\bar j, \bar j'}(m) = m'$, then if $\beta$ is the $m$'th element of $H_l(\bar \gamma)$, then $\beta$ is $m'$'th element of $H_l(\bar \delta)$ aswell as of $H_l(\bar\rho)$. But also $\beta$ is the $m$'th element of $H_l(\bar\rho)$, thus $m =m'$.  
\end{proof}
	
	Note that by the above claim $e_{l,\bar j, \bar j'} = (e_{l,\bar j', \bar j})^{-1} = e_{l,\bar j', \bar j}$ and the essential information given by $e_{l,\bar j, \bar j'}$ is it's domain.
	
	Next let us introduce some notation. Whenever $x,y \in 2^\omega$, we write $x < y$ to say that $x$ is lexicographically below $y$, i.e. $x(n) < y(n)$, where $n = \min\{m \in \omega : x(m) \neq x(m) \}$. For any $g \in \{-1,0,1 \}^{k}$ we naturally define a relation $\tilde R_g$ between $k$-length sequences $\bar \nu$ and $\bar \mu$, either of elements of $2^\omega$, or of ordinals $<\kappa$, as follows: $$\bar \nu \tilde R_g \bar \mu \leftrightarrow \forall l<k \begin{cases}
	\nu_l < \mu_l &\text{ if } g(l)=-1\\
	\nu_l = \mu_l &\text{ if } g(l)=0\\
	\nu_l > \mu_l &\text{ if } g(l)=1.
	\end{cases}
	$$
	
	\noindent Further we write $\bar \nu R_g \bar \mu$ iff $\bar \nu \tilde R_g \bar \mu$ or $\bar \mu \tilde R_g \bar \nu$. Enumerate $\{R_g : g \in \{-1,0,1 \}^{k} \}$ without repetition as $\langle R_i : i < K \rangle$ (it is easy to see that $K = \frac{3^k +1}{2}$). Note that for any $\bar \nu, \bar \mu$ there is a unique $i <K$ so that $\bar \nu R_i \bar \mu$. Now for each $l < k$ and $i < K$, we let $$I_{l,i} := \dom e_{l,\bar j, \bar j'} \subseteq N_l,$$ 
	where $\bar j R_i \bar j'$. By homogeneity of $[B]^{2k}$ and the observation that $e_{l,\bar j, \bar j'} = e_{l,\bar j', \bar j}$, we see that $I_{l,i}$ does not depend on the particular choice of $\bar j, \bar j'$, such that $\bar j R_i \bar j'$.

For each $l< k$ and $m < N_l$, we define a relation $E_{l,m}$ on $(2^\omega)^k$ as follows: 
	
	$$ \bar x E_{l,m} \bar y \leftrightarrow m \in I_{l,i} \text{ where } i \text{ is such that } \bar x R_i \bar y.$$
	
	\begin{claim}
		$E_{l,m}$ is an equivalence relation. 
	\end{claim}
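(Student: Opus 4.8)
The plan is to verify reflexivity, symmetry and transitivity in turn, and for this it is convenient first to record that $E_{l,m}$ depends only on ``patterns''. For $\bar x,\bar y\in(2^\omega)^k$ the unique $i<K$ with $\bar x R_i\bar y$ is determined, up to the global sign flip built into $R_i$, by the vector of $<_{\lex}$-comparisons of the coordinates of $\bar x$ and $\bar y$; and by the two preceding claims $I_{l,i}$ is the domain of $e_{l,\bar j,\bar j'}$ for any $\bar j R_i\bar j'$, so that for any base $\bar\alpha\in[B]^{2k}$ and any choice sequences $\bar\gamma,\bar\delta$ over $\bar\alpha$ whose relative pattern is $R_i$,
\[ m\in I_{l,i}\iff H_l(\bar\gamma)\text{ and }H_l(\bar\delta)\text{ have the same }m\text{-th element}. \]
Here $H_l(\bar\gamma)$ has exactly $N_l$ elements by homogeneity of $c$ (item (v), applied after extending $\bar\gamma$ to a member of $[B]^{2k}$), so ``the $m$-th element'' makes sense for $m<N_l$.

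\emph{Reflexivity} and \emph{symmetry} are then immediate. The pattern of $(\bar x,\bar x)$ is the constantly-$0$ one, for which the two choice sequences coincide, so $e_{l,\bar j,\bar j}$ is the identity on $N_l$ and the corresponding $I_{l,i}$ equals $N_l\ni m$; thus $\bar x E_{l,m}\bar x$ always. And each $R_i$ is by definition a symmetric relation on $(2^\omega)^k$, so $\bar x R_i\bar y$ gives $\bar y R_i\bar x$ with the \emph{same} $i$, whence $\bar x E_{l,m}\bar y\iff\bar y E_{l,m}\bar x$.

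For \emph{transitivity}, suppose $\bar x E_{l,m}\bar y$ and $\bar y E_{l,m}\bar z$, and let $i_1,i_2,i_3$ be given by $\bar x R_{i_1}\bar y$, $\bar y R_{i_2}\bar z$, $\bar x R_{i_3}\bar z$; so $m\in I_{l,i_1}\cap I_{l,i_2}$ and we must show $m\in I_{l,i_3}$. The idea is to realize $\bar x,\bar y,\bar z$ inside $B$: fix pairwise disjoint, increasingly ordered infinite blocks $J_0<\dots<J_{k-1}$ of $B$, and inside $J_{l'}$ pick ordinals $\gamma_{l'},\delta_{l'},\epsilon_{l'}$ reproducing the $<_{\lex}$-weak order of $(x_{l'},y_{l'},z_{l'})$. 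Since the blocks are increasing, $\bar\gamma,\bar\delta,\bar\epsilon\in[B]^k$, and by construction the relative pattern of $(\bar\gamma,\bar\delta)$ is $R_{i_1}$, that of $(\bar\delta,\bar\epsilon)$ is $R_{i_2}$, and that of $(\bar\gamma,\bar\epsilon)$ is $R_{i_3}$. For each of these three pairs the two relevant $k$-tuples embed into a base in $[B]^{2k}$ (at a coordinate where they agree, fill the block up to size two with a further element of $B$), so the displayed equivalence yields: $m\in I_{l,i_1}$ iff $H_l(\bar\gamma),H_l(\bar\delta)$ have the same $m$-th element, and likewise for $(i_2;\bar\delta,\bar\epsilon)$ and $(i_3;\bar\gamma,\bar\epsilon)$. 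Crucially $H_l(\bar\gamma),H_l(\bar\delta),H_l(\bar\epsilon)$ are fixed finite sets of ordinals (depending only on the $k$-tuples, not on the chosen $2k$-bases), so from $m\in I_{l,i_1}\cap I_{l,i_2}$ we get that $H_l(\bar\gamma)$ and $H_l(\bar\epsilon)$ share their $m$-th element, i.e.\ $m\in I_{l,i_3}$, which is $\bar x E_{l,m}\bar z$.

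The only real work is this last realization step: one must arrange the chosen ordinals so that they simultaneously form increasing $k$-tuples $\bar\gamma,\bar\delta,\bar\epsilon$ and, pairwise, increasing $2k$-bases in $B$, and one must check that ``the $m$-th element'' is unambiguous. Both points are routine given that $B$ is uncountable and that the data in (v) and (ix) are constant on $[B]^{2k}$ by Erdős--Rado, so no genuinely new idea is needed beyond careful indexing.
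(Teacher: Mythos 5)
Your proof is correct and follows essentially the same route as the paper's: reflexivity and symmetry are routine, and transitivity is established by realizing the coordinate-wise comparison patterns of the three points inside the homogeneous set $B$ via ordered blocks, and then tracing the $m$-th element of the $H_l$-sets across the three pairs using homogeneity of $c$ and the fact that each $e_{l,\bar j,\bar j'}$ is (a restriction of) the identity. The extra bookkeeping you supply — the explicit characterization ``$m\in I_{l,i}$ iff $H_l(\bar\gamma)$ and $H_l(\bar\delta)$ share their $m$-th element,'' the observation that $|H_l(\cdot)|=N_l$ by homogeneity so ``$m$-th element'' is unambiguous, and the remark that the all-equal pattern gives $I_{l,i}=N_l$ for reflexivity — matches what the paper leaves implicit.
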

	
	\begin{proof}
		The reflexivity and symmetry of $E_{l,m}$ is obvious. Assume that $\bar x_0 E_{l,m} \bar x_1$ and $\bar x_1 E_{l,m} \bar x_2$, and say $\bar x_0 R_{i_0} \bar x_1$, $\bar x_1 R_{i_1} \bar x_2$ and $\bar x_0 R_{i_2} \bar x_2$. Find $\bar \gamma^0, \bar \gamma^1, \bar \gamma^2 \in [B]^k$ so that $$\{\gamma^i_0 : i <3 \} < \dots < \{\gamma^i_{k-1} : i <3 \} $$ and $$\bar \gamma^0 R_{i_0} \bar \gamma^1, \bar \gamma^1 R_{i_1} \bar \gamma^2, \bar \gamma^0 R_{i_2} \bar \gamma^2.$$ 
		
		If $\beta$ is the $m$'th element of $H_l(\bar \gamma^0)$, then $\beta$ is also the $m$'th element of $H_l(\bar \gamma^1)$, since we can find an appropriate $\bar \alpha \in [B]^{2k}$ and $\bar j$, $\bar j'$ so that $\bar \gamma^0 = (\alpha_{j_l})_{l<k}$ and $\bar \gamma^1 = (\alpha_{j'_l})_{l<k}$, $\bar j R_{i_0} \bar j'$ and we have that $m \in I_{l,i_0}$. Similarly $\beta$ is the $m$'th element of  $H_l(\bar \gamma^2)$. 
		
		But now we find again $\bar \alpha \in [B]^{2k}$ and $\bar j$, $\bar j'$ so that $\bar \gamma^0 = (\alpha_{j_l})_{l<k}$ and $\bar \gamma^2 = (\alpha_{j'_l})_{l<k}$. Thus $m \in I_{l,i_2}$, as $e_{l,\bar j, \bar j'}(m) = m$ and $\bar x_0 E_{l,m} \bar x_2$. 
		\end{proof}
	
	\begin{claim}
		$E_{l,m}$ is smooth as witnessed by a continuous function, i.e. there is a continuous map $\varphi_{l,m} \colon (2^{\omega})^k \to 2^{\omega}$ so that $\bar x E_{l,m} \bar y$ iff $\varphi_{l,m}(\bar x) = \varphi_{l,m}(\bar y)$.  
	\end{claim}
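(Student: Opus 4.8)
The plan is to show that $E_{l,m}$ is governed only by the coordinatewise pattern of $<_{\lex}$-comparisons between its two arguments, and then to extract from the preceding claim (that $E_{l,m}$ is an equivalence relation) that this pattern condition collapses to plain equality on a fixed block of coordinates. Concretely, for $\bar x, \bar y \in (2^\omega)^k$ let $\sigma(\bar x,\bar y)\in\{-1,0,1\}^k$ be the vector whose $l'$-th entry records whether $x_{l'}<_{\lex}y_{l'}$, $x_{l'}=y_{l'}$ or $x_{l'}>_{\lex}y_{l'}$. Since $R_g=R_{-g}$, for each $g$ there is a unique index $i<K$ with $R_i=R_g$, call it $i_g$, and $\bar x\,R_{i_g}\,\bar y$ holds precisely when $\sigma(\bar x,\bar y)\in\{g,-g\}$. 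Hence, setting $G:=\{\,g\in\{-1,0,1\}^k:m\in I_{l,i_g}\,\}$, we have $\bar x\,E_{l,m}\,\bar y \leftrightarrow \sigma(\bar x,\bar y)\in G$, and $G$ is symmetric under $g\mapsto -g$ and contains $\bar 0$ (as $E_{l,m}$ is reflexive).

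The key step is a composition property for $G$: if $g,h\in G$ and $g'\in\{-1,0,1\}^k$ is a coordinatewise composite of $g$ and $h$ — meaning $g'(l')=g(l')$ whenever $h(l')=0$, $g'(l')=h(l')$ whenever $g(l')=0$, $g'(l')=g(l')$ whenever $g(l')=h(l')\neq 0$, and $g'(l')$ is unrestricted whenever $g(l')=-h(l')\neq 0$ — then $g'\in G$. To prove this I would produce $\bar x,\bar y,\bar z\in(2^\omega)^k$ with $\sigma(\bar x,\bar y)=g$, $\sigma(\bar y,\bar z)=h$ and $\sigma(\bar x,\bar z)=g'$: working in each coordinate separately, the three prescribed $<_{\lex}$-relations among $x_{l'},y_{l'},z_{l'}$ are mutually consistent (in the ``unrestricted'' case one simply places $y_{l'}$ at the bottom or top of $(2^\omega,<_{\lex})$ as appropriate), and they can be realised since $(2^\omega,<_{\lex})$ is an infinite linear order. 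Then $\bar x\,E_{l,m}\,\bar y$ and $\bar y\,E_{l,m}\,\bar z$, so transitivity (the preceding claim) yields $\bar x\,E_{l,m}\,\bar z$, i.e.\ $g'\in G$. I expect this realisability of the triple to be the only (very mild) obstacle; the rest is bookkeeping.

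To finish, taking $h=-g$ in the composition property shows that $G$ is downward closed under supports: if $g\in G$ and $\supp(g')\subseteq\supp(g)$ then $g'\in G$. Taking instead, for $g,h\in G$, a composite $g'$ with $\supp(g')=\supp(g)\cup\supp(h)$ shows that $\{\supp(g):g\in G\}$ is closed under unions. Being nonempty and closed under both subsets and unions, this family equals $\mathcal P(U)$ for $U:=\bigcup\{\supp(g):g\in G\}$, whence $G=\{\,g:\supp(g)\subseteq U\,\}$. Setting $T:=k\setminus U$, we conclude that $\bar x\,E_{l,m}\,\bar y \leftrightarrow \sigma(\bar x,\bar y)(l')=0 \text{ for all } l'\in T \leftrightarrow x_{l'}=y_{l'} \text{ for all } l'\in T$. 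Therefore the projection $\bar x\mapsto (x_{l'})_{l'\in T}\in(2^\omega)^T$ is continuous and reduces $E_{l,m}$ to equality; composing it with a homeomorphism $(2^\omega)^T\cong 2^\omega$ (and taking a constant map when $T=\emptyset$, in which case $E_{l,m}$ is the total relation) produces the required continuous $\varphi_{l,m}\colon(2^\omega)^k\to 2^\omega$.
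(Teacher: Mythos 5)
Your argument is correct, and it takes a genuinely different and more elementary route than the paper. The paper first establishes that $E_{l,m}$ is \emph{Borel}-smooth via Srivastava's selection theorem for $G_\delta$-valued multifunctions (checking that saturations of open sets are Borel and that classes are $G_\delta$), and then has to restrict to a perfect product $X_0\times\cdots\times X_{k-1}$ on which the resulting Borel reduction is continuous, finally invoking $<_{\lex}$-preserving homeomorphisms $X_{l'}\cong 2^\omega$ to return to the original space. You instead exploit that $E_{l,m}$ depends only on the coordinatewise sign pattern $\sigma(\bar x,\bar y)$, and extract from the transitivity established in the preceding claim (via the realizability of any consistent triple of sign patterns in $(2^\omega)^k$) a closure property that forces the admissible patterns to be exactly $\{g:\supp(g)\subseteq U\}$ for a fixed $U\subseteq k$. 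This identifies $E_{l,m}$ outright as equality on the block $T=k\setminus U$, and the projection onto those coordinates, composed with a homeomorphism $(2^\omega)^T\cong 2^\omega$ (or a constant map if $T=\emptyset$), is a globally continuous reduction. Your approach avoids Srivastava's theorem, avoids the restriction-and-re-homeomorphism step entirely, and yields a sharper structural conclusion about $E_{l,m}$; the paper's route is less explicit but illustrates a more generic technique that would still apply if $E_{l,m}$ did not have this rigid form. The only point to be pedantic about in writing it up is the realizability of the triple $(\bar x,\bar y,\bar z)$, which you handle correctly coordinate by coordinate; note also that the downward-closure and union-closure steps both need the symmetry $G=-G$ (which you state) so that $-g\in G$ can be used as the second input to the composition.
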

	
	\begin{proof}
		We will check the following: 
		
		\begin{enumerate}[label=(\alph*)]
			\item For every open $O \subseteq (2^\omega)^k$, the $E_{l,m}$ saturation of $O$ is Borel,
			\item every $E_{l,m}$ equivalence class is $G_\delta$. 
		\end{enumerate}
		
		By a theorem of Srivastava (\cite[Thm 4.1.]{Srivastava1979}), (a) and (b) imply that $E_{l,m}$ is smooth, i.e. we can find $\varphi_{l,m}$ Borel. 
		
		\begin{enumerate}[label=(\alph*)]
			\item The $E_{l,m}$ saturation of $O$ is the set $\{\bar x : \exists \bar y \in O (\bar x E_{l,m} \bar y) \}$. It suffices to check for each $g \in \{-1,0,1\}^k$ that the set $X = \{\bar x : \exists \bar y \in O (\bar x \tilde R_g \bar y) \}$ is Borel. Let $\mathcal{S} = \{\bar \sigma \in (2^{<\omega})^k : [\sigma_0] \times \dots \times [\sigma_{k-1}] \subseteq O \}$. Consider $$\varphi(\bar x) :\leftrightarrow \exists \bar \sigma \in \mathcal{S} \forall l' < k \begin{cases} x_{l'} <_{\lex} {\sigma_{l'}}^\frown 0^\omega &\text{ if } g(l') = -1 \\
			x_{l'} \in [\sigma_{l'}] &\text{ if } g(l') = 0 \\
			{\sigma_{l'}}^\frown 1^\omega <_{\lex} x_{l'} &\text{ if } g(l') = 1\end{cases}.$$
			If $\varphi(\bar x)$ holds true then let $\bar \sigma$ witness this. We then see that there is $\bar y \in [\sigma_0] \times \dots \times [\sigma_{k-1}]$ with $\bar x \tilde R_g \bar y$. On the other hand, if $\bar y \in O$ is such that $\bar x R_g \bar y$, then we find $\bar \sigma \in \mathcal{S}$ defining a neighborhood of $\bar y$ witnessing $\varphi(\bar x)$. Thus $X$ is defined by $\varphi$ and is thus Borel. 
			\item Since finite unions of $G_\delta$'s are $G_\delta$ it suffices to check that $\{ \bar x : \bar x \tilde R_g \bar y \}$ is $G_\delta$ for every $\bar y$ and $g \in \{-1,0,1\}^k$. But this is obvious from the definition. 
		\end{enumerate}
		
		Now note that given $\varphi_{l,m}$ Borel, we can find perfect $X_0,\dots X_{k-1} \subseteq 2^{\omega}$ so that $\varphi_{l,m}$ is continuous on $X_0 \times \dots \times X_{k-1}$ ($\varphi_{l,m}$ is continuous on a dense $G_\delta$). But there is a $<_{\lex}$ preserving homeomorphism from $X_l$ to $2^{\omega}$ for each $l<k$ so we may simply assume $X_l = 2^\omega$. 
	\end{proof}

	Fix such $\varphi_{l,m}$ for every $l<k$, $m<N_l$, so that $\varphi_{l,a_l}(\bar x) = x_l$ (note that $\bar x E_{l,a_l} \bar y$ iff $x_l = y_l$). Now let $M_0$ be countable elementary, containing all relevant information and such that $\varphi_{l,m} \in M_0$ for every $l < k$, $m < N_l$. Let $\chi_{l,m} \colon 2^\omega \to [r(l,m)]$ for $l < k$ and $m \neq a_l$ be generic continuous functions over $M_0$, i.e. the sequence $(\chi_{l,m})_{l < k, m \in N_l \setminus \{a_l\} }$ is $\prod_{l < k, m \in N_l \setminus \{a_l\}} \mathbb{C}(2^\omega, [r(l,m)])$ generic over $M_0$. Let us denote with $M$ the generic extension of $M_0$. Also let $\chi_{l,m}$ for $m = a_l$ be the identity. Finally we set $$\phi_i(\bar x) = ((\chi_{l,b^i_l}\circ \varphi_{l,b^i_l})(\bar x))_{l < k}$$ for each $i < N$. 
	
	\begin{claim}
		(2) of the main lemma holds true with $M$, $\bar s$ and $\phi_i$, $i <N$, that we just defined. 
	\end{claim}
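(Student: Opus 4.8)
The plan is: given any $\bar x_0,\dots,\bar x_{n-1}\in(2^\omega)^k\cap[\bar s]$ that are mCg over $M$, to produce a single $\mathbb{Q}$-generic filter $G$ over $M_0$ with $\bar z_{\bar\gamma_i}[G]=\bar x_i$ and $\bar z_{\bar\beta^j(\bar\gamma_i)}[G]=\phi_j(\bar x_i)$ for suitably chosen $\bar\gamma_i\in[B\cap M_0]^k$ and $j<N$, and then to transfer the resulting facts about $\dot{\mathcal A}$ and $E$ from $M_0[G]$ to $V$. To pick the $\bar\gamma_i$, I would fix disjoint increasing blocks of $B\cap M_0$, one per coordinate $l<k$ (the block for $l$ below that for $l+1$), and choose $(\gamma_i)_l$ inside the $l$-th block, spread out, so that the ordinal order (with ties) of $\langle(\gamma_i)_l:i<n\rangle$ matches the $<_{\lex}$-order of $\langle x_i(l):i<n\rangle$ and every pair $(\bar\gamma_i,\bar\gamma_{i'})$ occurs as a pair $(\bar\gamma,\bar\delta)$ with $\bar\gamma,\bar\delta\in\prod_{l<k}\{\alpha_{2l},\alpha_{2l+1}\}$ for some $\bar\alpha\in[B]^{2k}$. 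This ensures $\bar\gamma_i$ and $\bar x_i$ have the same relative positions: $\bar\gamma_iR_t\bar\gamma_{i'}$ and $\bar x_iR_t\bar x_{i'}$ hold for the same $t<K$.

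The assignment we want $G$ to realize sends, for each $i<n$, $l<k$, $m<N_l$, the $\mathbb{Q}$-coordinate $(l,\xi)$ with $\xi$ the $m$-th element of $H_l(\bar\gamma_i)$ to $\psi_{l,m}(\bar x_i)$ (recall $\psi_{l,a_l}(\bar x)=x_l$, so the coordinate $(l,(\gamma_i)_l)$ receives $x_i(l)$, and $\phi_j(\bar x)=(\psi_{l,b^j_l}(\bar x))_{l<k}$). First I would check this is well defined: if $\xi\in H_l(\bar\gamma_i)\cap H_l(\bar\gamma_{i'})$, then by the claim that the $e_{l,\bar j,\bar j'}$ are partial identities, $\xi$ sits at the same position $m$ in both and $m\in I_{l,t}$ where $\bar\gamma_iR_t\bar\gamma_{i'}$; since $\bar x_iR_t\bar x_{i'}$ with the same $t$, this says precisely $\bar x_iE_{l,m}\bar x_{i'}$, whence $\varphi_{l,m}(\bar x_i)=\varphi_{l,m}(\bar x_{i'})$ and $\psi_{l,m}(\bar x_i)=\psi_{l,m}(\bar x_{i'})$. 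Next, the assignment is coordinatewise compatible with all the $p_{\bar\gamma_i}$: each $p_{\bar\gamma_i}$ is determined by the homogeneous datum $r$ via $p_{\bar\gamma_i}(l,\xi)=r(l,m)$, and $\psi_{l,m}=\chi_{l,m}\circ\varphi_{l,m}$ maps into $[r(l,m)]$ (resp.\ into $[s_l]$ when $m=a_l$, using $\bar x_i\in[\bar s]$); and by the claim that the $p_{\bar\gamma}$ agree on common domains, their union $p^{\ast}$ is a condition still compatible with the assignment.

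It remains to see that the tuple of values produced by the assignment is Cohen generic over $M_0$, and this is where Lemma~\ref{lem:forcingcont} enters. The $\bar x_i$ are Cohen generic over $M=M_0[\langle\chi_{l,m}\rangle]$, the maps $\varphi_{l,m}$ lie in $M_0$, so the reals $\varphi_{l,m}(\bar x_i)$ lie in $M[\bar x]$; passing to distinct representatives for each $(l,m)$, the proof method of Lemma~\ref{lem:forcingcont} (which localises each $\chi_{l,m}$ separately, so it applies verbatim although the point-set depends on $(l,m)$) yields that $\langle\bar x_i:i<n\rangle^\frown\langle\chi_{l,m}(\varphi_{l,m}(\bar x_i)):l<k,\ m<N_l,\ m\neq a_l,\ i<n\rangle$, with entries identified exactly according to the relations $E_{l,m}$, is Cohen generic over $M_0$ in the corresponding product of basic clopen sets. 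Since the distinct entries are precisely the values the assignment attaches to distinct $\mathbb{Q}$-coordinates, there is a $\mathbb{Q}$-generic $G$ over $M_0$ with $p^{\ast}\in G$ realizing the assignment. In $M_0[G]$ we then have $\bar z_{\bar\gamma_i}[G]=\bar x_i$ and $\bar z_{\bar\beta^j(\bar\gamma_i)}[G]=\phi_j(\bar x_i)$; since $p_{\bar\gamma_i}\in G$, each $\{\phi_j(\bar x_i):j<N\}$ is contained in the $E$-independent set $\dot{\mathcal A}[G]$, hence $\{\phi_j(\bar x_i):j<N,\ i<n\}$ is $E$-independent in $M_0[G]$, while $\{\bar x_0\}\cup\{\phi_j(\bar x_0):j<N\}\in E$ there. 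As $M_0[G]$ is a ccc extension of the elementary model $M_0$ containing all the relevant reals, these $\Pi^1_1$ and $\Sigma^1_1$ facts are absolute to $V$, which is (2) of the main lemma. The main obstacle is this last paragraph: one must verify that collisions among the values of the assignment are governed \emph{exactly} by the equivalence relations $E_{l,m}$ — so that the assignment is consistent and no accidental identification of values on distinct $\mathbb{Q}$-coordinates occurs — and that Cohen genericity over $M_0$ survives being pushed through the continuous maps $\chi_{l,m}$; this is precisely what choosing the $\varphi_{l,m}$ as smooth witnesses, the homogeneity of the $e_{l,\bar j,\bar j'}$, and Lemma~\ref{lem:forcingcont} were arranged to deliver.
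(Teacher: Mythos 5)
Your proposal follows essentially the same route as the paper: pick ordinals in $B\cap M_0$ whose relative positions mirror the $<_{\lex}$-order of the $\bar x_i$ coordinatewise, define the induced assignment of $\psi_{l,m}(\bar x_i)$ to $\mathbb{Q}$-coordinates, verify consistency via the homogeneity claims about $e_{l,\bar j,\bar j'}$ and $E_{l,m}$, invoke Lemma~\ref{lem:forcingcont} for Cohen genericity of the resulting tuple over $M_0$, extend to a $\mathbb{Q}$-generic $G$ containing the (centered) $p_{\bar\gamma_i}$, and finish by absoluteness. The details are correct and line up with the paper's argument.
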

	
	\begin{proof}
		Let $\bar x_0. \dots, \bar x_{n-1} \in [\bar s]$ be $\langle 2^\omega : l < k\rangle$-mCg over $M$. Let us write $\{ \bar x_i(l) : i <n \} = \{ y_l^i : i < K_l \}$ for every $l < k$, where $y_l^0 <_{\lex} \dots <_{\lex} y_l^{K_l -1} $. Now find $$\alpha^0_0 < \dots < \alpha_0^{K_0 -1} < \dots < \alpha_{k-1}^0 < \dots < \alpha_{k-1}^{K_{k-1}-1}$$ in $B \cap M$. For every $\bar j \in \prod_{l < k} K_l$, define $\bar y_{\bar j} := (y_0^{j(0)}, \dots, y_{k-1}^{j(k-1)})$ and $\bar \alpha_{\bar j} := (\alpha_0^{j(0)}, \dots, \alpha_{k-1}^{j(k-1)})$. Then, for each $i < n$, we have $\bar j_i \in \prod_{l < k} K_l$ so that $\bar x_i = \bar y_{\bar j_i}$. For each $i < n$ define the function $g_i \colon \bigcup_{l < k} \{ l\} \times H_l(\bar \alpha_{\bar j_i}) \to 2^\omega$, setting $$g_i(l,\beta) = \chi_{l,m}(\varphi_{l,m}(\bar x_{i})), $$ whenever $\beta$ is the $m$'th element of $H_l(\bar \alpha_{\bar j_i})$.
		
		Now we have that the $g_i$ agree on their common domain. Namely let $i_0, i_1 < n$ and $(l,\beta) \in \dom g_{i_0} \cap \dom g_{i_1}$. Then if we set $i$ to be so that $\bar x_{i_0} R_i \bar x_{i_1}$, we have that $m \in I_{l,i}$, where $\beta$ is the $m$'th element of $H_{l}(\bar \alpha_{\bar j_{i_0}})$ and of $H_{l}(\bar \alpha_{\bar j_{i_1}})$. In particular $\bar x_{i_0} E_{l,m} \bar x_{i_1}$ and $\varphi_{l,m}(\bar x_{i_0}) = \varphi_{l,m}(\bar x_{i_1})$ and thus $$g_{i_0}(l,\beta) = \chi_{l,m}(\varphi_{l,m}(\bar x_{i_0})) = \chi_{l,m}(\varphi_{l,m}(\bar x_{i_1})) = g_{i_1}(l,\beta).$$
		
		Let $g := \bigcup_{i < n} g_i$. Then we see by Lemma~\ref{lem:forcingcont}, that $g$ is Cohen generic in $\prod_{(l,\beta) \in \dom g} 2^\omega$ over $M_0$. Namely consider $K = \sum_{l < k} K_l$ and $(y^0_0, \dots, y_{k-1}^{K_{k-1}-1})$ as a $(2^{<\omega})^K$-generic over $M$. Then, if $\langle u_i : i < n'\rangle$ enumerates $\{ \varphi_{l,m}(\bar x_i) : i < n, l<k, m < N_l \}$, we have that every value of $g$ is contained in $\{ \chi_{l,m}(u_i) : i < n', l <k, m<N_l \}$. Also note that by construction for every $i < n$, $p_{\bar \alpha_{\bar j_i}} \restriction \dom g$ is in the generic filter defined by $g$. Since $\{ p_{\bar \alpha_{\bar j_i}} : i < n \}$ is centered we can extend the generic filter of $g$ to a $\mathbb{Q}$-generic $G$ over $M_0$ so that $p_{\bar \alpha_{j_i}} \in G$ for every $i < n$. 
		
	   Now we have that $$ \bar z_{\bar \alpha_{\bar j_i}}[G] = \bar x_i \text{ and } \bar z_{\bar \beta^{j}(\bar \alpha_{\bar j_i})}[G] = \phi_j(\bar x_{i})$$ for every $i < n$ and $j < N$. Thus we get that $$M_0[G] \models \bigcup_{i < n} \{ \phi_j(\bar x_i) : j < N \} \subseteq \dot{\mathcal{A}}[G] \wedge \{ \bar x_0 \} \cup \{\phi_j(\bar x_0) : j < N \} \in E.$$ Again, by absoluteness, we get the required result.
\end{proof}
\vspace{-10pt}
\end{proof}

\subsection{Infinite products}

\begin{definition}\label{def:mcginfinite}
Let $\langle X_i : i < \alpha \rangle \in M$ be Polish spaces indexed by a countable ordinal $\alpha$. Then we say that $\bar x_0, \dots, \bar x_{n-1} \in \prod_{i<\alpha} X_i$ are $\langle X_i : i < \alpha \rangle$-\emph{mutually Cohen generic (mCg) over} $M$ if there are $\xi_0 = 0 <  \dots< \xi_k = \alpha$ for some $k \in \omega$ so that $$\bar x_0, \dots, \bar x_{n-1} \text{ are } \langle Y_l : l < k\rangle\text{-mutually Cohen generic over } M,$$ where $Y_l = \prod_{i \in [\xi_l, \xi_{l+1})} X_i$ for every $l <k$ and we identify $\bar x_i$ with $$(\bar  x_i \restriction [\xi_0, \xi_1), \dots, \bar x_i \restriction [\xi_{k-1}, \xi_k)) \in \prod_{l <k} Y_l ,$$ for every $i < n$.
\end{definition}
   
The identification of a sequence $\bar x$ with $(\bar x \restriction [\xi_0, \xi_1), \dots, \bar x \restriction [\xi_{k-1}, \xi_k))$ for a given $\langle \xi_l : l < k\rangle$ will be implicitely made throughout the rest of the paper in order to reduce the notational load. 

Note that whenever $\bar x_0, \dots, \bar x_{n-1}$ are $\langle X_i : i < \alpha\rangle$-mCg over $M$ and $\beta \leq \alpha$, then ${\bar x_0 \restriction \beta}, \dots, {\bar x_{n-1} \restriction \beta}$ are $\langle X_i : i < \beta\rangle$-mCg over $M$. Also note that Definition~\ref{def:mcginfinite} agrees with the notion of mCg for finite $\alpha$.

\begin{definition}
	We say that $\bar x_0, \dots, \bar x_{n-1} \in \prod_{i<\alpha} X_i$ are \emph{strongly $\langle X_i : i < \alpha \rangle$-mCg} over $M$ if they are $\langle X_i : i < \alpha \rangle$-mCg over $M$ and for any $i,j < n$, if $\xi = \min\{ \beta < \alpha : x_i(\beta) \neq x_j(\beta) \}$, then $x_i(\beta) \neq x_j(\beta)$ for all $\beta \geq \xi$.
\end{definition}

\begin{mainlemma}
	\label{lem:mainlemmainf}
	Let $\alpha < \omega_1$ and $E \subseteq [(2^\omega)^\alpha]^{<\omega}$ be an analytic hypergraph. Then there is a countable model $M$, $\alpha +1 \subseteq M$, so that either
	
	\begin{enumerate}
		\item for any $n \in \omega$ and $\bar x_0,\dots, \bar x_{n-1} \in (2^\omega)^\alpha$ that are strongly $\langle 2^\omega : i < \alpha\rangle$-mCg over $M$, $$\{\bar x_0,\dots, \bar x_{n-1}\} \text{ is } E \text{-independent}$$
	\end{enumerate}
	
	or for some $N\in \omega$,
	\begin{enumerate}
		\setcounter{enumi}{1}
		\item there are $\phi_0, \dots, \phi_{N-1} \colon (2^\omega)^\alpha \to (2^\omega)^\alpha$ continuous, $\bar s \in \bigotimes_{i<\alpha} 2^{<\omega}$ so that for any $n \in \omega$ and $\bar x_0,\dots, \bar x_{n-1} \in (2^\omega)^\alpha \cap [\bar s]$ that are strongly mCg over $M$,  $$\{\phi_j(\bar x_i) : j < N, i<n\} \text{ is } E \text{-independent but } \{ \bar x_0\} \cup \{ \phi_j(\bar x_0) : j < N \} \in E.$$
		\end{enumerate}
	
\end{mainlemma}

\begin{proof}

We are going to show something slightly stronger. Let $R$ be an analytic hypergraph on $(2^\omega)^\alpha \times \omega$, $M$ a countable model with $R \in M, \alpha +1 \subseteq M$ and $k \in \omega$. Then consider the following two statements.

\begin{enumerate}[label={$(\arabic*)_{R,M,k}$:},align=left]

    \item For any pairwise distinct $\bar x_0, \dots, \bar x_{n-1}$ that are strongly $\langle 2^\omega : i < \alpha\rangle$-mCg over $M$, and any $k_0, \dots, k_{n-1} < k$, $$\{(\bar x_0{},k_0),\dots, (\bar x_{n-1}, k_{n-1})\} \text{ is } R \text{-independent}.$$

\item There is $N \in \omega$, there are $\phi_0, \dots, \phi_{N-1} \colon (2^\omega)^\alpha \to (2^\omega)^\alpha$ continuous, such that for every $\bar x \in (2^\omega)^\alpha$ and $j_0 < j_1 < N$, $\phi_{j_0}(\bar x) \neq \phi_{j_1}(\bar x)$ and $\phi_{j_0}(\bar x) \neq \bar x$, there are $k_0, \dots, k_{N-1} \leq k$ and $\bar s \in \bigotimes_{i<\alpha} 2^{<\omega}$, so that for any pairwise distinct $\bar x_0,\dots, \bar x_{n-1} \in (2^\omega)^\alpha \cap [\bar s]$ that are strongly $\langle 2^\omega : i < \alpha\rangle$-mCg over $M$,  $$\{(\phi_j(\bar x_i),k_{j}) : j < N, i<n\} \text{ is } R \text{-independent, but}$$ $$\{ (\bar x_0, k)\} \cup \{(\phi_j(\bar x_0), k_{j}) : j < N \} \in R.$$ In fact, if $k > 0$, $$\{ (\bar x_i, {k-1}) : i < n \} \cup \{(\phi_j(\bar x_i), k_{j}) : j < N, i<n\} \text{ is } R \text{-independent.}$$
\end{enumerate}

\noindent We are going to show by induction on $\alpha$ that for any $R, M, k$, $(1)_{R,M,k}$ implies that either $(1)_{R,M,k+1}$ or there is a countable model $M^+ \supseteq M$ so that $(2)_{R,M^+,k}$. From this we easily follow the statement of the main lemma. Namely, whenever $E$ is a hypergraph on $(2^\omega)^\alpha$, consider the hypergraph $R$ on $(2^\omega)^\alpha \times \omega$ where $\{ (\bar x_0, k_0), \dots, (\bar x_{n-1}, k_{n-1}) \} \in R$ iff $\{ \bar x_0, \dots, \bar x_{n-1} \} \in R$. Then, if $M$ is an arbitrary countable elementary model with $R,\alpha \in M$ and if $k=0$, $(1)_{R,M,k}$ holds vacuously true. Applying the claim we find $M^+$ so that either $(1)_{R,M,1}$ or $(2)_{R,M^+,0}$. The two options easily translate to the conclusion of the main lemma.  

Let us first consider the successor step. Assume that $\alpha = \beta +1$, $R$ is an analytic hypergraph on $(2^\omega)^\alpha \times \omega$ and $M$ a countable model with $R \in M, \alpha +1 \subseteq M$ so that $(1)_{R,M,k}$ holds true for some given $k \in \omega$. Let $\mathbb{Q}$ be the forcing adding mutual Cohen reals $\langle z_{0,i,j}, z_{1,i,j} : i, j \in \omega \rangle$ in $2^\omega$. Then we define the hypergraph $\tilde R$ on $(2^\omega)^\beta \times \omega$ where $\{(\bar y_0, m_0), \dots, (\bar y_{n-1}, m_{n-1}) \} \in \tilde R \cap [(2^\omega)^\beta \times \omega]^n$ iff there is $p \in \mathbb{Q}$ and there are $K_i \in \omega$, $k_{i,0}, \dots, k_{i,K_i -1} < k$ for every $i < n$, so that $$p \Vdash_{\mathbb{Q}} \bigcup_{i<n} \{ (\bar y_i{}^\frown \dot z_{0,i,j}, k_{i,j}) : j < K_i \} \cup \{ (\bar y_i{}^\frown \dot z_{1,i,j}, k) : j < m_i \} \in R.$$ 

Then $\tilde R$ is analytic (see e.g. \cite[29.22]{Kechris1995}).

\begin{claim}
\label{claim:tildeR1}
$(1)_{\tilde R, M, 1}$ is satisfied. 
\end{claim}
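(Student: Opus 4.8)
The plan is to argue by contradiction, so assume $(1)_{\tilde R,M,1}$ fails. Then there are pairwise distinct $\bar y_0,\dots,\bar y_{n-1}$ that are strongly mCg with respect to $\prod_{i<\beta}2^\omega$ over $M$ together with a sub-collection $\{\bar y_i{}^\frown 0:i\in I\}$ that lies in $\tilde R$ (all tags equal $0$ since $m_i<1$). Since any sub-tuple of a pairwise distinct strongly mCg tuple is again pairwise distinct and strongly mCg, I may as well take $I=\{0,\dots,n-1\}$. As the $\bar y_i{}^\frown 0$ are pairwise distinct, the clause defining $\tilde R$ on $[(2^\omega)^\beta\times\omega]^n$ applies, and unwinding it I fix $p\in\mathbb{Q}$, numbers $K_i\in\omega$ and tags $k_{i,j}<k$ (for $i<n$, $j<K_i$) with
$$p\Vdash_{\mathbb{Q}}\ \bigcup_{i<n}\{\,\bar y_i{}^\frown\dot z_{0,i,j}{}^\frown k_{i,j}:j<K_i\,\}\in R,$$
the second family in the definition of $\tilde R$ being empty because each $m_i=0$.

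The next step is to convert this forced membership into a genuine instance of $R$. The point is that the $\mathbb{Q}$-forcing relation restricted to $\mathbf{\Sigma}^1_1$ statements is itself analytic in its real parameters --- this is precisely the computation behind $\tilde R$ being analytic --- so ``$\{\bar y_i{}^\frown 0:i<n\}\in\tilde R$'' already holds in the countable model $M[\langle\bar y_i:i<n\rangle]$, which is a Cohen extension of $M$, and there one fixes $p,\bar K,\bar k$ as above forcing over $M[\langle\bar y_i:i<n\rangle]$. Since this model and $\mathbb{Q}$ are countable, there is $G\in V$ that is $\mathbb{Q}$-generic over it with $p\in G$; writing $z_{0,i,j}:=\dot z_{0,i,j}[G]$, the forcing theorem applied inside $M[\langle\bar y_i:i<n\rangle]$ together with $\mathbf{\Sigma}^1_1$-absoluteness up to $V$ yields that $\bigcup_{i<n}\{(\bar y_i{}^\frown z_{0,i,j}){}^\frown k_{i,j}:j<K_i\}\in R$ in $V$.

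The crux is then to verify that the reals $\bar y_i{}^\frown z_{0,i,j}\in(2^\omega)^\alpha$ (for $i<n$, $j<K_i$) are pairwise distinct and strongly mCg with respect to $\prod_{i<\alpha}2^\omega$ over $M$, so that $(1)_{R,M,k}$ can be invoked. Pairwise distinctness is immediate: $\bar y_i\neq\bar y_{i'}$ for $i\neq i'$, and the $z_{0,i,j}$ are distinct coordinates of a $\mathbb{Q}$-generic. For genericity I combine the two generic objects along a common block decomposition: $\bar y$ is Cohen-generic over $M$ for some decomposition $0=\xi_0<\dots<\xi_{k'}=\beta$, and the $z_{0,i,j}$ are mutually Cohen generic over $M[\langle\bar y_i:i<n\rangle]$; refining the decomposition by the single extra block $\{\beta\}$ and using that a product of Cohen forcings is Cohen, the tuple $(\bar y_i{}^\frown z_{0,i,j})_{i,j}$ becomes mCg over $M$ with respect to $\prod_{i<\alpha}2^\omega$. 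Strongness is checked coordinatewise: two such tuples sharing the same first $\beta$ coordinates first differ at the last coordinate $\beta$, where strongness is vacuous; two with $\bar y_i\neq\bar y_{i'}$ first differ exactly where $\bar y_i$ and $\bar y_{i'}$ do, and then keep differing on the rest of $[\,\cdot\,,\beta)$ by strongness of $\bar y$ and also at $\beta$ since the corresponding $z$'s are distinct. With this established, $(1)_{R,M,k}$ applied to the $\bar y_i{}^\frown z_{0,i,j}$ with tags $k_{i,j}<k$ declares $\bigcup_{i<n}\{(\bar y_i{}^\frown z_{0,i,j}){}^\frown k_{i,j}:j<K_i\}$ to be $R$-independent, contradicting the previous paragraph.

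I expect the main obstacle to be the bookkeeping in the second paragraph: making precise that membership in $\tilde R$ descends to the countable model $M[\langle\bar y_i:i<n\rangle]$ (so that a generic over it may be chosen) and that $R$-membership then ascends back to $V$ --- that is, the interplay of the analyticity of the forcing relation, $\mathbf{\Sigma}^1_1$-absoluteness and the forcing theorem. By contrast, once the right block decomposition is chosen the verification of strong mutual Cohen genericity is routine.
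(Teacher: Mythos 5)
Your proposal is correct and takes essentially the same route as the paper: unwind the definition of $\tilde R$ with $m_i=0$, use analyticity/$\mathbf{\Sigma}^1_1$-absoluteness to transfer $(*_0)$ down to the countable model $M[\langle\bar y_i:i<n\rangle]$, choose a $\mathbb{Q}$-generic over it with $p$ in the filter, verify that the concatenations $\bar y_i{}^\frown z_{0,i,j}$ are pairwise distinct and strongly mCg over $M$, and derive a contradiction with $(1)_{R,M,k}$. Your write-up is merely more explicit about the absoluteness bookkeeping and the block decomposition than the paper's terse version.
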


\begin{proof}
Suppose $\bar y_0, \dots, \bar y_{n-1}$ are pairwise distinct and strongly mCg over $M$, but $\{(\bar y_0{}, 0),$ $\dots,(\bar y_{n-1}, 0) \} \in \tilde R$ as witnessed by $p \in \mathbb{Q}$, $\langle K_i : i < n \rangle$ and $\langle k_{i,j} : i < n, j < K_i \rangle$, each $k_{i,j} < k$. More precisely, \begin{equation}\tag{$*_0$}
    p \Vdash_{\mathbb{Q}} \bigcup_{i<n} \{ (\bar y_i{}^\frown \dot z_{0,i,j}, k_{i,j}) : j < K_i \} \in R.
\end{equation}

By absoluteness, $(*_0)$ is satisfied in $M[\bar y_0, \dots, \bar y_{n-1}]$. Thus, let $\langle z_{0,i,j}, z_{1,i,j} : i, j \in \omega \rangle$ be generic over $M[\bar y_0, \dots, \bar y_{n-1}]$ with $p$ in the associated generic filter. Then the $\bar y_i{}^\frown z_{(0,i,j)}$ for $i < n, j < K_i$ are pairwise distinct and strongly mCg over $M$, but $$ \bigcup_{i<n} \{ (\bar y_i{}^\frown z_{0,i,j}, k_{i,j}) : j < K_i \} \in R.$$ This poses a contradiction to $(1)_{R,M,k}$.  
\end{proof}

\begin{claim}
If $(1)_{\tilde R, M, m}$ is satisfied for every $m \in \omega$, then also $(1)_{R,M,k+1}$. 
\end{claim}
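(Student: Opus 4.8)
The plan is to argue by contradiction: assume $(1)_{\tilde R, M, m}$ holds for every $m$, but $(1)_{R, M, k+1}$ fails, so there are pairwise distinct $\bar x_0, \dots, \bar x_{n-1} \in (2^\omega)^\alpha$ (with $n \geq 1$), strongly mCg with respect to $\prod_{i < \alpha} 2^\omega$ over $M$, and labels $k_0, \dots, k_{n-1} \leq k$ with $\{\bar x_i{}^\frown k_i : i < n\} \in R$. Write $\bar y_i = \bar x_i \restriction \beta$ and $v_i = x_i(\beta)$. The first observation is that \emph{strong} mutual Cohen genericity forces the last coordinates to be pairwise distinct: if $\bar x_i \ne \bar x_{i'}$, their first point of disagreement $\xi$ satisfies $\xi < \alpha = \beta + 1$, so by definition of strong mCg they differ at every coordinate $\ge \xi$, in particular at $\beta$, whence $v_i \ne v_{i'}$. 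This is exactly the feature of ``strongly mCg'' that makes the reduction go through.

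Next I would organise the edge by its $\beta$-part. Let $\bar w_0, \dots, \bar w_{n'-1}$ enumerate $\{\bar y_i : i < n\}$ without repetition; these are pairwise distinct and, by the remark on restrictions of mCg tuples (first differences being inherited), strongly mCg with respect to $\prod_{i < \beta} 2^\omega$ over $M$. For each $t < n'$ put $S_t = \{i < n : \bar y_i = \bar w_t\}$, split it as $S_t^0 = \{ i \in S_t : k_i < k\}$ and $S_t^1 = \{i \in S_t : k_i = k\}$, set $K_t = |S_t^0|$, $m_t = |S_t^1|$, fix enumerations $S_t^0 = \{i^0_{t,0}, \dots, i^0_{t,K_t-1}\}$, $S_t^1 = \{i^1_{t,0}, \dots, i^1_{t,m_t-1}\}$, and put $k_{t,j} := k_{i^0_{t,j}} < k$. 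Note $\sum_{t<n'}(K_t + m_t) = n$, and that the $S_t^0/S_t^1$ split mirrors precisely the two kinds of summands (labels $<k$ versus the fixed label $k$) in the definition of $\tilde R$. I will show $\{\bar w_t{}^\frown m_t : t < n'\} \in \tilde R$; since the $\bar w_t{}^\frown m_t$ are pairwise distinct this is an $n'$-element $\tilde R$-edge, so with $m := 1 + \max_{t < n'} m_t$ we contradict $(1)_{\tilde R, M, m}$.

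To produce the $\tilde R$-edge, set $N := M[\bar w_0, \dots, \bar w_{n'-1}]$, a ccc (Cohen) extension of $M$. Refining the block decomposition witnessing strong mCg of $\bar x_0, \dots, \bar x_{n-1}$ so that the coordinate $\beta$ forms its own last block, and applying the product lemma for Cohen forcing, one obtains that $v_0, \dots, v_{n-1}$ — already known to be pairwise distinct — are mutually Cohen generic in $2^\omega$ over $N$. Now $\mathbb{Q}$ (adding the $\langle z_{c,i,j}\rangle$) is a countable product of copies of Cohen forcing, and $A := \{(0,t,j): t<n', j<K_t\}\cup\{(1,t,j): t<n', j<m_t\}$ is a set of $n$ of its coordinates; via the bijection $(0,t,j)\mapsto i^0_{t,j}$, $(1,t,j)\mapsto i^1_{t,j}$ the tuple $\langle v_0, \dots, v_{n-1}\rangle$ determines an $N$-generic filter on the $A$-part of $\mathbb{Q}$, which (as $N[\langle v_i\rangle]$ is countable) extends to a $\mathbb{Q}$-generic $G$ over $N$ with $\dot z_{0,t,j}[G] = v_{i^0_{t,j}}$ and $\dot z_{1,t,j}[G] = v_{i^1_{t,j}}$. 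In $N[G]$ the set $\bigcup_{t<n'}\{\bar w_t{}^\frown \dot z_{0,t,j}[G]{}^\frown k_{t,j} : j<K_t\}\cup\{\bar w_t{}^\frown \dot z_{1,t,j}[G]{}^\frown k : j<m_t\}$, after unravelling ($\bar w_t{}^\frown v_{i^0_{t,j}} = \bar x_{i^0_{t,j}}$, $k_{t,j} = k_{i^0_{t,j}}$, and $k_{i^1_{t,j}} = k$), is exactly $\{\bar x_i{}^\frown k_i : i<n\} \in R$; since $N[G]$ is a ccc extension of the elementary $M$, $\Sigma^1_1$-absoluteness gives $N[G] \models \{\bar x_i{}^\frown k_i : i<n\} \in R$. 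By the forcing theorem some $p \in G$ forces the displayed union into $R$, which is exactly a witness that $N \models \{\bar w_t{}^\frown m_t : t<n'\}\in\tilde R$; as $\tilde R$ is analytic with parameters in $N$, this transfers to $V$, completing the contradiction.

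The step I expect to require the most care is the middle one: checking that, after refining the block decomposition, the product lemma really yields $v_0, \dots, v_{n-1}$ mutually Cohen generic over $N = M[\bar w_0, \dots, \bar w_{n'-1}]$ — one must verify that adjoining the distinct values of all earlier blocks generates exactly $N$, and that refining a block preserves mutual Cohen genericity of a tuple — together with the surrounding absoluteness bookkeeping, namely that $N[G]$ still counts as an admissible ``model'' so that $\Sigma^1_1$ statements and the analyticity of $\tilde R$ transfer between it, $N$, and $V$.
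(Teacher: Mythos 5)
Your proof is correct and follows essentially the same route as the paper's. The paper's proof is terser: it directly rewrites the set $\{\bar x_0{}^\frown k_0,\dots,\bar x_{n-1}{}^\frown k_{n-1}\}$ in the form $(*_1)$ using the strong mCg structure (distinct $\bar y_i = \bar x_i\restriction\beta$, with the last-coordinate values $x_i(\beta)$ playing the role of the $z_{c,i,j}$, extended to a full $\mathbb Q$-generic over $M[\bar y_0,\dots,\bar y_{n'-1}]$), sets $m = \max_i m_i + 1$, and immediately reads off $R$-independence from $(1)_{\tilde R,M,m}$. You run the identical decomposition (your $\bar w_t$, $S_t^0/S_t^1$, $K_t$, $m_t$ are the paper's $\bar y_i$, $K_i$, $m_i$) but unwind the contrapositive: a putative $R$-edge among the $\bar x_i{}^\frown k_i$ is shown to witness $\{\bar w_t{}^\frown m_t : t<n'\}\in\tilde R$ via the forcing theorem and analytic absoluteness, contradicting $(1)_{\tilde R,M,m}$. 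The two concerns you flag — that refining the block partition preserves mCg and yields the $v_i$ mutually generic over $M[\bar w_0,\dots,\bar w_{n'-1}]$, and that the distinctness of the $v_i$ is exactly what strong (as opposed to plain) mCg provides at the successor coordinate — are indeed the load-bearing points, and your handling of them matches what the paper tacitly uses.
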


\begin{proof}
Let $\bar x_0, \dots, \bar x_{n-1} \in (2^\omega)^\alpha$ be pairwise distinct, strongly mCg over $M$ and let $k_0, \dots, k_{n-1} \leq k$. Then we may write $\{ (\bar x_0, k_0), \dots, (\bar x_{n-1}, k_{n-1}) \}$ as \begin{equation}\tag{$*_1$}\bigcup_{i < n'} \{ (\bar y_i{}^\frown z_{0,i,j}, k_{i,j}) : j < K_i \} \cup \{ (\bar y_i{}^\frown z_{1,i,j}, k) : j < m_i \},\end{equation} for some pairwise distinct $\bar y_0, \dots,\bar y_{n'-1}$, $\langle K_i : i < n' \rangle$, $\langle k_{i,j} : i < n', j<K_i \rangle$, $\langle m_i : i < n' \rangle$ and $\langle z_{0,i,j} : i, j \in \omega \rangle$, $\langle z_{1,i,j} : i, j \in \omega \rangle$ $2^\omega$-mutually Cohen generic over $M[\bar y_0, \dots, \bar y_{n'-1}]$. Letting $m = \max_{i < n'} m_i +1$, we follow the $R$-independence of the set in $(*_1)$ from $(1)_{\tilde R,M,m}$. 
\end{proof}

\begin{claim}
If there is $m \in \omega$ so that $(1)_{\tilde R, M, m}$ fails, then there is a countable model $M^+ \supseteq M$ so that $(2)_{R,M^+,k}$. 
\end{claim}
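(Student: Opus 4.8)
Since $\alpha=\beta+1$, the plan is to use the statement being proved for $\beta$ in place of $\alpha$, applied to the analytic hypergraph $\tilde R$ on $(2^\omega)^\beta\times\omega$. By Claim~\ref{claim:tildeR1} the statement $(1)_{\tilde R,M,1}$ holds, so, as $(1)_{\tilde R,M,m}$ fails for some $m$, there is a least $m^*\geq 2$ with $(1)_{\tilde R,M,m^*}$ failing, and then $(1)_{\tilde R,M,m^*-1}$ holds. Applying the inductive hypothesis to $\tilde R$, $M$, $k':=m^*-1$, and using that $(1)_{\tilde R,M,m^*}$ fails, we get a countable $M'\supseteq M$ with $(2)_{\tilde R,M',m^*-1}$; unpacking yields $\tilde N\in\omega$ and continuous $\psi_0,\dots,\psi_{\tilde N-1}\colon(2^\omega)^\beta\to(2^\omega)^\beta$, and after restricting to a subcube and re-coordinatizing I may assume the $\psi_j$ are pairwise nowhere equal and nowhere equal to the identity. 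Fix $\bar y_0\in(2^\omega)^\beta$ Cohen generic over $M'$; it is good for the $\psi_j$, so $(2)_{\tilde R,M',m^*-1}$ at $\bar y_0$ gives $l_0,\dots,l_{\tilde N-1}\leq m^*-1$ and a cube $\bar t\ni\bar y_0$ such that, for all pairwise distinct strongly-mCg-over-$M'$ tuples $\bar y_0',\dots,\bar y_{n-1}'\in[\bar t]$: (a) $\{\psi_j(\bar y_i'){}^\frown l_j:j<\tilde N,\,i<n\}$ is $\tilde R$-independent; (b) so is $\{\bar y_i'{}^\frown(m^*-2):i<n\}\cup\{\psi_j(\bar y_i'){}^\frown l_j:j<\tilde N,\,i<n\}$ (here $m^*-1\geq1$ is used); (c) $\{\bar y_0'{}^\frown(m^*-1)\}\cup\{\psi_j(\bar y_0'){}^\frown l_j:j<\tilde N\}\in\tilde R$.

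Next I would uncurry (c) through the definition of $\tilde R$: there are $p\in\mathbb{Q}$, block sizes $K_0,\dots,K_{\tilde N}\in\omega$ ($K_0$ for $\bar y_0$, $K_{1+j'}$ for $\psi_{j'}(\bar y_0)$), and labels $k_{i,j}<k$, with $p$ forcing the resulting finite set — built from $\bar y_0$, the $\psi_{j'}(\bar y_0)$, and the Cohen reals $\dot z_{\cdot,\cdot,\cdot}$ — to lie in $R$. Since $\bar y_0$ is Cohen generic over $M'$ and this data lies in $M'[\bar y_0]$, pass to a subcube $\bar t_1\ni\bar y_0$ in $M'$ deciding $p,\langle K_i\rangle,\langle k_{i,j}\rangle$, so that for every $\bar v\in[\bar t_1]$ Cohen generic over $M'$ we have ``$p\Vdash_\mathbb{Q}\big(\text{that }R\text{-edge, with }\bar v,\psi_{j'}(\bar v)\text{ replacing }\bar y_0,\psi_{j'}(\bar y_0)\big)\in R$'' (a $\mathbf{\Sigma}^1_1$-absolute statement). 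Now let $M_0\supseteq M'$ be a large countable elementary model containing $\bar t_1,p,\langle K_i\rangle,\langle k_{i,j}\rangle,\langle l_{j'}\rangle,\langle\psi_j\rangle$, and let a tuple $$\langle\eta^{0,0}_j\rangle_{j<K_0},\ \langle\eta^{1,0}_j\rangle_{j<m^*-2},\ \langle\eta^{0,1+j'}_j\rangle_{j<K_{1+j'},\,j'<\tilde N},\ \langle\eta^{1,1+j'}_j\rangle_{j<l_{j'},\,j'<\tilde N}$$ of pairwise distinct continuous functions $2^\omega\to 2^\omega$, each mapping into the cone in $2^\omega$ cut out by $p$'s constraint on the corresponding coordinate $\dot z_{\cdot,\cdot,\cdot}$, be generic over $M_0$ for the appropriate finite power of $\mathbb{C}(2^\omega,2^\omega)$; put $M^+:=M_0[\langle\eta\rangle]$.

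Set $N:=K_0+(m^*-2)+\sum_{j'<\tilde N}(K_{1+j'}+l_{j'})$ and define the $\phi$'s block-wise: a type-$0$ slot of block $*$ is $\phi(\bar u):=(\bar u\restriction\beta){}^\frown\eta^{0,0}_j(\bar u(\beta))$ with label $k_{0,j}$; a type-$1$ slot of block $*$ is $\phi(\bar u):=(\bar u\restriction\beta){}^\frown\eta^{1,0}_j(\bar u(\beta))$ with label $k$; a type-$c$ slot of block $j'$ is $\phi(\bar u):=\psi_{j'}(\bar u\restriction\beta){}^\frown\eta^{c,1+j'}_j(\bar u(\beta))$ with label $k_{1+j',j}$ if $c=0$ and $k$ if $c=1$. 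Take $\bar s$ with $\bar s\restriction\beta:=\bar t_1$ and $\bar s(\beta)$ the stem $p$ puts on the type-$1$ coordinate over $\bar y_0$ that will be realized by the generic point itself, and let the $k_j$'s of $(2)_{R,M^+,k}$ be the labels just assigned (all $\leq k$); note $N$, the $\phi$'s, $\bar s$, and the $k_j$'s do not depend on the $\bar x$ quantified in $(2)_{R,M^+,k}$ — whose only role is to make the distinctness hypothesis non-vacuous, which it is for generic $\bar x$ by the ``nowhere equal'' arrangement. To verify $(2)_{R,M^+,k}$, let $\bar x_0,\dots,\bar x_{n-1}\in[\bar s]$ be pairwise distinct and strongly mCg over $M^+$; since $\alpha=\beta+1$, distinct $\bar x_i$ differ at coordinate $\beta$, so the $\bar x_i(\beta)$ are pairwise distinct, while the (possibly repeated) $\bar x_i\restriction\beta$ lie in $[\bar t_1]$ and their distinct values form a pairwise distinct strongly-mCg-over-$M'$ tuple. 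By Lemma~\ref{lem:forcingcont} (and its evident variant with more functions) the reals $\eta^{\cdot,\cdot}_j(\bar x_i(\beta))$ are mutually Cohen generic over $M_0[\langle\bar x_i\restriction\beta:i<n\rangle]$, and since they were forced into $p$'s cones they, suitably assigned to the $\dot z$'s, generate a $\mathbb{Q}$-generic over that model containing $p$.

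For the witnessing edge, fold $\{\bar x_0{}^\frown k\}\cup\{\phi_j(\bar x_0){}^\frown k_j:j<N\}$ block by block: the block-$*$ elements together with $\bar x_0{}^\frown k$ itself (taking $\bar x_0(\beta)$ as the remaining type-$1$ $\dot z$ over $\bar x_0\restriction\beta$) realize the ``$\bar y_0$-block'' of (c)'s unfolding, and each block $j'$ realizes its ``$\psi_{j'}(\bar y_0)$-block''; so the $p$ from the uncurrying (with $\bar v:=\bar x_0\restriction\beta$) witnesses membership in $R$ once the partial $\mathbb{Q}$-generic is extended, and by $\mathbf{\Sigma}^1_1$-absoluteness the edge lies in $R$. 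For the two independence claims, given an $R$-sub-edge $F$ of $\{\phi_j(\bar x_i){}^\frown k_j:j<N,\,i<n\}$ (respectively of that set together with $\{\bar x_i{}^\frown(k-1):i<n\}$), by $(1)_{R,M,k}$ it contains a label-$k$ element; folding $F$ block by block produces a finite $\tilde F\subseteq(2^\omega)^\beta\times\omega$ which, via the mutual genericity above, lies in $\tilde R$, while $\tilde F$ is a subset of one of the $\tilde R$-independent sets given by $(1)_{\tilde R,M,m^*-1}$, (a), or (b) — a contradiction. The step I expect to be the real obstacle is exactly this last folding-and-matching: the block structure, together with the freedom in the $K_i$'s and the mutual genericity of the appended Cohen reals, must be arranged so that the multiplicities ``$m_i$'' arising when an \emph{arbitrary} $R$-sub-edge $F$ is folded match those for which $(1)_{\tilde R,M,m^*-1}$ and $(2)_{\tilde R,M',m^*-1}$ actually certify $\tilde R$-independence, including sub-edges meeting only part of a block and tuples with coinciding $\bar x_i\restriction\beta$; the remaining ingredients (the category step in the uncurrying, the genericity bookkeeping via Lemma~\ref{lem:forcingcont}, and absoluteness) are routine.
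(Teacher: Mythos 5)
Your proposal follows the paper's overall strategy (least $m$ where the $\tilde R$-dichotomy lands in case $(2)$, unfold the resulting $\tilde R$-edge, fix $p$ and the combinatorial data on a subcube, force generic continuous functions, define the new $\phi$'s block-wise, fold sub-edges back and appeal to $\tilde R$-independence). However, there is a genuine gap in the definition of the new $\phi$'s, and it is exactly the step you flag as the expected obstacle.

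You set $\eta^{c,i}_j\colon 2^\omega\to 2^\omega$ and define, for a type-$1$ slot, $\phi(\bar u) = \psi_{j'}(\bar u\restriction\beta){}^\frown\eta^{1,1+j'}_j(\bar u(\beta))$ (and similarly for block $*$). The paper instead forces $\chi_{c,i,j}\colon (2^\omega)^\beta\to 2^\omega$ and defines $\phi_{1,i,j}(\bar x) := \phi_i(\bar x\restriction\beta){}^\frown\chi_{1,i,j}(\phi_i(\bar x\restriction\beta))$, so its type-$1$ functions depend \emph{only} on $\bar x\restriction\beta$. This is essential: when $\bar x_{l_0}\restriction\beta = \bar x_{l_1}\restriction\beta$ but $\bar x_{l_0}(\beta)\neq\bar x_{l_1}(\beta)$ (which happens precisely for strongly mCg tuples with coinciding $\beta$-restriction, since $\alpha=\beta+1$), the paper's type-$1$ $\phi$'s collapse, $\phi_{1,i,j}(\bar x_{l_0}) = \phi_{1,i,j}(\bar x_{l_1})$, whereas yours produce two generically distinct reals $\eta^{1,1+j'}_j(\bar x_{l_0}(\beta))\neq\eta^{1,1+j'}_j(\bar x_{l_1}(\beta))$ over the \emph{same} base $\psi_{j'}(\bar y)$. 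So when you fold an arbitrary $R$-sub-edge $F$ back to $\tilde F\subseteq(2^\omega)^\beta\times\omega$, the type-$1$ multiplicity over a base $\psi_{j'}(\bar y_i)$ can reach $n'\cdot l_{j'}$ (and over $\bar y_i$ can reach $n'\cdot(m^*-2)$) where $n'$ counts the $\bar x_l$ with common restriction $\bar y_i$. This exceeds the levels $l_{j'}$ and $m^*-2$ for which $(2)_{\tilde R,M',m^*-1}$ certifies $\tilde R$-independence, and also escapes the range $<m^*-1$ covered by $(1)_{\tilde R,M,m^*-1}$, so no contradiction is available. The collapse is not optional bookkeeping; it is the mechanism that makes the multiplicities ``arising when an arbitrary $R$-sub-edge is folded'' match the certified ones. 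Composing the forced continuous functions with $\bar u\restriction\beta$ (respectively $\psi_{j'}(\bar u\restriction\beta)$) rather than with $\bar u(\beta)$ — and consequently forcing them as functions on $(2^\omega)^\beta$, applying Lemma~\ref{lem:forcingcont} to the distinct elements $\bar y_i, \psi_{j'}(\bar y_i)$ of $(2^\omega)^\beta$ rather than to the $\bar x_i(\beta)$ — is the missing ingredient. The remaining steps of your outline (the least-$m^*$ reduction, the uncurrying and subcube refinement, $\bar s(\beta)$ being the constraint $p$ places on the last type-$1$ coordinate over $\bar y_0$, and the $\mathbf{\Sigma}^1_1$-absoluteness bookkeeping) do match the paper.
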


\begin{proof}
Let $m\geq 1$ be least so that $(2)_{\tilde R, M_0, m}$ for some countable model $M_0 \supseteq M$. We know that such $m$ exists, since from $(1)_{\tilde R, M, 1}$ we follow that either $(1)_{\tilde R, M, 2}$ or $(2)_{\tilde R, M_0,1}$ for some $M_0$, then, if $(1)_{\tilde R, M, 2}$, either $(1)_{\tilde R, M, 3}$ or $(2)_{\tilde R, M_0, 2}$ for some $M_0$, and so on. Let $\phi_0, \dots, \phi_{N-1}$, $m_0, \dots, m_{N-1} \leq m$ and $\bar s \in \bigotimes_{i < \beta}(2^{<\omega})$ witness $(2)_{\tilde R, M_0, m}$. Let $M_1$ be a countable elementary model such that $\phi_0, \dots, \phi_{N-1}, M_0 \in M_1$. Then we have that for any $\bar y$ that is Cohen generic in $(2^\omega)^\beta \cap [\bar s]$ over $M_1$, in particular over $M_0$, that $$\{(\bar y, m) \} \cup \{ (\phi_j(\bar y), m_j) : j <N \} \in \tilde R,$$ i.e. there is $p \in \mathbb{Q}$, there are $K_i \in \omega$, $k_{i,0}, \dots, k_{i,K_i -1} < k$ for every $i \leq N$, so that \begin{multline} \tag{$*_2$}
    p \Vdash_{\mathbb{Q}} \bigcup_{i<N} \{ (\phi_i(\bar y){}^\frown \dot z_{0,i,j}, k_{i,j}) : j < K_i \} \cup \{ (\phi_i(\bar y){}^\frown \dot z_{1,i,j}, k) : j < m_i \} \\ \cup \{(\bar y{}^\frown \dot z_{0,N,j}, k_{N,j}) : j < K_N  \} \cup \{(\bar y{}^\frown \dot z_{1,N,j}, k) : j < m \} \in R.
\end{multline}

By extending $\bar s$, we can assume wlog that $p$, $\langle K_i : i \leq N \rangle$, $\langle k_{i,j} : i \leq N, j < K_i \rangle$ are the same for each $\bar y \in [\bar s]$ generic over $M_1$, since $(*_2)$ can be forced over $M_1$. Also, from the fact that $\phi_j$ is continuous for every $j < N$, that $\phi_j(\bar y) \neq \bar y$ for every $j < N$, and that $\phi_{j_0}(\bar y) \neq \phi_{j_1}(\bar y)$ for every $j_0 < j_1 < N$, we can assume wlog that for any $\bar y_0, \bar y_1 \in [\bar s]$ and $j_0 < j_1 < N$, \begin{equation}\tag{$*_3$}
\phi_{j_0}(\bar y_0) \neq \bar y_1 \text{ and } \phi_{j_0}(\bar y_0) \neq \phi_{j_1}(\bar y_1).
\end{equation} Let us force in a finite support product over $M_1$ continuous functions $\chi_{0,i,j} \colon (2^\omega)^\beta \to [p(0,i,j)]$ and $\chi_{1,i,j} \colon (2^\omega)^\beta \to [p(1,i,j)]$ for $i,j \in \omega$ and write $M^+ = M_1[\langle \chi_{0,i,j}, \chi_{1,i,j} : i,j \in \omega \rangle]$. For every $i < N$ and $j < K_i$ and $\bar x \in (2^\omega)^\alpha$, define $$\phi_{0,i,j}(\bar x ) := \phi_i(\bar x \restriction \beta){}^\frown \chi_{0,i,j}(\phi_i(\bar x \restriction \beta)) \text{ and } k_{0,i,j} = k_{i,j}.$$ For every $i< N$ and $j < m_i$ and $\bar x \in (2^\omega)^\alpha$, define $$\phi_{1,i,j}(\bar x ) := \phi_i(\bar x \restriction \beta){}^\frown \chi_{1,i,j}(\phi_i(\bar x \restriction \beta)) \text{ and } k_{1,i,j} = k.$$ For every $j < K_{N}$ and $\bar x \in (2^\omega)^\alpha$, define $$\phi_{0,N,j}(\bar x ) := \bar x \restriction \beta{}^\frown \chi_{0,N,j}(\bar x \restriction \beta) \text{ and } k_{0,N,j} = k_{N,j}.$$ At last, define for every $j < m-1$ and $\bar x \in (2^\omega)^\alpha$, $$\phi_{1,N,j}(\bar x ) := \bar x \restriction \beta{}^\frown \chi_{1,N,j}(\bar x \restriction \beta) \text{ and } k_{1,N,j} = k.$$

Let $\bar t \in \bigotimes_{i < \alpha} 2^{<\alpha}$ be $\bar s$ with $p(1,N,m-1)$ added in coordinate $\beta$. Now we have that for any $\bar x \in [\bar t]$ that is Cohen generic in $(2^\omega)^\alpha$ over $M^+$, \begin{multline*}
    \{ (\bar x, k) \} \cup \{ (\phi_{0,i,j}(\bar x), k_{0,i,j}) : i \leq N, j < K_N \} \cup \{ (\phi_{1,i,j}(\bar x), k_{1,i,j}) : i < N, j < m_i\} \\ \cup \{(\phi_{1,N,j}(\bar x), k_{1,N,j}) : j < m -1 \} \in R.
\end{multline*}

This follows from $(*)_2$ and applying Lemma~\ref{lem:forcingcont} to see that the $\chi_{0,i,j}(\phi_i(\bar x \restriction \beta))$, $\chi_{1,i,j}(\phi_i(\bar x \restriction \beta))$, $\chi_{0,N,j}(\bar x \restriction \beta)$, $\chi_{1,N,j}(\bar x \restriction \beta)$ and $x(\beta)$ are mutually Cohen generic over $M_1[\bar x \restriction \beta]$. Moreover they correspond to the reals $z_{0,i,j}$, $z_{1,i,j}$ added by a $\mathbb{Q}$-generic over $M_1[\bar x \restriction \beta]$, containing $p$ in its generic filter. Also, remember that $(*)_2$ is absolute between models containing the relevant parameters, which $M_1[\bar y]$ is, with $\bar y = {\bar x \restriction \beta}$. 

On the other hand, whenever $\bar x_0, \dots, \bar x_{n-1} \in (2^\omega)^\alpha \cap [\bar t]$ are pairwise distinct and strongly mCg over $M^+$, letting $\bar y_0, \dots, \bar y_{n'-1}$ enumerate $\{\bar x_i \restriction \beta : i < n \}$, we have that \begin{equation}\tag{$*_4$}
    \{ (\bar y_i{},m-1) : i < n' \} \cup \{ (\phi_j(\bar y_i), m_j) : i < n', j < N \} \text{ is } \tilde R \text{-independent.}
\end{equation} According to the definition of $\tilde R$, $(*_4)$ is saying e.g. that whenever $A \cup B \subseteq (2^\omega)^\alpha$ is an arbitrary set of strongly mCg reals over $M_1$, where $A \restriction \beta, B \restriction \beta \subseteq \{ \bar y_i, \phi_j(\bar y_i)  : i < n', j < N \}$ and in $B$, $\bar y_i$ is extended at most $m-1$ many times and $\phi_j(\bar y_i)$ at most $m_j$ many times for every $i < n'$, $j < N$, and, assuming for now that $k > 0$, if $f \colon A \to k$, then $$\{ (\bar x{}, f(\bar x)) : \bar x \in A \} \cup (B \times \{k\}) \text{ is } R\text{-independent.}$$

As an example for such sets $A$ and $B$ we have, $$A = \{\phi_{0,i,j}(\bar x_l) : l < n, i \leq N, j < K_i \} \cup \{ \bar x_l : l < n' \}, \text{ and}$$ $$B = \{ \phi_{1,i,j}(\bar x_l) : l < n, i < N, j < m_i \} \cup \{ \phi_{1,N,j}(\bar x_l) : l < n', j < m-1 \}.$$ Again, to see this we apply Lemma~\ref{lem:forcingcont} to show that the relevant reals are mutually generic over the model $M_1[\bar y_0, \dots, \bar y_{n'-1}]$. Also, remember from the definition of $\phi_{1,i,j}$ for $i < N$ and $j < m_i$ that, if $\phi_i(\bar x_{l_0} \restriction \beta) = \phi_i(\bar x_{l_1} \restriction \beta)$, then also $\phi_{1,i,j}(\bar x_{l_0}) = \phi_{1,i,j}(\bar x_{l_1})$, for all $l_0, l_1 < n$. Equally, if $\bar x_{l_0} \restriction \beta = \bar x_{l_1} \restriction \beta$, then $\phi_{1,N,j}(\bar x_{l_0}) = \phi_{1,N,j}(\bar x_{l_1})$ for every $j < m-1$. Use $(*_3)$ to note that $\{ \bar y_i : i < n'\}$, $\{\phi_0(\bar y_i) : i < n' \}, \dots$, ${\{ \phi_{N-1}(\bar y_i) : i < n'  \}}$ are pairwise disjoint. From this we can follow that indeed, each $\bar y_i$ is extended at most $m-1$ many times in $B$ and $\phi_j(\bar y_i)$ at most $m_i$ many times. In total, we get that \begin{multline*}
    \{(\phi_{0,i,j}(\bar x_l), k_{0,i,j}) : l < n, i \leq N, j < K_i \} \cup \{ (\bar x_l,k-1) : l < n' \} \cup \\ \{ (\phi_{1,i,j}(\bar x_l), k) : l < n, i < N, j < m_i \} \cup \{ (\phi_{1,N,j}(\bar x_l), k) : l < n', j < m-1 \} \\ \text{ is } R\text{-independent}. 
\end{multline*}

It is now easy to check that we have the witnesses required in the statement of $(2)_{R,M^+,k}$. For example, $\phi_{0,i,j}(\bar x) \neq \bar x$ when $i < N$, follows from $\phi_i(\bar x) \neq \bar x$. For the values $\phi_{0,N,j}(\bar x)$ we simply have that $\chi_{0,N,j}(\bar x \restriction \beta) \neq x(\beta)$, as the two values are mutually generic. Everything else is similar and consists only of a few case distinctions. Also, the continuity of the functions is clear. 

If $k = 0$, then we can simply forget the set $A$ above, since $K_i$ must be $0$ for every $i \leq N$. In this case we just get that \begin{multline*}
   \{ (\phi_{1,i,j}(\bar x_l), k) : l < n, i < N, j < m_i \} \cup \{ (\phi_{1,N,j}(\bar x_l), k) : l < n', j < m-1 \} \\ \text{ is } R\text{-independent}, 
\end{multline*}
which then yields $(2)_{R,M^+,k}$.
\end{proof}

This finishes the successor step. Now assume that $\alpha$ is a limit ordinal. We fix some arbitrary tree $T \subseteq \omega^{<\omega}$ such that for every $t \in T$, $\vert \{ n \in \omega : t^\frown n \in T\} \vert = \omega$ and for any branches $x \neq y \in [T]$, if $d = \min \{ i \in \omega : x(i)\neq y(i) \}$ then $x(j) \neq x(j)$ for every $j \geq d$. We will use $T$ only for notational purposes. For every sequence $\xi_0 < \dots < \xi_{k'} = \alpha$, we let $\mathbb{Q}_{\xi_0, \dots, \xi_{k'}} = \left(\prod_{l < k'} (\bigotimes_{i \in [\xi_l, \xi_{l+1})} 2^{<\omega})^{<\omega}\right) \times ( \bigotimes_{i \in [\xi_0, \alpha)} 2^{<\omega})^{<\omega}$. $\mathbb{Q}_{\xi_0, \dots, \xi_{k'}}$ adds, in the natural way, reals $\langle \bar z^0_{l, i} : l < k', i \in \omega \rangle$ and $\langle \bar z^0_i : i \in \omega \rangle$, where $\bar z^0_{l,i} \in (2^\omega)^{[\xi_l, \xi_{l+1})}$ and $\bar z^1_i \in (2^{\omega})^{[\xi_0, \alpha)}$ for every $l < k'$, $i \in \omega$. Whenever $t \in T \cap \omega^{k'}$, we write $\bar z^0_{t} = \bar z^0_{0,t(0)}{}^\frown \dots {}^\frown \bar z^0_{k'-1, t(k'-1)}$.~ Note that for generic $\langle \bar z^0_{l,i} : i \in \omega, l < k' \rangle$, the reals $\langle \bar z^0_t : t \in T \cap \omega^{k'}\rangle$ are strongly $\langle 2^\omega : i \in [\xi_0, \alpha)\rangle$-mCg.

Now, let us define for each $\xi < \alpha$ an analytic hypergraph $R_\xi$ on $(2^\omega)^\xi \times 2$ so that $\{(\bar y^0_i, 0) : i < n_0 \} \cup \{ (\bar y^1_i, 1) : i < n_1 \} \in R_\xi \cap [(2^\omega)^\xi \times 2]^{n_0+n_1}$, where $\vert \{(\bar y^0_i, 0) : i < n_0 \}\vert = n_0$ and $\vert \{ (\bar y^1_i, 1) : i < n_1 \} \vert = n_1$, iff there are $\xi_0 = \xi < \dots < \xi_{k'} = \alpha$, $(p,q) \in \mathbb{Q}_{\xi_0, \dots, \xi_{k'}}$, $K_i \in \omega$, $k_{i,0}, \dots, k_{i,K_i -1}<k$ and distinct $t_{i,0}, \dots, t_{i,K_i -1} \in T \cap \omega^{k'}$ for every $i < n_0$, so that $t_{i_0,j_0}(0) \neq t_{i_1,j_1}(0)$ for all $i_0 < i_1 < n_0$ and $j_0 < K_{i_0}, j_1 < K_{i_1}$, and $$(p,q) \Vdash_{\mathbb{Q}_{\bar \xi}} \bigcup_{i < n_0} \{ (\bar y^0_i{}^\frown \bar z^0_{t_{i,j}}, k_{i,j}): j < K_i \} \cup \{ (\bar y^1_i{}^\frown \bar z^1_{i}, k): i < n_1 \} \in R.$$

Note that each $R_\xi$ can be defined within $M$. It should be clear, similar to the proof of Claim~\ref{claim:tildeR1}, that from $(1)_{R,M,k}$, we can show the following. 
\begin{claim}
For every $\xi < \alpha$, $(1)_{R_\xi,M,1}$. 
\end{claim}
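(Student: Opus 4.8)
The plan is to reduce $(1)_{R_\xi,M,1}$ for an arbitrary fixed $\xi < \alpha$ to the hypothesis $(1)_{R,M,k}$, exactly mirroring the argument of Claim~\ref{claim:tildeR1}. So suppose toward a contradiction that $\bar y^0_0, \dots, \bar y^0_{n_0-1}, \bar y^1_0, \dots, \bar y^1_{n_1-1} \in (2^\omega)^\xi$ are such that the decorated tuples are pairwise distinct, strongly mCg with respect to $\prod_{i<\xi} 2^\omega$ over $M$, yet $\{\bar y^0_i{}^\frown 0 : i < n_0\} \cup \{\bar y^1_i{}^\frown 1 : i < n_1\} \in R_\xi$. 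Unravelling the definition of $R_\xi$, this gives us $\xi_0 = \xi < \dots < \xi_{k'} = \alpha$, a condition $(p,q) \in \mathbb{Q}_{\xi_0,\dots,\xi_{k'}}$, and the data $K_i, k_{i,j}, t_{i,j}$ witnessing membership, so that $(p,q)$ forces the corresponding big union into $R$.

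The key step is to push this existential statement down into an appropriate model and realize the generic. By $\mathbf\Sigma^1_1$-absoluteness (the statement that $(p,q)$ forces the union into $R$ is $\mathbf\Pi^1_1$ about $(p,q)$ and the $\bar y$'s, or one can phrase the whole ``there exist data and a condition forcing…'' as $\mathbf\Sigma^1_1$), this is witnessed already in $M[\bar y^0_0, \dots, \bar y^1_{n_1-1}]$. Then I take a $\mathbb{Q}_{\xi_0,\dots,\xi_{k'}}$-generic filter over $M[\bar y^0_0, \dots, \bar y^1_{n_1-1}]$ containing $(p,q)$, producing reals $\langle \bar z^0_{l,i} \rangle$ and $\langle \bar z^1_i \rangle$, hence the strongly mCg family $\langle \bar z^0_t : t \in T \cap \omega^{k'} \rangle$ over $M$ with respect to $\prod_{i \in [\xi,\alpha)} 2^\omega$. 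Concatenating, the reals $\bar y^0_i{}^\frown \bar z^0_{t_{i,j}}$ (for $i < n_0$, $j < K_i$) together with $\bar y^1_i{}^\frown \bar z^1_i$ (for $i < n_1$) lie in $(2^\omega)^\alpha$; one checks using the branching property of $T$, the condition $t_{i_0,j_0}(0) \neq t_{i_1,j_1}(0)$, the strong mutual genericity of the $\bar y$'s and of the $\bar z$'s, and the fact that the $\bar z$'s are generic over a model containing the $\bar y$'s, that this family is pairwise distinct and strongly mCg with respect to $\prod_{i<\alpha} 2^\omega$ over $M$. Attaching the decorations $k_{i,j} < k$ to the first group and $k$ to the second group, the forced statement (transported back by absoluteness into the extension) says exactly that $\bigcup_{i<n_0}\{\bar y^0_i{}^\frown\bar z^0_{t_{i,j}}{}^\frown k_{i,j} : j < K_i\} \cup \{\bar y^1_i{}^\frown\bar z^1_i{}^\frown k : i < n_1\} \in R$, contradicting $(1)_{R,M,k}$.

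The main obstacle I anticipate is the bookkeeping needed to verify that the concatenated reals are genuinely \emph{strongly} mutually Cohen generic over $M$ with respect to the full product $\prod_{i<\alpha}2^\omega$ — one must choose the partition points for the mCg witness to be (a refinement of) both the partition witnessing strong mCg of the $\bar y$'s below $\xi$ and the partition $\xi_0 < \dots < \xi_{k'}$ above $\xi$, and then argue that the product-of-Cohen forcing $\mathbb{Q}_{\xi_0,\dots,\xi_{k'}}$ really does add a family that is Cohen generic in the relevant power of $\prod_l Y_l$ over $M$ once the multiplicities from the $t_{i,j}$'s are accounted for. The ``strong'' clause — that once two of the concatenated reals differ at a coordinate they differ at all later coordinates — follows from the corresponding property of the $\bar y$'s on $[0,\xi)$, from the branching hypothesis on $T$ on $[\xi,\alpha)$ via the $\bar z^0_t$'s, and from the fact that distinct $\bar y$'s are combined with $\bar z$'s generic over the $\bar y$'s so no new agreement is created at the boundary. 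I expect this claim to be stated with a remark that the proof is ``similar to that of Claim~\ref{claim:tildeR1}'' and to record only the new point, namely how $R_\xi$'s extra combinatorial side-conditions on the $t_{i,j}$ guarantee strong mutual genericity.
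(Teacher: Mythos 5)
Your approach is the intended one: the paper records no proof of this claim, remarking only that it is ``similar to the proof of Claim~3.7,'' and your reduction to $(1)_{R,M,k}$ by realizing a $\mathbb{Q}_{\xi_0,\dots,\xi_{k'}}$-generic over $M[\bar y_0,\dots]$ and gluing is exactly the analogue of that earlier argument.

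There is, however, a misreading of the statement that you should correct. $(1)_{R_\xi,M,1}$ quantifies over labels $k_0,\dots,k_{n-1}<1$, hence all labels are $0$, and so the only subsets of a labelled tuple that need to be ruled out of $R_\xi$ have the form $\{\bar y_i{}^\frown 0: i\in S\}$, i.e.\ $n_1=0$ in the notation of $R_\xi$'s definition. Your proof instead starts from a putative edge $\{\bar y^0_i{}^\frown 0 : i<n_0\}\cup\{\bar y^1_i{}^\frown 1 : i<n_1\}\in R_\xi$ with $n_1$ arbitrary. In the case $n_1>0$, the family of $(2^\omega)^\alpha\times\omega$-points you produce after realizing the generic contains the elements $\bar y^1_i{}^\frown \bar z^1_i{}^\frown k$, whose label is exactly $k$ and not $<k$; so membership of this family in $R$ is \emph{not} ruled out by $(1)_{R,M,k}$, and your final ``contradiction'' does not fire. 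The argument you give is sound only when $n_1=0$, which is precisely the case the claim actually demands, so the error is a harmless over-generalization of the hypothesis rather than a gap in the needed case; but you should restrict to $n_1=0$ at the outset.

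Two more minor points worth making explicit. First, when you claim the concatenated family is \emph{strongly} mCg, the case $i\neq i'$ relies not just on mutual genericity of the $\bar z$'s over $M[\bar y_0,\dots,\bar y_{n_0-1}]$ but also on the side condition $t_{i_0,j_0}(0)\neq t_{i_1,j_1}(0)$ built into $R_\xi$: this guarantees, via the branching property of $T$, that $\bar z^0_{t_{i_0,j_0}}$ and $\bar z^0_{t_{i_1,j_1}}$ disagree at \emph{every} coordinate of $[\xi,\alpha)$, which is exactly what is needed so that the two concatenated reals continue to disagree past the initial point of disagreement below $\xi$. You gesture at this in the last paragraph, but in a clean write-up it deserves to be a displayed observation rather than an aside. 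Second, pairwise distinctness of the concatenated points for a fixed $i$ and distinct $j,j'$ uses that $t_{i,0},\dots,t_{i,K_i-1}$ were chosen distinct (again part of $R_\xi$'s definition); worth saying.
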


\begin{claim}
Assume that for every $\xi < \alpha$, $(1)_{R_\xi,M,2}$. Then also $(1)_{R,M,k+1}$.
\end{claim}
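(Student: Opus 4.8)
The plan is to carry out the limit analogue of the successor-case argument: a hypothetical $R$-edge among strongly mCg tuples of length $\alpha$ will be reduced to an $R_\xi$-edge among strongly mCg tuples of length $\xi$, for a suitable $\xi<\alpha$, and this will contradict the hypothesis $(1)_{R_\xi,M,2}$. So suppose $(1)_{R,M,k+1}$ fails, witnessed by pairwise distinct $\bar x_0,\dots,\bar x_{n-1}\in(2^\omega)^\alpha$ which are strongly mCg over $M$ and by $k_0,\dots,k_{n-1}\le k$ with $\{\bar x_i{}^\frown k_i:i<n\}\in R$. Since $\alpha$ is a limit and refining the block decomposition does not affect mutual genericity, I may fix $\xi<\alpha$ large enough that the restrictions $\bar x_i\restriction\xi$ are pairwise distinct and such that $\xi$ is a split point of a witnessing decomposition; let $\xi=\xi_0<\xi_1<\dots<\xi_{k'}=\alpha$ be the induced split sequence from $\xi$ onward. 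Because $\bar x_i$ and $\bar x_j$ first differ below $\xi$ for $i\ne j$, strong mutual genericity makes them differ at \emph{every} coordinate in $[\xi,\alpha)$. Write $\bar y_i:=\bar x_i\restriction\xi$ and $\bar w_i:=\bar x_i\restriction[\xi,\alpha)$.

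By restriction, $\bar y_0,\dots,\bar y_{n-1}$ are pairwise distinct and strongly mCg with respect to $\prod_{i<\xi}2^\omega$ over $M$. Factoring the product Cohen forcing into its part below $\xi$ and its part on $[\xi,\alpha)$, and using mutual genericity as in Lemma~\ref{lem:forcingcont}, the family $\bar w_0,\dots,\bar w_{n-1}$ is pairwise everywhere-distinct and the tuple of all its blocks $\bar w_i\restriction[\xi_l,\xi_{l+1})$ ($i<n$, $l<k'$) is Cohen generic over $M[\bar y_0,\dots,\bar y_{n-1}]$. Now split the indices into $A=\{i:k_i<k\}$ and $B=\{i:k_i=k\}$. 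For $i\in B$ put $\bar z^1_i:=\bar w_i$. For $i\in A$, using that $T$ is infinitely branching at every node, pick branches $t_i\in T$ of length $k'$ with pairwise distinct first coordinates — hence, by the never-remerging property of $T$, pairwise distinct at every level — and declare $\bar z^0_{l,t_i(l)}:=\bar w_i\restriction[\xi_l,\xi_{l+1})$ for each $l<k'$, so that $\bar z^0_{t_i}=\bar w_i$. Since the $\bar w_i$ are everywhere-distinct, their blocks form a Cohen-generic tuple over $M[\bar y_0,\dots,\bar y_{n-1}]$, and the slots used are distinct in each level, this is a consistent partial specification of a $\mathbb{Q}_{\bar\xi}$-generic, $\bar\xi=(\xi_0,\dots,\xi_{k'})$, and it extends, inside $V$, to a $\mathbb{Q}_{\bar\xi}$-generic $H$ over $M[\bar y_0,\dots,\bar y_{n-1}]$ with $\bar z^0_{t_i}[H]=\bar w_i$ ($i\in A$) and $\bar z^1_i[H]=\bar w_i$ ($i\in B$).

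In $M[\bar y_0,\dots,\bar y_{n-1}][H]$ the union $\bigcup_{i\in A}\{\bar y_i{}^\frown\bar z^0_{t_i}{}^\frown k_i\}\cup\{\bar y_i{}^\frown\bar z^1_i{}^\frown k:i\in B\}$ evaluates to $\{\bar x_i{}^\frown k_i:i<n\}$, which lies in $R$; by analyticity of $R$ and the absoluteness already invoked in the proof of Claim~\ref{claim:tildeR1}, some condition $(p,q)\in H$ forces the corresponding statement over $M[\bar y_0,\dots,\bar y_{n-1}]$. Setting $K_i=1$, $k_{i,0}=k_i<k$ and $t_{i,0}=t_i$ for $i\in A$, this is precisely a witness that $\{\bar y_i{}^\frown 0:i\in A\}\cup\{\bar y_i{}^\frown 1:i\in B\}\in R_\xi$. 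As the $\bar y_i$ are pairwise distinct and strongly mCg over $M$, and the labels $0,1$ are $<2$, this contradicts $(1)_{R_\xi,M,2}$; hence $(1)_{R,M,k+1}$.

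The point I expect to require care is the middle paragraph. First, one must be able to push $\xi$ high enough that \emph{all} of the $\xi$-restrictions become distinct — this is exactly what prevents two label-$k$ reals from sharing a value $\bar y^1$, which the $R_\xi$-format (a single $\bar z^1$ per $\bar y^1$) cannot encode — while still retaining a legitimate split sequence. Second, one must verify that regrouping the blocks of the $\bar w_i$ (those for $i\in B$ reassembled into full elements of $(2^\omega)^{[\xi,\alpha)}$, those for $i\in A$ kept standalone) produces a Cohen-generic tuple over $M[\bar y_0,\dots,\bar y_{n-1}]$ matching the product structure of $\mathbb{Q}_{\bar\xi}$. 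Both are routine mutual-genericity computations of the sort already carried out in Lemma~\ref{lem:forcingcont} and in the successor step, so I would appeal to those rather than repeat the bookkeeping here.
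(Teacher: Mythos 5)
Your proof is correct and takes essentially the same route as the paper's: choose $\xi<\alpha$ large enough that the $\bar x_i\restriction\xi$ are pairwise distinct, then use the fact that the reals $\bar x_i\restriction[\xi,\alpha)$ are everywhere-distinct and mutually generic over $M[\bar y_0,\dots,\bar y_{n-1}]$ to realize them as (blocks of) $\mathbb{Q}_{\bar\xi}$-generic reals, thereby transporting the $R$-edge question back to an $R_\xi$-question and invoking $(1)_{R_\xi,M,2}$ with labels $0$ for the $k_i<k$ group and $1$ for the $k_i=k$ group. The paper argues forward (from $R_\xi$-independence plus absoluteness it concludes $R$-independence) whereas you argue by contradiction, and the paper simply fixes the trivial decomposition $\xi_0=\xi$, $\xi_1=\alpha$ with $K_i=1$ and distinct length-one branches $t_{i,0}\in T\cap\omega^1$, whereas you allow a possibly longer split sequence and use the never-remerging property of $T$ to keep the slots disjoint; both choices work, and if $\xi$ is taken above $\max(\Delta(\bar x_0,\dots,\bar x_{n-1})\setminus\{\alpha\})$ your $k'$ collapses to $1$ anyway, so the extra generality buys nothing but also costs nothing.
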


\begin{proof}
Let $\bar x^0_0, \dots, \bar x^0_{n_0-1}, \bar x^1_0, \dots, \bar x^1_{n_1-1}$ be pairwise distinct and strongly mCg over $M$ and $k_0, \dots, k_{n_0 -1} < k$. Then there is $\xi < \alpha$ large enough so that $\bar x^0_0 \restriction \xi, \dots, \bar x^0_{n_0-1}\restriction \xi , \bar x^1_0\restriction \xi, \dots, \bar x^1_{n_1-1}\restriction \xi$ are pairwise distinct and in particular, $\bar x^0_0 \restriction [\xi,\alpha), \dots, \bar x^0_{n_0-1}\restriction [\xi, \alpha) , \bar x^1_0\restriction [\xi, \alpha), \dots, \bar x^1_{n_1-1}\restriction [\xi, \alpha)$ are pairwise different in every coordinate. Let $\xi_0 = \xi$, $\xi_1 = \alpha$, $K_i = 1$ for every $i < n_0$ and $t_{0,0}, \dots, t_{n_0 -1,0} \in T \cap \omega^1$ pairwise distinct. Also, write $k_{0,0} = k_0$, ..., $k_{n_0-1,0} = k_{n_0 -1}$. Then, from $(1)_{R_\xi,M,2}$, we have that $$\mathbbm{1} \Vdash_{\xi_0,\xi_1} \{ ((\bar x^0_i \restriction \xi ){}^\frown \bar z^0_{t_{i,0}}, k_{i,0}): i < n_0 \} \cup \{((\bar x^1_i\restriction \xi){}^\frown \bar z^1_{i}, k): i < n_1 \} \text{ is } R\text{-independent}.$$ By absoluteness, this holds true in $M[\langle \bar x^0_i \restriction \xi,\bar x^1_j\restriction \xi: i < n_0, j < n_1\rangle]$ and we find that $$\{(\bar x^0_i{}, k_i) : i < n_0 \} \cup \{ (\bar x^1_i, k) \} \text{ is } R\text{-independent},$$ as required.
\end{proof}

\begin{claim}
If there is $\xi < \alpha$ so that $(1)_{R_\xi,M,2}$ fails, then there is a countable model $M^+ \supseteq M$ so that $(2)_{R,M^+,k}$.  
\end{claim}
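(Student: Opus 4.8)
The plan is to run, essentially verbatim, the argument of the corresponding claim in the successor case, now fed by the inductive hypothesis at the smaller ordinal $\xi$. By the previous claim $(1)_{R_\xi,M,1}$ holds, and by assumption $(1)_{R_\xi,M,2}$ fails. Since $\xi<\alpha$, $R_\xi\in M$ and $\xi+1\subseteq M$, I would apply the implication being proved (by induction on $\alpha$), now at the ordinal $\xi$, to the triple $(R_\xi,M,1)$: its conclusion reads ``$(1)_{R_\xi,M,2}$, or there is a countable model $M_0\supseteq M$ with $(2)_{R_\xi,M_0,1}$'', so the second alternative holds. Fix such $M_0$ together with its witnesses: $N\in\omega$, continuous $\phi_0,\dots,\phi_{N-1}\colon(2^\omega)^\xi\to(2^\omega)^\xi$ that are pairwise distinct and distinct from the identity, tags $k_0,\dots,k_{N-1}\in\{0,1\}$, and $\bar s\in\bigotimes_{i<\xi}2^{<\omega}$.

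Next I would pass to a countable elementary $M_1\ni M_0,\phi_0,\dots,\phi_{N-1}$ and unfold the membership statement $\{\bar y{}^\frown 1\}\cup\{\phi_j(\bar y){}^\frown k_j:j<N\}\in R_\xi$, valid for every $\bar y\in(2^\omega)^\xi\cap[\bar s]$ Cohen generic over $M_1$. By the definition of $R_\xi$ this produces (a priori depending on $\bar y$) a splitting $\xi=\xi_0<\dots<\xi_{k'}=\alpha$ of the tail interval, a condition $(p,q)\in\mathbb{Q}_{\xi_0,\dots,\xi_{k'}}$, numbers $K_i$, tags $k_{i,j}<k$ and pairwise distinct tree nodes $t_{i,j}\in T\cap\omega^{k'}$ with $t_{i_0,j_0}(0)\neq t_{i_1,j_1}(0)$ for $i_0\neq i_1$, forcing the corresponding blow-up into $R$. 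As in the successor step, all this data is forced by a condition of $M_1$, so after extending $\bar s$ I may assume it is independent of $\bar y$; after a further shrinking of $\bar s$, using continuity of the $\phi_j$ together with them being pairwise distinct and distinct from the identity, I would also secure the analogue of $(\ast_3)$, i.e. $\phi_{j_0}(\bar y_0)\neq\bar y_1$ and $\phi_{j_0}(\bar y_0)\neq\phi_{j_1}(\bar y_1)$ for all $\bar y_0,\bar y_1\in[\bar s]$ and $j_0<j_1<N$. Then, exactly as in the successor case, I would force over $M_1$, in a finite support product, generic continuous functions realising the blocks $\bar z^0_{l,i}$ (ranges inside the clopen sets prescribed by $p$, taken disjoint for distinct indices) and $\bar z^1_i$ (ranges prescribed by $q$); composing blocks along the tree nodes produces tails of precisely the structured, tree-indexed shape that, by the observation preceding the definition of $R_\xi$, is strongly mutually generic. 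Put $M^+=M_1[\vec\chi]\supseteq M$.

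The witnesses for $(2)_{R,M^+,k}$ are then obtained by composition, again as in the successor step. Given a strongly mCg $\bar x\in(2^\omega)^\alpha$ over $M^+$ lying in a suitably extended $\bar t$, the restriction $\bar x\restriction\xi$ is strongly mCg over $M^+$, while $\bar x\restriction[\xi,\alpha)$ is an independent Cohen generic over $M^+[\bar x\restriction\xi]$; arranging $\bar t$ so that $\bar x\restriction[\xi,\alpha)$ extends the relevant coordinate of $q$, the real $\bar x{}^\frown k$ realises the blow-up of ``$\bar x\restriction\xi$ with $R_\xi$-tag $1$''. For each $j<N$ with $k_j=1$ I take $\bar x\mapsto\phi_j(\bar x\restriction\xi){}^\frown\chi(\phi_j(\bar x\restriction\xi))$ with tag $k$; for each $j<N$ with $k_j=0$ and each $j'<K_j$ I take $\bar x\mapsto\phi_j(\bar x\restriction\xi){}^\frown\chi_{j,j'}(\phi_j(\bar x\restriction\xi))$ with tag $k_{j,j'}$, the $\chi_{j,j'}$ chosen so the blocks assemble along $t_{j,j'}$; and to get the ``in fact $k>0$'' clause I additionally attach, the same way, generic tails to $\bar x\restriction\xi$ itself coming from the last conclusion of $(2)_{R_\xi,M_0,1}$, i.e. the blow-up of ``$\bar x\restriction\xi$ with $R_\xi$-tag $0$''. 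That the two required conclusions of $(2)_{R,M^+,k}$ hold is checked via Lemma~\ref{lem:forcingcont}: all the composed values together with the tails of the $\bar x_i$'s are mutually strongly Cohen generic over the relevant $M_1[\langle\bar x_i\restriction\xi:i\rangle]$ and correspond to a $\mathbb{Q}_{\bar\xi}$-generic below $(p,q)$, so the $\mathbf{\Sigma}^1_1$ statements defining $R$ and $R_\xi$ transfer by absoluteness, and the $R$-independence assertions follow because the $R_\xi$-independence of the relevant sets given by $(2)_{R_\xi,M_0,1}$ says exactly that all such blow-ups are $R$-independent.

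The hard part is purely organizational: carrying two kinds of blow-up in parallel (tag-$0$ reals spawning finitely many $<k$-tagged copies indexed along the tree $T$, tag-$1$ reals additionally spawning one $k$-tagged copy), ensuring the substituted tails are not merely pairwise Cohen generic but strongly mutually generic of the tree-structured form demanded by the definition of $R_\xi$ — which is exactly why $T$ was introduced — and bookkeeping the ``$k>0$'' refinement in the style of the $\phi_{1,\cdot,\cdot}$ construction of the successor case.
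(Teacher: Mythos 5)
Your proposal is correct and follows the paper's argument almost step for step: you apply the inductive implication at the smaller ordinal $\xi$ (since $(1)_{R_\xi,M,1}$ holds and $(1)_{R_\xi,M,2}$ fails) to get $M_0\supseteq M$ with $(2)_{R_\xi,M_0,1}$; you pass to an elementary $M_1\ni M_0$ and stabilize the $(p,q)$, $K_i$, $k_{i,j}$, $t_{i,j}$, and the $(\ast_3)$-analogue after shrinking $\bar s$; you force the continuous functions $\chi^0_{l,i},\chi^1_i$ over $M_1$ and set $M^+=M_1[\vec\chi]$; and you build the new $\phi$-witnesses by composing the $\phi_j$'s with tree-indexed products of the $\chi$'s, verifying membership via Lemma~\ref{lem:forcingcont} and independence via the $R_\xi$-independence of $(\ast_5)$. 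This is exactly the paper's route.

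One phrase you should reword or at least double-check in your own head: ``to get the `in fact $k>0$' clause I \emph{additionally attach}, the same way, generic tails to $\bar x\restriction\xi$ itself''. Unlike the successor case, where the least-$m$ choice forces you to manufacture $m-1$ extra functions $\phi_{1,N,j}$ with range projecting to $\bar x\restriction\beta$, the limit case needs no extra $\phi$-witnesses of this kind. Here $(2)_{R_\xi,M_0,1}$ is applied with constant $1$, so $\bar y^\frown 1$ spawns exactly one $k$-tagged $R$-copy under the definition of $R_\xi$, and that copy is $\bar x$ itself (after the extension of $\bar s$ by $q(N_1)$). The ``in fact $k>0$'' clause is then verified not by introducing new functions but by \emph{reading} the already-given $\bar x_0,\dots,\bar x_{n'-1}$ as the $R_\xi$-tag-$0$, tree-indexed, $<k$-tagged blow-ups of $\bar y_0,\dots,\bar y_{n-1}$ — choosing a refinement $\eta_0<\dots<\eta_{k''}$ and the nodes $u_{l,i}, v_{i,j}\in T\cap\omega^{k''}$ appropriately — after which $(\ast_5)$'s $R_\xi$-independence gives the required $R$-independence. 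If you literally appended further functions $\psi_j(\bar x)=\bar x\restriction\xi{}^\frown\chi_j(\bar x\restriction\xi)$ with tags $<k$ to the witness list, the membership assertion $(\ast_7)$ would no longer cover them, since $\bar y_0$ appears in $(\ast_6)$ only with $R_\xi$-tag $1$ and hence spawns no $<k$-tagged copies there. So: same proof, but make sure you are interpreting, not augmenting.
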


\begin{proof}
If $(1)_{R_\xi, M, 2}$ fails, then there is a countable model $M_0 \supseteq M$ so that $(2)_{R_\xi, M, 1}$ holds true as witnessed by $\bar s \in \bigotimes_{i < \xi} 2^{<\omega}$, $\phi^0_{0}, \dots, \phi^0_{N_0 -1}, \phi^1_{0}, \dots, \phi^1_{N_1 -1} \colon (2^\omega)^\xi \to (2^\omega)^\xi$ such that for any pairwise distinct $\bar y_0, \dots, \bar y_{n-1} \in (2^\omega)^\xi \cap [\bar s]$ that are strongly mCg over $M_0$, \begin{equation}\tag{$*_5$}
    \{ (\bar y_i, 0) : i < n\} \cup \{(\phi^0_j(\bar y_i), 0) : i < n, j < N_0\} \cup \{ (\phi^1_j(\bar y_i), 1) : i < n, j < N_1\} 
\end{equation}
is $R_\xi$-independent, but 
\begin{equation}\tag{$*_6$}
    \{ (\bar y_0, 1) \} \cup \{ (\phi^0_j(\bar y_0), 0) : j < N_0\} \cup  \{ (\phi^1_j(\bar y_0), 1) : j < N_1\} \in R_\xi.  
\end{equation}
As before, we may pick $M_1 \ni M_0$ elementary containing all relevant information, assume that $(*_6)$ is witnessed by fixed $\xi_0 = \xi < \dots< \xi_{k'} = \alpha$, $(p,q) \in \mathbb{Q}_{\xi_0, \dots, \xi_{k'}}$, $K_0, \dots, K_{N_0-1}$, $k_{i,0}, \dots, k_{i, K_i -1}$ and $t_{i,0}, \dots, t_{i, K_i-1} \in T \cap \omega^{k'}$ for every $i < N_0$, so that for every generic $\bar y_0 \in (2^\omega)^\xi \cap [\bar s]$ over $M_1$, \begin{multline}\tag{$*_7$}
    (p,q) \Vdash_{\mathbb{Q}_{\bar \xi}} \{(\bar y_0{}^\frown \bar z^1_{N_1}, k) \}\cup \bigcup_{i < N_0}\{ (\phi^0_i(\bar y_0){}^\frown \bar z^0_{t_{i,j}}, k_{i,j}): j < K_i\} \cup \\ \{ (\phi^1_j(\bar y_0){}^\frown \bar z^1_{j}, k): j < N_1 \} \in R.
\end{multline}

As before, we may also assume that $\bar y_0 \neq \phi^{j_0}_{i_0}(\bar y_0) \neq \phi^{j_1}_{i_1}(\bar y_1)$ for every $\bar y_0, \bar y_1 \in [\bar s]$ and $(j_0,i_1) \neq (j_1,i_1)$. We let $\bar s' = \bar s{}^\frown q(N_1)$. Now we force continuous functions $\chi^0_{l,i} \colon (2^\omega)^\xi \to (2^{\omega})^{[\xi_l, \xi_{l+1})} \cap [p(l,i)]$ and $\chi^1_{i} \colon (2^\omega)^\xi \to (2^{\omega})^{[\xi, \alpha)} \cap [q(i)]$ over $M_1$ for every $i \in \omega$, $l < k'$ and we let $M^+ = M_1[\langle \chi^0_{l,i}, \chi^1_{i} : i \in \omega, l < k'\rangle]$. Finally we let $$ \phi_{0,i,j}(\bar x) := \phi^0_i(\bar x \restriction \xi){}^\frown \chi_{0,t_{i,j}(0)}{}(\phi^0_i(\bar x \restriction \xi)){}^\frown \dots{}^\frown \chi_{k'-1,t_{i,j}(k'-1)}(\phi^0_i(\bar x \restriction \xi))$$ for every $i < N_0$ and $j < K_i$, $\bar x \in (2^\omega)^\alpha$, and $$\phi_{1,i}(\bar x) :=  \phi^1_i(\bar x \restriction \xi){}^\frown \chi_{1,i}(\phi
^1_i(\bar x \restriction \xi))$$ for every $i < N_1$, $\bar x \in (2^\omega)^\alpha$.

We get from $(*_7)$, and, as usual, applying Lemma~\ref{lem:forcingcont}, that for any $\bar x \in (2^\omega)^\alpha \cap [\bar s']$ which is generic over $M^+$, 
   $$ \{(\bar x, k)\} \cup \bigcup_{i < N_0} \{(\phi_{0,i,j}(\bar x), k_{i,j}) : j < K_i \} \cup \{(\phi_{1,i}(\bar x), k) : i < N_1 \} \in R.$$
   
   On the other hand, whenever $\bar x_0,\dots, \bar x_{n'-1}\in (2^{\omega})^\alpha \cap [\bar s']$ are strongly mCg over $M^+$, and letting $\bar y_0, \dots, \bar y_{n-1}$ enumerate $\{ \bar x_i \restriction \xi : i < n'\}$, knowing that the set in $(*_5)$ is $R_\xi$-independent, we get that \begin{multline*}
       \{ (\bar x_l,k-1) : l < n' \} \cup \bigcup_{i < N_0} \{(\phi_{0,i,j}(\bar x_l),k_{i,j}) : j < K_i, l < n' \} \cup \\\{(\phi_{1,i}(\bar x_l), k) : i < N_1, l < n' \} \text{ is } R\text{-independent},
   \end{multline*}
   
 in case $k>0$. To see this, we let $\eta_0< \dots< \eta_{k''}$ be a partition refining $\xi_0 < \dots, \xi_{k'}$ witnessing the mCg of $\bar x_0 \restriction [\xi, \alpha), \dots, \bar x_{n'-1}\restriction [\xi, \alpha)$ and we find appropriate $u_{0,0}, \dots, u_{0,L_0 -1}, \dots, u_{n-1,0 }, \dots, u_{n-1,L_{n-1}-1} \in T \cap \omega^{k''}$ and $v_{i,j} \in T \cap \omega^{k''}$ for $i < N_0, j < K_i$ to interpret the above set in the form \begin{multline*}
       \{ (\bar y_l{}^\frown \bar z^0_{u_{l,i}}, k-1) : l < n, i < L_i \} \cup \bigcup_{i < N_0} \{(\phi^0_i(\bar y_l){}^\frown \bar z^0_{v_{i,j}}, k_{i,j}) : i < N_0, j < K_i, l < n \} \\\cup \{(\phi^1_{i}(\bar y_l){}^\frown \bar z^1_i , k) : i < N_1, l < n \},
   \end{multline*} for $\mathbb{Q}_{\eta_0, \dots, \eta_{k''-1}}$-generic $\langle \bar z
^0_{l,i}, \bar z^1_{i} : l < k'', i \in \omega \rangle$ over $M_1[\bar y_0, \dots, \bar y_{n-1}]$. We leave the details to the reader. In case $k= 0$, all $K_i$ are $0$ and we get that $$\{(\phi_{1,i}(\bar x_l), k) : i < N_1, l < n' \} \text{ is } R\text{-independent}.$$
Everything that remains, namely showing e.g. that $\bar x \neq \phi_{1,i}(\bar x)$ is clear.
\end{proof}
As a final note, let us observe that the case $\alpha = 0$ is trivial, since $(2^\omega)^\alpha$ has only one element. 
\end{proof}

\begin{remark}\label{rem:E1}
	If we replace ``strong mCg" with ```mCg" in the above Lemma, then it already becomes false for $\alpha = \omega$. Namely consider the equivalence relation $E$ on $(2^\omega)^\omega$, where $\bar x E \bar y$ if they eventually agree, i.e. if $\exists n \in \omega \forall m \geq n (x(n) = y(n))$.\footnote{This equivalence relation is usually called $E_1$.} Then we can never be in case (1) since we can always find two distinct $\bar x$ and $\bar y$ that are mCg and $\bar x E \bar y$. On the other hand, in case (2) we get a continuous selector $\phi_0$ for $E$ (note that $N = 0$ is not possible). More precisely we have that for any $\bar x$, $\bar y$ that are mCg, $\bar x E \phi_0(\bar x)$ and $\phi_0(\bar x) = \phi_0(\bar y)$ iff $\bar x E \bar y$. But for arbitrary mCg $\bar x$ and $\bar y$ so that $\bar x \neg E \bar y$, we easily find a sequence $\langle \bar x_n : n \in \omega \rangle$ so that $\bar x$ and $\bar x_n$ are mCg and $\bar x E \bar x_n$, but $\bar x_n \restriction n = \bar y \restriction n$ for all $n$. In particular $\lim_{n \in \omega} \bar x_n = \bar y$. Then $\phi_0(\bar y) = \lim_{n\in \omega} \phi_0(\bar x_n) = \lim_{n\in \omega} \phi_0(\bar x) = \phi_0(\bar x)$.
\end{remark}

The proofs of Main Lemma~\ref{thm:mainlemma} and \ref{lem:mainlemmainf} can be generalized to $E$ that is $\omega$-universally Baire, in particular they also hold for coanalytic hypergraphs. The only assumptions on analytic sets that we used in the proofs are summarized below.  

\begin{prop}
Let $\Gamma$ be a pointclass closed under countable unions, countable intersections and continuous preimages and assume that for every $A \in \Gamma \cap \mathcal{P}(\omega^\omega)$, there are formulas $\varphi$, $\psi$ (with parameters) in the language of set theory, such that for every countable elementary model $M$ (with the relevant parameters) and $G$ a generic over $M$ for a finite support product of Cohen forcing, 
\begin{enumerate}
    \item for $x \in M[G] \cap \omega^\omega$, $M[G] \models \varphi(x) \text{ iff } x \in A \text{ and } M[G] \models \psi(x) \text{ iff } x \notin A,$
    \item for $\dot x \in M[G]$ a $\mathbb{C}$-name for a real, $p \in \mathbb{C}$, $M[G] \models ``p \Vdash \varphi(\dot x)"$ iff $p \Vdash \varphi(\dot x)$, 
    \item for $\dot y$ a $\mathbb{C}$-name for a real, $p \in \mathbb{C}$ and a continuous function $f \colon \omega^\omega \times \omega^\omega \to \omega^\omega$, $\{ x \in \omega^\omega : p \Vdash \varphi(f(x,\dot y)) \} \in \Gamma.$
\end{enumerate}
Then Main Lemma~\ref{thm:mainlemma} and \ref{lem:mainlemmainf} hold, where ``analytic" is replaced by $\Gamma$.
\end{prop}

\begin{definition}\label{def:Delta}
For $\bar x_0, \dots, \bar x_{n-1} \in \prod_{i<\alpha} X_i$, we define $$\Delta(\bar x_0, \dots, \bar y_{n-1}) := \{ \Delta_{\bar x_i, \bar x_j} : i \neq j < n \} \cup \{ 0, \alpha\},$$ where $\Delta_{\bar x_i, \bar x_j} := \min \{ \xi < \alpha: x_i(\xi) \neq x_j(\xi) \}$ if this exists and $\Delta_{\bar x_i, \bar x_j} = \alpha$ if $\bar x_i = \bar x_j$. 
\end{definition}

\begin{remark}
Whenever $\bar x_0, \dots, \bar x_{n-1}$ are strongly mCg, then they are mCg as witnessed by the partition $\xi_0 < \dots < \xi_k$, where $\{\xi_0, \dots, \xi_k \} = \Delta(\bar x_0, \dots, \bar x_{n-1})$.
\end{remark}

\section{Sacks and splitting forcing}

\subsection{Splitting Forcing}

\begin{definition}\label{def:splittingforcing}
	We say that $S \subseteq 2^{<\omega}$ is \textit{fat} if there is $m \in \omega$ so that for all $n \geq m$, there are $s,t \in S$ so that $s(n) =0$ and $t(n) =1$. A tree $T$ on $2$ is called \textit{splitting tree} if for every $s \in T$, $T_s$ is fat. We call \textit{splitting forcing} the tree forcing $\mathbb{SP}$ consisting of splitting trees.  
\end{definition}

Note that for $T \in \mathbb{SP}$ and $s \in T$, $T_s$ is again a splitting tree. Recall that $x \in 2^\omega$ is called splitting over $V$, if for every $y \in 2^{\omega} \cap V$, $\{ n \in \omega: y(n) = x(n) = 1 \}$ and $\{n \in \omega: x(n) = 1 \wedge y(n) =0 \}$ are infinite. The following is easy to see.

\begin{fact}
	Let $G$ be $\mathbb{SP}$-generic over $V$. Then $x_G$, the generic real added by $\mathbb{SP}$, is splitting over $V$.
\end{fact}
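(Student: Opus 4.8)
The plan is to reduce the statement to a single density argument inside $\mathbb{SP}$. Recall that $x_G = \bigcup\{ s \in 2^{<\omega} : \forall T \in G\,(s \in T)\}$, so that whenever $T \in \mathbb{SP}$ and $s \in T$, the restriction $T_s$ is again a splitting tree (as already observed just above the Fact), $T_s \leq T$, and $T_s \Vdash s \subseteq x_G$; in particular $T_s$ forces $x_G(n) = s(n)$ for every $n < \vert s\vert$. Fix $y \in 2^\omega \cap V$, understood as usual to code an infinite, coinfinite subset of $\omega$, so that both $\{n : y(n) = 1\}$ and $\{n : y(n) = 0\}$ are infinite. It suffices to prove that for every $m \in \omega$ the two sets
$$ D^{\varepsilon}_m := \{\, T \in \mathbb{SP} : \exists\, n \geq m\ \bigl(y(n) = \varepsilon \ \wedge\ T \Vdash x_G(n) = 1\bigr)\,\}, \qquad \varepsilon \in \{0,1\}, $$
are dense in $\mathbb{SP}$; for then a generic $G$ meets each of them for every $m$, whence $\{n : x_G(n) = y(n) = 1\}$ and $\{n : x_G(n) = 1 \wedge y(n) = 0\}$ are both unbounded in $\omega$, i.e.\ infinite, which is precisely the definition of $x_G$ being splitting over $V$.

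The key combinatorial input is that every $T \in \mathbb{SP}$ is itself fat: since $T$ is nonempty we have $\langle\rangle \in T$ and $T = T_{\langle\rangle}$, so applying the defining property of a splitting tree to the node $\langle\rangle$ yields $m_T \in \omega$ such that for all $n \geq m_T$ there exist $s, t \in T$ with $\vert s\vert, \vert t\vert > n$, $s(n) = 1$ and $t(n) = 0$. Now, given $T \in \mathbb{SP}$, $m \in \omega$ and $\varepsilon \in \{0,1\}$, I would choose $n \geq \max(m, m_T)$ with $y(n) = \varepsilon$ — possible because $\{n : y(n) = \varepsilon\}$ is infinite — and take $s \in T$ witnessing fatness at $n$, i.e.\ $\vert s\vert > n$ and $s(n) = 1$. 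Then $T_s \leq T$ is a splitting tree forcing $x_G(n) = s(n) = 1$, and since $y(n) = \varepsilon$ and $n \geq m$ we get $T_s \in D^{\varepsilon}_m$. This establishes density and hence the Fact. (The two remaining ``quadrants'' $\{n : x_G(n) = 0 \wedge y(n) = 1\}$ and $\{n : x_G(n) = y(n) = 0\}$ are seen to be infinite by the identical argument, now using the node $t \in T$ with $t(n) = 0$ and descending to $T_t$; so in fact $x_G$ is ``balanced'' against every ground model real.)

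There is no genuine obstacle in this proof — it is the routine unfolding of the definitions, which is why the text labels it ``easy to see''. The only points deserving (minor) attention are the bookkeeping in the notion of fatness, namely that it asserts the existence of \emph{nodes of length strictly greater than} $n$ realizing each bit at coordinate $n$ (so that $s(n)$ and $t(n)$ are actually defined), and the fact, already recorded before the Fact, that a restriction $T_s$ of a splitting tree is again a splitting tree, so that the extensions produced in the density argument are legitimate conditions in $\mathbb{SP}$.
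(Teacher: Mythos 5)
The paper offers no proof of this Fact, merely remarking beforehand that "The following is easy to see," so there is no argument to compare against. Your density argument is correct and is the natural one: every $T \in \mathbb{SP}$ is fat (apply the definition of splitting tree to $s = \langle\rangle$), so for any sufficiently large $n$ one can restrict to a node realizing $x_G(n)=1$, and choosing $n$ where $y(n)=\varepsilon$ (for each $\varepsilon$) yields the required density; the bookkeeping points you flag (that fatness implicitly gives nodes of length $> n$, and that $T_s$ remains a splitting tree) are exactly the right ones to check.
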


Whenever $S$ is fat let us write $m(S)$ for the minimal $m \in \omega$ witnessing this.

\begin{definition}
	Let $S,T$ be splitting trees and $n \in \omega$. Then we write $S \leq_n T$ iff $S \leq T$, $\splt_{\leq n}(S) =  \splt_{\leq n}(T)$ and $\forall s \in \splt_{\leq n}(S) (m(S_s) = m(T_s))$. 
\end{definition}

\begin{prop}
	The sequence $\langle \leq_n : n \in \omega \rangle$ witnesses that $\mathbb{SP}$ has Axiom A with continuous reading of names. 
\end{prop}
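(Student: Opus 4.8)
The plan is to verify the four conditions of Definition~\ref{def:axiomA} for $\mathbb{SP}$ with the stated sequence $\langle \leq_n : n \in \omega \rangle$. Conditions (1) and (2) are immediate: if $S \leq_n T$ then $\splt_{\leq n}(S) = \splt_{\leq n}(T)$, which in particular pins down $S$ and $T$ up to level $n$, giving (1); and for a fusion sequence $\langle p_n \rangle$ the intersection $p = \bigcap_n p_n$ is again a splitting tree, since any $s \in p$ lies in $p_n$ for $n = |s|$, and then the fatness of $(p_n)_s$ below level $m((p_n)_s) = m((p_{n+1})_s) = \cdots$ is witnessed by splitting nodes that, by the $\leq_n$-stabilization, all already belong to $p$; hence $p \in \mathbb{SP}$ and $p \leq_n p_n$ for every $n$. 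The first real work is condition (3), the fusion-into-antichain property: given a maximal antichain $D$, a condition $p$, and $n \in \omega$, I would enumerate the finitely many splitting nodes of $p$ up to level $n$ (call their immediate successors the ``stems''), and then, working below each of the resulting finitely many restrictions $p_t$, recursively thin out: at each splitting node we must keep the tree fat (so we cannot prune below $m$), but above level $m$ we are free to pass to a single condition extending $p_t$ that decides membership in some $r \in D$. Iterating this construction along a fusion of the appropriate kind produces $q \leq_n p$ so that every node of $q$ past the relevant threshold has been ``sealed'' into meeting only countably many elements of $D$ — more precisely, $q$ is built as the fusion of a sequence $q_0 \geq q_1 \geq \cdots$ where $q_{k+1} \leq_k q_k$ and each new splitting level is handled by the density of conditions deciding a given $r \in D$.

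The heart of the proof is condition (4), continuous reading of names. Given $p$, $n$, and a name $\dot y$ for an element of a Polish space $X$ — which we may take to be $\omega^\omega$ — I would build a fusion sequence $\langle q_k : k \in \omega \rangle$ with $q_{k+1} \leq_{k} q_k$, arranging at stage $k$ that for every splitting node $s$ of $q_k$ of level $\leq k$, and every immediate successor $s^\frown i$ in $q_k$, the restriction $(q_k)_{s^\frown i}$ decides $\dot y \restriction k$. The subtle point, and the one I expect to be the main obstacle, is reconciling this with the fatness requirement built into $\leq_k$: when we move from $q_k$ to $q_{k+1}$ we are not allowed to kill any splitting node below the current $m$-values, yet we need to shrink enough to decide one more coordinate of $\dot y$. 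The resolution is the usual splitting-forcing bookkeeping: we only ever seal a decision above level $m(S_s)$ for the relevant $s$, exploiting that fatness of $S_s$ is a statement about cofinally many levels $\geq m(S_s)$, so there is always room to find, above any given finite level, two extensions with a $0$ and a $1$ in some coordinate while simultaneously deciding $\dot y \restriction k$; one then declares that coordinate to be a new splitting level of $q_{k+1}$ and continues. Letting $q = \bigcap_k q_k$, every branch $x \in [q]$ passes through a sequence of splitting nodes each of which has already decided longer and longer initial segments of $\dot y$, so the map $x \mapsto \dot y$-value is well-defined and continuous on $[q]$; this is the desired $f \colon [q] \to \omega^\omega$ with $q \Vdash \dot y = f(x_G)$, and $q \leq_n p$ by construction.

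I would present the argument by first isolating a single ``one-step'' lemma — below any splitting tree $S$, any $n$, and any finite set of nodes to be preserved, there is $S' \leq_n S$ deciding a prescribed finite piece of information at each node past the fatness threshold — and then applying it along the fusion, with the increasing sequence $\langle a_n\rangle$ of coordinates replaced here by the increasing sequence of ``levels we have processed.'' The bookkeeping that every splitting node of the final $q$ eventually gets processed, and that the $m$-values stabilize, is routine once the one-step lemma is in place. Condition (1) then also guarantees the clause $S \cap A^{<n} = T \cap A^{<n}$ needed in Definition~\ref{def:axiomA}(1), completing the verification.
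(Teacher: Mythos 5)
Your high-level plan is the same as the paper's: conditions (1) and (2) are immediate, and conditions (3) and (4) are handled by a fusion built from a ``one-step'' refinement lemma. Your treatment of (2) is essentially the paper's (the $m$-value stabilization across the fusion sequence witnesses fatness of the intersection), and your reduction of (4) to the machinery developed for (3) also matches the paper. The problem is that the one-step refinement for (3) is exactly where the real work lies, and your account of it is too vague to be credited — indeed, as stated it doesn't quite make sense.

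You say that above level $m$ ``we are free to pass to a single condition extending $p_t$ that decides membership in some $r\in D$,'' and later you propose a lemma that decides a prescribed finite piece of information ``at each node past the fatness threshold.'' Neither formulation works as written: passing to a single condition below a stem destroys the $\leq_n$-structure (it in general changes $\splt_{\leq n}$ and the $m$-values), and ``each node past the fatness threshold'' is an infinite set, so deciding something ``at each'' of them is not a finite-stage operation. The obstruction is that fatness is a requirement about \emph{every} level $\geq m$, so you cannot simply prune to a single branch above $m$; but at the same time $\leq_n$ demands that you not disturb the initial part of the tree. What bridges this gap in the paper is a combinatorial claim that you do not identify: in any splitting tree $S$ there is an antichain $A\subseteq S$ which already witnesses all of the fatness of $S$, in the sense that for every $k$ and $j\in 2$, if some $s\in S$ has $s(k)=j$ then some $t\in A$ has $t(k)=j$. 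Given this, one extends each node of a suitable finite subset of $A$ (plus one designated node to control the new $m$-value) into $D$ and amalgamates; it is this amalgam, not a ``single condition,'' that gives $S'\leq_n S$ refining into $D$. That antichain-covering lemma is the nontrivial content of the proposition, and without it your fusion has no valid single step to iterate. Everything else in your write-up is sound modulo this point, so I would not call the approach different — it is the paper's approach with the key lemma missing.
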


\begin{proof}
	It is clear that $\leq_{n}$ is a partial order refining $\leq$ and that $\leq_{n+1} \subseteq \leq_n$ for every $n \in \omega$. Let $\langle T_n : n \in \omega \rangle$ be a fusion sequence in $\mathbb{SP}$, i.e. for every $n$, $T_{n+1} \leq_n T_n$. Then we claim that $T := \bigcap_{n \in \omega} T_n$ is a splitting tree. More precisely, for $s \in T$, we claim that $m := m((T_{\vert s\vert})_s)$ witnesses that $T_s$ is fat. To see this, let $n \geq m$ be arbitrary and note that $n \geq m \geq \vert s \vert$ must be the case. Then, since $\splt_{\leq n+1}(T_{n+1}) \subseteq T$ we have that $s \in \splt_{\leq n+1}( T_{n+1})$ and $m((T_{n+1})_s) = m$. So find $t_0, t_1 \in T_{n+1}$ so that $t_0(n) = 0$, $t_1(n) = 1$ and $\vert t_0 \vert = \vert t_1 \vert = n+1$. But then $t_0,t_1 \in T$, because $t_0,t_1 \in \splt_{\leq n+1}(T_{n+1}) \subseteq T$. 
	
	Now let $D \subseteq \mathbb{SP}$ be open dense, $T \in \mathbb{SP}$ and $n \in \omega$. We will show that there is $S \leq_n T$ so that for every $x \in [S]$, there is $t \subseteq x$, with $S_t \in D$. This implies condition (3) in Definition~\ref{def:axiomA}.
	
	\begin{claim}
		Let $S$ be a splitting tree. Then there is $A \subseteq S$ an antichain (seen as a subset of $2^{<\omega}$) so that for every $k \in \omega, j \in 2$, if $\exists s \in S (s(k) = j)$, then $\exists t \in A (t(k) = j)$. 
	\end{claim}
	
	\begin{proof}
		Start with $\{ s_i : i \in \omega \} \subseteq S$ an arbitrary infinite antichain and let $m_i := m(S_{s_i})$ for every $i \in \omega$. Then find for each $i \in \omega$, a finite set $H_i \subseteq S_{s_i}$ so that for all $k \in [m_i, m_{i+1})$, there are $t_0,t_1 \in H_i$, so that $t_0(k) = 0$ and $t_1(k) = 1$.  Moreover let $H \subseteq S$ be finite so that for all $k \in [0,m_0)$ and $j \in 2$, if $\exists s \in S (s(k) =j)$, then $\exists t \in H (t(k) = j)$. Then define $F_i = H_i \cup (H \cap S_{s_i})$ for each $i \in \omega$ and let $F_{-1} := H \setminus \bigcup_{i \in \omega} F_i$. 
		Since $F_i$ is finite for every $i \in \omega$, it is easy to extend each of its elements to get a set $F_i'$ that is an additionally an antichain in $S_{s_i}$. Also extend the elements of $F_{-1}$ to get an antichain $F_{-1}'$ in $S$. It is easy to see that $A := \bigcup_{i \in [-1,\omega)} F_i'$ works. 
	\end{proof}
	
	Now enumerate $\splt_n(T)$ as $\langle \sigma_i : i < N \rangle$, $N := 2^n$. For each $i < N$, let $A_i \subseteq T_{\sigma_i}$ be an antichain as in the claim applied to $S = T_{\sigma_i}$. For every $i < N$ and $t \in A_i$, let $S^{t} \in D$ be so that $S^t \leq T_t$. For every $i <N$ pick $t_i \in A_i$ arbitrarily and $F_i \subseteq A_i$ a finite set so that for every $k \in [0, m(S^{t_i}))$ and $j \in 2$, if $\exists s \in A_i (s(k)= j)$, then $ \exists t \in F_i (t(k)=j)$. Then we see that $S := \bigcup_{i < N} (\bigcup_{t \in F_i} S^t \cup S^{t_i} )$ works. We constructed $S$ so that $S \leq_n T$. Moreover, whenever $x \in [S]$, then there is $i < N$ be so that $\sigma_i \subseteq x$. Then $x \in [\bigcup_{t \in F_i} S_t \cup S_{t_i}]$ and since $F_i$ is finite, there is $t \in F_i \cup \{t_i \}$ so that $t \subseteq x$. But then $S_{t} \leq S^t \in D$. 
	
	Finally, in order to show the continuous reading of names, let $\dot y$ be a name for an element of $\omega^\omega$, $n \in \omega$ and $T \in \mathbb{SP}$. It suffices to consider such names, since for every Polish space $X$, there is a continuous surjection $F \colon \omega^\omega \to X$. Then we have that for each $i \in \omega$, $D_i := \{ S \in \mathbb{SP} : \exists s \in \omega^{i}( S \Vdash \dot y \restriction i = s ) \}$ is dense open. Let $\langle T_i : i \in \omega \rangle$ be so that $T_0 \leq_n T$, $T_{i+1} \leq_{n+i} T_i$ and for every $x \in [T_i]$, there is $t \subseteq x$ so that $(T_i)_t \in D_i$. Then $S = \bigcap_{i \in \omega} T_i \leq_n T$. For every $x \in [S]$, define $f(x) = \bigcup \{ s \in \omega^{<\omega} : \exists t \subseteq x (S_t \Vdash s \subseteq \dot y) \}$. Then $f \colon [S] \to \omega^\omega$ is continuous and $S \Vdash \dot y = f(x_G)$.
\end{proof}

\begin{cor}
	$\mathbb{SP}$ is proper and $\omega^\omega$-bounding.
\end{cor}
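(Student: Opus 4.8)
The plan is to derive both properties directly from the Proposition just established, namely that $\langle \leq_n : n \in \omega\rangle$ witnesses Axiom A with continuous reading of names for $\mathbb{SP}$. For properness I would run the standard master-condition argument. Fix a countable elementary $M$ with $\mathbb{SP}, T \in M$ and enumerate as $\langle D_n : n \in \omega\rangle$ all maximal antichains of $\mathbb{SP}$ that belong to $M$ (there are only countably many, since $M$ is countable). Build a fusion sequence $T = T_0$, $T_{n+1} \leq_n T_n$, where at stage $n$ I apply clause (3) of Definition~\ref{def:axiomA} \emph{inside $M$} — legitimate by elementarity, since $T_n, D_n \in M$ and $\omega \subseteq M$ — to obtain $T_{n+1} \in M$ with $\{ r \in D_n : r \not\perp T_{n+1}\}$ countable. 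The key observation is that this last set is definable from the parameters $D_n, T_{n+1} \in M$, hence lies in $M$, and being a countable element of $M$ it is in fact a \emph{subset} of $M$. By clause (2), $T_\omega := \bigcap_n T_n \in \mathbb{SP}$ and $T_\omega \leq_n T_n$ for every $n$; in particular $T_\omega \leq T$, and for each $D = D_n \in M$ we get $\{ r \in D : r \not\perp T_\omega\} \subseteq \{ r \in D : r \not\perp T_{n+1}\} \subseteq M$. Thus $T_\omega$ is a master condition over $M$ below $T$, which is exactly properness.

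For $\omega^\omega$-boundingness I would combine continuous reading of names with the compactness of $[S]$ for $S \in \mathbb{SP}$. Given $T \in \mathbb{SP}$ and an $\mathbb{SP}$-name $\dot y$ for an element of $\omega^\omega$, clause (4) of Definition~\ref{def:axiomA} (taken with $n = 0$) yields $S \leq T$ together with a continuous $f \colon [S] \to \omega^\omega$ such that $S \Vdash \dot y = f(x_G)$. Since $S$ is a subtree of $2^{<\omega}$, the set $[S]$ is closed in the compact space $2^\omega$, hence compact, so $f''[S] \subseteq \omega^\omega$ is compact and therefore bounded; concretely $g(k) := \max\{ f(x)(k) : x \in [S]\}$ defines an element $g \in \omega^\omega$ in $V$, and $S \Vdash \forall k\,(\dot y(k) \leq g(k))$. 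As every element of $\omega^\omega$ in the extension is named by some such $\dot y$, this shows $\mathbb{SP}$ is $\omega^\omega$-bounding.

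I do not anticipate any real obstacle; the substantive work is already in the Proposition, and the corollary is just the routine packaging of Axiom A with crn into its two standard consequences. The only points deserving a moment's care are the elementarity bookkeeping in the properness argument — precisely that a countable set which is an \emph{element} of $M$ is a \emph{subset} of $M$, so that the part of each $D_n$ compatible with $T_{n+1}$ is captured inside $M$ — and, for boundingness, the use of the fact that the conditions live on $2^{<\omega}$, so that $[S]$ is compact; this is a feature of $\mathbb{SP}$ (and, looking ahead, of Sacks forcing), whereas for tree forcings on $\omega^{<\omega}$ continuous reading of names alone would not give $\omega^\omega$-bounding.
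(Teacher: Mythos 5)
Your proof is correct and is exactly the standard derivation that the paper leaves implicit: the paper gives no proof of this corollary, treating Axiom~A $\Rightarrow$ proper (via the master-condition fusion you describe) and continuous reading of names $+$ compactness of $[S]$ for $S \subseteq 2^{<\omega}$ $\Rightarrow$ $\omega^\omega$-bounding as routine. Both of your arguments, including the bookkeeping points you flag (that a countable element of $M$ is a subset of $M$, and that compactness of the branch space is essential), are the intended ones.
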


\subsection{Weighted tree forcing}

    In this subsection we define a class of forcings that we call \emph{weighted tree forcing}. The definition is slightly ad-hoc, but simple enough to formulate and to check for various forcing notions. In earlier versions of this paper we proved many of the results only for splitting forcing, but we noted that similar combinatorial arguments apply more generally. The notion of a \emph{weight} resulted directly from analysing the proof of Proposition~\ref{prop:weightedmain} for splitting forcing. It also turned out to be particularly helpful in the next subsection. 

\begin{definition}
	Let $T$ be a perfect tree. A \emph{weight} on $T$ is a map $\rho \colon T \times T \to [T]^{<\omega}$ so that $\rho(s,t) \subseteq T_s \setminus T_t$ for all $s,t \in T$. Whenever $\rho_0, \rho_1$ are weights on $T$ we write $\rho_0 \subseteq \rho_1$ to say that for all $s,t \in T$, $\rho_0(s,t) \subseteq \rho_1(s,t)$. 
\end{definition}

Note that if $t \subseteq s$ then $\rho(s,t) = \emptyset$ must be the case. 

\begin{definition}
	Let $T$ be a perfect tree, $\rho$ a weight on $T$ and $S$ a tree. Then we write $S \leq_\rho T$ if $S \subseteq T$ and there is a dense set of $s_0 \in S$ with an injective sequence $(s_n)_{n\in \omega}$  in $S_{s_0}$ such that $\forall n \in \omega (\rho(s_n, s_{n+1}) \subseteq S)$. 
\end{definition}

\begin{remark}
	Whenever $\rho_0 \subseteq \rho_1$, we have that $S \leq_{\rho_1} T$ implies $S \leq_{\rho_0} T$. 
\end{remark}

\begin{definition}
\label{def:weightedtreeforcing}
	Let $\mathbb{P}$ be a tree forcing. Then we say that $\mathbb{P}$ is \emph{weighted} if for any $T \in \mathbb{P}$ there is a weight $\rho$ on $T$ so that for any tree $S$, if $S \leq_\rho T$ then $S \in \mathbb{P}$. 
\end{definition}

\begin{lemma}
	\label{lem:spweighted}
	$\mathbb{SP}$ is weighted. 
\end{lemma}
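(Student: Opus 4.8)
The plan is to exhibit, for each splitting tree $T$, a weight $\rho$ on $T$ whose only purpose is to ``certify fatness'': if a subtree $S$ satisfies $S \leq_\rho T$, then along a dense set of nodes the sets $\rho(s_n, s_{n+1})$ that are forced to lie in $S$ witness that every $S_s$ is fat. Concretely, fix for every $s \in T$ an injective enumeration of the splitting that makes $T_s$ fat; that is, using $m = m(T_s)$, choose for each $n \geq m$ and each $j \in 2$ a node $u^s_{n,j} \in T_s$ with $u^s_{n,j}(n) = j$, and arrange these into a single finite-to-one bookkeeping. I would then define $\rho(s,t)$, for incomparable $s,t$ (and $\emptyset$ when $t \subseteq s$), to be a suitable finite set of such witnessing nodes drawn from $T_s \setminus T_t$ — the point being that as $n$ ranges over $\omega$ and we look at $\rho(s_n, s_{n+1})$ along an injective branchwise sequence, every level $\geq$ some bound gets both a $0$-witness and a $1$-witness thrown into $S$.

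The key steps, in order: (1) Verify $\rho$ is a legitimate weight, i.e. $\rho(s,t) \subseteq T_s \setminus T_t$; this is immediate from choosing the witnessing nodes inside $T_s$ and, for the finitely many that might land in $T_t$, discarding them — one must check enough witnesses survive, which is where the finite-to-one bookkeeping is used (for fixed $s$, only finitely many levels are ``blocked'' by any single $t \supseteq$ a fixed initial segment). (2) Given $S \leq_\rho T$, fix $s_0 \in S$ from the dense set together with its injective sequence $(s_n)_{n \in \omega}$ in $S_{s_0}$ with $\rho(s_n,s_{n+1}) \subseteq S$ for all $n$; show $S_{s_0}$ is fat by arguing that for all sufficiently large levels $\ell$, among the nodes $\bigcup_n \rho(s_n,s_{n+1}) \subseteq S_{s_0}$ there are witnesses taking value $0$ and value $1$ at level $\ell$. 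This follows because the $s_n$ are infinitely many distinct nodes of $T$, so their weights collectively cover cofinitely many levels with both values, and only finitely many get lost when restricting to $T_{s_n} \setminus T_{s_{n+1}}$. (3) Finally, since $S \leq_\rho T$ and tree forcings are closed under restriction, conclude $S_s \in \mathbb{SP}$ for all $s \in S$ by applying (2) to the restriction; since $S = \bigcup$ of its restrictions and a tree all of whose $S_s$ are fat is by definition a splitting tree, $S \in \mathbb{SP}$.

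The main obstacle I expect is bookkeeping in the definition of $\rho$: I need $\rho(s,t)$ to be \emph{finite} (it maps into $[T]^{<\omega}$) while still guaranteeing that, \emph{along any infinite injective branchwise sequence} $(s_n)_n$, the union $\bigcup_n \rho(s_n,s_{n+1})$ hits every high enough level with both values. The natural fix is to make $\rho(s,t)$ depend on $|s|$: let $\rho(s,t)$ (for $s \perp t$) consist of, say, a $0$-witness and a $1$-witness for $T_s$ at level $|s|$ (and perhaps a few nearby levels), chosen to avoid $T_t$. Then along $(s_n)_n$ with $|s_n|\to\infty$ strictly increasing (which injectivity plus density lets us assume), the levels $|s_n|$ are cofinal, each contributing both values, so $S_{s_0}$ is fat with $m(S_{s_0})$ roughly $|s_0|$. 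One must double-check that requiring the witness at level $|s|$ to avoid the \emph{single} node $t$ only kills it when $t$ itself sits at that level on the relevant side, which can be sidestepped by allowing a choice among two or three nearby levels — a routine but slightly fiddly argument. I do not expect any conceptual difficulty beyond this.
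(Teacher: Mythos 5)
There is a genuine gap. Your concrete choice of weight --- a $0$- and $1$-witness at level $|s|$ (or a few nearby levels), with $\rho(s,t)=\emptyset$ for $t \subseteq s$ --- cannot work. Along the sequence $(s_n)$ the sets $\rho(s_n,s_{n+1})$ then only supply witnesses at the levels $|s_n|$, which form an infinite but generally non-cofinite subset of $\omega$; fatness of $S_{s_0}$ requires witnesses at \emph{every} sufficiently large level. Your step~(2) papers over this with the claim that ``only finitely many get lost when restricting to $T_{s_n}\setminus T_{s_{n+1}}$,'' but that is false: a fixed $(k,i)$-witness can lie in $T_{s_{n+1}}$ for every $n$ and hence never be forced into $S$ by any $\rho(s_n,s_{n+1})$. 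You also never define $\rho(s,t)$ for $s \subsetneq t$, which is needed since the definition of $\leq_\rho$ imposes no branch relation on $(s_n)$, and your justification that $|s_n|$ can be taken strictly increasing (``injectivity plus density'') is not available, as passing to a subsequence does not preserve the constraint $\rho(s_n,s_{n+1})\subseteq S$.

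The idea you are missing is that $\rho(s,t)$ should record not ``a witness at level $|s|$'' but the \emph{difference} between $T_s$ and $T_t$: for each $k$ and $i\in 2$, if $T_s$ has an $r$ with $r(k)=i$ and $T_t$ does not, put one such $r$ into $\rho(s,t)$. This is automatically a finite subset of $T_s\setminus T_t$ because $T_t$ is fat, so only finitely many $(k,i)$ can be lost, and it works uniformly in the three cases $t\subseteq s$, $s\subsetneq t$, $s\perp t$. The verification is then the telescoping induction your proposal needs but doesn't supply: fix $k\geq m(T_{s_0})$, $i\in 2$, and suppose $S_{s_0}$ has no $(k,i)$-witness; since $\rho(s_n,s_{n+1})\subseteq S_{s_0}$ has none, one shows inductively that $T_{s_n}$ must have one for every $n$, and once $|s_n|>k$ this forces $s_n(k)=i$, contradicting $s_n\in S_{s_0}$. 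Without the ``difference'' weight, this induction has no base from which to start, which is exactly why your bookkeeping felt ``slightly fiddly'': it was the wrong bookkeeping.
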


\begin{proof}
	Let $T \in \mathbb{SP}$. For any $s,t \in T$ let $\rho(s,t) \subseteq T_s \setminus T_t$ be finite so that for any $k \in \omega$ and $i \in 2$, if there is $r \in T_s$ so that $r(k) = i$ and there is no such $r \in T_t$, then there is such $r$ in $\rho(s,t)$. This is possible since 
	$T_t$ is fat. 
	Let us show that $\rho$ works. Assume that $S \leq_\rho T$ and let $s \in S$ be arbitrary. Then there is $s_0 \supseteq s$ in $S$ with a sequence $(s_n)_{n\in \omega}$ as in the definition of $\leq_\rho$. Let $k \geq m(T_{s_0})$ and $i \in 2$ and suppose there is no $r \in S_{s_0}$ with $r(k) = i$. In particular this means that no such $r$ is in $\rho(s_n,s_{n+1})$ for any $n \in \omega$, since $\rho(s_n, s_{n+1}) \subseteq S_{s_0}$. But then, using the definition of $\rho$ and $m(T_{s_0})$, we see inductively that for each $n \in \omega$ such $r$ must be found in $T_{s_n}$. Letting $n$ large enough so that $k < \vert s_n \vert$, $s_n(k) = i$ must be the case. But $s_n \in S_{s_0}$, which is a contradiction.   
\end{proof}

\begin{definition}
\textit{Sacks forcing} is the tree forcing $\mathbb{S}$ consisting of all perfect subtrees of $2^{<\omega}$. It is well-known that it is Axiom A with continuous reading of names. 
\end{definition}

\begin{lemma}
	\label{lem:sweighted}
	$\mathbb{S}$ is weighted. 
\end{lemma}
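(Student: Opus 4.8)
The plan is to attach to every $T\in\mathbb{S}$ a weight $\rho$ that, for a pair of comparable nodes $s\subsetneq t$, records the node of $T$ that branches off from $t$ immediately after the \emph{first} splitting node of $T$ encountered on the way from $s$ to $t$. Precisely: if $s\subsetneq t$ and there is a splitting node of $T$ with $s\subseteq\sigma\subsetneq t$, let $\sigma=\sigma(s,t)$ be the $\subseteq$-least such, let $i\in 2$ be given by $\sigma{}^\frown i\subseteq t$, and set $\rho(s,t)=\{\sigma{}^\frown(1-i)\}$; in every other case (in particular whenever $t\subseteq s$, where $T_s\setminus T_t=\emptyset$ anyway) put $\rho(s,t)=\emptyset$. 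Since $\sigma{}^\frown(1-i)\in T$, it extends $s$ and is incomparable with $t$, this is a legitimate finite subset of $T_s\setminus T_t$, so $\rho$ is a weight; and this is the only verification that is entirely routine.

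Next I would fix a tree $S$ with $S\leq_\rho T$ and prove $S$ is perfect (hence $S\in\mathbb{S}$; if $S=\emptyset$ there is nothing to check). Given $u\in S$, density produces $r\in S$ with $r\supseteq u$ together with an injective sequence $(s_n)_{n\in\omega}$ in $S_r$ with $\rho(s_n,s_{n+1})\subseteq S$ for all $n$; it suffices to find a splitting node of $S$ above $r$. If two of the $s_n$ are $\perp$-incomparable, then since $S$ is a tree their meet already is a splitting node of $S$ lying above $r$, and we are done without using $\rho$. Otherwise $\{s_n:n\in\omega\}$ is a chain, which is infinite because the sequence is injective, hence cofinal in a single branch $x\in 2^\omega$ with $r\subsetneq x$; writing $s_n=x\restriction k_n$, the set $\{k_n:n\in\omega\}$ is infinite, and since $S$ is a tree this forces $x\in[S]\subseteq[T]$.

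The crux is then to invoke perfectness of $T$ in this chain case. Because $T_r$ is perfect and $x$ is a branch of $T$ through $r$, there are infinitely many $q\geq|r|$ with $x\restriction q$ a splitting node of $T$: otherwise some $x\restriction q_0$ would have a unique successor in $T$ at every level above it, so $T_{x\restriction q_0}$ would be a single branch, contradicting perfectness. Pick such a $q$ with $q\geq k_0$. The set $\{n:k_n\leq q\}$ is finite (those $k_n$ are distinct elements of $[|r|,q]$) and contains $0$; let $n_0$ be its maximum, so $k_{n_0}\leq q<k_{n_0+1}$, i.e.\ $s_{n_0}\subseteq x\restriction q\subsetneq s_{n_0+1}$. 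Thus $s_{n_0}\subsetneq s_{n_0+1}$ and $x\restriction q$ witnesses a splitting node of $T$ in the interval $[s_{n_0},s_{n_0+1})$, so $\rho(s_{n_0},s_{n_0+1})=\{\sigma^{*\,\frown}(1-i)\}$, where $\sigma^*=\sigma(s_{n_0},s_{n_0+1})$ is an initial segment of $x$ extending $r$. Since $\rho(s_{n_0},s_{n_0+1})\subseteq S$, the node $\sigma^{*\,\frown}(1-i)$ lies in $S$, extends $r$, and is incomparable with $s_{n_0+1}\in S$; their meet is exactly $\sigma^*$, which therefore has both its successors in $S$ and so is a splitting node of $S$ above $r$. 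This yields perfectness of $S$.

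I expect the chain case to be the only real obstacle: one must choose the weight so that \emph{some} increasing step of an arbitrary witnessing sequence is guaranteed to cross a splitting node of $T$ (this is where perfectness of $T$ along the limit branch $x$ is essential), and then check that the extra node forced into $S$ by $\rho$ produces an honest splitting node of $S$ rather than merely another node of the same branch. The incomparable case is immediate and needs no weight at all.
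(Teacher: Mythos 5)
Your argument is correct, and it is the same approach the paper takes: record, for comparable $s\subsetneq t$, the one‑step extension of the first splitting node of $T$ on the way from $s$ to $t$ that leaves the path to $t$. (The paper's $\rho(s,t)$, restricted to $s\subsetneq t$, is exactly your singleton $\{\sigma^{\frown}(1-i)\}$; the paper also makes $\rho(s,t)$ nonempty when $s\perp t$, but that only enlarges the weight, so your smaller weight gives a formally stronger conclusion by the paper's Remark on $\rho_0\subseteq\rho_1$.) The paper in fact states the weight and gives no verification at all, so the dichotomy you supply — two of the $s_n$ incomparable, hence a splitting node of $S$ for free; or they lie on one branch $x\in[S]\subseteq[T]$, and perfectness of $T$ along $x$ forces a step $s_{n_0}\subsetneq s_{n_0+1}$ to cross a splitting node, whose off‑branch immediate successor is forced into $S$ — is precisely the content that the paper leaves to the reader.

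One small point worth flagging: your parenthetical "those $k_n$ are distinct elements of $[|r|,q]$" presupposes $s_n\supseteq s_0=r$ for all $n$, whereas $S_{s_0}$, as defined, also contains nodes below $s_0$. The paper itself uses the same tacit reading in the proof of Lemma~\ref{lem:spweighted} (asserting $\rho(s_n,s_{n+1})\subseteq S_{s_0}$, which likewise needs $s_n\supseteq s_0$), so your reading matches the intended one; and even under the literal reading the argument survives, since at most $|r|$ many of the $s_n$ can be $\subsetneq r$, so one can pick a splitting level $q\geq k_{N'}$ where $N'$ is a tail index beyond all such $n$, and then take the largest $n\geq N'$ with $k_n\leq q$.
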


\begin{proof}
	Let $T \in \mathbb{S}$. For $s,t \in T$, we let $\rho(s,t)$ contain all $r^\frown i \in T_s \setminus T_t$ such that $r^\frown (1-i) \in T$ and where $\vert r \vert$ is minimal with this property.
\end{proof}

Recall that for finite trees $T_0$, $T_1$ we say that $T_1$ is an end-extension of $T_0$, written as $T_0 \sqsubset T_1$, if $T_0 \subsetneq T_1$ and for every $t \in T_1 \setminus T_0$ there is a terminal node $\sigma \in \term (T_0)$ so that $\sigma \subseteq t$. A node $\sigma \in T_0$ is called terminal if it has no proper extension in $T_0$. 

\begin{definition}
	Let $T$ be a perfect tree, $\rho$ a weight on $T$ and $T_0, T_1$ finite subtrees of $T$. Then we write $T_0 \lhd_\rho T_1$ iff $T_0 \sqsubset T_1$ and 
	\begin{multline*} 
	\forall \sigma \in \term(T_0) \exists N \geq 2 \exists \langle s_i\rangle_{i < N} \in ((T_{1})_\sigma)^N \text{ injective } \\ \bigl( s_0 = \sigma \wedge s_{N-1} \in \term(T_{1}) \wedge \forall i < N (\rho(s_i, s_{i+1}) \subseteq T_{1}) \bigr).\tag{$*_0$}
	\end{multline*}
	
\end{definition}

\begin{lemma}
	\label{lem:basicweight}
	Let $T$ be a perfect tree, $\rho$ a weight on $T$ and $\langle T_n : n \in \omega \rangle$ be a sequence of finite subtrees of $T$ so that $T_n \lhd_\rho T_{n+1}$ for every $n \in \omega$. Then $\bigcup_{n \in \omega} T_n \leq_\rho T$. 
\end{lemma}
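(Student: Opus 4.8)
The plan is to set $S := \bigcup_{n \in \omega} T_n$ and verify the two requirements of $S \leq_\rho T$: that $S \subseteq T$, and that there is a dense set of $s_0 \in S$ admitting an injective sequence $(s_n)_{n \in \omega}$ in $S_{s_0}$ with $\rho(s_n, s_{n+1}) \subseteq S$ for all $n$. The first is immediate since each $T_n$ is a subtree of $T$. For the second, I would first observe that because $T_n \lhd_\rho T_{n+1}$ entails $T_n \sqsubset T_{n+1}$ (genuine end-extension), $S$ is an infinite tree and every $t \in S$ lies in some $T_n$; moreover every node of $T_n$ is eventually extended to a terminal node of some later $T_m$, so $S$ has no terminal nodes. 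The key point will be to produce, given an arbitrary $t \in S$, an $s_0 \supseteq t$ in $S$ witnessing the condition — so in fact the "dense set" can be taken to be all of $S$.

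The main construction: fix $t \in S$, say $t \in T_n$, and extend $t$ to some terminal node $\sigma_0 \in \term(T_n)$; set $s_0 := \sigma_0$. Now I build the sequence recursively, maintaining the invariant that at stage $k$ I have an injective finite sequence $s_0, s_1, \dots, s_{m_k}$ with $s_{m_k} \in \term(T_{n+k})$ and $\rho(s_i, s_{i+1}) \subseteq T_{n+k} \subseteq S$ for all $i < m_k$. To pass from $k$ to $k+1$, apply the defining clause $(*_0)$ of $T_{n+k} \lhd_\rho T_{n+k+1}$ to the terminal node $\sigma = s_{m_k}$: it yields an injective sequence $\langle u_i \rangle_{i<N}$ in $(T_{n+k+1})_{s_{m_k}}$ with $u_0 = s_{m_k}$, $u_{N-1} \in \term(T_{n+k+1})$, and $\rho(u_i,u_{i+1}) \subseteq T_{n+k+1}$. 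Append $u_1, \dots, u_{N-1}$ after $s_{m_k}$; injectivity of the extended sequence holds because $u_1, \dots, u_{N-1}$ are proper extensions of $s_{m_k}$, which is terminal in $T_{n+k}$, hence strictly longer than all of $s_0, \dots, s_{m_k}$ (using $T_{n+k} \sqsubset T_{n+k+1}$), and the $u_i$ are pairwise distinct. Since the new terminal node $u_{N-1}$ is a proper extension of $s_{m_k}$, the lengths $|s_{m_k}|$ strictly increase with $k$, so $m_k \to \infty$ and the union of these finite sequences is a genuine infinite injective sequence $(s_n)_{n\in\omega}$, all lying in $S_{s_0}$, with $\rho(s_n,s_{n+1}) \subseteq S$ for every $n$.

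The step I expect to require the most care is bookkeeping the injectivity and the "lies in $S_{s_0}$" claims across the recursion — specifically making sure that concatenating the blocks from successive applications of $(*_0)$ never reuses a node and never leaves $T_{s_0}$; this follows from the end-extension clause in $\lhd_\rho$ (each block consists of proper extensions of the previous terminal node, which guarantees strictly increasing length at the block boundaries), but it needs to be stated explicitly rather than waved at. Everything else — density being trivially witnessed, $\rho(s_n,s_{n+1}) \subseteq S$ following from $\rho$-values landing inside some $T_m$, and $S \subseteq T$ — is routine.
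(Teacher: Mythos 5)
Your plan follows the paper's proof exactly: set $S := \bigcup_{n}T_n$, take the dense set to be $\bigcup_n \term(T_n)$, and obtain the infinite sequence by chaining applications of $(*_0)$ across consecutive pairs $T_{n+k}\lhd_\rho T_{n+k+1}$. The paper's own write-up is, if anything, terser and says nothing at all about injectivity, so you are right to single it out as the only delicate point. That said, the justification you offer is not quite what the definition gives you: $(*_0)$ only places the witnessing block in $\bigl((T_{n+k+1})_{s_{m_k}}\bigr)^N$, i.e.\ nodes \emph{compatible} with $s_{m_k}$, and by item (d) of the preliminaries $T_t$ contains all initial segments of $t$, not just extensions. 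So the intermediate $u_1,\dots,u_{N-2}$ need not be proper extensions of $s_{m_k}$; only $u_{N-1}$ is forced to be (it is terminal in $T_{n+k+1}$, compatible with $s_{m_k}$, and distinct from $s_{m_k}$ since $N\ge 2$ and the block is injective, so it cannot be a proper initial segment). This is enough to conclude that the successive block endpoints $\sigma_0\subsetneq\sigma_1\subsetneq\cdots$ are pairwise distinct, so the concatenated sequence hits infinitely many nodes, but the strict monotonicity of lengths you rely on for global injectivity does not follow directly from $\lhd_\rho$ alone. In the paper's actual use of $\lhd_\rho$ (via Lemma~\ref{lem:wghtdenseset}) the blocks are built from strict extensions, so the issue is moot there, but as a proof of the lemma as stated the injectivity argument needs a small repair — either assume/prove the blocks can be taken increasing, or remove repetitions from the concatenation (using that $\rho(s_i,s_{j+1})=\rho(s_j,s_{j+1})$ when $s_i=s_j$) while keeping it infinite.
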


\begin{proof}
	Let $S := \bigcup_{n \in \omega} T_n$. To see that $S \leq_\rho T$ note that $\bigcup_{n\in \omega} \term(T_n)$ is dense in $S$, in a very strong sense. Let $\sigma \in \term(T_n)$ for some $n \in \omega$, then let $s_0, \dots, s_{N_0-1}$ be as in $(*_0)$ for $T_n,T_{n+1}$. Since $s_{N_0-1} \in \term(T_{n+1})$ we again find $s_{N_0}, \dots, s_{N_1-1}$ so that $s_{N_0-1}, \dots, s_{N_1-1}$ is as in $(*_0)$ for $T_{n+1}, T_{n+2}$. Continuing like this, we find a sequence $\langle s_i : i \in \omega \rangle$ in $S$ starting with $s_0 = \sigma$ so that $\rho(s_i, s_{i+1}) \subseteq S$ for all $i \in \omega$, as required. 
\end{proof}

\begin{lemma}
	\label{lem:wghtdenseset}
	Let $T$ be a perfect tree, $\rho$ a weight on $T$ and $T_0$ a finite subtree of $T$. Moreover, let $k \in \omega$ and $D \subseteq (T)^k$ be dense open. Then there is $T_1 \rhd_\rho T_0$ so that
	\begin{multline*} 
	\forall \{\sigma_0, \dots, \sigma_{k -1}\} \in [\term(T_{0})]^{k} \forall \sigma_0', \dots, \sigma_{k -1}' \in \term(T_{1}) \\ \left(\forall l < k (\sigma_l \subseteq \sigma_l') \rightarrow (\sigma_0', \dots, \sigma_{k -1}') \in D\right).\tag{$*_1$}
	\end{multline*}
\end{lemma}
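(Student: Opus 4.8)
The plan is to obtain $T_1$ by a finite recursion that treats, one at a time, the finitely many requirements produced by the quantifier over $[\term(T_0)]^k$ in $(*_1)$. The two features of $D$ driving the recursion are density, used to fulfil a requirement by pushing a given tuple into $D$, and openness, used to guarantee that a requirement, once met, survives every later extension of the tree. The observation that keeps the recursion finite is that maintaining $T_0 \lhd_\rho T_1$ only obliges us to carry, for each $\sigma \in \term(T_0)$, a single $\rho$-chain $\sigma = s^\sigma_0, \dots, s^\sigma_{N_\sigma-1}$ reaching a terminal node of $T_1$ together with the weight sets $\rho(s^\sigma_i, s^\sigma_{i+1})$; all of the extra nodes thereby forced into $T_1$ lie strictly above the terminal node $\sigma$ of $T_0$, and hence may be end-extended freely, without incurring any further $\rho$-obligation.

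In more detail: first I would use density and openness of $D$ to choose, by a finite induction over $[\term(T_0)]^k$, ``gate'' extensions $u_\sigma \supseteq \sigma$ $(\sigma \in \term(T_0))$ in $T$ such that $(v_0,\dots,v_{k-1}) \in D$ whenever $\{\sigma_0,\dots,\sigma_{k-1}\} \in [\term(T_0)]^k$ and $u_{\sigma_l} \subseteq v_l \in T$ for all $l<k$; each step extends the relevant $u_{\sigma_l}$ into $D$, and earlier $k$-subsets are unharmed because the $u_\sigma$ only grow and $D$ is open. Next I would, for each $\sigma$, take a $\rho$-chain $\sigma \subsetneq u_\sigma \subsetneq e_\sigma$ running through the gate (with $e_\sigma$ some proper extension of $u_\sigma$), noting that its weight sets split into two groups: the part of $\rho(\sigma,u_\sigma)$ incomparable with $u_\sigma$, a fixed finite set of ``side nodes'' lying above $\sigma$ but off the gate, and the remaining weight contributions, which all extend $u_\sigma$ and hence, by the choice of the gates, automatically produce $D$-tuples in combination with any other nodes lying above gates. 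Finally I would run a second finite induction, over the $k$-subsets of $\term(T_0)$ paired with a choice, in each coordinate $l$, of one of the finitely many terminal-node candidates above $\sigma_l$ (the endpoint $e_{\sigma_l}$, a side node, a later weight node, or a current end-extension of one of these): at each step I end-extend the side nodes and weight nodes --- and, where forced, lengthen the corresponding $\rho$-chains past their endpoints --- so as to push the chosen tuple into $D$, with openness keeping all previously treated configurations inside $D$. Taking $T_1$ to be the downward closure in $T$ of $T_0$ together with all chains, all weight sets, and all chosen end-extensions, one checks that $T_0 \sqsubset T_1$, that the displayed chains witness $T_0 \lhd_\rho T_1$, and --- since every terminal node of $T_1$ above $\sigma_l$ is, by construction, one of the treated candidates --- that $(*_1)$ holds.

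The step that needs the most care, and the main obstacle, is the interaction between the final induction and the relation $T_0 \lhd_\rho T_1$: lengthening a $\rho$-chain beyond its current endpoint reintroduces fresh weight sets, hence fresh terminal-node candidates above $\sigma$, so one must arrange the induction so that this cascade is bounded. Routing the chains through the gates is what makes this possible, since every newly created candidate then sits above $u_\sigma$, so the only configurations that can still fail are those involving at least one of the fixed, finitely many side nodes coming from $\rho(\sigma,u_\sigma)$; once this finiteness is pinned down, the rest is the routine ``$D$ dense, $D$ open'' bookkeeping, together with the routine verifications that the downward closure formed is a finite subtree of $T$, that the prescribed $\rho$-chains remain inside it, and that their endpoints stay terminal after the end-extensions are appended.
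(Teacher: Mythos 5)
Your gate construction is a genuinely different route from the paper's and, with one point tightened, a correct one. The paper routes, for each $\sigma \in \term(T_0)$, a $\rho$-chain that bounces back and forth between two incomparable extensions $s_1^\sigma, s_2^\sigma$ of $\sigma$: it enumerates all $2^{\card{\term(T_0)}}$ functions $f \colon \term(T_0) \to \{1,2\}$, and at stage $f$ the chain end sits on the $f$-selected side of each coordinate while every terminal node on the opposite side is pushed into $D$ in a single density/openness step; $(*_1)$ then follows by looking up the stage whose $f$ matches the type of the given tuple. Your version instead fixes a single gate $u_\sigma$ satisfying the all-gate condition up front, uses the short increasing chain $\sigma \subsetneq u_\sigma \subsetneq e_\sigma$ as the $\lhd_\rho$-witness, and isolates the potentially bad terminal nodes as the fixed finite set of side nodes of $\rho(\sigma,u_\sigma)$. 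This decoupling is cleaner, and it trades the paper's $2^{\card{\term(T_0)}}$-long bouncing chain for two modest finite passes.

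The point to tighten is the cleanup induction. As written, you allow yourself to extend weight nodes and ``where forced, lengthen the corresponding $\rho$-chains,'' arguing that the cascade stops because new candidates sit above the gate. But a mixed tuple (one side coordinate, one gate coordinate) is not covered by the gate condition, and lengthening the chain at $e_{\sigma'}$ to handle some $(s, e_{\sigma'})$ introduces fresh gate candidates $w \in \rho(e_{\sigma'}, e''_{\sigma'})$, hence fresh mixed configurations $(s, w)$; so an induction that enumerates actual terminal candidates has no a priori bound. The fix is the openness trick you implicitly invoke, made literal: never extend $e_\sigma$ or any gate node at all. For each $k$-subset $\{\sigma_0,\dots,\sigma_{k-1}\}$ of $\term(T_0)$, each nonempty $B \subseteq \{0,\dots,k-1\}$ and each choice of side nodes $s_l$ ($l\in B$), extend only the side pointers $\hat s_l$ so that the tuple having $u_{\sigma_l}$ literally in every gate slot and $\hat s_l$ in every side slot lies in $D$; openness of $D$ then absorbs whatever actual terminal gate nodes occupy those slots, since they all extend $u_{\sigma_l}$. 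This induction ranges over a set that is fixed once the gates are chosen, the chain $\sigma, u_\sigma, e_\sigma$ is never lengthened, $e_\sigma$ stays terminal, and $T_0 \lhd_\rho T_1$ holds with that fixed three-node witness.
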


\begin{proof}
	First let us enumerate $\term(T_0)$ by $\sigma_0, \dots, \sigma_{K-1}$. We put $s_0^l = \sigma_{l}$ for each $l < K$. Next find for each $l < K$, $s_1^l \in T, s_0^l \subsetneq s_1^l$ above a splitting node in $T_{s_0^l}$. Moreover we find $s_2^l \in T_{s_0^l}$ so that $s_2^l \perp s_1^l$ and $s_2^l$ is longer than any node appearing in $\rho(s_0^l, s_1^l)$. This is possible since we chose $s_1^l$ to be above a splitting node in $T_{s_0^l}$. For each $l<K$ we let $\tilde{T}_{2}^l$ be the tree generated by (i.e. the downwards closure of) $\{s_1^l, s_2^l\} \cup \rho(s_0^l,s_1^l) \cup \rho(s_1^l,s_2^l)$. Note that $s_2^l \in \term(\tilde{T}_{2}^l)$ as $\rho(s_1^l,s_2^l) \perp s_2^l$.
	
	Let us enumerate by $(f_j)_{2\leq j < N}$ all functions $f \colon K \to \{1,2\}$ starting with $f_2$ the constant function mapping to $1$. We are going to construct recursively a sequence $\langle \tilde{T}_j^l : 2 \leq j \leq N \rangle$ where $\tilde{T}_j^l \sqsubseteq \tilde{T}_{j+1}^l$, and $\langle s_j^l : 2 \leq j \leq N \rangle$ without repetitions, for each $l< K$ such that at any step $j <N$: 
	
	\begin{enumerate}
		\item for every $l < K$,  $s_j^l \in \term(\tilde{T}_{j}^l)$ and $\begin{cases}  s_2^l \subseteq s_j^l & \text{if } f_j(l) = 1  \\
		s_1^l \subseteq s_j^l &\text{if } f_j(l) = 2.
		\end{cases}$
		\item  for any $\{l_i : i <k \} \in [K]^k$ and $(t_i)_{i<k}$ where $t_i \in \term(\tilde{T}_{j+1}^{l_i})$ and $\begin{cases}  s_1^{l_i} \subseteq t_{i}  & \text{if } f_j(l_i) = 1  \\
		s_1^{l_i} \perp t_{i} &\text{if } f_j(l_i) = 2
		\end{cases}$ for every $i< k$, $(t_0, \dots, t_{k-1}) \in D$
		
		\item for every $l < K$, $\rho(s_j^l, s_{j+1}^l ) \subseteq \tilde{T}_{j+1}^{l}$. 
		
	\end{enumerate}
	
	Note that $(1)$ holds true at the initial step $j=2$ since $f_2(l) = 1$, $s_2^l \subseteq s_2^l$ and $s_2^l \in \term(\tilde{T}_2^l)$ for each $l < K$. Now suppose that for some $j < N$ we have constructed $\tilde{T}_j^l$ and $s_j^l$ for each $l$ with $(1)$ holding true. Then we proceed as follows. Let $\{ t^l_i : i < N_l\}$ enumerate $\{ t :  t\in \term(\tilde{T}_j^l)  \wedge s_1^l \subseteq t \text{ if } f_j(l) = 1  \wedge s_1^l \perp t \text{ if } f_j(l) = 2\} $ for each $l <K$. Now it is simple to find $r^l_i \in T$, $t^l_i \subseteq r^l_i$ for each $i <N_l, l< K$ so that $[\{r^l_i : i < N_l, l< K \}]^k \subseteq D$.  
	
	Let $R_l$ be the tree generated by $\tilde{T}_{j}^l$ and $\{r^l_i : i < N_l\}$ for each $l <K$. It is easy to see that $\tilde{T}_{j}^l \sqsubseteq R_l$ since we only extended elements from $\term(\tilde{T}_j^l)$ (namely the $t_i^l$'s). Note that it is still the case that $s_j^l \in \term (R_l)$ since $s_j^l \perp t_i^l$ for all $i < N_l$. 
	Next we choose $s_{j+1}^l$ extending an element of $\term (R_l)$, distinct from all previous choices and so that $s_2^l \subseteq s_j^l \text{ if } f_{j+1}(l) = 1$ and $s_1^l \subseteq s_j^l \text{ if } f_{j+1}(l) = 2.$
	
	Taking $\tilde{T}_{j+1}^l$ to be the tree generated by $R_l \cup \{ s_{j+1}^l \} \cup \rho(s_j^l, s_{j+1}^{l})$ gives the next step of the construction. Again $R_l \sqsubseteq \tilde{T}_{j+1}^l$, as we only extended terminal nodes of $R_l$. Then $(3)$ obviously holds true and $s^l_{j +1} \in \term(\tilde{T}_{j+1}^l)$ since $\rho(s_j^l, s_{j+1}^{l}) \perp s^l_{j +1}$. It follows from the construction that $(2)$ holds true for each $\tilde{T}_{j+1}^l$ replaced by $R_l$. Since $R_l \sqsubseteq \tilde{T}_{j+1}^l$ we easily see that $(2)$ is satisfied. 
	
	Finally we put $T_{1} = \bigcup_{l<K} \tilde T_{N}^l$. It is clear that $(*_0)$ is true, in particular that $T_0 \lhd_\rho T_1$. For $(*_1)$ let $\{l_i : i<k \} \in [K]^k$ be arbitrary and assume that $t_i \in \term (\tilde T_{N}^{l_i})$ for each $i < k$. Let $f \colon K \to \{1,2\}$ be so that for each $i < k$ if $s_1^{l_i} \subseteq t_i$ then $f(l_i) = 1$, and if $s_1^{l_i} \perp t_i$ then $f(l_i) = 2$. Then there is $j \in [2,N)$ so that $f_j = f$. Clause $(2)$ ensured that for initial segments $t_i' \subseteq t_i$ where $t_i' \in \term(\tilde{T}_{j+1}^{l_i})$, $(t_0',\dots, t_{k-1}' ) \in D$. In particular $(t_0,\dots, t_{k-1}) \in D$ which proves $(*_1)$. 
\end{proof}

\begin{prop}
	\label{prop:weightedmain}
	Let $M$ be a countable model of set theory, $R_l \in M$ a perfect tree and $\rho_l$ a weight on $R_l$ for every $l < k \in \omega$. Then there is $S_l \leq_{\rho_l} R_l$ for every $l < k$ so that any $\bar x_0, \dots, \bar x_{n-1} \in \prod_{l < k}[S_l]$ are $\langle [R_l] : l < k\rangle$-mutually Cohen generic over $M$.
\end{prop}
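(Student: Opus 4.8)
The plan is to prove a formally stronger statement by a single fusion driven by the dense sets of $M$, handled coordinatewise via the weights. Concretely, I would build the $S_l$ so that for \emph{every} tuple $\bar K = (K_l)_{l<k}\in\omega^k$ and every choice, for each $l<k$, of pairwise distinct branches $x_l^0,\dots,x_l^{K_l-1}\in[S_l]$, the combined tuple $(x_l^i)_{l<k,\,i<K_l}$ is Cohen generic in $\prod_{l<k}[R_l]^{K_l}$ over $M$. This yields the proposition: given $\bar x_0,\dots,\bar x_{n-1}\in\prod_{l<k}[S_l]$, for each $l$ enumerate $\{x_i(l):i<n\}$ without repetition as $\langle y_l^i:i<K_l\rangle$; these are pairwise distinct branches of $S_l$, so $(y_l^i)_{l,i}$ is Cohen generic in $\prod_{l<k}[R_l]^{K_l}$ over $M$, which is exactly the definition of the $\bar x_i$ being mutually Cohen generic with respect to $\prod_{l<k}[R_l]$ over $M$.

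Since $M$ is countable, there are only countably many pairs $(D,\bar K)$ with $\bar K\in\omega^k$ and $D\in M$ a dense open subset of $\prod_{l<k}[R_l]^{K_l}$; fix an enumeration $\langle(D_m,\bar K_m):m\in\omega\rangle$ in which each such pair occurs infinitely often. I would construct finite subtrees $T_l^m\subseteq R_l$ with $T_l^m\lhd_{\rho_l}T_l^{m+1}$, so that by Lemma~\ref{lem:basicweight} we get $S_l:=\bigcup_m T_l^m\leq_{\rho_l}R_l$. At stage $m$, writing $\bar K_m=(K_l)_{l<k}$: first pass to $\lhd_{\rho_l}$-end-extensions of the $T_l^m$ having at least $K_l$ terminal nodes (always possible, since $R_l$ is perfect and only finitely many weight-nodes must be added along each new path), and then invoke the following product version of Lemma~\ref{lem:wghtdenseset}: there are $T_l^{m+1}\rhd_{\rho_l}T_l^m$ such that for any choice, for each $l<k$, of pairwise distinct $\sigma_l^0,\dots,\sigma_l^{K_l-1}\in\term(T_l^m)$ and any $\tau_l^i\in\term(T_l^{m+1})$ with $\sigma_l^i\subseteq\tau_l^i$, one has $\prod_{l<k,\,i<K_l}[\tau_l^i]\subseteq D_m$.

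This product version is proved exactly as Lemma~\ref{lem:wghtdenseset}, with the single index set $\term(T_0)$ replaced by the disjoint union $\bigsqcup_{l<k}\term(T_l^m)$, the weights applied coordinatewise ($\rho_l$ on the $l$-th block), and the density of $D_m$ in the product $\prod_{l<k}[R_l]^{K_l}$ used in place of the density of $D$ in $(T)^k$ at the step where the extending nodes are chosen; the verification of $T_l^m\lhd_{\rho_l}T_l^{m+1}$ and of the cell-inclusion property is the same bookkeeping over functions from the (now larger) index set to $\{1,2\}$. Granting this, I conclude as follows: fix pairwise distinct branches $x_l^0,\dots,x_l^{K_l-1}\in[S_l]$ for each $l<k$ and a dense open $D\in M$ of $\prod_{l<k}[R_l]^{K_l}$. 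Since the $x_l^i$ are pairwise distinct for each fixed $l$, there is $m_0$ so that for all $m\geq m_0$ the branches $x_l^0,\dots,x_l^{K_l-1}$ pass through $K_l$ distinct terminal nodes of $T_l^m$, for every $l$. Pick $m\geq m_0$ with $(D_m,\bar K_m)=(D,(K_l)_{l<k})$; then each $x_l^i$ passes through some $\tau_l^i\in\term(T_l^{m+1})$ extending its terminal node in $T_l^m$, whence $(x_l^i)_{l,i}\in\prod_{l,i}[\tau_l^i]\subseteq D$. As $D$ was arbitrary, $(x_l^i)_{l,i}$ is Cohen generic in $\prod_{l<k}[R_l]^{K_l}$ over $M$, as required.

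The main obstacle is the product generalization of Lemma~\ref{lem:wghtdenseset}: one must keep the weight conditions $\rho_l(s_j^l,s_{j+1}^l)\subseteq T_l^{m+1}$ intact on each coordinate while simultaneously steering, for every admissible pattern of which terminal nodes extend which chosen branch, the resulting product of basic clopen cells into the single dense set $D_m$ of the product space. This is a bookkeeping elaboration of the already somewhat intricate proof of Lemma~\ref{lem:wghtdenseset} rather than a new idea, but it is the technical crux; the surrounding fusion and the reduction to pairwise distinct branches are routine.
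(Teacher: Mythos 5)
Your proof is correct, and the reduction to the auxiliary statement (build $S_l$ so that every tuple of pairwise-distinct branches drawn from the $[S_l]$ is jointly Cohen generic in $\prod_l[R_l]^{K_l}$) is exactly the right thing to aim for, as is the observation that separating branches eventually pass through distinct terminal nodes, so each relevant dense set only needs to be revisited infinitely often. Where you diverge from the paper is in the machinery: you keep the $k$ trees separate and posit a ``product version'' of Lemma~\ref{lem:wghtdenseset}, whose proof you sketch as a coordinatewise reindexing. The paper instead forms the disjoint sum $T = \{\emptyset\}\cup\{\langle l\rangle^\frown s : s \in R_l, l<k\}$ with a weight $\rho$ that restricts to $\rho_l$ on the $l$-th copy, runs a single fusion against all $(D,m)$ with $D \in M$ dense open in $T^m$, and applies Lemma~\ref{lem:wghtdenseset} verbatim; it then reads off $S_l$ as the $l$-th ``fiber'' of the resulting $S$ and notes that $T^m$-genericity of any distinct $m$-tuple of branches of $S$ immediately gives the desired $\prod_l R_l^{K_l}$-genericity (the product is a clopen piece of $[T]^m$ once one prescribes which coordinates go to which copy). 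The disjoint-sum trick thus buys you a literal reuse of Lemma~\ref{lem:wghtdenseset} and one fusion rather than a reproof of an $(k,\bar K)$-indexed variant, at the small conceptual cost of remembering that dense subsets of the product pull back to dense subsets of $T^m$. Your coordinatewise version is more transparent about what each block of coordinates is doing, but it does require you to actually carry out the ``bookkeeping elaboration'' of Lemma~\ref{lem:wghtdenseset}; you identify this correctly as the technical crux and, while you don't fully write it out, the sketch is plausible and the construction in the paper's proof of Lemma~\ref{lem:wghtdenseset} indeed supports it without new ideas.
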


\begin{proof}
	Let $T := \{ \emptyset\} \cup \{ \langle l \rangle^\frown s : s \in R_l, l<k \}$ be the disjoint sum of the trees $R_l$ for $l < k$. Also let $\rho$ be a weight on $T$ extending arbitrarily the weights $\rho_l$ defined on the copy of $R_l$ in $T$. As $M$ is countable, let $(D_n,k_n)_{n\in \omega}$ enumerate all pairs $(D,m) \in M$, such that $D$ is a dense open subset of $T^m$ and $m \in \omega \setminus \{0\}$, infinitely often. 
	Let us find a sequence $(T_n)_{n\in \omega}$ of finite subtrees of $T$, such that for each $n\in \omega$, $T_n \lhd_\rho T_{n+1}$ and
	
	\begin{multline*} 
	\forall \{\sigma_0, \dots, \sigma_{k_n -1}\} \in [\term(T_{n})]^{k_n} \forall \sigma_0', \dots, \sigma_{k_n -1}' \in \term(T_{n+1}) \\ [\forall l < k (\sigma_l \subseteq \sigma_l') \rightarrow (\sigma_0', \dots, \sigma_{k_n -1}') \in D_n].\tag{$*_1$}
	\end{multline*}
	
	We start with $T_0 = k^{<2} = \{ \emptyset \} \cup \{ \langle l \rangle : l<k \}$ and then apply Lemma~\ref{lem:wghtdenseset} recursively. Let $S := \bigcup_{n \in \omega} T_n$. Then we have that $S \leq_\rho T$. 
	
	\begin{claim}
		For any $m \in \omega$ and distinct $x_0, \dots, x_{m-1} \in [S]$, $(x_0, \dots, x_{m-1})$ is $T^m$-generic over $M$. 
	\end{claim}
	
	\begin{proof}
		Let $D \subseteq T^m$ be open dense with $D \in M$. Then there is a large enough $n \in \omega$ with $(D_n, k_n) = (D,m)$ and $\sigma_0, \dots, \sigma_{m-1} \in \term(T_n)$ distinct such that $\sigma_0 \subseteq x_0, \dots, \sigma_{m-1} \subseteq x_{m-1}$. Then there are unique $\sigma'_0, \dots, \sigma'_{m-1} \in \term(T_{n+1})$ such that $\sigma'_0 \subseteq x_0, \dots, \sigma'_{m-1} \subseteq x_{m-1}$. By $(*_1)$, $(\sigma'_0, \dots, \sigma'_{m-1}) \in D$.
	\end{proof}
	
	Finally let $S_l = \{ s : \langle l \rangle^\frown s \in S \}$ and note that $S_l \leq_{\rho_l} R_l$ for every $l < k$. The above claim clearly implies the statement of the proposition.
\end{proof}

\begin{remark}\label{rem:mgpspl} Proposition~\ref{prop:weightedmain} implies directly the main result of \cite{Spinas2007}.
	A modification of the above construction for splitting forcing can be used to show that for $T \in M$, we can in fact find a master condition $S \leq T$ so that for any distinct $x_0,\dots, x_{n-1} \in [S]$, $(x_0, \dots, x_{n-1})$ is $\mathbb{SP}^n$-generic over $M$. In that case $(S,\dots, S) \in \mathbb{SP}^n$ is a $\mathbb{SP}^n$-master condition over $M$. We won't provide a proof of this since our only application is Corollary~\ref{cor:minimality} below, which seems to be implicit in \cite{Spinas2007}. The analogous statement for Sacks forcing is a standard fusion argument. 
\end{remark}

\begin{cor}
	\label{cor:minreal}
	Let $\mathbb{P}$ be a weighted tree forcing with continuous reading of names. Then $\mathbb{P}$ adds a minimal real. In fact for any $\mathbb{P}$-generic $G$, if $y \in 2^\omega \cap V[G] \setminus V$, then there is a continuous map $f \colon 2^\omega \to A^\omega$ in $V$ so that $x_G = f(y)$.  
\end{cor}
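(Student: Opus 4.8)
The plan is to reduce the corollary to a density statement and then exploit the mutual Cohen genericity provided by Proposition~\ref{prop:weightedmain}. Concretely, I would first show: whenever $p \in \mathbb{P}$ and $\dot y$ is a name for a point of $2^\omega$ with $p \Vdash \dot y \notin \check V$, the set of $q \le p$ for which there is a Borel $f \colon 2^\omega \to A^\omega$ in $V$ with $q \Vdash x_G = f(\dot y)$ is dense below $p$. This gives the corollary: for $\mathbb{P}$-generic $G$ and $y \in 2^\omega \cap V[G] \setminus V$, a standard argument yields $p \in G$ forcing $\dot y \notin \check V$, and then some $q \le p$ in $G$ witnesses the conclusion. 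To prove the density statement, fix $p$ as above and an arbitrary $q_0 \le p$. Using continuous reading of names (Definition~\ref{def:axiomA}(4)) I would pass to $T \le q_0$ together with a continuous $g \colon [T] \to 2^\omega$ such that $T \Vdash \dot y = g(x_G)$; note $T \Vdash \dot y \notin \check V$. Fixing a weight $\rho$ on $T$ witnessing Definition~\ref{def:weightedtreeforcing} and a countable $M \preccurlyeq H(\theta)$ with $\mathbb{P}, T, \rho, g, \dot y \in M$, Proposition~\ref{prop:weightedmain} (applied with $k = 1$, $R_0 = T$, $\rho_0 = \rho$) yields $S \le_\rho T$ — so that $S \in \mathbb{P}$ and $S \le q_0$ — such that any distinct $x_0, \dots, x_{n-1} \in [S]$ are mutually Cohen generic with respect to $[T]$ over $M$.

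The key step is to show that $g \restriction [S]$ is injective. If it were not, pick distinct $a_0, a_1 \in [S]$ with $g(a_0) = g(a_1)$; by the choice of $S$, the pair $(a_0, a_1)$ is Cohen generic in $[T]^2$ over $M$, and it lies in the closed set $C := \{(x_0, x_1) \in [T]^2 : g(x_0) = g(x_1)\}$, which is coded in $M$. Then $[T]^2 \setminus C$ is open and in $M$, and it cannot be dense, as otherwise genericity would force $(a_0, a_1) \in [T]^2 \setminus C$; hence some basic clopen box $[T_{\sigma_0}] \times [T_{\sigma_1}]$ lies inside $C$, and fixing $b \in [T_{\sigma_1}]$ this means $g$ is constant on $[T_{\sigma_0}]$. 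But $T_{\sigma_0} \le T$ then forces $\dot y = g(x_G)$ to be the ground-model real $g(b)$, contradicting $T \Vdash \dot y \notin \check V$. I expect this to be the conceptual heart of the argument: the mutual genericity of \emph{pairs} of branches through $[S]$ is exactly what rules out the awkward middle case in which $g$ is neither injective nor constant below any condition — the rest is routine once Proposition~\ref{prop:weightedmain} is in place.

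To finish, $g \restriction [S]$ is a continuous injection of the closed (hence standard Borel) set $[S] \subseteq A^\omega$ into $2^\omega$, so by the Lusin--Souslin theorem $g''[S]$ is Borel and $h := (g \restriction [S])^{-1} \colon g''[S] \to A^\omega$ is Borel; extending $h$ arbitrarily off $g''[S]$ produces a Borel $f \colon 2^\omega \to A^\omega$ in $V$. Since $S \Vdash x_G \in [S]$, and hence $\dot y = g(x_G) \in g''[S]$, we get $S \Vdash x_G = h(g(x_G)) = f(\dot y)$, so $q := S$ works and the density statement follows. The remaining points — the standard density argument producing $p$, and the absoluteness of ``$f$ inverts $g$ on $[S]$'' between $V$ and $V[G]$, handled by the usual reinterpretation conventions — are routine.
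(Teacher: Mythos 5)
Your proof is correct and takes essentially the same route as the paper's: continuous reading of names to get $T$ and $g$, Proposition~\ref{prop:weightedmain} (with $k=1$) to pass to $S \leq_\rho T$ whose branches are pairwise mutually Cohen generic, a genericity argument showing that non-injectivity of $g \restriction [S]$ would force $g$ to be constant on a subcondition (contradicting $\dot y \notin V$), and Lusin--Souslin for the Borel inverse. The only difference is cosmetic: the paper argues via a condition $(s,t)$ forcing $g(\dot c_0)=g(\dot c_1)$ and constancy on $[S_t]$, whereas you argue via non-density of $[T]^2\setminus C$ and constancy on $[T_{\sigma_0}]$, and you make the ambient density-below-$p$ scaffolding explicit where the paper leaves it implicit; these are the same argument.
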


\begin{proof}
	Using the continuous reading of names let $T \in G$ be so that there is a continuous map $g \colon [T] \to 2^\omega$ with $T \Vdash \dot y = g(x_G)$. It is easy to see from the definition, that in any weighted tree forcing, the set of finitely branching trees is dense.\footnote{In particular, weighted tree forcing with the crn is $\omega^\omega$-bounding.} Thus, let us assume that $[T]$ is compact. Moreover let $M$ be countable elementary with $g, T \in M$. Now let $S \leq T$ be so that any $x_0,x_1 \in [S]$ are $[T]$-mCg over $M$. 
	
	Suppose that there are $x_0 \neq x_1 \in [S]$, with $g(x_0) = g(x_1)$. Then there must be $s \subseteq x_0$ and $t \subseteq x_1$, so that $M \models (s,t) \Vdash_{T^2} g(\dot c_0) = g(\dot c_1)$, where $\dot c_0$, $\dot c_1$ are names for the generic branches added by $T^2$. But then note that for any $x \in S_{t}$, since $x$ and $x_0$ are mCg and $s \subseteq x_0$, $t \subseteq x$, we have that $g(x) = g(x_0)$. In particular $g$ is constant on $S_t$ and $S_t \Vdash g(x_G) = g(\check x_0) \in V$. 
	
	On the other hand, if $g$ is injective on $[S]$, then $g^{-1}$ is continuous as $[S]$ is compact and it is easy to extend $g^{-1}$ to a continous function $f \colon A^\omega \to 2^\omega$.
\end{proof}

\begin{cor}\label{cor:minimality}
	$V^{\mathbb{SP}}$ is a minimal extension of $V$, i.e. whenever $W$ is a model of ZFC so that $V \subseteq W \subseteq V^{\mathbb{SP}}$, then $W = V$ or $W = V^{\mathbb{SP}}$.
\end{cor}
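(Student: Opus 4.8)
The plan is as follows. By Corollary~\ref{cor:genreal} we may write $V^{\mathbb{SP}} = V[x_G]$, so it suffices to show that any $W \models \mathrm{ZFC}$ with $V \subseteq W \subseteq V[x_G]$ and $W \neq V$ already contains $x_G$; indeed, once $x_G \in W$ we get $V[x_G] \subseteq W \subseteq V[x_G]$, hence $W = V^{\mathbb{SP}}$. First I would record the standard reduction that, since $W \models \mathrm{ZFC}$ and $W \neq V$, the model $W$ contains a \emph{set of ordinals} $a \notin V$ (decode any new set via a well-ordering of its transitive closure and Mostowski collapse). Fix an $\mathbb{SP}$-name $\dot a$ for a subset of some ordinal $\lambda$ and a condition $T \in G$ with $T \Vdash_{\mathbb{SP}} \dot a \notin \check V$, together with a countable elementary $M \preccurlyeq H(\theta)$ with $T, \dot a, \lambda \in M$; by elementarity $M \models T \Vdash \dot a \notin \check V$.

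The heart of the argument is to recover $x_G$ from $a$ using mutual genericity. By Remark~\ref{rem:mgpspl} (the $\mathbb{SP}^n$-master-condition refinement of Proposition~\ref{prop:weightedmain}), the set of $S \leq T$ which are master conditions over $M$ and satisfy ``any distinct $x_0, x_1 \in [S]$ are mutually $\mathbb{SP}^2$-generic over $M$'' is dense below $T$ in $V$; pick such an $S$ with $S \in G$, so that $x_G \in [S]$. For $x \in [S]$ put $G_x := \{R \in \mathbb{SP}^M : x \in [R]\}$; this is $\mathbb{SP}^M$-generic over $M$ (as $x$ is $\mathbb{SP}$-generic over $M$, being a projection of the product-generic pair $(x,x')$ for any $x' \neq x$ in $[S]$). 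I claim $x \mapsto \dot a[G_x]$ is injective on $[S]$: if $x \neq x'$ in $[S]$ had $\dot a[G_x] = \dot a[G_{x'}]$, then since $(x,x')$ is $\mathbb{SP}^2$-generic over $M$, the statement ``$\dot a^{(0)} = \dot a^{(1)}$'' would be forced by some $(R_0,R_1) \in M$ below $(S,S)$, and the usual product-forcing argument yields $c \in M$ with $R_0 \Vdash_{\mathbb{SP}} \dot a = \check c$, whence $\dot a[G_x] = c \in M$ — contradicting $\dot a[G_x] \notin M$, which follows from $M \models T \Vdash \dot a \notin \check V$. Since $G_{x_G} = G \cap \mathbb{SP}^M$ (Corollary~\ref{cor:genreal}) we have $\dot a[G_{x_G}] = \dot a[G] = a$, so $x_G$ is the unique $x \in [S]$ with $\dot a[G_x] = a$; equivalently, $G \cap \mathbb{SP}^M$ is the unique $\mathbb{SP}^M$-generic filter over $M$ containing $S$ whose interpretation of $\dot a$ is $a$.

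This characterization of $G \cap \mathbb{SP}^M$ — a subset of the \emph{countable} set $\mathbb{SP}^M$ — uses only $a$ and the ground-model parameters $S, M, \dot a$: ``generic over $M$'' quantifies over the countably many dense sets in $M$, ``interprets $\dot a$ as $a$'' is the absolute name-evaluation recursion, and the required uniqueness is the $\mathbf{\Pi}^1_1$-absolute mutual-genericity property of $[S]$. Hence the characterization is absolute between $W$ and $V[x_G]$, and carrying the reconstruction out splitting node by splitting node of $S$ (at each node the correct successor is the one whose side remains consistent with $a$, a finitary step) shows that $W$ itself builds $G \cap \mathbb{SP}^M$, hence $x_G$, completing the proof. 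I expect the main obstacle to be exactly this last point: one must check that $W$ — only assumed to be a transitive $\mathrm{ZFC}$-model between $V$ and $V[x_G]$, possibly missing some reals of $V[x_G]$ — genuinely \emph{realizes} the reconstruction rather than merely failing to refute it. The cleanest ways to close this are either to reduce first to the case $a \subseteq \omega$ (so that $a$ is itself a new real and Corollary~\ref{cor:minreal} applies directly, giving $x_G \in V[a] \subseteq W$), or to strengthen the choice of $S$ by an additional fusion so that the two immediate successors of every splitting node of $S$ decide membership of some ordinal in $\dot a$ to opposite truth values, turning each recursion step into a genuine first-order computation available inside $W$.
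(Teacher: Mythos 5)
Your overall strategy matches the paper's: pass to a new set of ordinals, take a countable elementary $M$ and a master condition $S$ from Remark~\ref{rem:mgpspl} making the branches of $[S]$ pairwise $\mathbb{SP}^2$-generic over $M$, and then recover $x_G$ in $W$. Your injectivity argument via the product-forcing lemma is the right idea. However, there is a precision error that is exactly the crux of the matter: your identity $\dot a[G_{x_G}] = \dot a[G] = a$ is not correct. The filter $G_{x_G} = G \cap \mathbb{SP}^M$ is generic over the \emph{countable} model $M$, so the evaluation $\dot a[G_{x_G}]$ --- performed over $M$, with $\dot a$ reinterpreted as $\dot a \cap M$ --- is a subset of $\lambda \cap M$, namely $a \cap M$, not $a$.

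Once this is corrected, the gap you flag at the end resolves without either of your proposed fixes. Your injectivity claim, stated for $x \mapsto (\dot a \cap M)[G_x]$, shows that no condition below $S$ decides $\dot a \cap M$; hence $a \cap M \notin V$. But $a \cap M$ is (a code for) a real, since $M$ is countable, and $a \cap M \in W$ because $a \in W$ and $M \in V \subseteq W$. So $W$ contains a real not in $V$, and Corollary~\ref{cor:minreal} gives $x_G \in V[a \cap M] \subseteq W$ directly. This is precisely the paper's proof, phrased there with an increasing sequence of ordinals restricted to $\delta \cap M$. Your first suggested fix (``reduce to $a \subseteq \omega$'') is this very move, but you state it without saying how, and the ``how'' is the restriction to $M$, which your own mutual-genericity argument already supplies. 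Your second fix (a Sacks-style fusion so that the two successors of each splitting node decide some ordinal of $\dot a$ oppositely) is a genuinely different route and would also work, but it requires an extra fusion argument compatible with the fatness condition of $\mathbb{SP}$ and is not what the paper does.
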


\begin{proof}
	Let $G$ be an $\mathbb{SP}$-generic filter over $V$. It suffices to show that if $\langle \alpha_\xi : \xi < \delta \rangle \in W \setminus V$ is an increasing sequence of ordinals, then $x_G \in W$ (see also \cite[Theorem 13.28]{Jech2013}). So let $ \langle \dot \alpha_\xi : \xi < \delta \rangle$ be a name for such a sequence of ordinals and $T \in \mathbb{SP}$ be such that $T \Vdash \langle \dot \alpha_\xi : \xi < \delta \rangle \notin V$. Note that this is in fact equivalent to saying that $(T,T) \Vdash_{\mathbb{SP}^2} \langle \dot \alpha_\xi[\dot x_0] : \xi < \delta \rangle \neq \langle \dot \alpha_\xi[\dot x_1] : \xi < \delta \rangle$, where $\dot x_0, \dot x_1$ are names for the generic reals added by $\mathbb{SP}^2$. Let $M$ be a countable elementary model so that $T, \langle \dot \alpha_\xi : \xi < \delta \rangle \in M$ and let $T' \leq T$ be a master condition over $M$ as in Remark~\ref{rem:mgpspl}. Then also $T' \Vdash \langle \dot \alpha_\xi : \xi \in \delta \cap M \rangle \notin V$. Namely, suppose towards a contradiction that there are $x_0, x_1 \in [T']$ generic over $V$ so that $\langle \dot \alpha_\xi[x_0] : \xi \in \delta \cap M \rangle = \langle \dot \alpha_\xi[x_1] : \xi \in \delta \cap M \rangle$, then $(x_0,x_1)$ is $\mathbb{SP}^2$-generic over $M$ and $M[x_0][x_1] \models \langle \dot \alpha_\xi[x_0] : \xi < \delta \rangle = \langle \dot \alpha_\xi[x_1] : \xi < \delta \rangle$ which yields a contradiction to the sufficient elementarity of $M$. Since $T' \Vdash \langle \dot \alpha_\xi : \xi \in \delta \cap M \rangle \subseteq M$ we can view $\langle \dot \alpha_\xi : \xi \in \delta \cap M \rangle$ as a name for a real, for $M$ is countable. Back in $W$, we can define $\langle \alpha_\xi : \xi \in \delta \cap M \rangle$ since $M \in V \subseteq W$. But then, applying Corollary~\ref{cor:minreal}, we find that $x_G \in W$. 
\end{proof}

\subsection{The countable support iteration}

Recall that for any perfect subtree $T$ of $2^{<\omega}$, $\splt(T)$ is order-isomorphic to $2^{<\omega}$ in a canonical way, via a map $\eta_T \colon \splt(T) \to 2^{<\omega}$. This map induces a homeomorphism $\tilde \eta_T \colon [T] \to 2^\omega$ and note that the value of $\tilde \eta_T(x)$ depends continuously on $T$ and $x$. Whenever $\rho$ is a weight on $T$, $\eta_T$ also induces a weight $\tilde \rho$ on $2^{<\omega}$, so that whenever $S \leq_{\tilde \rho} 2^{<\omega}$, then $\eta_T^{-1}(S)$ generates a tree $S'$ with $S' \leq_{\rho} T$. 

Let $\langle \mathbb{P}_\beta, \dot{\mathbb{Q}}_\beta : \beta < \lambda \rangle$ be a countable support iteration where for each $\beta < \lambda$, $\Vdash_{\mathbb{P}_\beta} \dot{\mathbb{Q}}_\beta \in  \{ \mathbb{SP},\mathbb{S}\}$. We fix in this section a $\mathbb{P}_\lambda$ name $\dot y$ for an element of a Polish space $X$, a good master condition $\bar p \in \mathbb{P_\lambda}$  over a countable model $M_0$, where $\dot y, X \in M_0$, and let $C\subseteq \lambda$ be a countable set as in Lemma~\ref{lem:intermediategoodanalytic}. For every $\beta \in C$ and $\bar y \in [\bar p]\restriction (C \cap\beta)$, let us write $$T_{\bar y} = \{s \in 2^{<\omega} : \exists \bar x \in [\bar p] \left[ \bar x \restriction (C \cap\beta) = \bar y \wedge s \subseteq x(\beta)\right]  \}.$$ According to Lemma~\ref{lem:intermediategoodanalytic}, the map $\bar y \mapsto T_{\bar y}$ is a continuous function from $[\bar p]\restriction (C \cap\beta)$ to $\mathcal{T}$. Let $\alpha := \otp(C) < \omega_1$ as witnessed by an order-isomorphism $\iota \colon \alpha \to C$. Then we define the homeomorphism $\Phi\colon [\bar p] \restriction C \to (2^\omega)^\alpha$ so that for every $\bar y \in [\bar p] \restriction C$ and every $\delta < \alpha$, $$\Phi(\bar y) \restriction (\delta + 1) = \Phi(\bar y) \restriction \delta^\frown \tilde \eta_{T_{\bar y \restriction \iota(\delta)}}(y(\iota(\delta))).$$

Note that for $\mathbb{P} \in \{ \mathbb{SP}, \mathbb{S}\}$, the map sending $T \in \mathbb{P}$ to the weight $\rho_T$ defined in Lemma~\ref{lem:spweighted} or Lemma~\ref{lem:sweighted} is a Borel function from $\mathbb{P}$ to the Polish space of partial functions from $(2^{<\omega})^2$ to $[2^{<\omega}]^{<\omega}$. Thus for $\beta \in C$ and $\bar x \in [\bar p] \restriction (C \cap\beta)$, letting $\rho_{\bar x} := \rho_{T_{\bar x}}$, we get that $\bar x \mapsto \rho_{\bar x}$ is a Borel function on $[\bar p] \restriction (C \cap \beta)$. For each $\delta < \alpha$ and $\bar y \in (2^\omega)^\delta$, we may then define $\tilde \rho_{\bar y}$ a weight on $2^{<\omega}$, induced by $\rho_{\bar x}$ and $\eta_{T_{\bar x}}$, where $\bar x = \Phi^{-1}(\bar y^\frown \bar z) \restriction \beta$ for arbitrary, equivalently for every, $\bar z \in (2^\omega)^{\alpha \setminus \delta}$. The map sending $\bar y \in (2^\omega)^\delta$ to $\tilde \rho_{\bar y}$ is then Borel as well.

\begin{lemma}
	\label{lem:mCgforiteration}
	Let $M_1$ be a countable elementary model with $M_0, \bar p, \mathbb{P}_\lambda \in M_1$ and let $\bar s \in \bigotimes_{i < \alpha} 2^{<\omega}$. Then there is $\bar q \leq \bar p$, a good master condition over $M_0$, so that \begin{multline*}
	\forall \bar x_0, \dots, \bar x_{n-1} \in [\bar q] \big(\Phi(\bar x_0 \restriction C), \dots, \Phi(\bar x_{n-1} \restriction C) \in (2^\omega)^\alpha \cap [\bar s] \\ \text{ are strongly }\langle 2^\omega : i < \alpha \rangle \text{-mCg } \text{ over } M_1\big).
	\end{multline*}  
	Moreover $[\bar q] \restriction C$ is a closed subset of $[\bar p] \restriction C$ and $[\bar q] = ([\bar q] \restriction C) \times (2^\omega)^{\lambda \setminus C}$ (cf. Lemma~\ref{lem:goodmaster2}).
\end{lemma}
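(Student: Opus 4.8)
The plan is to construct the closed set $[\bar q]\restriction C$ as a ``fibred'' version of the weighted construction in Proposition~\ref{prop:weightedmain}, carried out along the coordinates of $C$ by a fusion argument. At each coordinate $\iota(\delta)\in C$ I will thin the current finite approximation using the weight $\tilde\rho_{\bar y}$ (pulled back from $\rho_{T_{\bar x}}$ via $\eta_{T_{\bar x}}$, for the finitely many relevant prefixes $\bar y$ of $\Phi$-images) exactly as in Lemma~\ref{lem:wghtdenseset}, so as to catch dense open sets living in $M_1$ and to build in the ``once split, always split'' property required by strong mutual genericity. Because $\bar y\mapsto T_{\bar x}$ is continuous, $\bar y\mapsto\rho_{T_{\bar x}}$ is Borel and $(T,x)\mapsto\tilde\eta_T(x)$ is continuous (as recorded just before the lemma), all these objects will glue into continuous section maps, and each thinning at $\iota(\delta)$ will be a $\le_{\tilde\rho_{\bar y}}$-extension, so it pulls back to a $\le_{\rho_{T_{\bar x}}}$-extension of $T_{\bar x}$, which lies in $\mathbb{Q}_{\iota(\delta)}=\mathbb{P}$ by weightedness (Lemmas~\ref{lem:spweighted} and \ref{lem:sweighted}).

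First I would reduce to building such a closed set. It suffices to produce a closed $[\bar q]\restriction C\subseteq\Phi^{-1}[[\bar s]]$ all of whose section maps $\bar x\restriction(C\cap\beta)\mapsto\{s:\exists\bar z\in[\bar q]\restriction C\,(\bar z\restriction\beta=\bar x\restriction\beta\wedge s\subseteq z(\beta))\}$ ($\beta\in C$) are continuous and map into $\mathbb{Q}_\beta$, and then to set $[\bar q]=([\bar q]\restriction C)\times(2^\omega)^{\lambda\setminus C}$: the condition $\bar q$ induced by this closed set is automatically $\le\bar p$ (its section trees shrink those of $\bar p$, using that $[\bar p]=([\bar p]\restriction C)\times(2^\omega)^{\lambda\setminus C}$ since $C$ is the canonical countable set of Lemma~\ref{lem:intermediategoodanalytic} for $\bar p$), it is a master condition over $M_0$ (inherited downward from $\bar p$), clause~(4) of ``good master condition'' is inherited since $[\bar q]\subseteq[\bar p]$, and clauses~(1)--(3) hold by the shape of $[\bar q]$; and the constraint $\Phi(\bar x_i\restriction C)\in[\bar s]$ becomes automatic. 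To build $[\bar q]\restriction C$: fix the order isomorphism $\iota\colon\alpha\to C$, an increasing sequence of finite $a_n\subseteq\alpha$ with union $\alpha$, and a list, with infinitely many repetitions, of all triples $(\bar\xi,\bar K,D)$ with $\bar\xi=(\xi_0=0<\dots<\xi_k=\alpha)$ a finite partition, $\bar K\in\omega^k$, and $D\in M_1$ dense open in $\prod_{l<k}\bigl(\prod_{i\in[\xi_l,\xi_{l+1})}2^\omega\bigr)^{K_l}$ --- this is the countable list one diagonalises against to make $\Phi$-images of branches mutually Cohen generic over $M_1$. Then run a fusion $\langle\bar q_n:n\in\omega\rangle$ below $\bar p$, carrying finite partial trees in $\bigotimes_{i<\alpha}2^{<\omega}$ describing the $\Phi$-picture of the eventual $[\bar q]\restriction C$; at step $n$, at the coordinates of $a_{n+1}\setminus a_n$, apply Lemma~\ref{lem:wghtdenseset} fibrewise --- over the finitely many prefixes in the current approximation --- to thin the section trees so that (a) the $n$-th listed $D$ is caught by all sufficiently long distinct branches of the final set (read through $\Phi$), and (b) any two branches already separated by coordinate $\iota(\delta)$ are forced to keep disagreeing at every later coordinate of $C$; between these steps keep the construction a legitimate $\le_{n,a_n}$-fusion for $\mathbb{P}_\lambda$, which is possible because the weighted thinning proceeds by $\lhd_{\tilde\rho}$-end-extensions of finite trees (Lemma~\ref{lem:basicweight}) that can be arranged to preserve the splitting data up to level $n$ demanded by the $\le_n$-order of the $\iota(\delta)$-th iterand.

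Finally I would verify $\bar q$ works. By the reduction it is a good master condition over $M_0$ with $[\bar q]\restriction C$ closed in $[\bar p]\restriction C$ and $[\bar q]=([\bar q]\restriction C)\times(2^\omega)^{\lambda\setminus C}$. Given $\bar x_0,\dots,\bar x_{n-1}\in[\bar q]$, put $\bar y_i=\Phi(\bar x_i\restriction C)\in(2^\omega)^\alpha\cap[\bar s]$; clause~(b) ensures any two differing $\bar y_i$ differ at every later coordinate, so they are strongly mutually Cohen generic over $M_1$ as soon as they are mutually Cohen generic; and, taking the partition $\bar\xi=\Delta(\bar y_0,\dots,\bar y_{n-1})$, clause~(a) together with the enumeration of dense open subsets of the corresponding block-Cohen poset of $M_1$ yields exactly mutual Cohen genericity, by the argument of the claim inside the proof of Proposition~\ref{prop:weightedmain}, applied one block at a time. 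The hard part will be precisely this interleaving: running the fibrewise weighted thinning uniformly over all fibres of the growing finite approximation, keeping it compatible with the $\le_{n,a_n}$-requirements of the $\mathbb{P}_\lambda$-fusion, and simultaneously maintaining the ``once split, always split'' condition through the limit coordinates of $\alpha$ --- everything resting on the continuity and Borelness of $\bar y\mapsto T_{\bar x},\,\rho_{T_{\bar x}},\,\tilde\eta_{T_{\bar x}}$ so that the resulting section maps are continuous, as required for a good master condition.
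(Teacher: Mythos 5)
Your framing — thin the section trees fibrewise along $C$ using the pullbacks $\tilde\rho_{\bar y}$ of the weights so that the $\Phi$-images of branches are Cohen generic, building in the ``once split, always split'' rule, then convert the resulting closed set to a condition — is aligned with what the paper actually does, and your reduction paragraph (it suffices to produce a closed $[\bar q]\restriction C$ with continuous section maps into $\mathbb{Q}_\beta$, since clauses (1)--(4) of ``good master condition'' and $\bar q\le\bar p$ then come essentially for free) is sound. However, the construction you sketch has a genuine gap, and it is precisely in the place you flag as ``the hard part.''

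The paper does \emph{not} run a direct fusion in $\mathbb{P}_\lambda$. Instead it builds the target closed set $X_\alpha\subseteq(2^\omega)^\alpha$ by forcing over the countable model $M_1$ with a finite support iteration $\langle\mathbb{R}_\delta,\dot{\mathbb{S}}_\delta:\delta\le\alpha\rangle$, where each $\mathbb{S}_\delta$ generically adds a continuous map $F\colon X_\delta\to\mathcal{T}$ picking out a weighted thinning $S_{\bar y}\leq_{\tilde\rho_{\bar y}}2^{<\omega}$ of each section tree (this is exactly your ``fibrewise $\mathbb{C}(2^\omega,\cdot)$-like'' thinning, packaged as a poset), and only afterwards reads off $\bar q$ from $X_\alpha$ by the recursion $\bar q\restriction\beta\Vdash\dot q(\beta)=\eta^{-1}_{T_{\bar x_G\restriction(C\cap\beta)}}(S_{\Phi(\bar x_G\restriction C)\restriction\delta})$. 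The reason for this detour is not stylistic. Along the way, the paper maintains a second inductive invariant $(2)_\delta$ in addition to the mCg statement $(1)_\delta$: for any $\mathbb{R}_\delta$-generic $G$ and any name $\dot D\in M_1$ for a dense open set in the block-Cohen poset, the \emph{intersection} $\bigcap\{\dot D[\bar y'_0,\dots,\bar y'_{n-1}]:\bar y'_0,\dots,\bar y'_{n-1}\in X_\delta\text{ of a fixed type}\}$ is dense open and lies in $M_1[G\restriction\delta]$. This is the key uniformity that your sketch is missing, and it is load-bearing, not a technicality. Here is why: when you try to catch a dense open $D\in M_1$ in a product $\prod_{l<k}(\bigotimes_{i\in[\xi_l,\xi_{l+1})}2^{<\omega})^{K_l}$, the part of $D$ above coordinate $\delta$ is a \emph{name}, evaluated at the prefix $\bar z^0$ determined by $\bar y_0\restriction\delta,\dots,\bar y_{n-1}\restriction\delta$. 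There are continuum many such prefixes, so you cannot simply list the evaluated dense sets $D[\bar z^0]$ and diagonalise against them at the coordinates $\iota(\delta),\iota(\delta+1),\dots$; and the fibrewise application of Lemma~\ref{lem:wghtdenseset} at any finite stage only sees finitely many prefixes. What $(2)_\delta$ buys is a single dense open $\tilde D$ in $M_1[G\restriction\delta]$ that works \emph{uniformly} over all tuples of the given type, so that catching $\tilde D$ at coordinate $\delta$ (via the genericity of $F$) catches $D$ for every tuple at once. Your proposal lists only the $D$'s in $M_1$ themselves and says you will ``catch the $n$-th listed $D$ by all sufficiently long distinct branches,'' but it gives no mechanism to reduce this to a countable list of catching tasks compatible with a finite-support fusion; in particular, the limit case of the induction in the paper (establishing $(1)_\delta$, $(2)_\delta$ for $\delta$ limit) is an explicit genericity argument over $\mathbb{R}_\delta$ and has no counterpart in a direct $\le_{n,a_n}$-fusion in $\mathbb{P}_\lambda$.

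So the correct diagnosis is: your overall architecture and your reduction step are on the right track and close to the paper's, but replacing the forcing-over-$M_1$ construction by a direct fusion requires you to prove a prefix-uniformity statement equivalent to $(2)_\delta$, and the proposal currently asserts the catching rather than supplying that. Once you add $(2)_\delta$ (and something like the paper's finite-support iteration $\mathbb{R}_\alpha$ as the device that forces it), the two approaches coincide in substance.
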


\begin{proof}
	We can assume without loss of generality that $\bar s = \emptyset$, i.e. $[\bar s] = (2^\omega)^\alpha$. It will be obvious that this assumption is inessential. 
	Next, let us introduce some notation. For any $\delta \leq \alpha$ and $\bar y_0, \dots, \bar y_{n-1} \in (2^\omega)^\delta$, recall from Definition~\ref{def:Delta} that $$ \Delta(\bar y_0, \dots, \bar y_{n-1}) := \{ \Delta_{\bar y_i, \bar y_j} : i \neq j < n \} \cup \{ 0, \delta\}.$$
	Let us write $$\tp(\bar y_0, \dots, \bar y_{n-1}) := (\langle \xi_l : l \leq k \rangle, \langle K_l : l < k \rangle, \langle U_i : i < n \rangle),$$ where $\{  \xi_0 < \dots < \xi_k\} = \Delta(\bar y_0, \dots, \bar y_{n-1})$, $K_l = \vert \{ \bar y_i \restriction [\xi_l, \xi_{l+1}) : i < n  \} \vert$ for every $l < k$ and $\langle U_i : i < n \rangle$ are the clopen subsets of $(2^\omega)^\delta$ of the form $U_i = [\bar s_i]$ for $\bar s_i \in \bigotimes_{\xi < \delta} 2^{<\omega}$ with $\dom(\bar s_i) = \{ \Delta_{\bar y_i, \bar y_j} : j < n, \bar y_j \neq \bar y_i \}$ and $\bar s_i$ minimal in the order of $\bigotimes_{\xi < \delta} 2^{<\omega}$ so that $$\bar y_i \in [\bar s_i] \text{ and } \forall j < n (\bar y_j \neq \bar y_i \rightarrow \bar y_j \notin [\bar s_i]),$$ for every $i < n$. 
	
	Note that for any $\delta_0 \leq \delta$, if $$\tp(\bar y_0 \restriction \delta_0, \dots, \bar y_{n-1} \restriction \delta_0) := (\langle \eta_l : l \leq k' \rangle, \langle M_l : l < k' \rangle, \langle V_i : i < n \rangle),$$ then $V_i = U_i \restriction \delta_0$ for every $i < n$. Moreover, for any $\bar y'_0, \dots, \bar y'_{n-1} \in (2^\omega)^\delta$ with $$\tp(\bar y'_0, \dots, \bar y'_{n-1}) = (\langle \xi_l : l \leq k \rangle, \langle K_l : l < k \rangle, \langle U_i : i < n \rangle),$$ we have that $$\tp(\bar y'_0 \restriction \delta_0, \dots, \bar y'_{n-1} \restriction \delta_0) = (\langle \eta_l : l \leq k' \rangle, \langle M_l : l < k' \rangle, \langle V_i : i < n \rangle).$$
	
	Any $\bar y_0, \dots, \bar y_{n-1}$, with $\tp(\bar y_0, \dots, \bar y_{n-1}) := (\langle \xi_l : l \leq k \rangle, \langle K_l : l < k \rangle, \langle U_i : i < n \rangle)$, that are $\langle 2^\omega : i < \delta \rangle$-mutually Cohen generic over $M_1$ as witnessed by $\xi_0 < \dots < \xi_k$, induce a $\prod_{l < k} (\bigotimes_{\xi \in [\xi_l,\xi_{l+1})} 2^{<\omega})^{K_l}$-generic and vice-versa. Thus whenever $\tau$ is a $\prod_{l < k} (\bigotimes_{\xi \in [\xi_l,\xi_{l+1})} 2^{<\omega})^{K_l}$-name, we may write $\tau[\bar y_0, \dots, \bar y_{n-1}]$ for the evaluation of $\tau$ via the induced generic. It will not matter in what particular way we define the $\prod_{l < k} (\bigotimes_{\xi \in [\xi_l,\xi_{l+1})} 2^{<\omega})^{K_l}$-generic from given $\bar y_0, \dots, \bar y_{n-1}$. We may stipulate for instance, that the generic induced by $\bar y_0, \dots, \bar y_{n-1}$ is $\langle \bar z_{l,j} : l < k, j < K_l \rangle$, where for each fixed $l < k$, $\langle \bar z_{l,j} : l < k, j < K_l \rangle$ enumerates $\{ \bar y_i \restriction [\xi_l, \xi_{l+1}) : i < n  \}$ in lexicographic order. 
	
	Let us get to the bulk of the proof. We will define a finite support iteration $\langle \mathbb{R}_{\delta}, \dot{\mathbb{S}}_\delta : \delta \leq \alpha \rangle$ in $M_1$, together with, for each $\delta \leq \alpha$, an $\mathbb{R}_{\delta}$-name $\dot X_\delta$ for a closed subspace of $(2^{\omega})^\delta$, where $\Vdash_{\mathbb{R}_{\delta_1}} \dot X_{\delta_0} = \dot X_{\delta_1} \restriction \delta_0$ for every $\delta_0 < \delta_1 \leq \alpha$. This uniquely determines the limit steps of the construction. Additionally we will make the following inductive assumptions $(1)_\delta$ and $(2)_\delta$ for all $\delta \leq \alpha$ and any $\mathbb{R}_\delta$-generic $G$. Let $\bar y_0, \dots, \bar y_{n-1} \in \dot X_{\delta}[G]$ be arbitrary and $\tp(\bar y_0, \dots, \bar y_{n-1}) = (\langle \xi_l : l \leq k \rangle, \langle K_l : l < k \rangle, \langle U_i : i < n \rangle)$. Then
	
	\begin{enumerate}[$(1)_\delta$]
		\item $\bar y_0, \dots, \bar y_{n-1}$ are strongly $\langle 2^\omega : i < \delta \rangle$-mCg over $M_1$, 
	\end{enumerate}

	\begin{enumerate}[$(1)_\delta$]
		\setcounter{enumi}{1}
		\item and for any $\prod_{l < k} (\bigotimes_{\xi \in [\xi_l,\xi_{l+1})} 2^{<\omega})^{K_l}$-name $\dot D \in M_1$ for an open dense subset of a countable poset $\mathbb{Q} \in M_1$, \begin{multline*}
		\bigcap \Bigl\{ \dot D[\bar y'_0, \dots, \bar y'_{n-1}] : \bar y'_0, \dots, \bar y'_{n-1} \in X_\delta, \\ \tp(\bar y'_0, \dots, \bar y'_{n-1}) = (\langle \xi_l : l \leq k \rangle, \langle K_l : l < k \rangle, \langle U_i : i < n \rangle) \Bigr\}
		\end{multline*} is open dense in $\mathbb{Q}$.
	\end{enumerate}

	Having defined $\mathbb{R}_\delta$ and $\dot{X}_\delta$, for $\delta < \alpha$, we proceed as follows. Fix for now $G$ an $\mathbb{R}_\delta$-generic over $M_1$ and $X_\delta := \dot{X}_\delta[G]$. Then we define a forcing $\mathbb{S}_\delta \in M_1[G]$ which generically adds a continuous map $F \colon X_\delta \to \mathcal{T}$, so that for each $\bar y \in X_\delta$, $ S_{\bar y} := F(\bar y) \leq_{\tilde \rho_{\bar y}} 2^{<\omega}$. In $M_1[G][F]$, we then define $X_{\delta+1} \subseteq (2^\omega)^{\delta +1}$ to be $\{ \bar y^\frown z : \bar y \in X_\delta, z \in [S_{\bar y}] \}$. The definition of $\mathbb{S}_\delta$ is as follows.  
	
	Work in $M_1[G]$. Since the map $\bar y \in (2^\omega)^\delta \mapsto \tilde \rho_{\bar y}$ is Borel and an element of $M_1$ and by $(1)_\delta$ any $\bar y \in X_\delta$ is Cohen generic over $M_1$, it is continuous on $X_\delta$. Since $X_\delta$ is compact we find a single weight $\tilde \rho$ on $2^{<\omega}$, so that $\tilde \rho_{\bar y} \subseteq \tilde \rho$ for every $\bar y \in X_\delta$. Let $\{ O_s : s \in 2^{<\omega} \}$ be a basis of $X_\delta$ so that $O_s \subseteq O_t$ for $t \subseteq s$ and $O_s \cap O_t = \emptyset$ for $s \perp t$. This is possible since $X_\delta$ is homeomorphic to $2^\omega$. Let $\mathcal{FT}$ be the set of finite subtrees of $2^{<\omega}$. Then $\mathbb{S}_\delta$ consists of functions $h \colon 2^{\leq n} \to \mathcal{FT}$, for some $n \in \omega$, so that for every $s \subseteq t \in 2^{\leq n}$, $ (h(s) \unlhd_{\tilde \rho} h(t))$. The extension relation is defined by function extension. Note that $\mathbb{S}_\delta$ is indeed a forcing poset with trivial condition $\emptyset$. 
	
	Given $H$, an $\mathbb{S}_\delta$-generic over $M_1[G]$, we let $F \colon X_\delta \to \mathcal{T}$ be defined as $$F(\bar y) :=  \bigcup_{\substack{s \in 2^{<\omega}, \bar y \in O_s \\ h \in H}} h(s).$$
	
	\begin{claim}
		For every $\bar y \in X_\delta$, $F(\bar y) = S_{\bar y} \leq_{\tilde \rho} 2^{<\omega}$, in particular $S_{\bar y} \leq_{\tilde \rho_{\bar y}} 2^{<\omega}$. For any $\bar y_0, \bar y_1 \in X_\delta$, $[S_{\bar y_0}] \cap [S_{\bar y_1}] \neq \emptyset$. Any $z_0, \dots, z_{n-1} \in \bigcup_{\bar y \in X_\delta}[S_{\bar y}]$ are $2^\omega$-mutually Cohen generic over $M_1[G]$. And for any countable poset $\mathbb{Q} \in M_1$, any $m \in \omega$ and any dense open $E \subseteq (2^{<\omega})^n \times \mathbb{Q}$ in $M_1[G]$, there is $r \in \mathbb{Q}$ and $m_0 \geq m$ so that for any  $z_0, \dots, z_{n-1} \in \bigcup_{\bar y \in X_\delta}[S_{\bar y}]$ where $z_0 \restriction m, \dots, z_{n-1} \restriction m$ are pairwise distinct, $( (z_0 \restriction m_0, \dots, z_{n-1} \restriction m_0), r) \in E$.  
	\end{claim}
	
	\begin{proof}
		We will make a genericity argument over $M_1[G]$. Let $h \in \mathbb{S}_\delta$ be arbitrary. Then it is easy to find $h' \leq h$, say with $\dom(h') = 2^{\leq a_0}$, so that for every $s \in 2^{a_0}$ and every $t \in \term(h(s))$, $\vert t \vert \geq m$. For the first claim, it suffices through Lemma~\ref{lem:basicweight} to find $h'' \leq h'$, say with $\dom(h'') = 2^{\leq a_1}$, $a_0 < a_1$, so that for every $s \in 2^{a_0}$ and $t \in 2^{a_1}$, with $s \subseteq t$, $h''(s) \lhd_{\tilde \rho} h''(t)$. Finding $h''$ so that additionally $\term(h''(t_0)) \cap \term(h''(t_1)) = \emptyset$ for every $t_0 \neq t_1 \in 2^{a_1}$ proves the second claim. For the last two claims, given a fixed dense open subset $E \subseteq (2^{<\omega})^n \times \mathbb{Q}$ in $M_1[G]$, it suffices to find $r \in \mathbb{Q}$ and to ensure that for any pairwise distinct $s_0, \dots, s_{n-1} \in \bigcup_{s \in 2^{a_0}} \term(h''(s))$ and $t_0 \supseteq s_0, \dots, t_{n-1} \supseteq s_{n-1}$ with $t_0, \dots, t_{n-1} \in \bigcup_{t \in 2^{a_1}} \term(h''(t))$, $((t_0, \dots, t_{n-1}), r) \in E$. Then we may put $m_0 = \max \{ \vert t \vert : t \in \bigcup_{s \in 2^{a_1}} \term(h''(s)) \}$. We may also assume wlog that $\mathbb{Q} = 2^{<\omega}$. 
		
		To find such $h''$ we apply Lemma~\ref{lem:wghtdenseset} as in the proof of Proposition~\ref{prop:weightedmain}. More precisely, for every $s \in 2^{a_0}$, we find $T^0_s, T^1_s \rhd_{\tilde \rho} h'(s)$, and we find $T \subseteq 2^{<\omega}$ finite, so that for any pairwise distinct $s_0, \dots, s_{n-1} \in \bigcup_{s \in 2^{a_0}} \term(h'(s))$, any $t_0 \supseteq s_0, \dots, t_{n-1} \supseteq s_{n-1}$ with $t_0, \dots, t_{n-1} \in \bigcup_{s \in 2^{a_0}, i \in 2} \term(T^i_s)$ and any $\sigma \in \term(T)$, $((t_0, \dots, t_{n-1}), \sigma ) \in E$ and $\term (T_s^i) \cap \term (T_{t}^j) = \emptyset$ for every $i, j \in 2$, $s, t \in 2^{a_0}$. Then simply define $h'' \leq h'$ with $\dom(h'') = 2^{a_0 +1}$, where $h''(s^\frown i) = T_s^i$ for $s \in 2^{a_0}$, $i \in 2$. 
	\end{proof}
	
	The function $F$ is obviously continuous and $X_{\delta +1 }$ is a closed subset of $(2^\omega)^{\delta +1}$, with $X_{\delta +1} \restriction \delta_0 = (X_{\delta +1} \restriction \delta) \restriction \delta_0 = X_\delta \restriction \delta_0 = X_{\delta_0}$ for every $\delta_0 < \delta + 1$.
	
	\begin{proof}[Proof of $(1)_{\delta +1}$, $(2)_{\delta +1}$]
		Let $G$ be $\mathbb{R}_{\delta +1}$ generic over $M_1$ and $\bar y_0, \dots, \bar y_{n-1} \in \dot X_{\delta +1}[G] = X_{\delta +1}$ be arbitrary. By the inductive assumption we have that $\bar y_0 \restriction \delta, \dots, \bar y_{n-1} \restriction \delta$ are strongly $\langle 2^\omega : i < \delta \rangle$-mCg over $M_1$. By the above claim, whenever $\bar y_i \restriction \delta \neq \bar y_j \restriction \delta$, then $\bar y_i(\delta) \neq \bar y_j(\delta)$. Thus, for $(1)_{\delta +1}$, we only need to show that $\bar y_0, \dots, \bar y_{n-1}$ are mCg. Let $\tp(\bar y_0, \dots, \bar y_{n-1}) = (\langle \xi_{l} : l < k \rangle,\langle K_{l} : l < k \rangle, \langle U_i : i < n \rangle )$, $\tp(\bar y_0 \restriction \delta , \dots, \bar y_{n-1} \restriction \delta) = (\langle \eta_l : l \leq k' \rangle, \langle M_l : l < k \rangle, \langle U_i\restriction \delta : i < n \rangle)$ and $n' = \vert \{ y_i(\delta) : i < n \} \vert = K_{k-1}$. Then we may view a dense open subset of $\prod_{l < k} (\bigotimes_{\xi \in [\xi_l,\xi_{l+1})} 2^{<\omega})^{K_l}$ as a $\prod_{l < k'} (\bigotimes_{\xi \in [\eta_l,\eta_{l+1})} 2^{<\omega})^{M_l}$-name for a dense open subset of $(2^{<\omega})^{n'}$. To this end, let $\dot D \in M_1$ be a $\prod_{l < k'} (\bigotimes_{\xi \in [\eta_l,\eta_{l+1})} 2^{<\omega})^{M_l}$ name for a dense open subset of $(2^{<\omega})^{n'}$. Then we have, by $(2)_\delta$, that \begin{multline*}
		\tilde D = \bigcap \Bigl\{ \dot D[\bar y'_0, \dots, \bar y'_{n-1}] : \bar y'_0, \dots, \bar y'_{n-1} \in X_\delta, \\ \tp(\bar y'_0, \dots, \bar y'_{n-1}) = (\langle \eta_l : l \leq k' \rangle, \langle M_l : l < k' \rangle, \langle U_i \restriction \delta : i < n \rangle) \Bigr\}
		\end{multline*} is a dense open subset of $(2^{<\omega})^{n'}$ and $\tilde D \in M_1[G\restriction \delta]$. By the above claim, $y_0(\delta), \dots, y_{n-1}(\delta)$ are mCg over $M_1[G\restriction \delta]$ in $2^\omega$. Altogether, this shows that $\bar y_0, \dots, \bar y_{n-1}$ are $\langle 2^\omega : i < \delta +1 \rangle$-mCg over $M_1$.
		
		For $(2)_{\delta+1}$, let $\dot D \in M_1$ now be a $\prod_{l < k} (\bigotimes_{\xi \in [\xi_l,\xi_{l+1})} 2^{<\omega})^{K_l}$-name for a dense open subset of $\mathbb{Q}$.  Consider a name $\dot E$ in $M_1$ for the dense open subset of $(2^{<\omega})^{n'} \times \mathbb{Q}$, where for any $\bar y'_0, \dots, \bar y'_{n-1} \in X_{\delta}$, with $\tp(\bar y'_0, \dots, \bar y'_{n-1}) = (\langle \eta_l : l \leq k' \rangle, \langle M_l : l < k \rangle, \langle U_i\restriction \delta : i < n \rangle)$, \begin{multline*}
		\dot E[\bar y'_0 , \dots, \bar y'_{n-1}] = \{ (\bar t,r) : M_1[\bar y'_0, \dots, \bar y'_{n-1}] \models \\\bar t \Vdash r \in \dot D[\bar y'_0, \dots, \bar y'_{n-1}][\dot z_0, \dots, \dot z_{n'-1}]\},
		\end{multline*} 
		
		where $(\dot z_0, \dots, \dot z_{n'-1})$ is a name for the $(2^{<\omega})^{n'}$-generic. By $(2)_\delta$, we have that \begin{multline*}
		\tilde E = \bigcap \Bigl\{ \dot E[\bar y'_0, \dots, \bar y'_{n-1}] : \bar y'_0, \dots, \bar y'_{n-1} \in X_{\delta}, \\ \tp(\bar y'_0, \dots, \bar y'_{n-1}) = (\langle \eta_l : l \leq k' \rangle, \langle M_l : l < k \rangle, \langle U_i\restriction \delta : i < n \rangle) \Bigr\}
		\end{multline*}
		is a dense open subset of $(2^{<\omega})^{n'} \times \mathbb{Q}$ and $\tilde E \in M_1[G\restriction \delta]$. Let $m \in \omega$ be large enough so that for any $i,j < n$, if $U_i \neq U_j$, then $\forall \bar y'_i \in U_i \cap X_{\delta+1}, \bar y'_j \in U_j \cap X_{\delta +1} (y'_i(\delta) \restriction m \neq y'_j(\delta)\restriction m)$. To see that such $m$ exists, note that if $U_i \neq U_j$, then $U_i \cap X_{\delta+1}$ and $U_j \cap X_{\delta +1}$ are disjoint compact subsets of $X_{\delta +1}$. By the claim, there is $r \in \mathbb{Q}$ and $m_0 \geq m$ so that for any $z_0, \dots, z_{n'-1} \in \bigcup_{\bar y \in X_\delta}[S_{\bar y}]$, if $z_0 \restriction m, \dots, z_{n'-1} \restriction m$ are pairwise different, then $((z_0 \restriction m_0, \dots, z_{n'-1} \restriction m_0), r ) \in \tilde E$. Altogether we find that \begin{multline*}
		r \in \bigcap \Bigl\{ \dot D[\bar y'_0, \dots, \bar y'_{n-1}] : \bar y'_0, \dots, \bar y'_{n-1} \in X_{\delta+1}, \\ \tp(\bar y'_0, \dots, \bar y'_{n-1}) = (\langle \xi_l : l \leq k \rangle, \langle K_l : l < k \rangle, \langle U_i : i < n \rangle) \Bigr\}.
		\end{multline*}
		Of course the same argument can be carried out below any condition in $\mathbb{Q}$, showing that this set is dense. That it is open is also clear since it is the intersection of open subsets of a partial order.  
	\end{proof}
	
	Now let $\delta \leq \alpha$ be a limit ordinal. 
	
	\begin{proof}[Proof of $(1)_\delta$ and $(2)_{\delta}$.]
		Let $G$ be $\mathbb{R}_{\delta}$-generic over $M_1$, $\bar y_0, \dots, \bar y_{n-1} \in \dot X_\delta[G] = X_\delta$, this time wlog pairwise distinct, and $\tp(\bar y_0, \dots, \bar y_{n-1}) = (\langle \xi_l : l \leq k \rangle, \langle K_l : l < k \rangle, \langle U_i : i < n \rangle)$. We will make a genericity argument over $M_1$ to show $(1)_\delta$ and $(2)_\delta$. To this end, let $D_0 \subseteq \prod_{l < k} (\bigotimes_{\xi \in [\xi_l,\xi_{l+1})} 2^{<\omega})^{K_l}$ be dense open, $D_0 \in M_1$, and let $\dot D_1 \in M_1$ be a $\prod_{l < k} (\bigotimes_{\xi \in [\xi_l,\xi_{l+1})} 2^{<\omega})^{K_l}$-name for a dense open subset of $\mathbb{Q}$. Then consider the dense open subset $D_2 \subseteq  \prod_{l < k} (\bigotimes_{\xi \in [\xi_l,\xi_{l+1})} 2^{<\omega})^{K_l} \times \mathbb{Q}$ in $M_1$, where $$D_2 = \{ (r_0, r_1) : r_0 \in D_0 \wedge r_0 \Vdash r_1 \in \dot D_1 \}.$$ Also let $\bar h_0 \in G$ be an arbitrary condition so that $$\bar h_0 \Vdash \forall i < n (U_i \cap \dot X_\delta \neq \emptyset).$$
		Then there is $\delta_0 < \delta$ so that $\supp(\bar h_0), \xi_{k-1}+1 \subseteq \delta_0$. We may equally well view $D_2$ as a $\prod_{l < k-1} (\bigotimes_{\xi \in [\xi_l,\xi_{l+1})} 2^{<\omega})^{K_l} \times (\bigotimes_{\xi \in [\xi_{k-1},\delta_0)} 2^{<\omega})^{K_{k-1}}$-name $\dot E \in M_1$ for a dense open subset $$E \subseteq (\bigotimes_{\xi \in [\delta_0,\xi_k)} 2^{<\omega})^{K_{k-1}} \times \mathbb{Q}= (\bigotimes_{\xi \in [\delta_0,\delta)} 2^{<\omega})^{n} \times \mathbb{Q}.$$
		
		We follow again from $(2)_{\delta_0}$, that the set $\tilde E \in M_1[G \cap \mathbb{R}_{\delta_0}]$, where  \begin{multline*}
		\tilde E = \bigcap \Bigl\{ \dot E[\bar y'_0, \dots, \bar y'_{n-1}] : \bar y'_0, \dots, \bar y'_{n-1} \in X_{\delta_0}, \\ \tp(\bar y'_0, \dots, \bar y'_{n-1}) = (\langle \xi_0 < \dots < \xi_{k-1} < \delta_0 \rangle, \langle K_l : l < k \rangle, \langle U_i \restriction \delta_0 : i < n \rangle) \Bigr\},
		\end{multline*} is dense open. Let $((\bar t_0, \dots, \bar t_{n-1}), r) \in \tilde E$ be arbitrary and $\bar h_1 \in G \cap \mathbb{R}_{\delta_0}$, $\bar h_1 \leq \bar h_0$, so that $\bar h_1 \Vdash ((\bar t_0, \dots, \bar t_{n-1}), r) \in \tilde E$.
		
		Let us show by induction on $\xi \in [\delta_0, \delta)$, $\xi > \sup \left( \bigcup_{i < n} \dom(\bar t_i) \right)$, that there is a condition $\bar h_2 \in \mathbb{R}_\xi,$ $\bar h_2 \leq \bar h_1$, so that \begin{multline*}
		\bar h_2 \Vdash \forall \bar y'_0, \dots, \bar y'_{n-1} \in \dot X_\delta \big(\tp(\bar y'_0, \dots, \bar y'_{n-1}) = (\langle \xi_l : l \leq k \rangle, \langle K_l : l < k \rangle, \langle U_i : i < n \rangle) \\ \rightarrow  \bar y'_0 \in [\bar t_0] \wedge \dots \wedge \bar y'_{n-1} \in [\bar t_{n-1}] \big)
		\end{multline*} 
		
		and in particular, if $\bar h_2 \in G$, then for all $\bar y'_0, \dots, \bar y'_{n-1} \in X_\delta$ with $\tp(\bar y'_0, \dots, \bar y'_{n-1}) = (\langle \xi_l : l \leq k \rangle, \langle K_l : l < k \rangle, \langle U_i : i < n \rangle)$, the generic corresponding to $\bar y'_0, \dots, \bar y'_{n-1}$ hits $D_0$, and $r \in \dot D_1[\bar y'_0, \dots, \bar y'_{n-1}]$. Since $\bar h_0 \in G$ was arbitrary, genericity finishes the argument. 
		
		The limit step of the induction follows directly from the earlier steps since if $\dom (\bar t_i) \subseteq \xi$, with $\xi$ limit, then there is $\eta < \xi$ so that $\dom (\bar t_i) \subseteq \eta$. So let us consider step $\xi +1$. Then there is, by the inductive assumption, $\bar h'_2 \in \mathbb{R}_\xi$, $\bar h'_2 \leq \bar h_1$, so that 
		
		\begin{multline*}
		\bar h'_2 \Vdash \forall \bar y'_0, \dots, \bar y'_{n-1} \in \dot X_\delta \big(\tp(\bar y'_0, \dots, \bar y'_{n-1}) = (\langle \xi_l : l \leq k \rangle, \langle K_l : l < k \rangle, \langle U_i : i < n \rangle) \\ \rightarrow ( \bar y'_0 \in [\bar t_0 \restriction \xi] \wedge \dots \wedge \bar y'_{n-1} \in [\bar t_{n-1} \restriction \xi] \big).
		\end{multline*}
		
		Now extend $\bar h'_2$ to $\bar h''_2$ in $\mathbb{R}_\xi$, so that there is $m \in \omega$ such that for every $s \in 2^{m}$ and every $i < n$, either $\bar h''_2 \Vdash \dot O_{s} \subseteq U_i \restriction \xi$ or $\bar h''_2 \Vdash \dot O_{s} \cap (U_i \restriction \xi) = \emptyset$, where $\langle \dot O_s : s \in 2^{<\omega} \rangle$ is a name for the base of $\dot X_\xi$ used to define $\dot{\mathbb{S}}_\xi$. The reason why this is possible, is that in any extension by $\mathbb{R}_\xi$ and for every $i < n$, by compactness of $X_\xi \cap (U_i \restriction \xi)$, there is a finite set $a \subseteq 2^{<\omega}$ so that $X_\xi \cap (U_i \restriction \xi) = \bigcup_{s \in a} O_s$.  Let us define $h \colon 2^{\leq m} \to \mathcal{FT}$, where $$h(s) =\begin{cases}
		\emptyset & \text{if } \forall i < n ( \bar h''_2 \Vdash \dot O_{s}  \cap U_i \restriction \xi = \emptyset) \\
		\{ t \in 2^{<\omega} : t \subseteq t_i(\xi)  \} & \text{if } \bar h''_2 \Vdash \dot O_{s} \subseteq U_i \restriction \xi \text{ and } i < n.
		\end{cases} $$
		
		Note that $h$ is well-defined as $(U_i \restriction \xi) \cap (U_j \restriction \xi) = \emptyset$ for every $i \neq j < n$. Since $\emptyset \unlhd_{\rho} T$ and $T \unlhd_{\rho} T$ for any weight $\rho$ and any finite tree $T$, we have that $\bar h''_2 \Vdash h \in \dot{\mathbb{S}}_\xi$ and $\bar h_2 = \bar h''_2{}^\frown h \in \mathbb{R}_{\xi+1}$ is as required. \end{proof}	
	
	This finishes the definition of $\mathbb{R}_\alpha$ and $\dot X_\alpha$. Finally let $G$ be $\mathbb{R}_\alpha$-generic over $M_1$ and $X_\alpha = \dot X_\alpha[G]$. Now let us define $\bar q \leq \bar p$ recursively so that for every $\delta \leq \alpha$,
	$$\forall \bar x \in [\bar q] (\Phi(\bar x \restriction C) \restriction \delta \in X_\alpha \restriction \delta).$$ 
	
	If $\beta \notin C$ we let $\dot q(\beta)$ be a name for the trivial condition $2^{<\omega}$, say e.g. $\dot q(\beta) = \dot p(\beta)$. If $\beta \in C$, say $\beta = \iota(\delta)$, we define $\dot q(\beta)$ to be a name for the tree generated by $$ \eta^{-1}_{T_{\bar x_{G} \restriction (C \cap \beta) }} (S_{\bar y}),$$ where $\bar x_{G}$ is the generic sequence added by $\mathbb{P}_\lambda$ and $\bar y = \Phi(\bar x_G \restriction C) \restriction \delta$. This ensures that $\bar q \restriction \beta \Vdash \dot q(\beta) \in \mathbb{Q}_\beta \wedge \dot q(\beta) \leq \dot p(\beta)$. Inductively we see that $\bar q \restriction \beta^\frown \bar p \restriction (\lambda \setminus \beta) \Vdash \Phi(\bar x_G \restriction  C) \restriction \delta \in X_\alpha \restriction \delta$. Having defined $\bar q$, it is also easy to check that it is a good master condition over $M_0$, with $[\bar q] = \Phi^{-1}(X_\alpha) \times (2^{\omega})^{\lambda\setminus C}$. Since for every $\bar x \in [\bar q]$, $\Phi(\bar x \restriction C) \in X_\alpha$ and by $(1)_\alpha$, $\bar q$ is as required. 
\end{proof}

\begin{prop}
\label{prop:propheart}
	Let $E \subseteq [X]^{<\omega} $ be an analytic hypergraph on $X$, say $E$ is the projection of a closed set $F \subseteq [X]^{<\omega} \times \omega^\omega$, and let $f \colon [\bar p] \restriction C \to X$ be continuous so that $\bar p \Vdash \dot y = f(\bar x_G \restriction C)$ (cf. Lemma~\ref{lem:goodmaster2}). Then there is a good master condition $\bar q \leq \bar p$, with $[\bar q] \restriction C$ a closed subset of $[\bar p] \restriction C$ and $[\bar q] = ([\bar q] \restriction C) \times (2^\omega)^{\lambda \setminus C}$, a compact $E$-independent set $Y \subseteq X$, $N \in \omega$ and continuous functions $\phi \colon [\bar q] \restriction C \to [Y]^{<N}$, $w \colon [\bar q] \restriction C \to \omega^\omega $, so that 
	
	\begin{enumerate}[(i)]
		\item either $f''([\bar q] \restriction C) \subseteq Y$, thus $\bar q \Vdash \dot y \in Y$,
		\item or $\forall \bar x \in [\bar q]\restriction C ( (\phi(\bar x) \cup \{ f(\bar x) \}, w(\bar x)) \in F)$, thus $\bar q \Vdash {\{\dot y\} \cup Y}$ is not $E\text{-independent}$.
	\end{enumerate}
\end{prop}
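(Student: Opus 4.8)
The plan is to combine Main Lemma~\ref{lem:mainlemmainf} with the amalgamation result of Lemma~\ref{lem:mCgforiteration}. First I would transport the hypergraph $E$ via the homeomorphism $\Phi \colon [\bar p]\restriction C \to (2^\omega)^\alpha$ (where $\alpha = \otp(C)$) and the continuous function $f$. Precisely, define an analytic hypergraph $E'$ on $(2^\omega)^\alpha$ by declaring $\{\bar v_0, \dots, \bar v_{m-1}\} \in E'$ iff $\{f(\Phi^{-1}(\bar v_0)), \dots, f(\Phi^{-1}(\bar v_{m-1}))\} \in E$ (this is analytic since $f$ and $\Phi^{-1}$ are continuous and $E$ is analytic; one must be slightly careful about the cardinality bookkeeping in the definition of a hypergraph, but that is routine). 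Apply Main Lemma~\ref{lem:mainlemmainf} to $E'$ and $\alpha$ to obtain a countable model $M_1 \ni M_0, \bar p, \mathbb{P}_\lambda$ (we may take $M_1$ elementary and containing all relevant parameters), which lands us in case (1) or case (2).

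Next I would feed $M_1$ and the appropriate $\bar s$ into Lemma~\ref{lem:mCgforiteration}. In case (1) of the Main Lemma, take $\bar s = \emptyset$; in case (2), take $\bar s$ to be the sequence produced by the Main Lemma. Lemma~\ref{lem:mCgforiteration} returns a good master condition $\bar q \leq \bar p$ over $M_0$ with $[\bar q]\restriction C$ closed in $[\bar p]\restriction C$, $[\bar q] = ([\bar q]\restriction C) \times (2^\omega)^{\lambda\setminus C}$, and such that for every $\bar x_0, \dots, \bar x_{n-1} \in [\bar q]$, the tuple $\Phi(\bar x_0\restriction C), \dots, \Phi(\bar x_{n-1}\restriction C)$ lies in $[\bar s]$ and is strongly mCg over $M_1$. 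Now set $Y := \{ f(\bar x) : \bar x \in [\bar q]\restriction C \}$, which is compact since $[\bar q]\restriction C$ is compact (it is a closed subset of the compact space $(2^\omega)^\alpha$ transported back, being the $\Phi$-image of a closed subset of $[\bar p]\restriction C$ which itself is compact by Lemma~\ref{lem:intermediategoodanalytic}; here one uses that in our situation $A = 2$, so $A^\omega$ is compact) --- wait, more carefully: $[\bar q]\restriction C$ is closed in $(2^\omega)^C$ hence compact, and $f$ continuous, so $f''([\bar q]\restriction C)$ is compact. The $E$-independence of $Y$, in case (1), is immediate: by the Main Lemma, any finitely many strongly mCg reals give an $E'$-independent set, and a density/absoluteness argument (exactly as in the proofs of the Main Lemmas, using that every point of $[\bar q]\restriction C$ is a limit of strongly mCg tuples, which holds because the set of such tuples is dense by Lemma~\ref{lem:gettingacondition}-style reasoning, or directly since $\Phi$ transports the relevant density statement) shows $Y$ is $E$-independent; then take $N = 0$, $\phi$ the empty function, $w$ arbitrary continuous, and conclusion (i) holds with $f''([\bar q]\restriction C) = Y \subseteq Y$, whence $\bar q \Vdash \dot y \in Y$.

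In case (2) of the Main Lemma we get $N \in \omega$ and continuous $\psi_0, \dots, \psi_{N-1} \colon (2^\omega)^\alpha \to (2^\omega)^\alpha$ such that for strongly mCg $\bar v_0, \dots, \bar v_{n-1} \in [\bar s]$, the set $\{\psi_j(\bar v_i) : j<N, i<n\}$ is $E'$-independent while $\{\bar v_0\} \cup \{\psi_j(\bar v_0) : j < N\} \in E'$. Define $\phi \colon [\bar q]\restriction C \to [Y]^{<N}$ by $\phi(\bar x) := \{ f(\Phi^{-1}(\psi_j(\Phi(\bar x)))) : j < N \}$ (composed appropriately so the range is a finite subset of $Y$ --- here I need $\psi_j(\Phi(\bar x)) \in [\bar q]\restriction C$ transported, which requires a small strengthening: either argue that, since the $\psi_j$ are continuous and every $\bar x \in [\bar q]\restriction C$ is a limit of strongly mCg points, $\psi_j(\Phi(\bar x))$ is forced into $X_\alpha$, or simply enlarge $Y$ at the outset to also include $\bigcup_j f''(\Phi^{-1}(\psi_j''[\bar q]\restriction C))$ and re-verify $E$-independence using the $E'$-independence clause --- this enlargement is the cleanest route). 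Then $\phi$ is continuous and $Y$ is still compact $E$-independent. For $w$: the statement $\{\dot y\} \cup Y$ not $E$-independent amounts to $(\{f(\bar x)\} \cup \phi(\bar x), w(\bar x)) \in F$ for a suitable witness $w(\bar x) \in \omega^\omega$; since $\{\Phi(\bar x)\} \cup \{\psi_j(\Phi(\bar x)) : j<N\} \in E'$ for $\bar x$ strongly mCg, hence for all $\bar x$ by continuity of the $\psi_j$ and $f$ plus absoluteness (the projection $E$ of the closed $F$ is absolutely defined, and one selects a name for the $\omega^\omega$-witness and reads it off continuously, exactly as in Lemma~\ref{lem:goodmaster2}(iii)), we obtain continuous $w \colon [\bar q]\restriction C \to \omega^\omega$ with $(\phi(\bar x) \cup \{f(\bar x)\}, w(\bar x)) \in F$ for all $\bar x \in [\bar q]\restriction C$; this is conclusion (ii). The main obstacle is the last point: extracting the \emph{continuous} function $w$ witnessing membership in the closed set $F$ uniformly over all of $[\bar q]\restriction C$, not merely over the dense set of strongly mCg tuples --- this is handled by picking in $M_1$ a name for an element of $\omega^\omega$ coding the $F$-witness along the generic, applying the continuous-reading property of the good master condition (Lemma~\ref{lem:goodmaster2}(vi)) to get a continuous $w$ on $[\bar q]$ depending only on the $C$-coordinates, and then using closedness of $F$ together with density of the strongly mCg tuples to propagate $(\phi(\bar x)\cup\{f(\bar x)\}, w(\bar x)) \in F$ from a dense set to everywhere. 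A secondary subtlety, already flagged above, is ensuring $\phi$ genuinely maps into $[Y]^{<N}$, resolved by the preliminary enlargement of $Y$.
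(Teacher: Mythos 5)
Your overall strategy matches the paper's: transport $E$ via $\Phi$ and $f$ to an analytic hypergraph $\tilde E$ on $(2^\omega)^\alpha$, apply Main Lemma~\ref{lem:mainlemmainf}, take $M_1$ elementary containing all the relevant data, and feed $M_1$ and $\bar s$ into Lemma~\ref{lem:mCgforiteration}. But there is a real error in the case~(2) bookkeeping that would make $Y$ fail to be $E$-independent.

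The ``enlargement'' of $Y$ is wrong, and moreover the worry that prompted it is unfounded. You want $\phi(\bar x)\subseteq Y$ and ask whether $\psi_j(\Phi(\bar x))$ lands back in $\Phi''([\bar q]\restriction C)$. But that is irrelevant: $\Phi$ is a homeomorphism from $[\bar p]\restriction C$ onto all of $(2^\omega)^\alpha$, so $f\circ\Phi^{-1}$ is defined on all of $(2^\omega)^\alpha$; $\phi(\bar x)=\{f(\Phi^{-1}(\psi_j(\Phi(\bar x)))):j<N\}$ is well-defined without any constraint on where $\psi_j(\Phi(\bar x))$ lies. The correct definition (and the paper's) in case~(2) is simply $Y:=\bigcup_{\bar x\in[\bar q]\restriction C}\phi(\bar x)$; then $\phi$ trivially maps into $[Y]^{<N}$, and $Y$ is $E$-independent by the first half of the Main Lemma's case~(2) conclusion (every tuple from $[\bar q]$ is strongly mCg by Lemma~\ref{lem:mCgforiteration}, so $\{\psi_j(\Phi(\bar x_i\restriction C)):j<N,i<n\}$ is $\tilde E$-independent, hence $\{f(\Phi^{-1}(\psi_j(\Phi(\bar x_i\restriction C)))):j<N,i<n\}$ is $E$-independent). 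Your proposed $Y$ additionally contains $f''([\bar q]\restriction C)$, and this destroys $E$-independence: for every $\bar x\in[\bar q]\restriction C$ the Main Lemma's second clause gives $\{\Phi(\bar x)\}\cup\{\psi_j(\Phi(\bar x)):j<N\}\in\tilde E$, i.e.\ $\{f(\bar x)\}\cup\phi(\bar x)\in E$, and both $f(\bar x)$ and $\phi(\bar x)$ lie in your enlarged $Y$. So your $Y$ explicitly contains an $E$-edge. The requirement in the proposition is that $Y$ itself be $E$-independent and only $\{\dot y\}\cup Y$ not be; including $f''([\bar q]\restriction C)$ conflates the two.

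Two smaller points. First, the density/absoluteness argument in case~(1) is unnecessary: Lemma~\ref{lem:mCgforiteration} gives that \emph{every} tuple from $[\bar q]$ has its $\Phi$-image strongly mCg over $M_1$, not just a dense set, so $E$-independence of $Y=f''([\bar q]\restriction C)$ is immediate. Second, your construction of $w$ via continuous reading of a $\mathbb{P}_\lambda$-name over $M_1$ (Lemma~\ref{lem:goodmaster2}(vi)) has a mismatch: $\bar q$ is only guaranteed to be a good master condition over $M_0$, not $M_1$, yet the $\psi_j$ live in $M_1\setminus M_0$, so the name you need is not in $M_0$. The paper sidesteps this by instead working with Cohen forcing over $M_1$: by elementarity of $M_1$ one finds there a continuous partial function $\tilde w$ (defined on a dense $G_\delta$ of $(2^\omega)^\alpha$, containing all Cohen generics over $M_1$) such that the Cohen condition $\bar s$ forces $(\{f(\Phi^{-1}(\bar z))\}\cup\{f(\Phi^{-1}(\psi_j(\bar z))):j<N\},\tilde w(\bar z))\in F$; since Lemma~\ref{lem:mCgforiteration} makes $\Phi(\bar x)$ Cohen generic over $M_1$ for every $\bar x\in[\bar q]\restriction C$, one may set $w(\bar x)=\tilde w(\Phi(\bar x))$. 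This avoids any appeal to a master condition over $M_1$.
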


\begin{proof}
 On $(2^\omega)^\alpha$ let us define the analytic hypergraph $\tilde E$, where $$\{ \bar y_0, \dots, \bar y_{n-1} \} \in \tilde E \leftrightarrow \{ f(\Phi^{-1}(\bar y_0), \dots,f(\Phi^{-1}(\bar y_{n-1})) \} \in E.$$
	By Main Lemma~\ref{lem:mainlemmainf}, there is a countable model $M$ and $\bar s \in \bigotimes_{i<\alpha} 2^{<\omega}$ so that either 
	\begin{enumerate}
		\item for any $\bar y_0,\dots, \bar y_{n-1} \in (2^\omega)^\alpha \cap [\bar s]$ that are strongly $\langle 2^\omega : i < \alpha \rangle$-mCg over $M$, $\{\bar y_0,\dots, \bar y_{n-1}\} \text{ is } E\text{-independent}$,
	\end{enumerate}
	
	or for some $N\in \omega$,
	\begin{enumerate}
		\setcounter{enumi}{1}
		\item there are $\phi_0, \dots, \phi_{N-1} \colon (2^\omega)^\alpha \to (2^\omega)^\alpha$ continuous so that for any $\bar y_0,\dots, \bar y_{n-1} \in (2^\omega)^\alpha \cap [\bar s]$ that are strongly mCg over $M$,  $\{\phi_j(\bar y_i) : j < N, i<n\}$ is $E$-independent but $\{ \bar y_0\} \cup \{ \phi_j(\bar y_0) : j < N \} \in E.$
	\end{enumerate} 
	
Let $M_1$ be a countable elementary model with $M_0, M, \bar p, \mathbb{P}_\lambda \in M_{1}$ and apply Lemma~\ref{lem:mCgforiteration} to get the condition $\bar q \leq \bar p$. In case (1), let $Y := f''([\bar q] \restriction C)$. Then (i) is satisfied. To see that $Y$ is $E$-independent let $\bar x_0, \dots, \bar x_{n-1} \in [\bar q]$ be arbitrary and suppose that $\{{f(\bar x_0 \restriction C)}, \dots,f(\bar x_{n-1} \restriction C)  \} \in E$. By definition of $\tilde E$ this implies that $\{ {\Phi(\bar x_0 \restriction C)}, \dots, \Phi(\bar x_{n-1} \restriction C) \} \in \tilde E$ but this is a contradiction to (1) and the conclusion of Lemma~\ref{lem:mCgforiteration}. In case (2), by elementarity, the $\phi_j$ are in $M_1$ and there is a continuous function $\tilde w \in M_1$, with domain some dense $G_\delta$ subset of $(2^\omega)^\alpha$, so that $\bar s \Vdash (\{f(\bar z), \phi_{j}(\bar z ) : j < N \}, \tilde w(\bar z)) \in F$, where $\bar z$ is a name for the Cohen generic. Let $\phi(\bar x) = \{ f(\Phi^{-1}(\phi_j(\Phi(\bar x)))) : j < N \}$, $w(\bar x) = \tilde w(\Phi(\bar x))$ for $\bar x \in [\bar q] \restriction C$ and $Y := \bigcup_{\bar x \in [\bar q] \restriction C} \phi(\bar x)$. Since $\Phi(\bar x)$ is generic over $M_1$, we indeed have that $(\phi(\bar x), w(\bar x)) \in F$ for every $\bar x \in [\bar q] \restriction C$. Seeing that $Y$ is $E$-independent is as before. 
\end{proof}

\section{Main results and applications}

\begin{thm}\label{thm:mainmaintheorem}
	(V=L) Let $\mathbb{P}$ be a countable support iteration of Sacks or splitting forcing of arbitrary length. Let $X$ be a Polish space and $E \subseteq [X]^{<\omega} $ be an analytic hypergraph. Then there is a $\mathbf{\Delta}^1_2$ maximal $E$-independent set in $V^\mathbb{P}$. If $X = 2^\omega$ or $X = \omega^\omega$, $r \in 2^\omega$ and $E$ is $\Sigma^1_1(r)$, then we can find a $\Delta^1_2(r)$ such set. 
\end{thm}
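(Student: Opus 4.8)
The plan is to work in $L$, where we have the canonical $\Delta^1_2$ wellorder $<_L$ of the reals, and to build the maximal $E$-independent set \emph{together} with a $\mathbf{\Sigma}^1_2$ (resp.\ $\Sigma^1_2(r)$) definition for it. The key reduction is that once we have produced a genuine maximal $E$-independent set $\mathcal{A}$ with a $\Sigma^1_2(r)$ definition, its complement in $X$ is automatically $\Sigma^1_2(r)$: by maximality, $y\notin\mathcal{A}$ iff $\exists F\in[\mathcal{A}]^{<\omega}\,(F\cup\{y\}\in E)$, which is $\Sigma^1_2(r)$ since $E$ is $\Sigma^1_1(r)$ and "$F\subseteq\mathcal{A}$" is $\Sigma^1_2(r)$. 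Hence $\mathcal{A}$ is $\Delta^1_2(r)$, and $\mathbf{\Delta}^1_2$ for general Polish $X$ after replacing $r$ by a real coding $X$ and $E$. So the real work is: (a) construct an $E$-independent, maximal $\mathcal{A}$ in $L^{\mathbb{P}}$, and (b) equip it with a $\Sigma^1_2(r)$ definition. Proposition~\ref{prop:propheart} is the engine for both: applied to a name $\dot y$ for a real below a good master condition, it produces, below, a good master condition deciding whether $\dot y$ lands in a compact $E$-independent set $Y$ or receives an $E$-edge into $Y$.

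For (a), I would run a recursion along the iteration (using the fusion apparatus of Section~2) with a standard bookkeeping — available because the iterands are proper and $\omega^\omega$-bounding, so every name for a real is, below a dense set, given by a good master condition over a countable model via continuous reading (Lemmas~\ref{lem:nicemaster}, \ref{lem:intermediategoodanalytic}). At stage $\xi$ we have a good master condition $\bar p_\xi$ over a countable model $M_\xi$ containing $r$, a code for $E$, $<_L$, and all data used at earlier stages, and a continuous $f_\xi$ with $\bar p_\xi\Vdash\dot y_\xi=f_\xi(\bar x_G\restriction C_\xi)$; applying Proposition~\ref{prop:propheart} yields $\bar q_\xi\le\bar p_\xi$, a compact $E$-independent $Y_\xi$, and continuous $\phi_\xi,w_\xi$ realizing the dichotomy. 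We commit (a name for) $Y_\xi$ into $\dot{\mathcal{A}}$ and fuse the $\bar q_\xi$. The points to verify are that the successive $Y_\xi$ stay \emph{jointly} $E$-independent and that no real committed at one stage is excluded at another — both of which follow from always working below a condition already forcing "everything committed so far is $E$-independent", so that the fresh reals of $Y_\xi$ arise mutually Cohen generically over the relevant model in the sense of Section~3 and create no new edge — and that every real of the final extension equals some $f_\xi(\bar x_G\restriction C_\xi)$, so that maximality follows: if $\dot y_\xi\notin Y_\xi$ then clause~(ii) of Proposition~\ref{prop:propheart} applies and $\{\dot y_\xi\}\cup\mathcal{A}$ contains an $E$-edge.

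For (b), I would define $y\in\mathcal{A}$ to hold iff there is a countable transitive model $M$ of a large enough fragment of $\mathrm{ZFC}^-+V=L$ with $r\in M$ such that, letting $\mathbb{P}^M$ and $<_L^M$ be $M$'s versions of the iteration and wellorder, $M$ correctly runs the recursion above up to the stage where a name for $y$ is treated, produces there a good master condition $\bar q\in M$ placing $y$ into the associated compact $E$-independent set, and $\bar q$ is \emph{realized}, i.e.\ the actual generic reals lie in $[\bar q]$ on the finitely many coordinates of $C\cap M$ (a $\mathbf{\Pi}^1_1$ condition on $y$ and the continuous readings, since by Lemma~\ref{lem:intermediategoodanalytic} $[\bar q]$ is canonically the closure of its generic branches). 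The outer existential over countable $M$ plus the internal, Shoenfield-absolute verification of the run makes $\mathcal{A}$ a $\Sigma^1_2(r)$ set, and one checks it coincides with the constructed family: every committed real is witnessed by a countable $M$ obtained by Löwenheim--Skolem from the models actually used, and conversely a spurious realized $\bar q\in M$ cannot place $y$ into the family unless the true construction does, because $M$'s run is either $<_L$-guided (absolute to models of $V=L$) or an instance of Main Lemma~\ref{lem:mainlemmainf}/Proposition~\ref{prop:propheart} (hence $\Sigma^1_2$-absolute between $M$ and $V$), and "realized" pins down the correct generic. Combining with the reduction in the first paragraph gives $\Delta^1_2(r)$, as desired.

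The step I expect to be the main obstacle is precisely the matching of the recursive construction with the uniform witnessing-model definition in (b): making the "$\exists M$" clause capture exactly the reals committed by the recursion — not too few, since every element of every $Y_\xi$ must be caught by some countable $M$, and not too many, since no realized $\bar q\in M$ outside the true run may sneak a real in — which forces one to pin down the exact amount of absoluteness needed between $M$, the extension, and $V$, and to lean on the canonicity of $[\bar q]$ and the uniqueness statements of Section~2. The remaining ingredients — arranging the bookkeeping and fusion, and checking joint $E$-independence and maximality — are by now routine applications of the machinery of Sections~2--4.
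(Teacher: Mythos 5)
The high-level shape of your plan — a length-$\omega_1$ recursion in $L$ driven by the $<_L$-wellorder and Proposition~\ref{prop:propheart}, with the reduction from $\Delta^1_2(r)$ to $\Sigma^1_2(r)$ via maximality — matches the paper, and your first paragraph is correct. But two of the structural choices in (a) and (b) would not go through as written. The family the paper builds is not a $\mathbb{P}$-name $\dot{\mathcal A}$, and there is no fusion of the $\bar q_\xi$: fusion only makes sense along countable sequences, while the recursion has $\omega_1$ stages. Instead, each $\bar q_\xi$ produced by Proposition~\ref{prop:propheart} is used \emph{only} to extract ground-model data --- a real $x_\xi$ coding a compact analytic set $A_{x_\xi}$, a closed $T_\xi$ with $[\bar q_\xi]\restriction C$ corresponding (via the collapse $\Phi$) to $Z_{T_\xi}$, and codes $\bar\eta_\xi,\theta_\xi$ for the continuous witnesses --- and is then discarded. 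The constructed object is the \emph{ground-model} $\Sigma_1(r)$-over-$H(\omega_1)$ set $Y=\bigcup_{\xi<\omega_1}A_{x_\xi}$, reinterpreted in $V^{\mathbb P}$. This also shows why your ``realized'' clause in (b) is both ill-posed and unnecessary: ``the actual generic reals lie in $[\bar q]$'' refers to the generic sequence, which is not a real and cannot appear in a $\Sigma^1_2(r)$ formula; and it is not needed, because Proposition~\ref{prop:propheart}(i) already gives $f''([\bar q]\restriction C)\subseteq Y_\xi$, so the forcing statement $\bar r\Vdash\dot y\in A_{x_\xi}$ follows once the continuous reading of $\dot y$ appears as a code $(\alpha_\xi,S_\xi,\zeta_\xi)$ and the condition is refined to land in $Z_{T_\xi}$ via Lemma~\ref{lem:gettingacondition}. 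Membership in the family is simply $\exists\xi\,(y\in A_{x_\xi})$.

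Two further gaps. Your ``standard bookkeeping'' cannot range over names --- for an iteration of length $\geq\omega_2$ the names do not live in $H(\omega_1)$, so a $\Sigma_1$-over-$H(\omega_1)$ recursion cannot enumerate them. The paper's crucial move is to enumerate $\Psi_1$, the $\Delta_1$-definable class of triples $(\alpha,S,\zeta)$ with $\alpha<\omega_1$, $S$ coding a closed $Z\subseteq(2^\omega)^\alpha$ satisfying the coordinatewise splitting/Sacks condition, and $\zeta$ coding a continuous function on $Z$; Lemma~\ref{lem:intermediategoodanalytic} and the canonical homeomorphism $\Phi\colon(2^\omega)^C\to(2^\omega)^{\otp(C)}$ guarantee that every name for a real below a good master condition over the \emph{whole} iteration is captured by such a triple. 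Finally, joint $E$-independence of the successive pieces is not obtained by the heuristic that mutual Cohen genericity ``creates no new edge'' --- Main Lemma~\ref{lem:mainlemmainf} controls independence only for the hypergraph to which it is applied. The paper applies Proposition~\ref{prop:propheart} to the \emph{relativized} hypergraph $E_{y_\xi}$ (finite sets with an $E$-edge into $A_{y_\xi}=\bigcup_{\xi'<\xi}A_{x_{\xi'}}$), so that case (i) yields a compact $E_{y_\xi}$-independent $A_{x_\xi}$, whence $A_{y_\xi}\cup A_{x_\xi}$ is $E$-independent; this is exactly what condition (1) of the $<_L$-recursion records. Without passing to $E_{y_\xi}$, the pieces produced by the dichotomy need not cohere.
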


\begin{proof}
		We will only concentrate on the case $X = 2^\omega$ since the rest follows easily from the fact that there is a Borel isomorphism from $2^\omega$ to any uncountable Polish space $X$, and if $X = \omega^\omega$ that isomorphism is (lightface) $\Delta^1_1$. If $X$ is countable, then the statement is trivial. Also, let us only consider splitting forcing. The proof for Sacks forcing is the same. 
		
First let us us mention some well-known facts and introduce some notation. Recall that a set $Y \subseteq 2^\omega$ is $\Sigma^1_2(x)$-definable if and only if it is $\Sigma_1(x)$-definable over $H(\omega_1)$ (see e.g. \cite[Lemma 25.25]{Jech2013}). Also recall that there is a $\Sigma^1_1$ set $A \subseteq 2^{\omega} \times 2^{\omega}$ that is universal for analytic sets, i.e. for every analytic $B \subseteq 2^{\omega}$, there is some $x\in 2^\omega$ so that $B = A_x$, where $A_x = \{ y \in 2^\omega: (x,y) \in A \}$. In the same way, there is a universal $\Pi^0_1$ set $F \subseteq 2^\omega \times [2^\omega]^{<\omega} \times \omega^\omega$ (\cite[22.3, 26.1]{Kechris1995}). For any $x \in 2^\omega$, let $E_x$ be the analytic hypergraph on $2^\omega$ consisting of $a \in [2^{\omega}]^{<\omega} \setminus \{\emptyset\}$ so that there is $b \in [A_x]^{<\omega}$ with $a \cup b \in E$. Then there is $y \in 2^\omega$ so that $E_x$ is the projection of $F_y$. Moreover, it is standard to note, from the way $A$ and $F$ are defined, that for every $x$, $y= e(x,r)$ for some fixed recursive function $e$. Whenever $\alpha < \omega_1$ and $Z \subseteq (2^\omega)^\alpha$ is closed, it can be coded naturally by the set $S \subseteq \bigotimes_{i < \alpha} 2^{<\omega}$, where $$S = \{ (\bar x \restriction a) \restriction n : \bar x \in Z, a \in [\alpha]^{<\omega}, n \in \omega\}$$ and we write $Z = Z_S$. Similarly, any continuous function $f \colon Z \to \omega^\omega$ can be coded by a function $\zeta \colon S \to \omega^{<\omega}$, where $$f(\bar x) = \bigcup_{\bar s \in S, \bar x \in [\bar s]} \zeta(\bar s)$$ and we write $f = f_\zeta$.  For any $\beta < \alpha$ and $\bar x \in Z \restriction \beta$, let us write $T_{\bar x, Z} = \{ s \in 2^{<\omega} : \exists \bar z \in Z (\bar z \restriction \delta = \bar x \wedge s\subseteq z(\delta))\}$. The set $\Psi_0$ of pairs $(\alpha,S)$, where $S$ codes a closed set $Z \subseteq (2^\omega)^\alpha$ so that for every $\beta < \alpha$ and $\bar x \in Z \restriction \beta$, $T_{\bar x,Z} \in \mathbb{SP}$ is then $\Delta_1$ over $H(\omega_1)$. This follows since the set of such $S$ is $\Pi^1_1$, seen as a subset of $\mathcal{P}(\bigotimes_{i < \alpha} 2^{<\omega})$, uniformly on $\alpha$. Similarly, the set $\Psi_1$ of triples $(\alpha, S, \zeta)$, where $(\alpha, S) \in \Psi_0$ and $\zeta$ codes a continuous function $f \colon Z_S \to \omega^\omega$, is $\Delta_1$. 
		
Now let $\langle \alpha_\xi, S_\xi, \zeta_\xi : \xi < \omega_1 \rangle$ be a $\Delta_1$-definable enumeration of all triples $(\alpha, S, \zeta)\in \Psi_1$. This is possible since we assume $V=L$ (cf. \cite[Theorem 25.26]{Jech2013}). Let us recursively construct a sequence $\langle x_\xi, y_\xi, T_\xi, \bar \eta_\xi, \theta_\xi : \xi < \omega_1 \rangle$, where for each $\xi < \omega_1$,

\begin{enumerate}
	\item $\bigcup_{\xi' < \xi } A_{x_{\xi'}}  = A_{y_\xi}$ and $A_{y_\xi} \cup A_{x_\xi}$ is $E$-independent, 
	\item $\bar \eta_\xi = \langle \eta_{\xi,j} : j <N \rangle$ for some $N \in \omega$,
	\item $T_\xi \subseteq S_\xi$, $(\alpha_\xi, T_\xi, \eta_{\xi,j}) \in \Psi_1$ for every $j < N$ and $(\alpha_\xi, T_\xi, \theta_{\xi}) \in \Psi_1$,
	\item either $\forall \bar x \in Z_{T_\xi} (f_{\zeta_\xi}(\bar x) \in A_{x_\xi})$ or $\forall \bar x \in Z_{T_\xi} \big(\forall n < N (f_{\eta_{\xi,n}}(\bar x) \in A_{x_\xi}) \wedge (\{ f_{\eta_{\xi,n}}(\bar x), f_{\zeta_\xi}(\bar x): n < N\},f_{\theta_\xi}(\bar x) ) \in F_{e(y_\xi,r)}\big)$,
\end{enumerate}

and $(x_\xi, y_\xi, T_\xi, \bar \eta_\xi, \theta_\xi)$ is $<_L$-least such that (1)-(4), where $<_L$ is the $\Delta_1$-good global well-order of $L$. That $<_L$ is $\Delta_1$-good means that for every $z \in L$, the set $\{ z' : z' <_L z \}$ is $\Delta_1(z)$ uniformly on the parameter $z$. In particular, quantifying over this set does not increase the complexity of a $\Sigma_n$-formula. Note that (1)-(4) are all $\Delta_1(r)$ in the given variables. E.g. the second part of (1) is uniformly $\Pi^1_1(r)$ in the variables $x_\xi, y_\xi$, similarly for (4).

\begin{claim}
For every $\xi < \omega_1$, $(x_\xi, y_\xi, T_\xi, \bar \eta_\xi, \theta_\xi)$ exists. 
\end{claim}
\begin{proof}
Assume we succeeded in constructing the sequence up to $\xi$. Then there is $y_\xi$ so that $\bigcup_{\xi' < \xi } A_{x_{\xi'}}  = A_{y_\xi}$. By Lemma~\ref{lem:gettingacondition}, there is a good master condition $\bar r \in \mathbb{P}_{\alpha_\xi}$ so that $[\bar r] \subseteq Z_{S_\xi}$, where $\mathbb{P}_{\alpha_\xi}$ is the $\alpha_\xi$-long csi of splitting forcing. Then $f_{\zeta_\xi}$ corresponds to a $\mathbb{P}_{\alpha_\xi}$-name $\dot y$ so that $\bar r \Vdash \dot y = f_{\zeta_\xi}(\bar x_G)$. Let $M_0$ be a countable elementary model with $\dot y,\mathbb{P}_{\alpha_\xi}, \bar r \in M_0$ and $\bar p \leq \bar r$ a good master condition over $M_0$. Let $C := \alpha_\xi$ and consider the results of the last section. By Proposition~\ref{prop:propheart} applied to $E_{y_\xi}$ and $X = 2^\omega$, there is $\bar q \leq \bar p$, a compact $E_{y_\xi}$ independent set $Y_\xi$, $N \in \omega$ and continuous functions $\phi \colon [\bar q] \to [Y_\xi]^{<N}$, $w \colon [\bar q] \to \omega^\omega$ such that 
	\begin{enumerate}[(i)]
		\item either $f_{\zeta_\xi}''([\bar q]) \subseteq Y_\xi$,
		\item or $\forall \bar x \in [\bar q] ( (\phi(\bar x) \cup \{ f_{\zeta_\xi}(\bar x) \}, w(\bar x)) \in F)$.
	\end{enumerate}

Let $x_\xi$, $T_\xi$, $\bar \eta_\xi = \langle \eta_{\xi,j} : j < N\rangle$ and $\theta_\xi$ be such that $A_{x_\xi} = Y_\xi$, $Z_{T_\xi} = [\bar q]$, $\{ f_{\eta_{\xi,j}}(\bar x) : j < N \} = \phi(\bar x)$ for every $\bar x \in [\bar q]$, and $f_{\theta_\xi} = w$. Then $(x_\xi, y_\xi, T_\xi, \bar \eta_\xi, \theta_\xi)$ is as required.
\end{proof}

Let $Y = \bigcup_{\xi< \omega_1} A_{x_\xi}$. Then $Y$ is $\Sigma_1(r)$-definable over $H(\omega_1)$, namely $x \in Y$ iff  there is a sequence $\langle x_\xi, y_\xi, T_\xi, \bar \eta_\xi, \theta_\xi : \xi \leq \alpha < \omega_1 \rangle$  so that for every $\xi \leq \alpha$, (1)-(4), for every $(x,y, T, \bar \eta, \theta) <_L (x_\xi,y_\xi, T_\xi, \bar \eta_\xi, \theta_\xi)$, not (1)-(4), and $x \in A_{x_\alpha}$.

\begin{claim}
In $V^{\mathbb{P}}$, the reinterpretation of $Y$ is maximal $E$-independent.  
\end{claim}

\begin{proof}
Let $\bar p \in \mathbb{P}$ and $\dot y \in M_0$ be a $\mathbb{P}$-name for an element of $2^\omega$, $M_0 \ni \mathbb{P}, \bar p$ a countable elementary model. Then let $\bar q \leq \bar p$ be a good master condition over $M_0$ and $C$ countable, $f \colon [\bar q] \restriction C \to 2^\omega$ continuous according to Lemma~\ref{lem:intermediategoodanalytic}. Now $(2^\omega)^{C}$ is canonically homeomorphic to $(2^\omega)^\alpha$, $\alpha = \otp(C)$,  via the map $\Phi \colon (2^\omega)^{C} \to (2^\omega)^\alpha$. Then we find some $\xi < \omega_1$ so that $\alpha_\xi = \alpha$, $\Phi''([\bar q] \restriction C) = Z_{S_\xi}$ and $f_{\zeta_\xi} \circ \Phi = f$. On the other hand, $\Phi^{-1} (Z_{T_\xi})$ is a subset of $[\bar q] \restriction C$ conforming to the assumptions of Lemma~\ref{lem:gettingacondition}. Thus we get $\bar r \leq \bar q$ so that $[\bar r] \restriction C \subseteq \Phi^{-1} (Z_{T_\xi})$. According to (4), either $\bar r \Vdash \dot y \in A_{x_{\xi}}$ or $\bar r \Vdash \{ \dot y\} \cup A_{x_\xi} \cup A_{y_\xi}$ is not $E$-independent. Thus we can not have that $\bar p \Vdash \dot y \notin Y \wedge \{\dot y\} \cup Y$ is $E$-independent. This finishes the proof of the claim, as $\bar p$ and $\dot y$ were arbitrary.
\end{proof}

To see that $Y$ is $\Delta^1_2(r)$ in $V^{\mathbb{P}}$ it suffices to observe that any $\Sigma^1_2(r)$ set that is maximal $E$-independent is already $\Pi^1_2(r)$. \end{proof}

A priori, Theorem~\ref{thm:mainmaintheorem} only works for hypergraphs that are defined in the ground model. But note that there is a universal analytic hypergraph on $2^\omega \times 2^\omega$, whereby we can follow the more general statement of Theorem~\ref{thm:maintheorem}.

\begin{thm}
After forcing with the $\omega_2$-length countable support iteration of $\mathbb{SP}$ over $L$, there is a $\Delta^1_2$ ultrafilter, a $\Pi^1_1$ maximal independent family and a $\Delta^1_2$ Hamel basis, and in particular, $\mathfrak{i}_{B} = \mathfrak{i}_{cl} = \mathfrak{u}_B = \mathfrak{u}_{cl} = \omega_1 < \mathfrak{r} = \mathfrak{i} = \mathfrak{u} = \omega_2$.
\end{thm}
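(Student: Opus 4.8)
The plan is to deduce the three concrete consistency results from Theorem~\ref{thm:mainmaintheorem} (applied in $L$ to the $\omega_2$-length csi of $\mathbb{SP}$) together with standard properties of splitting forcing established in Section~4, namely properness, $\omega^\omega$-boundedness, being weighted (Lemma~\ref{lem:spweighted}), and the fact that the generic real is splitting over the ground model. First I would fix notation: let $\mathbb{P}$ be the $\omega_2$-length countable support iteration of $\mathbb{SP}$ over $L$, and work in $L^{\mathbb{P}}$. Since $\mathbb{P}$ is proper and $\omega^\omega$-bounding with the $\aleph_2$-cc (by a standard $\Delta$-system/properness argument, using CH in $L$), cardinals are preserved and $\mathfrak{c}=\omega_2$ in the extension.

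For the definable witnesses, I would apply Theorem~\ref{thm:mainmaintheorem} three times, once to each of the analytic hypergraphs $E_u$, $E_i$, $E_h$ described in the introduction. For $E_u$ on $\mathcal{P}(\omega)$ this yields a $\mathbf{\Delta}^1_2$ maximal $E_u$-independent set, i.e.\ a $\Delta^1_2$ ultrafilter (taking the lightface version since $E_u$ is lightface $\Sigma^1_1$); similarly $E_h$ gives a $\Delta^1_2$ Hamel basis. For the maximal independent family one wants the sharper $\Pi^1_1$ conclusion: here I would note that the coding machinery in the proof of Theorem~\ref{thm:mainmaintheorem} already produces a $\Sigma_1(r)$-over-$H(\omega_1)$ definition, i.e.\ $\Sigma^1_2$, and that for maximal independent families one can instead run the Erdős--Kunen--Mauldin/Miller/Vidnyánszky-style $\Pi^1_1$ coding (as cited in the introduction, \cite{Miller1989}, \cite{Vidnyanszky2014}) in parallel with the preservation argument of Section~4 — the point being that the good master condition technology of Lemma~\ref{lem:nicemaster} and Proposition~\ref{prop:propheart} is exactly what lets one decide membership densely while preserving the $\Pi^1_1$ code. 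Alternatively, one invokes the remark following Theorem~\ref{thm:mainmaintheorem} about there being a universal analytic hypergraph; I would phrase the argument so that the $\Pi^1_1$ upgrade for $E_i$ follows from the Section-4 preservation theorems applied to the $\Pi^1_1$ construction rather than re-deriving everything.

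For the cardinal invariant equalities, the lower bounds $\mathfrak{r},\mathfrak{i},\mathfrak{u}\geq\omega_2$ all follow from the generic real at each stage being splitting over the previous model: iterating $\mathbb{SP}$ with countable support and length $\omega_2$ adds $\omega_2$ mutually splitting reals, so no family of size $\omega_1$ can be reaping (hence $\mathfrak{r}\geq\omega_2$), and by the classical inequalities $\mathfrak{r}\leq\mathfrak{u}$ and $\mathfrak{r}\leq\mathfrak{i}$ we get $\mathfrak{u},\mathfrak{i}\geq\omega_2$; here I should be careful to verify that $\mathbb{SP}$ preserves the relevant reaping/independence-destroying properties along the iteration, which is where the $\omega^\omega$-bounding and the specific combinatorics of splitting trees enter. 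The upper bounds $\mathfrak{r},\mathfrak{i},\mathfrak{u}\leq\omega_2=\mathfrak{c}$ are trivial. For $\mathfrak{i}_B,\mathfrak{u}_B=\omega_1$: since there is a $\Delta^1_2$ (hence $\Sigma^1_2$) ultrafilter and a $\Delta^1_2$ maximal independent family, and every $\Sigma^1_2$ set is a union of $\omega_1$ Borel sets, we get $\mathfrak{u}_B,\mathfrak{i}_B\leq\omega_1$, and they are $\geq\omega_1$ since a single Borel set is never a maximal independent family or ultrafilter (by Miller's results cited in the introduction). Finally $\mathfrak{i}_{cl}=\mathfrak{i}_B=\omega_1$ follows because in this model the $\Pi^1_1$ (hence coanalytic, hence $\sigma$-$\boldsymbol{\Sigma}^1_1$... actually we use) maximal independent family can be taken $\sigma$-compact: invoking Lemma~\ref{lem:sigmacompind}, every closed subset of $[\omega]^\omega$ with the s.f.i.p.\ is $\sigma$-compact, so a $\Pi^1_1$ maximal independent family — being in particular a union of $\omega_1$ Borel, equivalently closed-modulo-the-$\sigma$-compact-reduction, pieces — yields $\mathfrak{i}_{cl}\leq\omega_1$; combined with $\mathfrak{i}_{cl}\geq\mathfrak{i}_B$ trivially and $\mathfrak{i}_B\geq\omega_1$ we conclude equality.

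The main obstacle, I expect, is getting the $\Pi^1_1$ (rather than merely $\Delta^1_2$) bound for the maximal independent family while simultaneously controlling the cardinal invariants: this requires interleaving the $\Pi^1_1$-coding technique of Miller/Vidnyánszky with the forcing-preservation apparatus of Sections~2--4, verifying that at each stage of the iteration the relevant name can be decided by a good master condition without spoiling the code, and checking that the code survives the limit stages of the countable support iteration. Everything else — the cardinal arithmetic, the $\mathfrak{i}_B/\mathfrak{u}_B$ computations, and the $\mathfrak{i}_{cl}$ identification via Lemma~\ref{lem:sigmacompind} — is routine once the definable objects are in hand. I would also remark that, strictly, the theorem as stated in the excerpt asserts $\Delta^1_2$ for all three, and the $\Pi^1_1$ sharpening for the independent family is the genuinely extra input flagged in the introduction as solving the Brendle--Fischer--Khomskii question.
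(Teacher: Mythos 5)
Your high-level plan — apply Theorem~\ref{thm:mainmaintheorem} to $E_u$, $E_i$, $E_h$ and then compute the cardinal invariants — is the right one, and the $\mathfrak{r}\geq\omega_2$ argument via the cofinally-added splitting reals together with $\mathfrak{r}\leq\mathfrak{u},\mathfrak{i}$ is correct and exactly the intended (if unstated) argument. But there are two genuine gaps in the places you yourself flag as delicate.

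\emph{The $\Pi^1_1$ upgrade.} You treat this as the ``main obstacle'' and propose to interleave an Erd\H{o}s--Kunen--Mauldin/Miller-style $\Pi^1_1$ coding with the good-master-condition machinery of Sections~2--4; you have not carried this out, and it would be substantially harder than the rest of the theorem. The paper does something much cheaper: as already recorded in the introduction (``Here, $\Pi^1_1$ can be changed to $\Delta^1_2$, as shown in \cite{Brendle2019}''), Brendle--Fischer--Khomskii prove that a $\Delta^1_2$ maximal independent family can be converted to a $\Pi^1_1$ one. So once Theorem~\ref{thm:mainmaintheorem} hands you a $\Delta^1_2$ m.i.f., the $\Pi^1_1$ conclusion is a black-box import from \cite{Brendle2019}; no new coding argument is run inside the forcing. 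Your ``alternative'' via a universal analytic hypergraph also does not produce $\Pi^1_1$, only $\Delta^1_2$.

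\emph{The $\mathfrak{i}_{cl}=\omega_1$ computation.} Your argument misuses Lemma~\ref{lem:sigmacompind}: that lemma says a \emph{closed} subset of $[\omega]^\omega$ with the s.f.i.p.\ is $\sigma$-compact, which is only used in the paper to justify that $\mathfrak{i}_{cl}$ does not depend on whether one reads ``closed'' in $[\omega]^\omega$ or in $\mathcal{P}(\omega)$. It does not let you pass from ``$\Pi^1_1$, hence a union of $\omega_1$ Borel constituents'' to ``a union of $\omega_1$ closed sets,'' since the Borel constituents need not be closed, and the hand-wave ``closed-modulo-the-$\sigma$-compact-reduction'' does not close this gap. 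The correct argument, which is what the paper uses, is the ZFC fact that every analytic set is the union of $\mathfrak{d}$ many compact sets, combined with $\mathfrak{d}=\omega_1$ in the extension (because $\mathbb{SP}$, and hence its countable support iteration, is $\omega^\omega$-bounding). Applying this to the $\omega_1$ analytic pieces of the $\Sigma^1_2$ m.i.f.\ yields $\omega_1\cdot\mathfrak{d}=\omega_1$ compact pieces and hence $\mathfrak{i}_{cl}\leq\omega_1$.
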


\begin{proof}
Apply Theorem~\ref{thm:mainmaintheorem} to $E_u$, $E_i$ and $E_h$ from the introduction. To see that $\mathfrak{i}_{cl} = \mathfrak{u}_{cl} = \omega_1$ note that every analytic set is the union of $\mathfrak{d}$ many compact sets and that $\mathfrak{d} = \omega_1$, since $\mathbb{SP}$ is $\omega^\omega$-bounding.
\end{proof}

\begin{thm}\label{thm:finiteprod}
(V=L) Let $\mathbb{P}$ be either Sacks or splitting forcing and $k \in \omega$. Let $X$ be a Polish space and $E \subseteq [X]^{<\omega}$ be an analytic hypergraph. Then there is a $\mathbf{\Delta}^1_2$ maximal $E$-independent set in $V^{\mathbb{P}^k}$.
\end{thm}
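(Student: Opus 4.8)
The plan is to follow the same strategy as in the proof of Theorem~\ref{thm:mainmaintheorem}, but replacing the iteration machinery of Section~2 and the continuous-reading-of-names apparatus from Lemma~\ref{lem:nicemaster} by the much simpler finite-product fusion. The key is that for $\mathbb{P} \in \{ \mathbb{S}, \mathbb{SP}\}$ and $k \in \omega$, the power $\mathbb{P}^k$ is still nicely behaved: it is weighted (working coordinate-wise with the weights from Lemma~\ref{lem:spweighted} and Lemma~\ref{lem:sweighted}) and has a continuous reading of names (a straightforward $k$-fold product fusion), so a condition $\bar T = (T_0, \dots, T_{k-1}) \in \mathbb{P}^k$ below a given name $\dot y$ for an element of a Polish space $X$ can be taken so that there is a continuous $f \colon \prod_{l<k}[T_l] \to X$ with $\bar T \Vdash \dot y = f(\bar x_G)$. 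After applying the relevant homeomorphisms $\tilde \eta_{T_l} \colon [T_l] \to 2^\omega$ (as in Section~4.3, but now only finitely many coordinates, with no limit stages), the domain of $f$ becomes $(2^\omega)^k$, so we are exactly in the situation of Main Lemma~\ref{thm:mainlemma} rather than Main Lemma~\ref{lem:mainlemmainf}.

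Concretely, I would first establish the analogue of Proposition~\ref{prop:propheart} for $\mathbb{P}^k$: given an analytic hypergraph $E$ on $X$ (a projection of a closed $F \subseteq [X]^{<\omega}\times\omega^\omega$), a condition $\bar p \in \mathbb{P}^k$ and a continuous $f \colon \prod_{l<k}[p_l]\to X$ representing $\dot y$, produce $\bar q \leq \bar p$, a compact $E$-independent $Y \subseteq X$, $N \in \omega$ and continuous $\phi \colon \prod_{l<k}[q_l] \to [Y]^{<N}$, $w$, so that either $f''\prod_l[q_l] \subseteq Y$, or $\forall \bar x ( (\phi(\bar x)\cup\{f(\bar x)\}, w(\bar x)) \in F)$. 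This is proved exactly as Proposition~\ref{prop:propheart}: transport $E$ along $f$ and the homeomorphisms to an analytic hypergraph $\tilde E$ on $(2^\omega)^k$, apply Main Lemma~\ref{thm:mainlemma} to get a model $M$, a sequence $\bar s$ and the dichotomy; then use Proposition~\ref{prop:weightedmain} (applied to the weights $\rho_{T_l}$ on $T_l$, shrunk so that $[\bar s]$ is respected) to find $\bar q \leq \bar p$ with all branches of $\prod_l [q_l]$ mutually Cohen generic over $M$ with respect to $\prod_{l<k} 2^\omega$; in case~(1) of the main lemma $Y = f''\prod_l[q_l]$ works, and in case~(2) one pushes the continuous $\phi_j$ (and a witnessing $w$ extracted by elementarity from a Cohen name for an element of $F$) back through $f$ and the homeomorphisms. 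Here one has to check that the $\tilde\eta_{T_l}$-image of the Proposition~\ref{prop:weightedmain} condition still gives strong mutual Cohen genericity in the product sense; since the $\mathbb{P}^k$ generic branches are mutually Cohen generic coordinate-by-coordinate this follows from the definition of mCg with respect to $\prod_{l<k} 2^\omega$.

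Given this proposition, the construction of the $\mathbf{\Delta}^1_2$ maximal $E$-independent set in $V^{\mathbb{P}^k}$ is verbatim the $V=L$ recursion from the proof of Theorem~\ref{thm:mainmaintheorem}: use the $\Delta_1$-good global well-order of $L$ to recursively build $\langle x_\xi, y_\xi, T_\xi, \bar\eta_\xi, \theta_\xi : \xi < \omega_1\rangle$ satisfying conditions (1)--(4) there, where now $T_\xi$ codes a subset of $(2^\omega)^k$ (so ``$\Psi_0$'' is the set of codes of closed $Z \subseteq (2^\omega)^k$ with all the relevant sections in $\mathbb{P}$), existence at stage $\xi$ being guaranteed by the analogue of Lemma~\ref{lem:gettingacondition} for $\mathbb{P}^k$ (a trivial $k$-fold fusion) together with the proposition above. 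Set $Y = \bigcup_\xi A_{x_\xi}$; it is $\Sigma_1(r)$ over $H(\omega_1)$, hence $\Sigma^1_2(r)$, and a density argument over $\mathbb{P}^k$ using continuous reading of names plus the proposition shows $Y$ reinterprets to a maximal $E$-independent set in $V^{\mathbb{P}^k}$, which is then automatically $\Pi^1_2(r)$ and so $\Delta^1_2(r)$; the parameter can be dropped via a universal analytic hypergraph as after Theorem~\ref{thm:mainmaintheorem}.

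The main obstacle, I expect, is purely bookkeeping: verifying that $\mathbb{P}^k$ really does satisfy weightedness and continuous reading of names in the precise forms needed, and that the homeomorphisms $\tilde\eta_{T_l}$ interact correctly with the weights (the remarks before Section~4.3 handle a single coordinate; one needs the product version) and with strong mutual Cohen genericity with respect to $\prod_{l<k} 2^\omega$. None of this is conceptually hard — it is strictly easier than the countable-support iteration case, since there are no limit stages and no need for Lemma~\ref{lem:nicemaster} or Lemma~\ref{lem:mCgforiteration}, Main Lemma~\ref{thm:mainlemma} replacing Main Lemma~\ref{lem:mainlemmainf} — but it requires care to state the product fusion lemmas cleanly. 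In the write-up I would simply remark that the proof is the same as that of Theorem~\ref{thm:mainmaintheorem} with Section~4.3 replaced by a direct finite-product fusion and Main Lemma~\ref{thm:mainlemma} used in place of Main Lemma~\ref{lem:mainlemmainf}.
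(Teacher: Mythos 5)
Your proposal is correct and is essentially the paper's own argument: the paper's proof reads, in full, ``This is similar to the proof of Theorem~\ref{thm:mainmaintheorem}, using Main Lemma~\ref{thm:mainlemma} and Proposition~\ref{prop:weightedmain} to get an analogue of Proposition~\ref{prop:propheart}.'' One small terminological slip: in the finite-product case Main Lemma~\ref{thm:mainlemma} requires only mCg with respect to $\prod_{l<k}2^\omega$, not \emph{strong} mCg (which is the infinite-product notion of Main Lemma~\ref{lem:mainlemmainf}), so the check you flag at the end of your second paragraph is easier than stated.
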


\begin{proof}
This is similar to the proof of Theorem~\ref{thm:mainmaintheorem}, using Main Lemma~\ref{thm:mainlemma} and Proposition~\ref{prop:weightedmain} to get an analogue of Proposition~\ref{prop:propheart}.
\end{proof}

Lastly, we are going to prove Theorem~\ref{thm:icleqd}. 

\begin{lemma}\label{lem:sigmacompind}
Let $X \subseteq [\omega]^\omega$ be closed so that $\forall x,y \in X ( \vert x \cap y \vert = \omega)$. Then $X$ is $\sigma$-compact. 
\end{lemma}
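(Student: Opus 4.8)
The plan is to show that a closed subset $X \subseteq [\omega]^\omega$ all of whose pairs have infinite intersection cannot ``escape to infinity'', and then invoke the standard fact that a closed subset of $[\omega]^\omega$ is $\sigma$-compact iff it is contained in a countable union of sets of the form $\{ y \in [\omega]^\omega : y \subseteq^* a \}$ for $a \in [\omega]^\omega$, equivalently iff no sequence in $X$ tends to a non-cofinite (or finite) ``limit'' in $\mathcal P(\omega)$. Concretely, recall that $[\omega]^\omega$ with the subspace topology from $\mathcal P(\omega) \cong 2^\omega$ is Polish but not compact, and a closed set $X$ fails to be $\sigma$-compact precisely when the closure of $X$ inside the compact space $\mathcal P(\omega)$ contains a point $z$ with $\omega \setminus z$ infinite together with... actually the cleanest route: $X$ is $\sigma$-compact iff $X$ is \emph{bounded} in the sense that there is no $y \in X$-limit point $z$ in $\mathcal{P}(\omega)$ with $z \in [\omega]^{<\omega}$, i.e., iff $\overline{X}^{\mathcal P(\omega)} \cap [\omega]^{<\omega} = \emptyset$; this is because $[\omega]^\omega$ is homeomorphic to $\omega^\omega$ via enumeration, and closed subsets of $\omega^\omega$ are $\sigma$-compact iff they are contained in $\prod_n k_n$ for some $k \in \omega^\omega$, which translates to a bound on gaps. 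I would state and use this characterization up front.

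First I would argue that $\overline{X}^{\mathcal P(\omega)} \cap [\omega]^{<\omega} = \emptyset$. Suppose not: let $z \in \overline{X}$ with $z$ finite. Then there is a sequence $(x_n)_{n \in \omega}$ in $X$ converging to $z$ in $\mathcal P(\omega)$. Convergence means: for every $k$, eventually $x_n \cap k = z \cap k$, and since $z$ is finite, for every $k > \max(z)$, eventually $x_n \cap k = z$. Now pick any single $x_0 \in X$ (if $X = \emptyset$ the statement is trivial since $\emptyset$ is $\sigma$-compact). Since $x_0$ is infinite, enumerate $x_0 = \{ a_0 < a_1 < \dots \}$. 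Using that $x_n \to z$ with $z$ finite, I would build, by a diagonal argument, an element $x_n$ from the sequence such that $x_n \cap x_0$ is finite, contradicting the hypothesis on $X$: indeed, given any finite bound, we can find $n$ large enough that $x_n$ agrees with the finite set $z$ below that bound, so $x_n \cap x_0 \subseteq z \cup (x_0 \setminus \text{bound})$ — wait, this needs care. Let me instead argue directly: fix $x_0 \in X$. For each $m$, since $x_n \to z$, choose $n_m$ with $x_{n_m} \cap (m{+}1) = z \cap (m{+}1)$. Then $x_{n_m} \cap x_0 \cap (m{+}1) = z \cap x_0 \cap (m+1) \subseteq z$, a fixed finite set. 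If the sequence $(n_m)$ were such that some $x_{n_m}$ repeats infinitely or we can extract $x \in X$ with $x \cap x_0 \subseteq z$... Actually the right move: the map $y \mapsto y \cap x_0$ is continuous, so $x_{n_m} \cap x_0 \to z \cap x_0$ which is finite; but also each $x_{n_m} \cap x_0$ is infinite and they converge to a finite set, which is fine topologically — I need $X$ closed to pin a limit point \emph{in} $X$. The clean contradiction: $\overline X^{\mathcal P(\omega)}$ is compact; the function $f(y) = |y \cap x_0| \in \omega \cup \{\infty\}$ ... hmm.

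The honest approach, and the one I would actually write: use that a closed $Y \subseteq [\omega]^\omega$ is $\sigma$-compact iff for the increasing enumeration function $e_y \colon \omega \to \omega$ of $y$, the family $\{ e_y : y \in Y\}$ is dominated by a single $g \in \omega^\omega$ (eventual domination suffices after a standard reduction, but here we can even get everywhere-domination by closedness). So suppose $X$ is not $\sigma$-compact; then $\{ e_x : x \in X \}$ is unbounded in $\omega^\omega$, so by a fusion/diagonalization using closedness of $X$ one extracts $x, y \in X$ (or a single $x \in X$ together with points witnessing unboundedness) whose enumeration functions grow so differently that $x \cap y$ is finite. More precisely: build a sequence $x_k \in X$ and integers $m_0 < m_1 < \cdots$ such that $x_{k+1} \cap [m_k, m_{k+1}) = \emptyset$ while $x_0 \cap [m_k, m_{k+1}) \neq \emptyset$, using unboundedness to find at stage $k$ a member of $X$ missing a long interval where $x_0$ has a point; then take a limit point $x_\omega \in X$ of a suitable subsequence (possible since $\mathcal P(\omega)$ compact and $X$ closed) with $x_\omega \cap x_0$ finite, contradicting the hypothesis. \textbf{The main obstacle} is precisely this extraction: one must choose the intervals and the sequence so that the limit point genuinely lies in $[\omega]^\omega$ (is infinite) — this is where one uses that $x_0$ itself is infinite, putting at least one point of $x_0$ into each block to force infinitely many survivors, while unboundedness of the enumeration functions lets each later $x_k$ skip an arbitrarily long block. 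I expect the write-up to be about one paragraph once the interval-construction bookkeeping is set up, with the $\sigma$-compactness characterization of closed subsets of $\omega^\omega$ (equivalently $[\omega]^\omega$) cited as standard.

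\begin{proof}[Proof sketch]
We may assume $X \neq \emptyset$. Recall that a closed subset $Y$ of $[\omega]^\omega \cong \omega^\omega$ (via the increasing enumeration $x \mapsto e_x$, where $e_x(n)$ is the $n$-th element of $x$) is $\sigma$-compact if and only if there is $g \in \omega^\omega$ with $e_x \leq g$ for all $x \in Y$; this is the standard characterization of $\sigma$-compact closed subsets of $\omega^\omega$. Suppose for contradiction that $X$ is not $\sigma$-compact, so $\{ e_x : x \in X\}$ is unbounded. Fix $x^* \in X$ and enumerate $x^* = \{ a_0 < a_1 < \cdots \}$. Construct inductively natural numbers $0 = m_0 < m_1 < \cdots$ and conditions as follows: given $m_k$, pick $n_k \in \omega$ with $a_{n_k} > m_k$, and using unboundedness pick $x_k \in X$ with $e_{x_k}(n_k) > a_{n_k}$, so that $x_k \cap (m_k, a_{n_k}] = \emptyset$; set $m_{k+1} = a_{n_k} + 1$. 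Since $\mathcal{P}(\omega)$ is compact, the sequence $(x_k)_{k}$ has a limit point $x_\infty$, and $x_\infty \in X$ as $X$ is closed. For each $k$, cofinally many $x_j$ satisfy $x_j \cap (m_k, m_{k+1}) = \emptyset$, hence $x_\infty \cap (m_k, m_{k+1}) = \emptyset$; therefore $x_\infty \subseteq \{ m_k : k \in \omega\}$, while $x^*$ meets $(m_k, m_{k+1}]$ for every $k$ (namely $a_{n_k} \in x^*$). Thus $x_\infty \cap x^*$ is finite, contradicting the hypothesis that $X$ is a family with pairwise infinite intersections (note $x_\infty$ must be infinite for this to be a contradiction; if $x_\infty$ were finite it already contradicts $X \subseteq [\omega]^\omega$ and closedness). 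Hence $X$ is $\sigma$-compact.
\end{proof}
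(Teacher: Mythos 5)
Your sketch has several genuine gaps, all concentrated in the extraction of a finite-intersection pair, and they turn out to be exactly the difficulties that the paper's appeal to Hurewicz's theorem is designed to dispose of.

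First, a point of record: for a closed $Y \subseteq \omega^\omega$, the existence of $g$ with $f \leq g$ pointwise for all $f \in Y$ characterizes \emph{compactness}, not $\sigma$-compactness; a closed set is $\sigma$-compact iff it is $\leq^*$-bounded. This matters for the way you invoke unboundedness: from failure of $\leq^*$-domination one does \emph{not} get, for a prescribed coordinate $n_k$, an $x_k \in X$ with $e_{x_k}(n_k)$ above a prescribed target. Unboundedness only produces, for each candidate $g$, some $x$ with $e_x(n) > g(n)$ at \emph{some} unspecified infinite set of $n$'s.

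Second, and more serious: even granting an $x_k$ with $e_{x_k}(n_k) > a_{n_k}$, the conclusion $x_k \cap (m_k, a_{n_k}] = \emptyset$ does not follow. That inequality only controls the elements of $x_k$ of index $\geq n_k$; the first $n_k$ elements of $x_k$ can lie anywhere below $e_{x_k}(n_k)$, in particular inside $(m_k, a_{n_k}]$. To force $x_k$ to actually avoid a block you must control an initial segment of its enumeration, i.e. work below a fixed node of the tree of enumerations of $X$ — which is precisely the structure the sketch never introduces.

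Third, the limit step is broken twice over. The claim that ``for each $k$, cofinally many $x_j$ satisfy $x_j \cap (m_k, m_{k+1}) = \emptyset$'' is unjustified: only $x_k$ was constrained on its own block, and nothing is said about $x_j$ on $(m_k, m_{k+1})$ for $j > k$. And even if one did have $x_\infty \subseteq \{ m_k : k \in \omega\}$, the set $x_\infty$ may perfectly well be \emph{finite}, in which case it is not a member of $[\omega]^\omega$, hence not in $X$, and there is no contradiction. Your parenthetical that ``if $x_\infty$ were finite it already contradicts $X \subseteq [\omega]^\omega$ and closedness'' misreads the hypothesis: $X$ is closed in $[\omega]^\omega$, which is not a closed subspace of $\mathcal P(\omega)$, so a $\mathcal P(\omega)$-limit of a sequence from $X$ is entirely free to escape to a finite set. (Moreover, even $x_\infty \subseteq \{m_k\}$ would not by itself make $x_\infty \cap x^*$ finite, since the $m_k$ may lie in $x^*$.)

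The paper avoids all of this by citing Hurewicz's theorem (\cite[7.10]{Kechris1995}): a closed non-$\sigma$-compact subset of $\omega^\omega$ contains $[T]$ for a superperfect tree $T$, taken here inside the tree of increasing enumerations of elements of $X$. One then builds two branches $x = \bigcup_n s_n$ and $y = \bigcup_n t_n$ of $T$ by interleaving: at each stage, pick the next infinite-splitting node $s_{n+1}$ (resp.\ $t_{n+1}$) so that all its new entries jump above the largest entry of $t_n$ (resp.\ of $s_{n+1}$). The superperfect structure is exactly what guarantees, simultaneously, that the next jump is always available and that the objects $x$ and $y$ constructed in the limit are genuine infinite subsets of $\omega$ lying in $X$ — the two things your limit-point approach cannot secure. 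In short, the strategy needs the Hurewicz machinery; once that is in hand, the construction collapses to the one in the paper.
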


\begin{proof}
If not, then by Hurewicz's Theorem (see \cite[7.10]{Kechris1995}), there is a superperfect tree $T \subseteq \omega^{<\omega}$ so that $[T] \subseteq X$, identifying elements of $[\omega]^\omega$ with their increasing enumeration, as usual.  But then it is easy to recursively construct increasing sequences $\langle s_n : n \in \omega \rangle$, $\langle t_n : n \in \omega \rangle$ in $T$ so that $s_0 = t_0 = \operatorname{stem}(T)$, for every $n \in \omega$, $t_n$ and $s_n$ are infinite-splitting nodes in $T$ and $s_{2n+1}(\vert s_{2n} \vert ) > t_{2n+1}(\vert t_{2n+1} \vert -1)$, $t_{2n+2}(\vert t_{2n} \vert ) > s_{2n+1}(\vert s_{2n+1} \vert -1)$. Then, letting $x = \bigcup_{n \in \omega} s_n$ and $y = \bigcup_{n \in \omega} t_n$, $x \cap y \subseteq \vert s_0 \vert$, viewing $x,y$ as elements of $[\omega]^\omega$. This contradicts that $x,y \in X$. 
\end{proof}

The proof of Theorem~\ref{thm:icleqd} is a modification of Shelah's proof that $\mathfrak{d} \leq \mathfrak{i}$.

\begin{proof}[Proof of Theorem~\ref{thm:icleqd}]
Let $\seq{C_\alpha: \alpha < \kappa}$ be compact independent families so that $ \mathcal{I} = \bigcup_{\alpha < \kappa} C_\alpha$ is maximal independent and $\kappa < \mathfrak d$ and assume without loss of generality that $\{C_\alpha: \alpha < \kappa\}$ is closed under finite unions. Here, we will identify elements of $[\omega]^\omega$ with their characteristic function in $2^\omega$ at several places and it should always be clear from context which representation we consider at the moment. 

\begin{claim}
There are $\seq{x_n : n \in \omega}$ pairwise distinct in $\mathcal{I}$ so that $\{x_n : n \in \omega \} \cap C_\alpha$ is finite for every $\alpha < \kappa$. 
\end{claim}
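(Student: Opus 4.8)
The plan is to produce the $x_n$ as a sequence of \emph{distinct} members of $\mathcal{I}$ which converges, in the topology of $2^\omega$ (identifying subsets of $\omega$ with their characteristic functions, as above), to some \emph{finite} set $F\in[\omega]^{<\omega}$. Such a sequence works for free: for any $\alpha<\kappa$, if $S=\{n:x_n\in C_\alpha\}$ were infinite, then $\{x_n:n\in S\}$ would be an infinite, closed and discrete subspace of $[\omega]^\omega$ --- closed because its closure in $2^\omega$ only adds the point $F\notin[\omega]^\omega$, and discrete because $x_n\to F\neq x_n$ --- and hence could not be contained in the compact set $C_\alpha$. Thus $\{x_n\}\cap C_\alpha$ is finite for every $\alpha$.

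So it suffices to show that the closure $\overline{\mathcal{I}}$ of $\mathcal{I}$ in $2^\omega$ meets $[\omega]^{<\omega}$; equivalently, that $\mathcal{I}$ is not contained in any compact subset of $[\omega]^\omega$. Suppose towards a contradiction that $\overline{\mathcal{I}}\subseteq[\omega]^\omega$, so that $\overline{\mathcal{I}}$ is compact and $\mathcal{I}$ is pointwise bounded: there is an increasing $f$ with $y(n)<f(n)$ and $(\omega\setminus y)(n)<f(n)$ for all $y\in\mathcal{I}$, $n\in\omega$ (where $y(n)$ is the $(n{+}1)$-st element of $y$). I then plan to contradict maximality of $\mathcal{I}$ by a Shelah-style block construction of a set $z$, infinite and co-infinite, which is independent over $\mathcal{I}$ and not a member of $\mathcal{I}$. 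The point where the hypotheses enter is the following uniform bound: because the $C_\alpha$ are compact and closed under finite unions, for each finite tuple $\bar\alpha$ of indices and each sign pattern $\varepsilon$, the Boolean combinations $\bigcap_{\varepsilon(i)=1}y_i\cap\bigcap_{\varepsilon(i)=0}(\omega\setminus y_i)$ formed from pairwise distinct $y_i\in C_{\alpha_i}$ have, collectively, their ``gap functions'' $n\mapsto\min\{m:b\cap[n,m)\neq\emptyset\}$ bounded by a single function $f_{\bar\alpha,\varepsilon}$; since there are only $\kappa^{<\omega}=\kappa<\mathfrak{d}$ such data, there is one $g$ dominating all of them, hence dominating the gap function of every Boolean combination of pairwise distinct elements of $\mathcal{I}$. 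Taking $z=\bigcup_k\big[g(2k),g(2k+1)\big)$ (after speeding up $g$ so that every such combination meets cofinitely many of the intervals $[g(2k),g(2k+1))$ and $[g(2k+1),g(2k+2))$), we get that $b\cap z$ and $b\setminus z$ are infinite for every such $b$; this makes $\mathcal{I}\cup\{z\}$ independent, and since $\omega\setminus z$ is disjoint from $z$ and is itself such a combination whenever $z\in\mathcal{I}$, also $z\notin\mathcal{I}$. This contradicts maximality, and the proof is complete.

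The main obstacle is the verification of the uniform bound above, i.e.\ that each such family of Boolean combinations has compact closure inside $[\omega]^\omega$. The delicate cases are the ``degenerate'' ones, where a convergent sequence of combinations comes from tuples of pairwise distinct elements of $\mathcal{I}$ whose entries collapse in the limit (for instance two entries from the same $C_\alpha$ approaching a common point); one has to argue, by passing to convergent subsequences in the compact $C_\alpha$ and using independence of $\mathcal{I}$, that no such limit can be a finite set except in a way that already exhibits a finite member of $\overline{\mathcal{I}}$ --- which is exactly what we are after. This is where compactness of the $C_\alpha$, closure under finite unions and $\kappa<\mathfrak{d}$ are genuinely used.
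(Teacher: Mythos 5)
Your approach is quite different from the paper's, and unfortunately it has a genuine gap at the step you yourself flagged as delicate.

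The paper's proof is three lines: since $\mathcal{I}$ is a \emph{maximal} independent family, its closure $\overline{\mathcal{I}}$ (taken in $2^\omega$) cannot be independent --- otherwise $\mathcal{I}$ would equal $\overline{\mathcal{I}}$ and hence be a closed, in particular analytic, maximal independent family, contradicting Miller's theorem. Thus there is some $x \in \overline{\mathcal{I}} \setminus \mathcal{I}$; pick pairwise distinct $x_n \in \mathcal{I}$ with $x_n \to x$. Since $C_\alpha \subseteq \mathcal{I}$ is closed and $x \notin \mathcal{I}$, at most finitely many $x_n$ lie in $C_\alpha$. Note that $x$ need not be finite: any $x \in \overline{\mathcal{I}} \setminus \mathcal{I}$ works, and you do not need to reach a point of $[\omega]^{<\omega}$. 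You are thus aiming at a strictly stronger intermediate statement, and paying a heavy price for it.

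The gap: the uniform bound $f_{\bar\alpha,\varepsilon}$ you posit does not exist. Take $\bar\alpha = (\alpha,\alpha)$ and $\varepsilon = (1,0)$. By compactness of $C_\alpha$ (assuming it is infinite), there are pairwise distinct $y_1, y_2 \in C_\alpha$ with $y_1 \to y^*$ and $y_2 \to y^*$; then $y_1 \setminus y_2 \to \emptyset$ in $2^\omega$, so the gap functions $n \mapsto \min\{m : (y_1\setminus y_2)\cap[n,m)\neq\emptyset\}$ are unbounded. Your proposed escape --- that a degenerate limit ``already exhibits a finite member of $\overline{\mathcal{I}}$'' --- is false here: the coordinates $y_1^*, y_2^*$ of the limiting tuple lie in $C_\alpha \subseteq \mathcal{I} \subseteq [\omega]^\omega$ and are therefore infinite. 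What fails is not that some coordinate becomes finite but that the tuple collapses, and a Boolean combination with a repeated argument in opposite signs is vacuously empty; independence of $\mathcal{I}$ gives you nothing here. To salvage the compactness argument you would need to stratify by a separation level: for fixed $m$, restrict to tuples $\bar y$ from $C_\alpha$ with the $y_i\restriction m$ pairwise distinct (a clopen, hence compact, condition preserved under limits); then the limiting tuple has pairwise distinct coordinates in $\mathcal{I}$, and your compactness argument produces a bound $f_{\alpha,k,m}$. There are still only $\kappa \cdot \omega \cdot \omega = \kappa < \mathfrak{d}$ such data, so domination goes through. But at that point you have essentially reproduced the Shelah-style domination argument that the remainder of the paper's proof of $\mathfrak{d} \leq \mathfrak{i}_{cl}$ carries out anyway, and more cleanly: in the paper the sequence $\langle x_n : n \in \omega \rangle$ constructed by the very claim you are proving, together with the finite sets $a_\alpha$, serves exactly the role of the separation parameter. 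So even with the fix the proposal is circular in spirit and far longer than necessary; the intended proof of the claim is the short closure argument above.
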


\begin{proof}
The closure of $\mathcal{I}$ is not independent. Thus there is $x \in \bar{\mathcal{I}} \setminus \mathcal{I}$. Now we pick $\seq{x_n : n \in \omega} \subseteq \mathcal{I}$ converging to $x$. Since $C_\alpha$ is closed, whenever for infinitely many $n$, $x_n \in C_\alpha$, then also $x \in C_\alpha$ which is impossible. 
\end{proof}

Fix a sequence $\seq{x_n : n \in \omega}$ as above. And let $a_\alpha = \{ n \in \omega : x_n \in C_\alpha \} \in [\omega]^{<\omega}$. We will say that $x$ is a Boolean combination of a set $X \subseteq [\omega]^\omega$, if there are finite disjoint $Y,Z \subseteq X$ so that $x = (\bigcap_{y \in Y} y) \cap (\bigcap_{z \in Z} \omega \setminus z)$.

\begin{claim}
 For any $\alpha < \kappa$ there is $f_{\alpha} \colon \omega \to \omega$ so that for any $K \in [C_\alpha \setminus \{x_n : n \in a_\alpha \} ]^{<\omega}$, for all but finitely many $k \in \omega$ and any Boolean combination $x$ of $K \cup \{x_0, \dots, x_k \}$, $x \cap [k,f_{\alpha}(k)) \neq \emptyset$.
\end{claim}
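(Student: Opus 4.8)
The plan is to prove the claim by a compactness-plus-diagonalization argument, exploiting that $C_\alpha$ is compact (hence a closed subset of $2^\omega$) and that the sequence $\seq{x_n : n \in \omega}$ avoids $C_\alpha$ except at indices in $a_\alpha$. First I would fix $\alpha < \kappa$ and consider, for each finite $K \subseteq C_\alpha \setminus \{x_n : n \in a_\alpha\}$ and each $k \in \omega$, the finitely many Boolean combinations $x$ of $K \cup \{x_0, \dots, x_k\}$. Each such $x$ is infinite, since $C_\alpha \cup \{x_n : n \notin a_\alpha\}$ is still an independent family: indeed $\{x_n : n \in \omega\} \cup \mathcal{I} = \mathcal{I}$ is independent, and $x_n \in C_\alpha$ exactly when $n \in a_\alpha$, so no $x_n$ with $n \notin a_\alpha$ lies in $C_\alpha$ and in fact $K \cup \{x_n : n \notin a_\alpha, n \leq k\}$ is a finite independent set with all Boolean combinations infinite. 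Hence for any such $x$ and any $k$ there is some witness in $x$ beyond any given bound.

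The key step is to make the bound uniform over the choice of $K$. Here is where compactness enters. For a fixed finite tuple $\bar\varepsilon$ of signs over the coordinates $\{x_0, \dots, x_k\}$ together with a ``shape'' for $K$ (a finite sequence of elements of $C_\alpha$ with prescribed signs), the map $C_\alpha^j \to 2^\omega$ sending $(y_0, \dots, y_{j-1})$ to the corresponding Boolean combination with $\{x_0, \dots, x_k\}$ is continuous, and $C_\alpha^j$ is compact. For each point in $C_\alpha^j$ the resulting Boolean combination is infinite, so it meets $[k, m)$ for some $m$; by continuity this holds on a neighborhood; by compactness finitely many such $m$ suffice, and we may take $f_\alpha(k)$ to be the maximum over all finitely many sign-patterns over $\{x_0, \dots, x_k\}$, all $j$ up to $k$ (it suffices to bound $|K|$ by $k$, say, since only finitely many $k$ are excluded per $K$ and we can absorb larger $K$ by the ``all but finitely many'' clause), and all shapes, of the bound obtained this way. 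Then for every finite $K \subseteq C_\alpha \setminus \{x_n : n \in a_\alpha\}$ and every $k \geq |K|$, every Boolean combination of $K \cup \{x_0, \dots, x_k\}$ meets $[k, f_\alpha(k))$, which is exactly what is claimed (the finitely many $k < |K|$ being the permitted exceptions).

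The main obstacle I anticipate is handling the dependence on $K$ cleanly: a priori $K$ ranges over an infinite set, so one cannot simply take a finite maximum. The resolution is the observation above that for the conclusion we only need, for each $k$, to control Boolean combinations of $K \cup \{x_0, \dots, x_k\}$ with $|K| \leq k$ — larger $K$ are covered by the ``all but finitely many $k$'' quantifier — and for fixed $|K| = j \leq k$ the relevant parameter space $C_\alpha^j$ is compact, so continuity and compactness give a uniform bound depending only on $j$, on $k$, and on the finitely many sign-patterns. A secondary point to be careful about is verifying that every Boolean combination in question is genuinely infinite; this follows because, as noted, $\mathcal{I}$ is a (maximal) independent family containing both $C_\alpha$ and all the $x_n$, and a Boolean combination of finitely many distinct members of an independent family is infinite by definition of independence — one only needs that the $x_n$ are pairwise distinct and distinct from the members of $K$, which holds since $K \subseteq C_\alpha \setminus \{x_n : n \in a_\alpha\}$ and the indices $0, \dots, k$ used are, after removing those in $a_\alpha$, exactly the ones whose $x_n \notin C_\alpha \supseteq K$.
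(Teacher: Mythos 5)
There is a genuine gap in the compactness step. You assert that ``for each point in $C_\alpha^j$ the resulting Boolean combination is infinite,'' but this is false: if the $j$-tuple $(y_0,\dots,y_{j-1}) \in C_\alpha^j$ has $y_i = y_{i'}$ for some $i \neq i'$ and the prescribed sign pattern puts opposite signs on coordinates $i$ and $i'$, the Boolean combination is empty; likewise, if some $y_i$ equals $x_n$ for $n \in a_\alpha$ (note $x_n \in C_\alpha$ for $n\in a_\alpha$) and the pattern places opposite signs on $y_i$ and $x_n$, it is again empty. These degenerate tuples form a closed (hence compact) subset of $C_\alpha^j$, and the ``good'' tuples form an open, generally non-compact subset — so your open cover of the good region has no reason to admit a finite subcover. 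The observation at the end of your write-up, that $K$ avoids $\{x_n : n\in a_\alpha\}$ and has distinct members, concerns the given $K$ but not the full space $C_\alpha^j$ you are compactifying over, so it does not repair the covering argument.

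The paper's proof overcomes exactly this difficulty. It covers all of $C_\alpha^j$, not just the good tuples, by adding to the cover a family $\mathcal{O}_{0,l}$ of ``garbage'' basic open sets: those $[\bar s]$ where two coordinates are compatible at level $k$ or some coordinate agrees with some $x_n$, $n \in a_\alpha$, up to level $k$. These open sets cover the degenerate diagonal-type tuples without any witness condition being imposed. The complement is then covered by ``good'' basic open sets $\mathcal{O}_{1,l}$ on which the desired witness in $[k, f_\alpha(k))$ actually exists. The point of the ``for all but finitely many $k$'' clause is then essential and dual-purpose: given a concrete finite $K \subseteq C_\alpha\setminus\{x_n : n\in a_\alpha\}$, once $k$ is large enough that the distinct members of $K$ are separated from each other and from $\{x_n : n \in a_\alpha\}$ at level $k$, the tuple cannot fall into any member of $\mathcal{O}_{0,l}$, forcing it into $\mathcal{O}_{1,l}$ where the conclusion holds. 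Your use of ``all but finitely many $k$'' only to enforce $|K| \le k$ misses this second role. Repairing your argument essentially amounts to reintroducing this two-part cover, so the underlying idea (compactness of $C_\alpha^j$ plus continuity) is the same as the paper's, but as written your proof is incomplete.
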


\begin{proof}
We define $f_{\alpha}(k)$ as follows. For every $l \leq k$, we define a collection of basic open subsets of $(2^\omega)^{l}$, $ \mathcal{O}_{0,l} := \{[\bar s] : \bar s \in (2^{<\omega})^l \wedge \forall i < l (\vert s_i\vert> k) \wedge (\exists i < l, n \in a_\alpha (s_i \subseteq x_n) \vee \exists i<j<l (s_i \not\perp s_j))\}$. Further we call any $[\bar s] \notin \mathcal{O}_{0,l}$ good if for any $F,G \subseteq l$ with $F \cap G = \emptyset$ and for any Boolean combination $x$ of $\{ x_0,\dots x_k \}$, there is $ k'> k$ so that for every $i \in F$, $s_i(k') = 1$, for every $ i \in G$, $s_i(k')=0$ and $x(k')=1$. Let $\mathcal{O}_{1,l}$ be the collection of all good $[\bar s]$. We see that $\bigcup_{l \leq k}(\mathcal{O}_{0,l} \cup \mathcal{O}_{1,l})$ is an open cover of $C_\alpha \cup (C_\alpha)^2 \cup \dots \cup(C_\alpha)^k$. Thus it has a finite subcover $\mathcal{O}'$. Now let $f_{\alpha}(k) := \max \{ \vert t \vert : \exists [\bar s] \in \mathcal{O}' \exists i < k (t = s_i) \}$. 

Now we want to show that $f_{\alpha}$ is as required. Let $(y_0,\dots,y_{l-1}) \in (C_\alpha \setminus \{ x_n : n \in a_\alpha \} )^{l}$ be arbitrary, $y_0, \dots, y_{l-1}$ pairwise distinct and $k \geq l$ so that $y_i \restriction k \neq x_n \restriction k$ for all $i < l$, $n \in a_\alpha$ and $y_i \restriction k \neq y_j \restriction k$ for all $i < j < l$. In the definition of $f_{\alpha}(k)$, we have the finite cover $\mathcal{O}'$ of $(C_\alpha)^l$ and thus $(y_0,\dots,y_{l-1}) \in [\bar s]$ for some $[\bar s] \in \mathcal{O}'$. We see that $[\bar s] \in \mathcal{O}_{0,l}$ is impossible as we chose $k$ large enough so that for no $i < l, n \in a_\alpha$, $s_i \subseteq x_n$ and for every $i < j<l$, $s_i \perp s_j$. Thus $[\bar s] \in \mathcal{O}_{1,l}$. But then, by the definition of $\mathcal{O}_{1,l}$, $f_{\alpha}(k)$ is as required. 
\end{proof}

As $\kappa < \mathfrak d$ we find $f \in \omega^\omega$ so that $f$ is unbounded over $\{f_{\alpha} : \alpha < \kappa\}$. Let $x_n^0 := x_n$ and $x_n^1 := \omega \setminus x_n$ for every $n \in \omega$. For any $g \in 2^\omega$ and $n \in \omega$ we define $y_{n,g} := \bigcap_{m\leq n} x_m^{g(m)}$. Further define $y_g = \bigcup_{n \in \omega} y_{n,g} \cap f(n)$. Note $y_{n,g} \subseteq y_{m,g}$ for $m\leq n$ and that $y_g \subseteq^* y_{n,g}$ for all $n \in \omega$. 

\begin{claim}
For any $g \in 2^\omega$, $y_g$ has infinite intersection with any Boolean combination of $\bigcup_{\alpha < \kappa} C_\alpha \setminus \{x_n : n \in \omega\}$.
\end{claim}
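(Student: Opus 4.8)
The plan is to unwind the definitions and invoke the choice of $f$ exactly once. First I would reduce to the following concrete form: fix disjoint finite sets $Y, Z \subseteq \bigcup_{\alpha<\kappa} C_\alpha \setminus \{x_n : n \in \omega\}$ and put $x = \bigcap_{y \in Y} y \cap \bigcap_{z \in Z}(\omega \setminus z)$; the goal is $\card{y_g \cap x} = \omega$. Since $\{C_\alpha : \alpha < \kappa\}$ is closed under finite unions there is a single $\alpha < \kappa$ with $K := Y \cup Z \subseteq C_\alpha$, and as $K$ contains no $x_n$ we have $K \in [C_\alpha \setminus \{x_n : n \in a_\alpha\}]^{<\omega}$, so the defining property of $f_\alpha$ applies to $K$.

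The key observation is that for every $k \in \omega$ the set $w_k := x \cap y_{k,g} = \bigcap_{y \in Y} y \cap \bigcap_{z \in Z}(\omega \setminus z) \cap \bigcap_{m \le k,\, g(m)=0} x_m \cap \bigcap_{m \le k,\, g(m)=1}(\omega \setminus x_m)$ is itself a Boolean combination of $K \cup \{x_0, \dots, x_k\}$: its positive part is $Y \cup \{x_m : m \le k,\, g(m)=0\}$ and its negative part is $Z \cup \{x_m : m \le k,\, g(m)=1\}$, and these are disjoint because $Y$ and $Z$ are disjoint, neither meets $\{x_m : m \le k\}$, and the $g(m)=0$ and $g(m)=1$ indices partition $\{0,\dots,k\}$. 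Hence by the property of $f_\alpha$ there is $k_0$ so that $w_k \cap [k, f_\alpha(k)) \neq \emptyset$ for all $k \ge k_0$.

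Now I use that $f$ is unbounded over $\{f_\alpha : \alpha < \kappa\}$: for this fixed $\alpha$ the set $\{k : f(k) \ge f_\alpha(k)\}$ is infinite. For any $k \ge k_0$ in this set, $\emptyset \ne w_k \cap [k, f_\alpha(k)) \subseteq w_k \cap [k, f(k))$; choosing $j$ in the latter set we get $j \in x$ and $j \in y_{k,g} \cap f(k) \subseteq y_g$, so $j \in y_g \cap x$ with $j \ge k$. Letting $k$ range over the infinite set $\{k \ge k_0 : f(k) \ge f_\alpha(k)\}$ produces elements of $y_g \cap x$ of arbitrarily large index, whence $y_g \cap x$ is infinite.

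I do not expect a genuine obstacle here. The only real content is recognizing $w_k$ as a Boolean combination of $K \cup \{x_0,\dots,x_k\}$ — this is what lets the property of $f_\alpha$ act on the "original" $x$ hidden inside $w_k$ — together with the trivial but essential inclusion $y_g \supseteq y_{k,g} \cap f(k)$ for every $k$, so that a witness for $w_k$ meeting $[k,f(k))$ is automatically a witness for $y_g \cap x$ above $k$. The one point worth double‑checking is the direction of unboundedness: one needs infinitely many $k$ with $f(k) \ge f_\alpha(k)$ for each fixed $\alpha$, which is precisely what ``$f$ unbounded over $\{f_\alpha : \alpha<\kappa\}$'' provides.
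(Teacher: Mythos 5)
Your proof is correct and follows essentially the same route as the paper: fix $\alpha$ so that the relevant combination comes from $C_\alpha$, observe that $x \cap y_{k,g}$ is a Boolean combination of $K \cup \{x_0,\dots,x_k\}$ so the $f_\alpha$ property applies, and then use that $f(k) \geq f_\alpha(k)$ on an infinite set together with the inclusion $y_{k,g} \cap [k,f(k)) \subseteq y_g$ to extract arbitrarily large elements of $y_g \cap x$.
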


\begin{proof}
Let $\{y_0,\dots,y_{l-1}\} \in [C_\alpha\setminus \{x_n : n \in a_\alpha\}]^l$ for some $l \in \omega$, $\alpha < \kappa$ be arbitrary. Here, recall that $\{C_\alpha: \alpha < \kappa\}$ is closed under finite unions. We have that there is some $k_0 \in \omega$ so that for every $k \geq k_0$, any Boolean combination $y$ of $\{y_0,\dots,y_{l-1}\}$ and $x$ of  $\{x_n : n \leq k \}$, $x\cap y \cap [k,f_{\alpha}(k)) \neq \emptyset$. Let $y$ be an arbitrary Boolean combination of $\{y_0,\dots,y_{l-1}\}$ and $m \in \omega$. Then there is $k > m,k_0$ so that $f(k) > f_{\alpha}(k)$. But then we have that $y_{k,g}$ is a Boolean combination of $\{x_0, \dots, x_{k} \}$ and thus $y_{k,g} \cap y \cap [k,f(k)) \neq \emptyset$. In particular, this shows that $y \cap y_g \not\subseteq m$ and unfixing $m$, $\vert y \cap y_g \vert = \omega$.
\end{proof}

Now let $Q_0,Q_1$ be disjoint countable dense subsets of $2^{\omega}$. We see that $\card{y_g \cap y_h} < \omega$ for $h \neq g \in 2^\omega$. Thus the family $\{ y_g : g \in Q_0 \cup Q_1 \}$ is countable almost disjoint and we can find $y'_g =^* y_g$, for every $g \in Q_0 \cup Q_1$, so that $\{y'_g : g \in Q_0 \cup Q_1 \}$ is pairwise disjoint.
Let $y = \bigcup_{g \in Q_0} y'_g$. We claim that any Boolean combination $x$ of sets in $\mathcal{I}$ has infinite intersection with $y$ and $\omega \setminus y$. To see this, assume without loss of generality that $x$ is of the from $\tilde x \cap x_0^{g(0)} \cap \dots \cap x_k^{g(k)}$, where $\tilde x$ is a Boolean combination of sets in $\mathcal{I} \setminus \{ x_n : n \in \omega\}$ and $g \in 2^\omega$. As $Q_0$ is dense there is some $h \in Q_0$ such that $h \restriction (k+1) = g \restriction (k+1)$. Thus we have that $y'_h \subseteq^* x_0^{g(0)} \cap \dots \cap x_k^{g(k)}$ but also $y'_h \cap \tilde x$ is infinite by the claim above. In particular we have that $y \cap x$ is infinite. The complement of $y$ is handled by replacing $Q_0$ with $Q_1$. 
We now have a contradiction to $\mathcal{I}$ being maximal. 
\end{proof}

\section{Concluding remarks}

Our focus in this paper was on Sacks and splitting forcing but it is clear that the method presented is more general. We mostly used that our forcing has Axiom A with continuous reading of names and that it is a weighted tree forcing (Definition~\ref{def:weightedtreeforcing}), both in a definable way. For instance, the more general versions of splitting forcing given by Shelah in \cite{Shelah1992} fall into this class. It would be interesting to know for what other tree forcings Theorem~\ref{thm:mainmaintheorem} holds true. In \cite{Schrittesser2018}, the authors showed that after adding a single Miller real over $L$, every (2-dimensional) graph on a Polish space has a $\mathbf \Delta^1_2$ maximal independent set. It is very plausible that this can be extended to the countable support iteration. One line of attack might be to use a similar method to ours, where Cohen genericity is replaced by other kinds of genericity. For instance, the following was shown by Spinas in \cite{Spinas2001} (compare this with Proposition~\ref{prop:weightedmain}):

\begin{fact}
Let $M$ be a countable model, then there is a superperfect tree $T$ so that for any $x \neq y \in [T]$, $(x,y)$ is $\mathbb{M}^2$ generic over $M$, where $\mathbb{M}$ denotes Miller forcing.
\end{fact}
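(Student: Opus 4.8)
The plan is to imitate the fusion construction behind Proposition~\ref{prop:weightedmain}, replacing Sacks (or splitting) trees by superperfect trees and the weight machinery by the much cheaper observation that being superperfect is preserved as long as every designated splitting node keeps infinitely many immediate successors in the tree. First I would fix a fusion order $\langle \leq_n : n \in \omega\rangle$ for $\mathbb{M}$, where $S \leq_n T$ means $S \subseteq T$, the nodes below the $n$-th level of infinite-splitting nodes of $T$ are preserved, and each such splitting node keeps infinite-splitting in $S$ with its first $n$ immediate successors (in a fixed recursive enumeration) still in $S$; then $\bigcap_n T_n \in \mathbb{M}$ for any fusion sequence, and one associates to it a sequence of finite ``fronts'' $F_n$, the set of $n$-th infinite-splitting nodes of $T_n$, which stabilizes coordinatewise.

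Since $M$ is countable, I would enumerate as $\langle (D_n, k_n) : n \in \omega\rangle$, each pair occurring infinitely often, all pairs with $D_n \in M$ a dense open subset of $\mathbb{M}^{k_n}$, and recursively build $T_{n+1} \leq_n T_n$ so that for every injective $k_n$-tuple $(\sigma_0, \dots, \sigma_{k_n-1})$ of nodes of $F_n$ and every choice of nodes $\sigma_0', \dots, \sigma_{k_n-1}'$ of the new front $F_{n+1}$ with $\sigma_i \subseteq \sigma_i'$, the tuple of restrictions $\big((T_{n+1})_{\sigma_0'}, \dots, (T_{n+1})_{\sigma_{k_n-1}'}\big)$ lies in $D_n$. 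As in Lemma~\ref{lem:wghtdenseset}, this is carried out by processing the finitely many tuples one after another; extending a finite approximation to handle one tuple cannot spoil the requirements already met for earlier tuples because $D_n$ is open (downward closed), and it cannot destroy superperfectness because we only ever pass to superperfect subtrees above the designated successors. The new ingredient relative to Sacks is simply that above each front node there remain infinitely many uncommitted immediate successors, giving room to diagonalize; the bookkeeping of which finite piece of which subtree is being enlarged is the analogue of the functions $f_j \colon K \to \{1,2\}$ in the proof of Lemma~\ref{lem:wghtdenseset}.

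With $T := \bigcap_n T_n \in \mathbb{M}$ constructed, I would finish as follows. Given distinct $x_0, \dots, x_{k-1} \in [T]$ and a dense open $D \subseteq \mathbb{M}^k$ with $D \in M$, pick $n$ large enough that the $x_i$ have split pairwise along the front $F_n$ and that $(D_n, k_n) = (D, k)$, which is possible because each pair recurs infinitely often. Let $\sigma_i \in F_n$ and $\sigma_i' \in F_{n+1}$ be the nodes of the two fronts below $x_i$; these are distinct, so $\big((T_{n+1})_{\sigma_0'}, \dots, (T_{n+1})_{\sigma_{k-1}'}\big) \in D$. The conditions $T_{\sigma_i'} \in \mathbb{M}$ occurring in the $\mathbb{M}^k$-generic filter determined by $(x_0, \dots, x_{k-1})$ satisfy $T_{\sigma_i'} \leq (T_{n+1})_{\sigma_i'}$, so by downward closure $(T_{\sigma_0'}, \dots, T_{\sigma_{k-1}'}) \in D$; hence the generic meets $D$. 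As $D$ was arbitrary, $(x_0, \dots, x_{k-1})$ is $\mathbb{M}^k$-generic over $M$, and the case $k = 2$ is the statement.

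The main obstacle is the simultaneous density step in the product $\mathbb{M}^{k_n}$: a single front node $\sigma_i'$ participates in many tuples, in different coordinate slots and with different partners, so the subtrees above distinct front nodes cannot be shrunk independently, and one must order the tuples and exploit openness of $D_n$ to keep the process monotone, exactly as in Lemma~\ref{lem:wghtdenseset}. Superperfectness itself is cheap to maintain, so no analogue of the weight $\rho$ is required; but the combinatorial bookkeeping of the fronts and the finite approximations is delicate, and this is where the argument of Spinas in \cite{Spinas2001} does its real work.
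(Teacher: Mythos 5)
The paper states this Fact without proof, citing Spinas \cite{Spinas2001}; there is no internal proof to compare against, so your sketch has to stand on its own.

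The skeleton you propose — fuse against an enumeration of dense open subsets of $\mathbb{M}^{k}$ coming from the countable $M$, hit each tuple of front nodes at each step, and conclude by the usual density argument — is the natural first attempt, mirroring Proposition~\ref{prop:weightedmain}. But the adaptation to Miller forcing has a genuine gap that the write-up glosses over, and indeed you concede at the end that the "combinatorial bookkeeping \dots is where the argument of Spinas does its real work," which is another way of saying the proof is not actually there.

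Here is the gap, concretely. In a Sacks tree the $n$-th splitting level has exactly $2^n$ nodes, so the front after $n$ steps is genuinely finite, and the proof of Lemma~\ref{lem:wghtdenseset} combines thinnings over the $2^K$ direction patterns $f \colon K \to \{1,2\}$. In a superperfect tree, the set of $n$-th infinite-splitting nodes is countably infinite already for $n \geq 1$: the stem has $\omega$-many immediate successors, each carrying an infinite-splitting node above it. So your assertion that $F_n$, defined as "the set of $n$-th infinite-splitting nodes of $T_n$," is finite is simply false, and "processing the finitely many tuples one after another" does not get off the ground: the number of direction patterns at a single committed splitting node is now $\omega$ rather than $2$, so $\omega^{k}$ combinations would have to be handled, which cannot be done by a finite sequence of thinnings while keeping each committed node infinite-splitting. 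Likewise, the fusion order you describe, which asks to preserve all nodes below the $n$-th level of infinite-splitting nodes together with the first $n$ successors at each, is an infinitary commitment and is not the finitary $\leq_n$ that makes a fusion converge. The correct Miller fusion commits to only finitely many splitting nodes and finitely many successors per node at each stage, and the delicate point — the genuine content of Spinas's theorem — is showing that the density requirements can nonetheless be interleaved with the growth of the committed part.

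Finally, your argument is completely uniform in $k$: nothing in it distinguishes $k=2$ from $k=3$. The paper's very next sentence, that $\mathbb{M}^3$ always adds a Cohen real, is a warning that the higher-dimensional behaviour of Miller forcing is genuinely different, and the Fact is deliberately stated only for pairs; Remark~\ref{rem:mgpspl}, in contrast, asserts the analogous $\mathbb{SP}^n$-genericity statement for all $n$. An argument that would prove the $\mathbb{M}^k$ result for all $k$ at once should be treated with suspicion, and the place where it should fail is precisely the density step you have not worked out.
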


On the other hand, it is impossible to have that any three $x,y,z \in [T]$ are mutually generic. This follows from a fact due to Velickovic and Woodin (see \cite[Theorem 1]{Velickovic1998}) that there is a Borel function $h \colon (\omega^\omega)^3 \to 2^\omega$, such that for any superperfect $T$, $h''([T]^3) = 2^\omega$.
Also, $\mathbb{M}^3$ always adds a Cohen real (see e.g. the last paragraph in \cite{Brendle1999}). This means that Theorem~\ref{thm:finiteprod} can't hold for Miller forcing and $k \geq 3$, even for just equivalence relations. Namely, after adding a Cohen real, there can't be any $E_0$-transversal that is definable with parameters from the. This doesn't rule out though that the iteration might work. Let us ask the following question. 

\begin{quest}
Does Theorem~\ref{thm:mainmaintheorem} hold true for Miller forcing?
\end{quest}

A positive result would yield a model in which $\mathfrak{i}_{B} < \mathfrak{i}_{cl}$, as per $\mathfrak{d} \leq \mathfrak{i}_{cl}$. No result of this kind has been obtained so far. 

Another common way to iterate Sacks or splitting forcing is to use the countable support product. The argument in Remark~\ref{rem:E1} can be used to see that no definable $E_1$-transversals can exist in an extension by a (uncountably long) countable support product of Sacks or splitting forcing. This raises the question for which hypergraphs Theorem~\ref{thm:mainmaintheorem} applies to countable support products. 

\begin{quest}
Is there a nice characterization of hypergraphs for which Theorem~\ref{thm:mainmaintheorem} holds when using countable support products? 
\end{quest}

An interesting application of our method that appears in the authors thesis, is that $P$-points exist after iterating splitting forcing over a model of CH. To our knowledge this is different to any other method of $P$-point existence in the literature. 

Other applications are related to questions about families of reals and the existence of a well-order of the continuum. For example, it is known that after adding $\omega_1$ many Sacks reals, there is no well-order of the reals in $L(\mathbb{R})$. On the other hand, if we start with $V = L$, a $\Delta^1_2$ Hamel base exists in the extension. In particular, a Hamel base will exist in $L(\mathbb{R})$. This gives a new solution to a question by Pincus and Prikry \cite{Pincus1975}, which asks whether a Hamel basis can exist without a well-order of the reals. This has only been solved recently (see \cite{Schindler2018}). Our results solve this problem not just for Hamel bases but for a big class of families of reals. In an upcoming paper \cite{Schilhan2022} we will consider further applications to questions related to the Axiom of Choice. 

\bibliographystyle{plain}

\end{document}